\newcommand{\nc}{\newcommand}
\nc{\exto}[1]{\stackrel{#1}{\longrightarrow}}
\nc{\dlim}{{\mathop{\lim\limits_{\longrightarrow}\,}}}
\nc{\ilim}{{\mathop{\lim\limits_{\longleftarrow}\,}}}
\nc{\hocolim}{{\mathop{\sf hocolim}\,}}
\nc{\holim}{{\mathop{\sf holim}}}
\nc{\lan}{\big\langle}
\nc{\ran}{\big\rangle}
\nc{\kk}{{\mathsf{k}}}
\nc{\C}{{\mathbb{C}}}
\nc{\HH}{{\mathbf{H}}}
\nc{\LL}{{\mathbb{L}}}
\nc{\PP}{{\mathbb{P}}}
\nc{\QQ}{{\mathbb{Q}}}
\nc{\RR}{{\mathbb{R}}}
\nc{\ZZ}{{\mathbb{Z}}}
\nc{\CA}{{\mathcal{A}}}
\nc{\CB}{{\mathcal{B}}}
\nc{\CC}{{\mathcal{C}}}
\nc{\D}{{\mathcal{D}}}
\nc{\CE}{{\mathcal{E}}}
\nc{\CF}{{\mathcal{F}}}
\nc{\CG}{{\mathcal{G}}}
\nc{\CH}{{\mathcal{H}}}
\nc{\CI}{{\mathcal{I}}}
\nc{\CL}{{\mathcal{L}}}
\nc{\CM}{{\mathcal{M}}}
\nc{\CN}{{\mathcal{N}}}
\nc{\CO}{{\mathcal{O}}}
\nc{\CP}{{\mathcal{P}}}
\nc{\CQ}{{\mathcal{Q}}}
\nc{\CR}{{\mathcal{R}}}
\nc{\CS}{{\mathcal{S}}}
\nc{\CT}{{\mathcal{T}}}
\nc{\CU}{{\mathcal{U}}}
\nc{\CV}{{\mathcal{V}}}
\nc{\CW}{{\mathcal{W}}}
\nc{\CX}{{\mathcal{X}}}
\nc{\CY}{{\mathcal{Y}}}
\nc{\CMo}{{\mathcal{M}^\circ}}
\nc{\Co}{{{C}^\circ}}
\nc{\BY}{{\overline{Y}}}
\nc{\BYD}{{\overline{Y}{}^{|D|}}}
\nc{\OZ}{{\overline{Z}}}
\nc{\bg}{{\bar{g}}}
\nc{\bq}{{\mathbf{q}}}
\nc{\BB}{{\mathbf{B}}}
\nc{\BC}{{\mathbf{C}}}
\nc{\BD}{{\mathbf{D}}}
\nc{\BG}{{\mathbf{G}}}
\nc{\BH}{{\mathbf{H}}}
\nc{\BK}{{\mathbf{K}}}
\nc{\BL}{{\mathbf{L}}}
\nc{\BM}{{\mathbf{M}}}
\nc{\BP}{{\mathbf{P}}}
\nc{\BT}{{\mathbf{T}}}
\nc{\BU}{{\mathbf{U}}}
\nc{\BZ}{{\mathbf{Z}}}
\nc{\BPr}{{\mathsf{P}}}
\nc{\BR}{{\mathbf{R}}}
\nc{\BW}{{\mathbf{W}}}
\nc{\BRO}[1]{{{\mathbf{R}}^{\circ}_{#1}}}
\nc{\BRD}[1]{{{\mathbf{R}}^{|D|}_{#1}}}
\nc{\BRP}[1]{{{\mathbf{R}}^{1}_{#1}}}
\nc{\BRTP}[1]{{{\mathbf{\tilde{R}}}{}^{1}_{#1}}}
\nc{\BS}{{\mathbf{S}}}
\nc{\BMS}{{{\mathbf{M}}^{{s}}}}
\nc{\BMSS}{{{\mathbf{M}}^{{ss}}}}
\nc{\BMZ}{{\mathbf{M}^{\circ}}}
\nc{\BCL}{{\mathbf{L}}}
\nc{\PCC}{{{}^\perp\CC}}
\nc{\Cl}{{\mathsf{Cliff}}}
\nc{\Clev}{{\mathop{\mathsf{Cliff}}^{\circ}}}
\nc{\FA}{{\mathfrak{A}}}
\nc{\FB}{{\mathfrak{B}}}
\nc{\fd}{{\mathfrak{d}}}
\nc{\fa}{{\mathfrak{a}}}
\nc{\fb}{{\mathfrak{b}}}
\nc{\fg}{{\mathfrak{g}}}
\nc{\fn}{{\mathfrak{n}}}
\nc{\fp}{{\mathfrak{p}}}
\nc{\FD}{{\mathfrak{D}}}
\nc{\FE}{{\mathfrak{E}}}
\nc{\FL}{{\mathfrak{L}}}
\nc{\FM}{{\mathfrak{M}}}
\nc{\FS}{{\mathsf{S}}}
\nc{\FU}{{\mathsf{U}}}
\nc{\fr}{{{\mathop{\mathsf{fr}}}}}
\nc{\FR}{{{\mathop{\mathrm{FR}}}}}
\nc{\sfc}{{\mathsf{c}}}
\nc{\sfch}{{\mathsf{ch}}}
\nc{\sfh}{{\mathsf{h}}}
\nc{\SB}{{\mathsf{B}}}
\nc{\SBi}{\SB^\inn}
\nc{\SBo}{\SB^\out}
\nc{\HSB}{{\hat{\mathsf{B}}}}
\nc{\HSBi}{{\hat{\mathsf{B}}^\inn}}
\nc{\HSBo}{{\hat{\mathsf{B}}^\out}}
\nc{\SBS}{{\bar{\mathsf{B}}}}
\nc{\SBSi}{{\bar{\mathsf{B}}^\inn}}
\nc{\SBSo}{{\bar{\mathsf{B}}^\out}}
\nc{\BRS}{{\mathop{\BR}\limits^\circ}}
\nc{\sj}{{\mathsf{j}}}
\nc{\SJ}{{\mathsf{J}}}
\nc{\SK}{{\mathsf{K}}}
\nc{\SM}{{\mathsf{M}}}
\nc{\SO}{{\mathsf{O}}}
\nc{\SQ}{{\mathsf{Q}}}
\nc{\SPV}{{\mathsf{S}^+\mathsf{V}}}
\nc{\SMV}{{\mathsf{S}^-\mathsf{V}}}
\nc{\SPMV}{{\mathsf{S}^\pm\mathsf{V}}}
\nc{\SX}{{S_X}}
\nc{\SY}{{S_Y}}
\nc{\phipsi}{{q}}
\nc{\eps}{\varepsilon}
\nc{\pim}{{\pi_-}}
\nc{\pip}{{\pi_+}}
\nc{\BE}{{\overline{\CE}}}
\nc{\TE}{{\tilde{\CE}}}
\nc{\TQ}{{\tilde{Q}}}
\nc{\TCF}{{\tilde{\CF}}}
\nc{\TCG}{{\tilde{\CG}}}
\nc{\TCL}{{\tilde{\CL}}}
\nc{\TF}{{\tilde{F}}}
\nc{\TR}{{\tilde{R}}}
\nc{\TW}{{\tilde{W}}}
\nc{\TCA}{{\widetilde{\CA}}}
\nc{\TCC}{{\tilde{\CC}}}
\nc{\TCX}{{\tilde{\CX}}}
\nc{\TCY}{{\tilde{\CY}}}
\nc{\TPi}{{\tilde{\Pi}}}
\nc{\TPhi}{{\tilde{\Phi}}}
\nc{\OPhi}{{\bar{\Phi}}}
\nc{\txi}{{\tilde{\xi}}}
\nc{\TXi}{{\widetilde{\Xi}}}
\nc{\tp}{{\tilde{p}}}
\nc{\tq}{{\tilde{q}}}
\nc{\tzeta}{{\tilde{\zeta}}}
\nc{\tpi}{{\tilde{\pi}}}
\nc{\tlambda}{{\tilde{\lambda}}}
\nc{\tnu}{{\tilde{\nu}}}
\nc{\hgamma}{{\hat{\gamma}}}
\nc{\halpha}{{\hat{\alpha}}}
\nc{\HCA}{{\hat{\CA}}}
\nc{\HCB}{{\hat{\CB}}}
\nc{\HCC}{{\hat{\CC}}}
\nc{\HE}{{\widehat{\CE}}}
\nc{\HR}{{\hat{R}}}
\nc{\HX}{{\hat{X}}}
\nc{\hxi}{{\hat{\xi}}}
\nc{\hlambda}{{\hat{\lambda}}}
\nc{\hnu}{{\hat{\nu}}}
\nc{\UH}{{\mathcal{H}}}
\nc{\TM}{{\widetilde{M}}}
\nc{\TCM}{{\widetilde{\CM}}}
\nc{\TU}{{\widetilde{U}}}
\nc{\TX}{{\widetilde{X}}}
\nc{\TY}{{\widetilde{Y}}}
\nc{\TYO}{{{\widetilde{Y}}^\circ}}
\nc{\barf}{{\bar{f}}}
\nc{\te}{{\tilde{e}}{}}
\nc{\tf}{{\tilde{f}}}
\nc{\tg}{{\tilde{g}}}
\nc{\ti}{{\tilde{\imath}}}
\nc{\tj}{{\tilde{\jmath}}}
\nc{\ty}{{\tilde{y}}}
\nc{\tphi}{{\tilde{\phi}}}
\nc{\depth}{\mathop{\mathsf{depth}}}
\nc{\SSS}{{\mathfrak{S}}}
\nc{\urho}{{\underline{\rho}}}
\nc{\LRA}{\Leftrightarrow}
\nc{\RA}{\Rightarrow}
\nc{\lotimes}{\mathbin{\mathop{\otimes}\limits^{\mathbb{L}}}}
\nc{\CEnd}{\mathop{\mathcal{E}\mathit{nd}}\nolimits}
\nc{\CExt}{\mathop{\mathcal{E}\mathit{xt}}\nolimits}
\nc{\CHom}{\mathop{\mathcal{H}\mathit{om}}\nolimits}
\nc{\RH}{\mathop{{\mathsf{R}}\Gamma}\nolimits}
\nc{\RGamma}{\mathop{{\mathsf{R}}\Gamma}\nolimits}
\nc{\RHom}{\mathop{\mathsf{RHom}}\nolimits}
\nc{\RCHom}{\mathop{\mathsf{R}\mathcal{H}\mathit{om}}\nolimits}
\nc{\RG}{\mathop{\mathsf{R\Gamma}}\nolimits}
\nc{\mult}{\mathop{\mathsf{mult}}\nolimits}
\nc{\Hom}{\mathop{\mathsf{Hom}}\nolimits}
\nc{\Ext}{\mathop{\mathsf{Ext}}\nolimits}
\nc{\End}{\mathop{\mathsf{End}}\nolimits}
\nc{\Tor}{\mathop{\mathsf{Tor}}\nolimits}
\nc{\Tordim}{\mathop{\mathsf{Tor}\text{\rm-}\mathsf{dim}}\nolimits}
\nc{\Hilb}{\mathop{\mathsf{Hilb}}\nolimits}
\nc{\Spec}{\mathop{\mathsf{Spec}}\nolimits}
\nc{\Pic}{\mathop{\mathsf{Pic}}\nolimits}
\nc{\Tr}{\mathop{\mathsf{Tr}}\nolimits}
\nc{\Cone}{\mathop{\mathsf{Cone}}\nolimits}
\nc{\Conv}{\mathop{\mathsf{Conv}}\nolimits}
\nc{\Fiber}{\mathop{\mathsf{Fiber}}\nolimits}
\nc{\SR}{\mathop{\mathsf{SR}}\nolimits}
\nc{\Ker}{\mathop{\mathsf{Ker}}\nolimits}
\nc{\Coker}{\mathop{\mathsf{Coker}}\nolimits}
\nc{\codim}{\mathop{\mathsf{codim}}\nolimits}
\nc{\reg}{{\mathsf{reg}}}
\nc{\sing}{{\mathsf{sing}}}
\nc{\supp}{\mathop{\mathsf{supp}}}
\nc{\vol}{\mathop{\mathsf{vol}}\nolimits}
\nc{\ch}{\mathop{\mathsf{ch}}\nolimits}
\nc{\perf}{{\mathsf{perf}}}
\nc{\rk}{\mathop{\mathsf{rk}}}
\nc{\Pf}{{\mathsf{Pf}}}
\nc{\Gr}{{\mathsf{Gr}}}
\nc{\OGr}{{\mathsf{OGr}}}
\nc{\Flag}{{\mathsf{Fl}}}
\nc{\Kosz}{{\mathsf{Kosz}}}
\nc{\LGr}{{\mathsf{LGr}}}
\nc{\LFl}{{\mathsf{LFl}}}
\nc{\SGr}{{\mathsf{SGr}}}
\nc{\GTGr}{{\mathsf{G_2Gr}}}
\nc{\GTF}{{\mathsf{G_2F}}}
\nc{\OF}{{\mathsf{OF}}}
\nc{\Fl}{{\mathsf{Fl}}}
\nc{\Bl}{{\mathsf{Bl}}}
\nc{\Gm}{{\mathbb{G}_m}}
\nc{\GL}{{\mathsf{GL}}}
\nc{\PGL}{{\mathsf{PGL}}}
\nc{\SL}{{\mathsf{SL}}}
\nc{\SP}{{\mathsf{Sp}}}
\nc{\Spin}{{\mathsf{Spin}}}
\nc{\Tot}{{\mathsf{Tot}}}
\nc{\ev}{{\mathsf{ev}}}
\nc{\tr}{{\mathsf{tr}}}
\nc{\mn}{{\mathsf{min}}}
\nc{\mx}{{\mathsf{max}}}
\nc{\od}{{\mathsf{odd}}}
\nc{\coev}{{\mathsf{coev}}}
\nc{\id}{{\mathsf{id}}}
\nc{\opp}{{\mathsf{opp}}}
\nc{\PS}{{{\PP^3}}}
\nc{\Qu}{{{Q^3}}}
\nc{\tdim}{\mathop{\Tor\dim}}
\nc{\ecart}{{\fbox{$\scriptstyle\mathsf{EC}$}}}
\nc{\ad}{{\mathop{\mathsf ad}}}
\nc{\sg}{{\mathop{\mathsf sg}}}
\nc{\hf}{{\mathop{\mathsf hf}}}
\nc{\gr}{{\mathop{\mathsf gr}}}
\nc{\qgr}{{\mathop{\mathsf qgr}}}
\nc{\Coh}{{\mathop{{\mathsf{Coh}}}}}
\nc{\Ab}{{\mathop{\mathcal{A}\mathit{b}}}}
\nc{\Ccoh}{{\mathop{\mathsf Ccoh}}}
\nc{\Qcoh}{{\mathop{\mathsf Qcoh}}}
\nc{\At}{\mathop{\mathsf{At}}}
\nc{\tra}{{\mathsf{T}}}
\nc{\fsl}{{\mathfrak{sl}}}
\nc{\fso}{{\mathfrak{so}}}
\nc{\fgl}{{\mathfrak{gl}}}
\newcommand{\w}{{\boldsymbol w}}
\nc{\AAV}{{\mathcal{AAV}}}
\nc{\Rep}{\mathop{\mathsf{Rep}}}
\nc{\Cubics}{{{\mathcal{S}}_3}}
\nc{\VFT}{{{\mathcal{S}}_{14}}}
\nc{\VFTE}{{{\mathcal{N}}_{\mathrm{reg,sm}}}}
\nc{\MX}{{\CM_X}}
\nc{\MY}{{\CM_Y}}
\nc{\MYE}{{\CM_{Y,\CE}}}
\nc{\Yd}{{Y_d}}
\nc{\Yfive}{{Y_5}}
\nc{\Xg}{{X_{2g-2}}}
\nc{\Xtt}{{X_{22}}}
\nc{\Xst}{{X_{16}}}
\nc{\Xtw}{{X_{12}}}
\nc{\Xe}{{X_{8}}}
\nc{\Xf}{{X_{4}}}
\nc{\git}{{/\!\!/\!{}_\chi}}
\nc{\HOH}{{\mathsf H\mathsf H}}
\nc{\HHE}{{\mathsf H\mathsf E}}
\nc{\cu}{{\mathfrak{u}}}
\nc{\ce}{{\mathfrak{e}}}
\nc{\cf}{{\mathfrak{f}}}
\nc{\co}{{\mathfrak{o}}}
\nc{\VB}{{\mathsf{VB}}}
\nc{\pio}{\pi_\out}
\nc{\pii}{\pi_\inn}
\nc{\Po}{P_\out}
\nc{\Pin}{P_\inn}
\nc{\Do}{D_\out}
\nc{\Di}{D_\inn}
\nc{\lo}{\lambda_\out}
\nc{\li}{\lambda_\inn}
\nc{\RRo}{\RR^\out}
\nc{\RRi}{\RR^\inn}
\nc{\BLo}{{\BL_\out}}
\nc{\BLi}{{\BL_\inn}}
\nc{\BMo}{{\BM_\out}}
\nc{\BMi}{{\BM_\inn}}
\nc{\OP}{{{\mathsf{OP}}}}
\nc{\TP}{{\widetilde{P}}}
\nc{\tomega}{{\tilde{\omega}}}
\nc{\vi}{{v_{\text{\bf i}}}}
\nc{\vo}{{v_{\text{\bf o}}}}
\nc{\inn}{{{\text{\bf inn}}}}
\nc{\out}{{{\text{\bf out}}}}
\nc{\TL}{{\widetilde{L}}}
\nc{\TBL}{{\widetilde{\BL}}}
\nc{\TS}{{\widetilde{S}}}
\nc{\bd}{{\bar{d}}}
\theoremstyle{plain}
\newtheorem{theorem}{Theorem}[section]
\newtheorem{conjecture}[theorem]{Conjecture}
\newtheorem{lemma}[theorem]{Lemma}
\newtheorem{proposition}[theorem]{Proposition}
\newtheorem{corollary}[theorem]{Corollary}
\theoremstyle{definition}
\newtheorem{definition}[theorem]{Definition}
\newtheorem{example}[theorem]{Example}
\theoremstyle{remark}
\newtheorem{remark}[theorem]{Remark}
\newtheorem{question}[theorem]{Question}
\nc{\TBW}{{\widetilde{\BW}}}
\nc{\HBW}{{\widehat{\BW}}}
\nc{\TA}{{\widetilde{A}}}
\nc{\HA}{{\widehat{A}}}
\nc{\SW}{{\operatorname{{\mathsf{SW}}}}}
\nc{\ji}{{i}}
\nc{\jo}{{o}}
\nc{\ot}{\otimes}
\nc{\wt}{\widetilde}
\title{Exceptional collections on isotropic Grassmannians}
\author{Alexander Kuznetsov}
\address{\sloppy
\parbox{0.9\textwidth}{
{\bf A.K.: }Algebra Section, Steklov Mathematical Institute,
8 Gubkin str., Moscow 119991 Russia
\hfill\\[5pt]
\phantom{{\bf A.K.: }}The Poncelet Laboratory, Independent University of Moscow
\hfill\\[5pt]
\phantom{{\bf A.K.: }}Laboratory of Algebraic Geometry, SU-HSE, 7 Vavilova Str., Moscow, Russia, 117312
\hfill
}\bigskip}
\email{akuznet@mi.ras.ru}
\author{Alexander Polishchuk}
\address{{\bf A.P.: }Department of Mathematics, University of Oregon, Eugene, OR 97405\bigskip}
\email{apolish@uoregon.edu}
\date{}
\thanks{A.K.\ was partially supported by
RFFI grants 10-01-93110, 10-01-93113, 11-01-00393, 11-01-00568, \hbox{11-01-92613-KO-a}, NSh-4713.2010.1,
the grant of the Simons foundation, and
by AG Laboratory SU-HSE, RF government grant, ag.11.G34.31.0023.
A.P.\ was partially supported by the NSF grant DMS-1001364.}
\begin{document}

%

\nc{\NB}{{\operatorname{\mathsf{B}}}}
\nc{\Forget}{{\operatorname{\mathsf{Fg}}}}
\nc{\TCU}{{\tilde{\CU}}}

\begin{abstract}
We introduce a new construction of exceptional objects in the derived category of coherent sheaves
on a compact homogeneous space of a semisimple algebraic group and show that it produces exceptional
collections of the length equal to the rank of the Grothendieck group on homogeneous spaces of all classical groups.
\end{abstract}
\maketitle

\tableofcontents

\section{Introduction}

The study of derived categories of coherent sheaves on algebraic varieties has been an
increasingly popular subject in algebraic geometry. One of important devices relevant
for this study is the notion of an exceptional collection (see \ref{ss-oec} below). In the present paper
we give a new general construction of such collections in the derived categories
of compact homogeneous spaces of semisimple algebraic groups and show
that for classical groups it gives exceptional collections of maximal length.

\subsection{An overview of exceptional collections on homogeneous varieties}\label{ss-oec}

Let $\kk$ be a base field which we assume to be algebraically closed of characteristic 0.
Recall that an object $E$ of a $\kk$-linear triangulated category $\CT$
is {\sf exceptional}, if
$$
\Ext^\bullet(E,E) = \kk
$$
(that is $E$ is simple and has no higher self-$\Ext$'s).
An ordered collection $E_1,\dots,E_m$ in $\CT$ is an {\sf exceptional collection},
if each $E_i$ is exceptional and
$$
\Ext^\bullet(E_i,E_j) = 0
$$
for all $i > j$.
Finally, an exceptional collection $E_1,\dots,E_m$ is {\sf full},
if the smallest triangulated subcategory of $\CT$ containing all
the objects $E_1,\dots,E_m$ is $\CT$ itself.

The simplest geometrical example of a full exceptional collection
is the collection
$$
\CO,\CO(1),\dots,\CO(n-1),\CO(n)
$$
in the bounded derived category $\D(\PP^n)$ of coherent sheaves on $\PP^n$ constructed by Beilinson
in his pioneering work~\cite{Bei}. After that a vast number of exceptional collections
was constructed by Kapranov in~\cite{Kap}. In fact, he constructed full exceptional collections
of vector bundles on all homogeneous spaces of the simple algebraic groups of type $A$ and on quadrics
(which are special homogeneous spaces of types $B$ and~$D$).
This naturally led to the following conjecture.

\begin{conjecture}\label{gp-ec}
If $\BG$ is a semisimple algebraic group and $\BP \subset \BG$ is a parabolic subgroup of $\BG$
then there is a full exceptional collection of vector bundles in $\D(\BG/\BP)$.
\end{conjecture}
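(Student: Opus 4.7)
The plan is to reduce to the case of a simple group $\BG$ (for semisimple groups one takes the external product over simple factors) and then to exhibit, for each simple type and each parabolic $\BP$, a collection of $\BG$-equivariant vector bundles of cardinality $|W_\BG / W_\BP|$, which is the rank of $K_0(\BG/\BP)$. The natural supply of such bundles is the family $\{\CE_\lambda\}_{\lambda \in \Lambda}$, where $\CE_\lambda = \BG \times^{\BP} V_\lambda$ for $V_\lambda$ an irreducible representation of the Levi $\BL$ of $\BP$ with highest weight $\lambda$.

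Given a candidate set $\Lambda$ of $\BP$-dominant weights and a linear order on it, I would proceed in three steps. First, decompose the tensor products $V_\lambda^\vee \otimes V_\mu$ into $\BL$-irreducibles and invoke the Borel--Weil--Bott theorem on each summand to compute $\Ext^\bullet(\CE_\lambda, \CE_\mu)$. Second, verify exceptionality and semi-orthogonality: for $\lambda > \mu$ every irreducible constituent of $V_\lambda^\vee \otimes V_\mu$ should, after shifting by $\rho$, land on a wall of the fundamental Weyl chamber of $\BG$, so that its cohomology vanishes. Third, establish fullness by checking that $\{[\CE_\lambda]\}_{\lambda \in \Lambda}$ forms a $\ZZ$-basis of $K_0(\BG/\BP)$; with the right ordering this often follows from the semi-orthogonality together with a Schubert-cell comparison of ranks.

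The \emph{principal obstacle} is the combinatorics of the second step. In type $A$ the Kapranov choice $\Lambda = \{\text{Young diagrams in a fixed rectangle}\}$ works because the arithmetic of weights leaves a large enough ``forbidden zone'' to force the required Borel--Weil--Bott cancellations. Already for types $B$, $C$, and $D$ no purely irreducible family of the correct cardinality satisfies all the semi-orthogonality constraints: the Weyl-group orbits are too short, and many tensor-product summands land inside the dominant chamber. One is therefore forced to replace some $\CE_\lambda$ by more elaborate objects --- cones, extensions, or sheaves linear over a Clifford algebra --- and to allow source weights outside the $\BP$-dominant cone. Producing a systematic recipe of this kind and verifying its properties uniformly across classical types is, I expect, the main technical content of the paper. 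For the exceptional groups $E_6, E_7, E_8, F_4, G_2$ the conjecture appears essentially open and seems to demand either a deeper structural insight into the tautological bundles on $\BG/\BP$ or a case-by-case analysis of each of the finitely many exceptional flag varieties.
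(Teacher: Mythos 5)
This statement is Conjecture~\ref{gp-ec}; the paper does not prove it, and neither does your proposal. What the paper actually establishes (Theorem~\ref{main-th} and Theorem~\ref{mainthm}) is strictly weaker: for classical types $B$, $C$, $D$ it constructs exceptional collections \emph{of expected length} (i.e.\ of cardinality $\rk K_0(\BG/\BP)$), with fullness left open as Conjecture~\ref{full} and purity (the objects being vector bundles) left open as Conjecture~\ref{strong-and-pure}. Your outline should therefore not be presented as a proof of the conjecture; at best it is a strategy, and it contains two genuine gaps beyond the honest admission about exceptional groups.

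First, your Step 3 is wrong as stated: having the classes $[\CE_\lambda]$ form a $\ZZ$-basis of $K_0(\BG/\BP)$ does \emph{not} imply that the $\CE_\lambda$ generate $\D(\BG/\BP)$. This is precisely the obstruction the authors face --- see Remark~\ref{nv-full}, where fullness of any exceptional collection of expected length is itself reduced to an unproven Nonvanishing Conjecture. No ``Schubert-cell comparison of ranks'' closes this gap. Second, your proposed fix for the failure of semiorthogonality outside type $A$ (``cones, extensions, or sheaves linear over a Clifford algebra'') is not what makes the construction work, and you give no recipe for it. The paper's actual mechanism is different and more specific: one isolates \emph{exceptional blocks} $\SB\subset P_\BL^+$, i.e.\ sets of weights for which the composition map $\bigoplus_{\nu}\Ext^\bullet_\BG(\CU^\lambda,\CU^\nu)\otimes\Hom(\CU^\nu,\CU^\mu)\to\Ext^\bullet(\CU^\lambda,\CU^\mu)$ is an isomorphism, passes to the \emph{right dual} exceptional collection inside the equivariant category $\D^\BG(X)$ (where every $\CU^\lambda$ is exceptional), and then forgets the equivariant structure; the resulting objects $\CE^\lambda$ are exceptional in $\D(X)$ by Proposition~\ref{el-fec}. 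The combinatorial heart is then the construction of the blocks via cores $\BR_\delta$ and the verification of the invariance and compatibility conditions of Proposition~\ref{crit-eb}. Even with all of this, the conjecture as stated remains open; your write-up needs to be reframed as a discussion of partial results rather than a proof.
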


Up to now only partial results in this direction were obtained. Below we list
all minimal homogeneous varieties of simple groups (corresponding to maximal
parabolic subgroups) for which a full exceptional collection was constructed.
Recall that simple algebraic groups are classified by Dynkin diagrams
that fall into types $A$, $B$, $C$, $D$, $E$, $F$ and $G$.
Maximal parabolic subgroups correspond to vertices of Dynkin diagrams
for which we use the standard numbering (see~\cite{Bou}). Thus, we denote by $\BP_i$
the maximal parabolic subgroup corresponding to the vertex $i$.

\begin{description}
\item[Type $A_n$] A full collection was constructed by Kapranov in~\cite{Kap}.
\item[Type $B_n$] For $\BP = \BP_1$ (so that $\BG/\BP = Q^{2n-1}$, a quadric of dimension $2n-1$)
a full exceptional collection was constructed by Kapranov in~\cite{Kap}.
For $\BP = \BP_2$ (so that $\BG/\BP = \OGr(2,2n+1)$, the Grassmannian of lines on $Q^{2n-1}$)
a full exceptional collection was constructed in~\cite{K08}.
For $n = 4$ and $\BP = \BP_4$ (so that $\BG/\BP = \OGr(4,9) = \OGr_+(5,10)$)
a full exceptional collection was constructed in~\cite{K06}.
\item[Type $C_n$] For $\BP = \BP_1$ (so that $\BG/\BP = \PP^{2n-1}$) Beilinson's collection works.
For $\BP = \BP_2$ (so that $\BG/\BP = \SGr(2,2n)$, the Grassmannian of isotropic planes in a symplectic
vector space)
a full exceptional collection was constructed in~\cite{K08}.
For $n = 3,4,5$ and $\BP = \BP_n$ (so that $\BG/\BP = \SGr(n,2n)$, the Lagrangian Grassmannian)
full exceptional collections were constructed in~\cite{Sam} and~\cite{PS}.
\item[Type $D_n$] For $\BP = \BP_1$ (so that $\BG/\BP = Q^{2n-2}$, a quadric of dimension $2n-2$)
a full exceptional collection was constructed by Kapranov in~\cite{Kap}.
For $\BP = \BP_2$ (so that $\BG/\BP = \OGr(2,2n)$, the Grassmannian of isotropic lines on $Q^{2n-2}$)
an almost full exceptional collection was constructed in~\cite{K08}.
\item[Type $E_n$] For $n = 6$ and $\BP = \BP_1$ (or $\BP = \BP_6$)
an exceptional collection was constructed by Manivel in~\cite{Man}. The collection was proved to be full in~\cite{FM}.
\item[Type $F_4$] For $\BP = \BP_4$ (so that $\BG/\BP$ is a hyperplane section
of $E_6/\BP_1$) an exceptional collection can be constructed by restricting
Manivel's collection.
\item[Type $G_2$] For $\BP = \BP_1$ (so that $\BG/\BP = Q^5$) Kapranov's collection works.
For $\BP = \BP_2$ a full exceptional collection was constructed in~\cite{K06}.
\end{description}

\subsection{The statement of results}\label{ss-res}

The main result of the present paper can be formulated as follows.
Let us say that an exceptional collection in $\D(X)$, the bounded derived category of
coherent sheaves on an algebraic variety $X$, is {\sf of expected length},
if its length is equal to the rank of the Grothendieck group $\rk(K_0(X))$.
Note that if $K_0(X)$ is a free abelian group then
this implies that the corresponding classes generate $K_0(X)$.

Let us say that a simple group $\BG$ is of type $BCD$ if its type is either $B_n$, or $C_n$, or $D_n$.

\begin{theorem}\label{main-th} Let  $\BG$ be
a simply connected simple group of type $BCD$. Then for each maximal parabolic subgroup $\BP \subset \BG$
there exists an exceptional collection of expected length in $\D(\BG/\BP)$ consisting of 
objects that have a $\BG$-equivariant structure.
\end{theorem}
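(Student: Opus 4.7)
The plan is as follows. Step~1 is to develop a general construction of $\BG$-equivariant exceptional objects on $\BG/\BP$. Step~2 is to exhibit, for each of the three classical types and each maximal parabolic, an explicit list of such objects of cardinality $\rk(K_0(\BG/\BP))$. Step~3 is to verify the exceptionality and semi-orthogonality relations $\Ext^\bullet(E_i, E_j) = 0$ for $i > j$ by reducing to the Borel--Weil--Bott theorem.

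For Step~1, the usual building blocks on $\BG/\BP$ are the irreducible equivariant bundles $\CE_\lambda$ associated to irreducible representations of the Levi $\BL \subset \BP$ of highest weight $\lambda$. As the survey in Section~\ref{ss-oec} makes clear, for most nodes of the $B_n$, $C_n$, $D_n$ Dynkin diagrams no full equivariant exceptional collection has ever been constructed, and even on $\OGr(2,2n)$ only an almost full one is known; so the irreducible bundles alone are manifestly not enough. The novelty is therefore a mechanism that, starting from the $\CE_\lambda$, produces a controlled supply of additional $\BG$-equivariant exceptional objects by forming iterated cones, kernels, or extensions, while still allowing the resulting objects to be filtered by irreducible equivariant bundles so that their cohomology is computable via Borel--Weil--Bott.

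For Step~2, I would for each type and each node $k$ define a combinatorial index set $I_{n,k}$ with $|I_{n,k}| = |W(\BG)/W(\BL)| = \rk(K_0(\BG/\BP_k))$, equip it with a linear order of Bruhat type, and assign to each $i \in I_{n,k}$ an object $E_i$ produced by the Step~1 construction. For Step~3 the computation
$$
\Ext^\bullet(E_i, E_j) \;=\; H^\bullet\bigl(\BG/\BP,\ E_i^\vee \otimes E_j\bigr)
$$
is performed by filtering $E_i^\vee \otimes E_j$ by irreducible equivariant bundles $\CE_\mu$ and applying Borel--Weil--Bott to each piece. The target is that, for $i > j$, every weight $\mu + \rho$ occurring in the composition series is singular for the Weyl group of $\BG$; while for $i = j$ exactly one piece contributes $\kk$ in degree $0$ and all others are again singular.

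I expect Step~3 to be the principal obstacle. The difficulty is twofold. First, because the $E_i$ are not themselves irreducible $\BP$-bundles, the composition factors $\CE_\mu$ of $E_i^\vee \otimes E_j$ span a substantial region of the weight lattice, and the singularity of $\mu + \rho$ requires type-by-type Weyl-group combinatorics with no uniform statement across $B_n$, $C_n$, $D_n$. Second, and more subtle, it may well happen that an individual composition factor has non-vanishing cohomology while $\Ext^\bullet(E_i, E_j) = 0$ nevertheless; in that case the vanishing can only come from differentials in the spectral sequence of the filtration, which in turn requires that the Step~1 construction be sharp enough to identify the connecting maps and their induced maps on cohomology. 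A secondary but non-trivial obstacle is the indexing in Step~2: the cardinality $|I_{n,k}| = |W(\BG)/W(\BL)|$ must be verified by a direct combinatorial count in each classical type, and the fact that only partial collections were available in the literature shows that this matching is already a substantive constraint.
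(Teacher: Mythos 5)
Your plan correctly identifies the shape of the problem (irreducible equivariant bundles do not suffice; new objects must be built from them; the hard part is the $\Ext$-vanishing), but it stops exactly where the paper's actual idea begins, so as it stands there is a genuine gap rather than a proof. The missing mechanism in your Step~1 is not ``iterated cones, kernels, or extensions'' chosen ad hoc: the paper's construction is to observe that the bundles $\CU^\lambda$ are \emph{all} exceptional in the equivariant category $\D^\BG(X)$, with equivariant $\Ext$'s running \emph{opposite} to the $\xi$-ordering while non-equivariant $\Hom$'s run \emph{with} it (Lemmas~\ref{uuord} and~\ref{uugord}); one then isolates subsets $\SB\subset P_\BL^+$ (``exceptional blocks'') on which the composition map $\bigoplus_\nu \Ext^\bullet_\BG(\CU^\lambda,\CU^\nu)\otimes\Hom(\CU^\nu,\CU^\mu)\to\Ext^\bullet(\CU^\lambda,\CU^\mu)$ is an isomorphism, passes to the \emph{right dual} exceptional collection $\{\CE^\lambda\}$ inside $\D^\BG(X)$, and only then forgets the equivariant structure. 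Without this organizing principle your Step~3 is unworkable in the form you describe: you yourself flag that vanishing may depend on spectral-sequence differentials of a filtration by irreducibles, and you have no handle on those differentials. The paper sidesteps this entirely: since $\D_\SB^\BG(X)$ is saturated, the forgetful functor has a right adjoint with $\Forget^!\Forget(\CU^\mu)=\bigoplus_\nu\Hom(\CU^\nu,\CU^\mu)\otimes\CU^\nu$ (Proposition~\ref{fshr}), whence $\Ext^\bullet(\Forget\CE^\lambda,\CU^\mu)=\Hom(\CU^\lambda,\CU^\mu)$ by adjunction and the defining property of the dual collection --- no spectral sequence is ever analyzed.

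Two further concrete ingredients are absent from your outline and cannot be supplied by ``type-by-type Weyl-group combinatorics'' alone. First, semiorthogonality \emph{between} blocks is not checked weight by weight but follows from a geometric device: each block is confined by a convex, Weyl-invariant polyhedron (the ``core'' $\BR_\delta$) whose only regular integral points lie in $\BW\rho$ (Lemma~\ref{core-sing}), so Borel--Bott--Weil kills all cross-$\Ext$'s at once (Theorem~\ref{aap-ec}). Second, exceptionality of a block is reduced to two checkable conditions --- an invariance condition $v\kappa=\kappa$ and a compatibility condition on projectors $\Pi_\SB$ applied to tensor products of $\BL$-representations (Proposition~\ref{crit-eb}) --- the latter ultimately resting on a Littlewood--Richardson argument for $\GL_k$ proved in the Appendix. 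Your proposal would need to discover all three of these devices to close the gap; the counting step you worry about is, by comparison, routine (an explicit generating-function computation yielding $\binom{n}{k}2^k$ in each classical type).
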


Note that the existence of a $\BG$-equivariant structure here is a general result (see
\cite[Lem.\ 2.2]{Pol2}) but also comes naturally from the construction.
The $\BG$-equivariant structure on objects of our collections allows to
construct a relative exceptional collection on any fibration with fiber $\BG/\BP$
(see \cite[Thm.\ 3.1]{Sam2}).


\begin{corollary}\label{dbfib}
Let $\BG$ and $\BP$ be as in Theorem \ref{main-th}, and let $\CG \to X$ be a principal $\BG$-bundle,
where $X$ is an algebraic variety.
Consider the corresponding fibration $Y = \CG \times_\BG (\BG/\BP) \to X$.
Then there exists a semiorthogonal decomposition of $\D^b(Y)$
consisting of $\rk(K_0(\BG/\BP))$ subcategories, each equivalent to $\D^b(X)$,
and possibly an additional subcategory.
In particular, if $X$ has an exceptional collection of expected length
then so does $Y$.
\end{corollary}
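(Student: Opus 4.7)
The strategy is to apply Samokhin's theorem \cite[Thm.\ 3.1]{Sam2} on lifting $\BG$-equivariant semiorthogonal decompositions from $\BG/\BP$ to any fiber bundle of the form $Y = \CG \times_\BG (\BG/\BP)$.

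By Theorem \ref{main-th}, there is a $\BG$-equivariant exceptional collection $E_1, \dots, E_m$ of vector bundles on $\BG/\BP$ with $m = \rk K_0(\BG/\BP)$. Via the associated bundle construction, the $\BG$-equivariant structure allows each $E_i$ to descend to a vector bundle $\tilde E_i$ on $Y$. Applying Samokhin's theorem to the $\BG$-equivariant partial semiorthogonal decomposition of $\D^b(\BG/\BP)$ generated by the $E_i$'s (together with its orthogonal complement $\CB$) gives a semiorthogonal decomposition
$$
\D^b(Y) = \lan \CA_1, \dots, \CA_m, \CA' \ran,
$$
where each $\CA_i$ is the essential image of the fully faithful functor $\Phi_i \colon \D^b(X) \to \D^b(Y)$ defined by $F \mapsto \pi^* F \otimes \tilde E_i$, and $\CA'$ is the lift of $\CB$. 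Each $\CA_i$ is thus equivalent to $\D^b(X)$, and $\CA'$ is the possibly nontrivial extra component, whose appearance reflects the fact that Theorem \ref{main-th} asserts only expected length, not fullness, of the collection on $\BG/\BP$.

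For the last assertion, let $F_1, \dots, F_n$ be an exceptional collection on $X$ of expected length $n = \rk K_0(X)$. Then the lexicographically ordered family $\{\Phi_i(F_j)\}_{i,j}$ is an exceptional collection in $\D^b(Y)$ of length $mn$. Since $\pi\colon Y \to X$ is Zariski-locally trivial with cellular fiber $\BG/\BP$, a Leray-Hirsch-type computation in $K$-theory yields $\rk K_0(Y) = \rk K_0(X) \cdot \rk K_0(\BG/\BP) = mn$, so this collection has the expected length for $Y$.

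The key technical point is the verification of the relative $\Ext$-vanishing required as input by Samokhin's theorem, but this reduces to the absolute $\Ext$-vanishing of the $E_i$ on $\BG/\BP$ by flat base change along an \'etale trivialization of the principal bundle $\CG \to X$; everything else follows formally from the structure of the produced semiorthogonal decomposition.
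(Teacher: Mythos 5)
Your argument is correct and follows essentially the same route as the paper: descend the $\BG$-equivariant exceptional objects to relative objects on $Y$, apply Samokhin's Theorem 3.1 to get the fully faithful functors $F\mapsto p^*F\otimes\CE_Y^\lambda$ with semiorthogonal images plus a possible extra component, and count via $\rk K_0(Y)=\rk K_0(X)\cdot\rk K_0(\BG/\BP)$. The only caveat is that the objects $\CE^\lambda$ are a priori complexes rather than vector bundles, so the descent step should be phrased via the equivalence $\D(Y)\cong\D(\CG\times(\BG/\BP))^\BG$ (Elagin) rather than the associated-bundle construction for bundles; this does not affect the argument.
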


Both Theorem~\ref{main-th} and Corollary~\ref{dbfib} will be proved in Section~\ref{Proofs-sec}.

Note that for an arbitrary (not maximal) parabolic subgroup $\BP \subset \BG$ the homogeneous space
$\BG/\BP$ has a structure of an iterated fibration with fibers of the form $\BG_i/\BP_i$, where
$\BG_i$ are semisimple algebraic groups and $\BP_i \subset \BG_i$ are maximal parabolic
subgroups.
Moreover, if $\BG$ is a classical group then all $\BG_i$ are classical as well. So, applying
Corollary~\ref{dbfib} (or Kapranov's construction in type $A$) several times we conclude that

\begin{corollary}\label{bcd-all}
If $\BG$ is a simple group of type $BCD$ and $\BP \subset \BG$ is
a {\rm(}not necessary maximal{\rm)} parabolic subgroup then
there exists an exceptional collection of expected length in $\D(\BG/\BP)$.
\end{corollary}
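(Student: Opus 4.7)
The plan is to induct on the corank $|I|$ of $\BP$, where $I\subset\{1,\dots,n\}$ denotes the set of simple roots of $\BG$ not lying in the Levi of $\BP$. The base case $|I|=1$ is exactly Theorem~\ref{main-th}, which in addition supplies the $\BG$-equivariant structure on the collection that I need to carry forward in the induction.

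For the inductive step, I would pick any $i\in I$ and let $\BP'\supsetneq\BP$ be the intermediate parabolic corresponding to the smaller set $I\setminus\{i\}$, so that $\BG/\BP\to\BG/\BP'$ is a smooth proper fibration. A direct analysis of the Levi of $\BP'$ identifies the fiber $\BP'/\BP$ with $\BG'/\BP''$, where $\BG'$ is the simple factor of the derived subgroup of that Levi corresponding to the connected component of its Dynkin subdiagram containing the vertex $i$, and $\BP''\subset\BG'$ is the maximal parabolic cut out by $i$ (the remaining simple factors of the derived Levi intersect $\BP$ in the whole group and so contribute trivially to the fiber). Since any connected subdiagram of a Dynkin diagram of type $B_n$, $C_n$ or $D_n$ is of type $A$ or again of type $BCD$, the fiber $\BG'/\BP''$ is a maximal-parabolic quotient of a simple classical group of one of these two types, and a $\BG'$-equivariant exceptional collection of the expected length is available in either case --- by Kapranov's classical construction~\cite{Kap} in type $A$ and by Theorem~\ref{main-th} in type $BCD$.

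By the inductive hypothesis, $\BG/\BP'$ carries a $\BG$-equivariant exceptional collection of the expected length $\rk K_0(\BG/\BP')$. I would then combine this with the equivariant collection on the fiber using the fibration mechanism underlying Corollary~\ref{dbfib}, namely Samokhin's theorem~\cite[Thm.~3.1]{Sam2}, which requires only a group-equivariant exceptional collection on the fiber and therefore accommodates both the type-$A$ and the type-$BCD$ fiber cases. This produces an exceptional collection on $\BG/\BP$ of length $\rk K_0(\BG/\BP')\cdot\rk K_0(\BG'/\BP'')$, which equals $\rk K_0(\BG/\BP)$ by the multiplicativity of Grothendieck ranks for such homogeneous fibrations; the $\BG$-equivariance is preserved by construction, closing the induction.

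The main obstacle will be bookkeeping equivariance throughout the iteration so that Samokhin's fibration argument can be invoked at every step: concretely, one must verify at each stage that the structure group of $\BG/\BP\to\BG/\BP'$ acts on the fiber through its simple factor $\BG'$, so that the $\BG'$-equivariant fiber collection does descend to a relative exceptional collection on the total space. The key combinatorial input that makes the whole argument close --- that in type $BCD$ no exotic simple types can arise as connected Dynkin subdiagrams --- is exactly what ensures that Kapranov's theorem and Theorem~\ref{main-th} between them cover every fiber that arises.
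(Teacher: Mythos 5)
Your proposal is correct and follows essentially the same route as the paper: realize $\BG/\BP$ as an iterated fibration whose fibers are maximal-parabolic quotients of simple factors of the intermediate Levi subgroups, and apply the relative exceptional collection mechanism of Corollary~\ref{dbfib} (Samokhin's theorem) at each step, with the rank count closing by multiplicativity of $\rk K_0$. The only difference is that the paper implicitly chooses the iteration so that every fiber is again of type $BCD$ (this is what the remark that all $\BG_i$ are of type $BCD$ amounts to), whereas your arbitrary choice of the vertex $i$ can also produce type-$A$ fibers and therefore additionally invokes Kapranov's equivariant collection --- a harmless variation.
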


We conjecture that the exceptional collections we construct are full and
possess further nice properties
that we checked in some special cases (see Conjecture \ref{strong-and-pure}).

Finally, we would like to stress that our construction of an exceptional collection
is quite general: we use special properties of types $BCD$ only in some
computations. So, we hope that the approach of this paper
can be used to construct full exceptional collections for all the remaining homogeneous spaces
(i.e., for the exceptional groups $E_6$, $E_7$, $E_8$ and $F_4$).

\subsection{An overview of the construction}\label{ss-constr}

The main part of any construction of an exceptional collection is to find sufficiently many exceptional objects.
For a homogeneous variety it is natural to try equivariant bundles.

Note that when we fix the type of a simple group we have several choices of the group itself,
ranging from simply connected to adjoint cases. The simply connected group has the richest
category of equivariant bundles. On the other hand, the variety $\BG/\BP$ does not change if we
replace $\BG$ by its simply connected covering. 
Because of this {\em from now on we will assume that $\BG$ is simply connected}.

Recall that there is a natural equivalence of the category of $\BG$-equivariant coherent sheaves on $\BG/\BP$
with the category of representations of $\BP$:
$$
\Coh^\BG(\BG/\BP) \cong \Rep\BP,
$$
see~\cite{BK}.
In fact, it is an equivalence of tensor abelian categories.
In particular, each representation of $\BP$ can be considered as a vector bundle on $X = \BG/\BP$.
The group $\BP$ is not reductive, so its representation theory is rather complicated.
Let us start by considering the semisimple part of the category, $\Rep^{\rm ss}\BP$,
i.e., the subcategory of representations on which the unipotent
radical $\BU$ of $\BP$ acts trivially. Thus, if
$$
\BL = \BP/\BU
$$
is the Levi quotient, then extending a representation of $\BL$
to a representation of $\BP$ via the projection $\BP \to \BL$
we get an equivalence $\Rep\BL\cong\Rep^{\rm ss}\BP$.
The Levi group $\BL$ is reductive, and its weight lattice $P_\BL$ is canonically isomorphic to the weight lattice $P_\BG$ of the group $\BG$. Let us choose a maximal torus $\BT\subset\BL$ and a Borel subgroup
$\BB$ in $\BP$ containing $\BT$, such that $\BB\cap\BL$ is a Borel subgroup in $\BL$.
We denote the corresponding cones of $\BL$-dominant and $\BG$-dominant weights
by $P_\BL^+ \subset P_\BL$ and by $P_\BG^+ \subset P_\BG$, respectively.
Irreducible representations of $\BL$ are parameterized by their highest weights which are $\BL$-dominant.
For each $\BL$-dominant weight $\lambda \in P_\BL^+$ we denote by $V_\BL^\lambda$ the corresponding irreducible representation of $\BL$,
as well as its extension to $\BP$, and by $\CU^\lambda$ the corresponding $\BG$-equivariant bundle on~$X = \BG/\BP$.

In type $A$  there are sufficiently many exceptional bundles
among the $\CU^\lambda$'s, so one can construct
an exceptional collection of expected length out of them. However,
for other types the situation is not so nice.
Although all the bundles $\CU^\lambda$ are exceptional as objects of the derived category of equivariant
sheaves $\D^\BG(X)$,
it turns out that only few of them are exceptional in $\D(X)$.
For example, in the case when $\BG$ is of type $C_n$ and $\BP = \BP_n$, so that $X = \SGr(n,2n)$
(the Lagrangian Grassmannian), one can check that $\CU^\lambda$ is exceptional if and only if
$$
\lambda = \omega_i + t\omega_n,
$$
where $\omega_i$ is the fundamental weight of the vertex $i$
of the Dynkin diagram and $t\in\ZZ$. Since the canonical bundle is $\omega_X = \CU^{-(n+1)\omega_n}$, one can deduce easily that
the maximal possible length of an exceptional collection in $\D(X)$ consisting of vector bundles
of the form $\CU^\lambda$ is $n(n+1)$ (we have $n$ choices for $i$ and $n+1$ choices for $t$
in the above formula for $\lambda$), whereas $\rk(K_0(X)) = 2^n$. So, for $n \ge 5$
we have no chance to find an exceptional collection of expected length consisting only of $\CU^\lambda$.
In other words, we need to introduce another class of $\BP$-modules. In fact, this is the most interesting
problem discussed in this paper.

To explain how we do it let us return to the example of the group $\BG$ of type $C_n$ and of $\BP = \BP_n$.
Recall that in this case the lattice of weights is
$$
P_\BL = P_\BG = \ZZ^n = \{(\lambda_1,\dots,\lambda_n)\},
$$
and the dominant cones can be described as
$$
P_\BG^+ = \{ \lambda_1 \ge \lambda_2 \ge \dots \ge \lambda_n \ge 0 \},
\qquad
P_\BL^+ = \{ \lambda_1 \ge \lambda_2 \ge \dots \ge \lambda_n \}
$$
(the Levi group $\BL$ in this case is isomorphic to $\GL_n$).
Take any integer $0 \le a \le n$ and consider a subset (a {\sf block})
$$
\SB_a = \{ n \ge \lambda_1 \ge \lambda_2 \ge \dots \ge \lambda_a \ge \lambda_{a+1} = \dots = \lambda_n = a \}.
$$
Its elements can be viewed as Young diagrams inscribed in $(n-a)\times a$ rectangle.
In particular,
$$
\#\SB_a = {n \choose a}.
$$
It turns out that for the weights $\lambda,\mu$ within such a block $\SB=\SB_a$ the following
amusing property is satisfied: the canonical map
$$
\bigoplus_{\nu \in \NB} \Ext^\bullet_\BG(\CU^\lambda,\CU^\nu) \otimes \Hom(\CU^\nu,\CU^\mu)\to
\Ext^\bullet(\CU^\lambda,\CU^\mu)\eqno{(\star)}
$$
is an isomorphism (here $\Ext_\BG$ stands for the $\Ext$ groups in the derived category $\D^\BG(X)$
of $\BG$-equivariant coherent sheaves on $X$, and the map is given by the composition of equivariant
$\Ext$'s with $\Hom$'s).

As we already mentioned above, all the objects $\CU^\lambda$ are exceptional when considered as objects
of the derived category of equivariant sheaves $\D^\BG(X)$ (and in fact form an exceptional collection), while
when considered as objects of $\D(X)$ (by forgetting the equivariant structure), they are not exceptional
in general. Now, having property $(\star)$ one can formally check that
\begin{itemize}
\item considering $\{\CU^\lambda\}_{\lambda \in \SB_a}$ as a (nonfull) exceptional collection in $\D^\BG(X)$,
\item passing to the {\em right dual exceptional collection} $\{\CE^\lambda\}_{\lambda \in \SB_a}$ in $\D^\BG(X)$,
and then
\item forgetting the equivariant structure on all $\CE^\lambda$,
\end{itemize}
one obtains an exceptional collection $\{\CE^\lambda \}_{\lambda \in \SB_a}$ in the non-equivariant
category $\D(X)$ that generates the same subcategory as
the original (non-exceptional) collection $\{\CU^\lambda\}$. This strange procedure (see details in 
Section \ref{ss-eb}) can be considered
as the central construction of the paper. To make it work in general we introduce the notion
of an {\sf exceptional block}\/ $\SB \subset P_\BL^+$. By definition, an exceptional block is a subset $\SB \subset P_\BL^+$
of $\BL$-dominant weights such that the morphism $(\star)$ is an isomorphism. The procedure described above produces an exceptional
collection $\{\CE^\lambda\}_{\lambda\in\SB}$ generating the subcategory
$$
\CA_\SB := \langle \CU^\lambda \rangle_{\lambda \in \SB}.
$$

However, in general one cannot find a single exceptional block of expected length.
To obtain an exceptional collection of expected length we combine several exceptional blocks 
in a semiorthogonal sequence of blocks, i.e. in such a way that $\Ext$'s between blocks
in the order-decreasing direction vanish.
For example, for $\BG$ of type $C_n$ and $\BP = \BP_n$ we take the blocks $\SB_a$ described above for all $a$ from $0$ to~$n$.
Note that the total number of exceptional objects in the blocks $\SB_a$ is 
$\sum_{a=0}^n {n \choose a} = 2^n$, which is the expected length in this case.

\subsubsection{The choices and the restriction}\label{ss-cnr}

Now let us describe the construction in the general case. The details can be found in Section \ref{ss-ckb}.
The construction depends on several choices (subject to one restriction) that
we are going to explain now.
Let $D = D_\BG$ be the Dynkin diagram of $\BG$. Denote
by $\beta$ the simple root (a vertex of $D$) corresponding to the maximal parabolic subgroup $\BP$,
and by $\xi$ the corresponding fundamental weight of $\BG$.
$$
\parbox{.75\textwidth}{We choose a connected component of $D\setminus\beta$, called the {\sf outer component}\/ and denoted by $\Do$. We also allow $\Do$ to be empty.}\leqno{\text{(C1)}}
$$
The restriction is
$$
\parbox{.75\textwidth}{If $\Do$ is nonempty then it is a Dynkin diagram of type $A$.}\leqno{\text{(R)}}
$$
We denote the complement of $\beta$ and $\Do$ by $\Di$
$$
\Di = D_\BG \setminus (\Do \cup \beta)
$$
and call it the {\sf inner component}\/ of $D_\BG$. We consider the simply connected subgroups
$$
\BLo,\ \BLi \subset \BL
$$
corresponding to the subdiagrams $\Do,\ \Di \subset D\setminus \beta = D_\BL$ and denote by
$$
i:\BLi \to \BL,
\qquad
o:\BLo \to \BL
$$
the embeddings. Abusing the notation we denote the embeddings of these subgroups into
$\BG$ by the same letters. Our restriction on $\Do$ means that
$\BLo\simeq\SL_k$ for some $k\ge 1$.

The next choice is the following.
$$
\parbox{.75\textwidth}{We choose a standard numbering of vertices in $\Do$.}
\leqno{\text{(C2)}}
$$
Since $\Do$ is of type $A$, there are two possibilities for this choice (unless $\Do$ is empty or
consists of one vertex).
Let $b$ be the number corresponding to the vertex in $\Do$ which is adjacent to $\beta$. The chain of vertices $1,2,\dots,b$ of $D_\BG$ 
will play an importaequationnt role in the construction below.
%
%
%

The possibilities of the choice of the outer component and of the numbering of its vertices are illustrated in the following Figure.
We take the Dynkin diagram of type $E_7$, the black circle marks the vertex corresponding to the parabolic subgroup $\BP$,
the thick lines mark the outer component of the diagram. So, there are 4 choices with nonempty outer component and the fifth
choice (not illustrated on the picture) when $D_\out$ is empty).
\begin{figure*}[h]\caption{Choices of the outer component and of the numbering}
\begin{picture}(100,60)
\multiput(0,20)(20,0){6}{\circle{4}}
\put(40,0){\circle{4}}
\put(60,20){\circle*{4}}
\multiput(2,20)(20,0){5}{\line(1,0){16}}
\put(40,2){\line(0,1){16}}
\thicklines
\multiput(2,20)(20,0){2}{\line(1,0){16}}
\put(40,2){\line(0,1){16}}
\put(-2,25){$\scriptstyle{}1$}
\put(18,25){$\scriptstyle{}2$}
\put(38,25){$\scriptstyle{}3$}
\put(38,-8){$\scriptstyle{}4$}
\put(58,25){$\scriptstyle{}5$}
\put(78,25){$\scriptstyle{}6$}
\put(98,25){$\scriptstyle{}7$}
\put(20,40){$b = 3,\ k = 5$}
\end{picture}
\qquad
\begin{picture}(100,50)
\multiput(0,20)(20,0){6}{\circle{4}}
\put(40,0){\circle{4}}
\put(60,20){\circle*{4}}
\multiput(2,20)(20,0){5}{\line(1,0){16}}
\put(40,2){\line(0,1){16}}
\thicklines
\multiput(2,20)(20,0){2}{\line(1,0){16}}
\put(40,2){\line(0,1){16}}
\put(-2,25){$\scriptstyle{}4$}
\put(18,25){$\scriptstyle{}3$}
\put(38,25){$\scriptstyle{}2$}
\put(38,-8){$\scriptstyle{}1$}
\put(58,25){$\scriptstyle{}5$}
\put(78,25){$\scriptstyle{}6$}
\put(98,25){$\scriptstyle{}7$}
\put(20,40){$b = 2,\ k = 5$}
\end{picture}
\qquad
\begin{picture}(100,60)
\multiput(0,20)(20,0){6}{\circle{4}}
\put(40,0){\circle{4}}
\put(60,20){\circle*{4}}
\multiput(2,20)(20,0){5}{\line(1,0){16}}
\put(40,2){\line(0,1){16}}
\thicklines
\put(82,20){\line(1,0){16}}
\put(-2,25){$\scriptstyle{}4$}
\put(18,25){$\scriptstyle{}5$}
\put(38,25){$\scriptstyle{}6$}
\put(38,-8){$\scriptstyle{}7$}
\put(58,25){$\scriptstyle{}3$}
\put(78,25){$\scriptstyle{}2$}
\put(98,25){$\scriptstyle{}1$}
\put(20,40){$b = 2,\ k = 3$}
\end{picture}
\qquad
\begin{picture}(100,60)
\multiput(0,20)(20,0){6}{\circle{4}}
\put(40,0){\circle{4}}
\put(60,20){\circle*{4}}
\multiput(2,20)(20,0){5}{\line(1,0){16}}
\put(40,2){\line(0,1){16}}
\thicklines
\put(82,20){\line(1,0){16}}
\put(-2,25){$\scriptstyle{}4$}
\put(18,25){$\scriptstyle{}5$}
\put(38,25){$\scriptstyle{}6$}
\put(38,-8){$\scriptstyle{}7$}
\put(58,25){$\scriptstyle{}3$}
\put(78,25){$\scriptstyle{}1$}
\put(98,25){$\scriptstyle{}2$}
\put(20,40){$b = 1,\ k = 3$}
\end{picture}
\end{figure*}
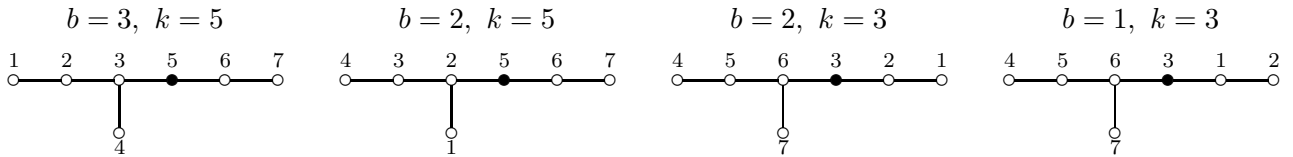

\medskip

We have the following decreasing chain of Dynkin subdiagrams in $\D_{\BG}$:
$$
D_a = D_\BG \setminus \{1,\dots,a\}
$$
for $a=0, 1, \ldots, b$ (so $D_0=D_\BG$).
Let
$$
h_a:\BH_a \to \BG
$$
be the embedding of the simply connected subgroup
corresponding to the subdiagram $D_a$.
Note that $\BLi \subset \BH_a$ since $\Di \subset D_a$, so the embedding $\ji:\BLi\to \BG$
factors through an embedding $\BLi \to \BH_a$ that we will also denote by $\ji$.
If $\BK$ is any of the groups $\BG$, $\BL$, $\BLi$ , $\BLo$, $\BH_a$ then we denote by
$P_\BK$ (resp., $\BW_\BK$)
the corresponding weight lattice (resp., Weyl group).

The third choice is the following.
$$
\parbox{.75\textwidth}{For each $a=0, 1, \ldots, b$ we choose a strictly dominant weight $\delta_a \in P_{\BH_a}^+$.}\leqno{\text{(C3)}}
$$
For each $a=0, 1, \ldots, b$ we define a polyhedron in $P_{\BH_a}\otimes {\mathbb{R}}$ by
$$
\BR_{\delta_a} = \{ \lambda \in P_{\BH_a}\otimes{\mathbb{R}}\ |\ \forall w\in\BW_{\BH_a}\quad (\lambda,w\delta_a) \le (\rho_{\BH_a},\delta_a) \},
$$
where $\rho_{\BH_a}$ is the sum of fundamental weights of $\BH_a$.
We will refer to $\BR_{\delta_a}$ as {\sf the core} in $P_{\BH_a}\otimes {\mathbb{R}}$.

\subsubsection{The indexing set}

The exceptional blocks that we construct are indexed by the set
$$
\SJ = \{ \sj \in (\theta,P_\BL)\ |\ 0 \le \sj < r \}.
$$
Here $\theta$ is the unique element of $P_\BL\otimes\QQ$ such that
$$
\theta \in \langle \omega_1,\dots,\omega_{k-1}\rangle ^\perp \cap \Ker i^*,
\qquad\text{and}\qquad
(\theta,\xi) = 1,
$$
where $\omega_t$ is the fundamental weight of the vertex $t \in D_\BG$, $i^*:P_{\BL}\to P_\BLi$ is the natural
restriction map, and
$r$ is the index of the Grassmannian $\BG/\BP$ (the integer such that 
$\CU^{-r\xi}$ is the canonical class of $\BG/\BP$). Note that the scalar 
product with $\theta$ defines a linear map $(\theta,-):P_\BL \to \QQ$, 
its image $(\theta,P_\BL)$ is a finitely generated subgroup of $\QQ$ containing $\ZZ$,
so $\SJ$ is a finite totally ordered set.


\subsubsection{The construction of the blocks}

Recall that we have a chain of subgroups
$\BH_b \subset \dots \BH_1 \subset \BH_0 = \BG$.
For each subgroup $\BH_a$ denote by $r_a$ the index of the Grassmannian $\BH_a/(\BP\cap \BH_a)$.
We prove that the sequence of integers $r_a$ is strictly decreasing
$$
r = r_0 > r_1 > \dots > r_{b-1} > r_b > r_{b+1} := 0,
$$
so it gives a subdivision of the indexing set $\SJ = \SJ_0 \sqcup \SJ_1 \sqcup \dots \sqcup \SJ_b$, where
\begin{equation*}
\SJ_a = \{ \sj \in \SJ\ |\ r-r_a \le \sj < r - r_{a+1} \}.
\end{equation*}
We denote by $a$ the function $\SJ \to \ZZ$ equal to $a$ on the subset $\SJ_a$. In the other words,
it is defined by the inequalities
$$
r - r_{a(\sj)} \le \sj < r - r_{a(\sj)+1},
$$
For brevity we will write $\BH_\sj = \BH_{a(\sj)}$ , $h_\sj = h_{a(\sj)}$ and $\BR_\sj=\BR_{\delta_{a(\sj)}}$.

Now  we are ready to describe the blocks.
First, we construct for each $\sj\in\SJ$ a block $\SB_\sj$ of the form
$$
\SB_\sj = \SBo_\sj + \sj\xi + i_*(\SBi_\sj) \subset P_\BL
$$
with $\SBo_\sj \subset \Ker h_\sj^* = \langle \omega_1,\omega_2,\dots,\omega_{a(\sj)-1}\rangle$ (called the {\sf outer part of the block}),
and $\SBi_\sj \subset P_\BLi$ (called the {\sf inner part of the block}).
The inner part is given by
$$
\SBi_\sj = \left\{ \nu \in P_\BLi^+\ \left|\
\begin{array}{ll}
(1) & \rho_{\BH_\sj} \pm 2 i_* (w \nu) \in \BR_{\sj}\ \text{for all $w \in \BW_{\BLi}$}\\
(2) & \sj\xi + i_*\nu \in P_\BL
\end{array}
\right\}\right.
$$
%
%
%
and then the outer part is defined by
$$
\SBo_\sj = \left\{ \mu \in \Ker h_\sj^* \cap P_\BG^+\ \left|\
\begin{array}{l}
\rho_{\BH_\sj} - h_\sj^*(w_\BLo\mu) - i_*(w_\BLi\nu) + i_*(w'_\BLi\nu') \in \BR_{\sj}\\
\qquad\text{for all $\nu,\nu' \in \SBi_\sj$, $w_\BLo \in \BW_\BLo$, and $w_\BLi,w'_\BLi \in \BW_\BLi$}
\end{array}
\right\}\right..
$$
Note that by definition of $\theta$ we have
$(\theta,\SB_\sj) = \sj$. So, the pairing with $\theta$ gives the ordering
of the blocks.

We check that the blocks $\SB_\sj$ constructed above are exceptional
provided the group $\BG$ is of type $BCD$
(for other types one has to modify slightly the definition of the outer part, 
see details in section~\ref{sbap}).
It follows that for each $\sj\in\SJ$ the subcategory
$\langle \CU^\lambda\ \rangle_{\lambda \in \SB_\sj}$
is generated by an exceptional collection.

\subsubsection{Modification of the blocks and the main result}

It turns out that the subcategories $\langle \CU^\lambda\ \rangle_{\lambda \in \SB_\sj}$ are not semiorthogonal, so we
have to make our blocks slightly smaller.
Let $\BR^*_{\sj}$ denote the interior of the core $\BR_{\sj}$. We define the subsets
$\SBSi_\sj\subset \SBi_\sj$ for $\sj\in\SJ$ recursively (starting from $\sj=0$) by
$$
\SBSi_\sj = \left\{ \nu \in \SBi_\sj\ \left|\
\begin{array}{l}
\text{for all $\sj' < \sj$, $\nu' \in \SBSi_{\sj'}$, and $w_\BLi,w'_\BLi \in \BW_\BLi$}\\[2pt]
\text{one has $\rho_{\BH_{\sj'}} - (\sj - \sj')\xi - w_\BLi\ji_*\nu + w'_\BLi\ji_*\nu' \in \BR^*_{\sj'}$}
\end{array}
\right\}\right..
$$
Then we set
$$
\SBSo_\sj = \left\{ \lambda_0 \in \SBo_\sj\ \left|\
\begin{array}{l}
\text{for all $\sj' < \sj$, $\nu \in \SBSi_{\sj}$, $\nu' \in \SBSi_{\sj'}$, $w_\BLi,w'_\BLi \in \BW_\BLi$, and $w_\BL \in \BW_\BL$}\\[2pt]
\text{one has $\rho_{\BH_{\sj'}} - h_{\sj'}^*(w_\BL\lambda_0 + (\sj - \sj')\xi) - w_\BLi\ji_*\nu + w'_\BLi\ji_*\nu' \in \BR^*_{\sj'}$}
\end{array}
\right\}\right.
$$
and, as before,
$$
\SBS_\sj = \SBSo_\sj + \sj\xi + \ji_*\SBSi_\sj.
$$
The subcategories 
$$
\CA_\sj = \langle \CU^\lambda \rangle_{\lambda \in \SBS_\sj},
$$
generated by these smaller blocks, are semiorthogonal.


This construction looks intimidating. However, we show in Section \ref{ss-ed} that the definition of the blocks
$\SBSo_\sj$ and $\SBSi_\sj$ can be rewritten in terms of simple inequalities, and in Section \ref{ss-ex}
we describe these blocks for classical groups.

Here is a more precise version of our main result.
Note that $\SBSo_\sj$ is a set of linear combinations of fundamental weights $\omega_1,\dots,\omega_{a(\sj)}$
with nonnegative coefficients. These can be considered as Young diagrams --- a weight
$x_1\omega_1 + \dots + x_a\omega_a$ corresponds to the Young diagram with $x_i$ columns of length~$i$.
Let us say that the set $\SBSo_\sj$ is {\sf closed under passing to Young subdiagrams}
if the corresponding set of Young diagrams is.

\begin{theorem}\label{mainthm}
$(i)$ 
Let $\BG$ be a simple simply connected group. For any choices {\rm(C1)}, {\rm(C2)}, {\rm(C3)} subject to the restriction {\rm(R)} 
the collection of subcategories
%
$$
\{\CA_\sj\}_{\sj \in \SJ}
$$
constructed above is semiorthogonal.

\noindent
$(ii)$ For $\sj\in\SJ$ such that $\SBSo_\sj$ is closed under passing to Young subdiagrams,
the block $\SBS_\sj$ is exceptional.

\noindent
$(iii)$ If $\BG$ is a group of type $BCD$ then the choices {\rm(C1)}, {\rm(C2)}, {\rm(C3)} can be made in such a way 
that the assumption of $(ii)$ is satisfied for all $\sj\in\SJ$ and the resulting exceptional collection
\begin{equation}\label{main-exc-coll-eq}
\{\CE^\lambda\}_{\lambda \in \SBS_\sj,\ \sj\in\SJ}
\end{equation}
in $\D(X)$ is of expected length.
\end{theorem}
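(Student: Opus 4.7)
The underlying strategy is to reduce every assertion to vanishing or matching statements for the cohomology $R^\bullet\Gamma(X,\CU^\sigma)$, computed by relative Borel--Weil--Bott with respect to the subgroups $\BH_a\subset\BG$. The cores $\BR_{\delta_a}$ are engineered precisely so that closed-core membership governs the Bott conditions controlling exceptionality within a single block, while strict-interior membership in $\BR^*_{\sj'}$ governs the strict inequalities needed for semiorthogonality between distinct blocks.

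\emph{Part (i).} Fix $\sj>\sj'$, $\lambda=\lambda_0+\sj\xi+\ji_*\nu\in\SBS_\sj$, and $\mu=\mu_0+\sj'\xi+\ji_*\nu'\in\SBS_{\sj'}$. I would decompose
$$
(\CU^\lambda)^\vee\otimes\CU^\mu\;=\;\bigoplus_\sigma m_\sigma^{\lambda,\mu}\,\CU^\sigma
$$
into irreducible $\BL$-equivariant summands, whose highest weights after Weyl translation involve the combination $w_\BL(\mu_0-\lambda_0)-(\sj-\sj')\xi+w_\BLi\ji_*\nu'-w'_\BLi\ji_*\nu$. Pushing down to $\BH_{\sj'}/(\BP\cap\BH_{\sj'})$ and applying relative Bott, the vanishing of $R^\bullet\Gamma(X,\CU^\sigma)$ is controlled by where the $\rho_{\BH_{\sj'}}$-shifted pullback $h_{\sj'}^*(\sigma)+\rho_{\BH_{\sj'}}$ sits relative to $\BR_{\sj'}$; the strict-interior condition written into $\SBSi_\sj$ and $\SBSo_\sj$ forces this pullback into the vanishing region for \emph{every} Weyl translate appearing in the decomposition, yielding the required $\Ext$-vanishing.

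\emph{Part (ii).} For exceptionality of a single block I would verify $(\star)$ by a component-wise matching in the same $\BL$-decomposition. The right-hand side equals $\bigoplus_\sigma m_\sigma^{\lambda,\mu}\,R^\bullet\Gamma(X,\CU^\sigma)$, while the left-hand side collects those summands whose associated $\sigma$ arises as $\nu-\lambda^*$ for some intermediate $\nu\in\SBS_\sj$. The claim is that the Bott-nonvanishing $\sigma$'s are precisely these, a statement of two index sets being equal: closedness of $\SBSo_\sj$ under passing to Young subdiagrams is the combinatorial input guaranteeing that every tensor summand surviving Bott corresponds to a $\nu$ already in the block, not merely in some larger enclosing set. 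Once $(\star)$ is established, the formal right-dualise-then-forget-equivariance procedure sketched in the Introduction produces an exceptional collection $\{\CE^\lambda\}_{\lambda\in\SBS_\sj}$ in $\D(X)$ generating $\CA_\sj$, and part (i) assembles the collections across $\sj\in\SJ$ into a single exceptional collection.

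\emph{Part (iii), and the main obstacle.} For $\BG$ of type $BCD$ I would make the choices (C1)--(C3) explicitly: take $\Do$ to be a type-$A$ end of $D\setminus\beta$ anchored at a terminal vertex, take $(1,\dots,b)$ to be the standard numbering along $\Do$, and choose explicit strictly dominant weights $\delta_a\in P_{\BH_a}^+$ calibrated so that the resulting $r_a$ is the Grassmannian index of $\BH_a/(\BP\cap\BH_a)$. With these choices the sets $\SBSo_\sj$ parametrize Young diagrams fitting in explicit rectangles (as in the $C_n$, $\BP_n$ example of the Introduction), and the Young-subdiagram closure needed for (ii) is manifest. The main obstacle --- and where the $BCD$ hypothesis is genuinely used --- is the final length count $\sum_{\sj\in\SJ}\#\SBS_\sj=\rk K_0(\BG/\BP)$, which matches the combinatorial data $(\sj,\lambda_0,\nu)$ indexing block elements against an enumeration of Schubert cells of $\BG/\BP$ (by signed or shifted partitions depending on whether the type is $B$, $C$, or $D$). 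This count must be carried out case by case over all maximal parabolics in each of the three series; for exceptional groups the same outline applies but the matching of index sets in (ii) fails without the "very special" correction mentioned in the text, which is why the theorem is stated only for $BCD$.
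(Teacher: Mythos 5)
Your outline reproduces the paper's architecture for parts (i) and (iii) of the framework (cores control Borel--Bott--Weil vanishing, interior membership gives the singularity needed for semiorthogonality, Young diagrams in rectangles for type $BCD$), but part (ii) --- the actual exceptionality of a block, which is the heart of the theorem --- has a genuine gap. You describe the isomorphism $(\star)$ as "a statement of two index sets being equal," i.e.\ that the Bott-nonvanishing summands $\sigma$ of $(\CU^\lambda)^\vee\otimes\CU^\mu$ are exactly those factoring through some $\nu$ in the block. This is not enough: $(\star)$ is an isomorphism of graded vector spaces, so the multiplicities on both sides must match, not just the supports. The paper splits this into two separate verifications that your proposal never isolates. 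First, an \emph{invariance condition}: for every pair $(\kappa,v)\in\OP_1(\SB)\times\OP_2(\SB)$ one must have $v\kappa=\kappa$ (equivalently $\kappa\in\Ker h_\sj^*$ and $v\in\BW_{\BH_\sj}$); this is what allows one to rewrite $v(\kappa+\rho)-\rho$ as $\kappa+v\rho-\rho$ and factor the right-hand side of $(\star)$, and its proof (Propositions~\ref{inv-cond-prop} and~\ref{key2}) is a delicate case analysis on simple roots adjacent to $D_{\BH_a}$ using Lemma~\ref{ccl}. Second, a \emph{compatibility condition}: the natural map $\Pi_{\SB}(V_\BL^{\kappa+v\rho-\rho}\otimes V_\BL^{\lambda})\to\Pi_{\SB}(V_\BL^\kappa\otimes\Pi_{\SB}(V_\BL^{v\rho-\rho}\otimes V_\BL^{\lambda}))$ must be an isomorphism. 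This is a nontrivial multiplicity statement about tensor products; the paper reduces it through the covering $\BLo\times\Gm\times\BLi\to\BL$ to a $\GL_k$ statement proved in the Appendix via the Littlewood--Richardson rule and explicit singular vectors, and this is exactly where the Young-subdiagram closure of $\SBSo_\sj$ and the absence of very special elements enter. Without identifying and proving these two conditions, your part (ii) does not go through.

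A secondary issue in part (iii): you propose to establish the length count by matching block elements against an enumeration of Schubert cells of $\BG/\BP$. No such bijection is constructed in the paper --- it is explicitly posed as an open problem (Question~\ref{qu-comb}). The count $\sum_\sj\#\SBS_\sj=\rk K_0(\BG/\BP)=|\BW_\BG/\BW_\BL|$ is instead verified by a direct generating-function computation (Proposition~\ref{nbcd}), so your route here asserts a combinatorial identification that would itself require proof.
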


We will describe explicit choices in Theorem~\ref{mainthm}$(iii)$ in Section \ref{ss-ex} along with
the explicit description of the blocks $\SBS_\sj$. The Theorem is proved in~Section~\ref{Proofs-sec}.
%
%
%
Note that Theorem~\ref{main-th} follows from this.


\begin{conjecture}\label{full}
The exceptional collections constructed in the Theorem~\ref{mainthm}$(iii)$ are full.
\end{conjecture}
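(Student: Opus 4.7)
The plan is to prove that $\CA := \langle \CE^\lambda \rangle_{\lambda \in \SBS_\sj,\, \sj \in \SJ} \subset \D(X)$ coincides with $\D(X)$. Since $\CA$ is admissible (being generated by an exceptional collection) and since $X = \BG/\BP$ is a $\BG$-variety with $\D(X)$ generated by the equivariant bundles $\{\CU^\lambda\}_{\lambda \in P_\BL^+}$ (standard: combine Borel--Bott--Weil with the fact that every coherent sheaf on $X$ admits a resolution by direct sums of such bundles), it suffices to show that $\CU^\lambda \in \CA$ for every $\lambda \in P_\BL^+$.

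My strategy is to induct on the value of $(\theta, \lambda) \in (\theta, P_\BL)$, which by construction grades the blocks $\SBS_\sj$. The base case $(\theta, \lambda) = 0$ concerns the block $\CA_0$; here the outer part is trivial and the inner part reduces to a statement about representations of $\BLi$, handled by induction on rank (using Theorem~\ref{mainthm} itself for smaller classical groups) combined with Kapranov's theorem to absorb the type-$A$ quotient structures. The inductive step treats $\lambda$ with $(\theta, \lambda) = \sj > 0$: for $\lambda \in \SB_\sj$ the claim reduces to showing that the weights in $\SB_\sj \setminus \SBS_\sj$, discarded to ensure semiorthogonality between blocks, are nevertheless in $\CA$; this would be established by exhibiting Koszul-type resolutions built from homogeneous bundles using Borel--Bott--Weil, relating them to objects of $\CA_{\sj'}$ for $\sj' < \sj$. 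For general $\lambda$ with $(\theta, \lambda) = \sj$ but $\lambda \notin \SB_\sj$, one would similarly resolve $\CU^\lambda$ in terms of bundles in $\SB_\sj$ and lower blocks. Finally, weights with $(\theta, \lambda) \ge r$ are reached by tensoring with the Serre twist $\CU^{r\xi}$, which preserves $\CA$ since Serre functors preserve admissible subcategories.

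The main obstacle, I expect, is the inductive step for weights in $\SB_\sj \setminus \SBS_\sj$ and for $\lambda \notin \SB_\sj$: these require a careful analysis of the cohomology of homogeneous vector bundles, governed by the shape of the core polytopes $\BR_\sj$ and by the chain $\BH_0 \supset \BH_1 \supset \dots \supset \BH_b$. The key combinatorial claim is that the geometry of $\BR_\sj$ is rich enough to force every $\BL$-dominant weight of level $\sj$ to be generated via Borel--Bott--Weil from $\SBS_\sj$ together with lower blocks; verifying this along the chain of $\BH_a$'s would likely require a type-dependent analysis, harder for type $D$ where spinor representations complicate the weight-lattice bookkeeping. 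A cleaner alternative would be to construct an explicit $\BG$-equivariant resolution of the diagonal on $X \times X$ using the dual pairs $(\CE^\lambda, (\CE^\lambda)^\vee)$, but this seems to require a finer understanding of the objects $\CE^\lambda$ than what is made explicit in the general construction.
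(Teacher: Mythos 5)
This statement is Conjecture~\ref{full}: the paper does not prove it, and explicitly leaves it open (Remark~\ref{nv-full} only notes it would follow from a much more general Nonvanishing Conjecture). So there is no proof in the paper to compare against, and your proposal does not close the gap either: it is a strategy outline in which every hard step is deferred (``this would be established by exhibiting Koszul-type resolutions'', ``one would similarly resolve'', ``verifying this \dots would likely require''). The entire content of the conjecture is precisely the claim that every $\CU^\lambda$ with $\lambda\in P_\BL^+$ lies in the subcategory generated by $\{\CE^\lambda\}_{\lambda\in\SBS_\sj,\,\sj\in\SJ}$, equivalently that the weights discarded in passing from $P_\BL^+$ to $\bigcup_\sj\SBS_\sj$ can be recovered; you correctly identify this as the key point but supply no mechanism for it. No such resolutions are constructed in the paper, and producing them is exactly the open problem.

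There is also a concrete error in the one step you do assert: you claim that weights with $(\theta,\lambda)\ge r$ are handled because ``Serre functors preserve admissible subcategories.'' They do not. The Serre functor of $\D(X)$ is $-\otimes\omega_X[\dim X]$, and for a proper admissible subcategory $\CA\subsetneq\D(X)$ one only gets that $S(\CA)$ is again admissible, not that $S(\CA)=\CA$; invariance of $\CA$ under the Serre functor would essentially already force $\CA=\D(X)$ in this setting, so the argument is circular. (The twist $\CU^\lambda\mapsto\CU^{\lambda+r\xi}$ does permute the bundles $\CU^\lambda$, but it does not visibly preserve the subcategory $\CA$ generated by the chosen blocks unless one already knows fullness.) The base case is also not as easy as stated: $\CA_0$ is generated by the $\CE^\lambda$ with $\lambda\in\SBS_0$, which is a finite set, whereas the weights $\lambda$ with $(\theta,\lambda)=0$ form an infinite set, so even the induction base requires the same unproven generation statement.
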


\begin{remark}\label{nv-full}
We conjecture that in fact every exceptional collection of expected length on $\BG/\BP$ is full.
%
The more general Nonvanishing Conjecture of \cite{K09} stating that every exceptional collection of expected
length is full turned out to be false --- counterexamples were constructed in~\cite{BBS},
\cite{AO}, \cite{GS}, \cite{BBKS}. Nevertheless, we believe that the conjecture is still true for
homogeneous spaces.
\end{remark}

\subsubsection{Properties of the exceptional collection}

Recall that an exceptional collection $E_1,\dots,E_m$ in a triangulated category $\CT$ is {\sf strong}, if
$$
\Ext^{\ne 0}(E_i,E_j) = 0
$$
for all $i,j$. An advantage of a full strong exceptional collection is that it gives
an equivalence of the category $\CT$ with the derived category of modules
over an algebra $\End(\oplus E_i)$ (for a non-strong collection one has to deal with a DG-algebra).
Let us say that an exceptional collection is {\sf pure}\/
if all $E_i$ are vector bundles.

\begin{theorem}\label{sp-equiv}
For the blocks of the collections constructed in Theorem~$\ref{mainthm}(i)$ strongness and purity are equivalent.
\end{theorem}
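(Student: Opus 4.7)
The plan is to exploit the block property $(\star)$ together with the dual basis relation defining $\{\CE^\lambda\}_{\lambda\in\SBS_\sj}$ in order to translate both purity and strongness of the non-equivariant collection into the same statement about equivariant data. The first step is to establish a dual version of $(\star)$: since the $\CE^\lambda$ generate the same equivariant subcategory $\CA_\sj$ as the $\CU^\lambda$, and the change of basis between them is governed by the dual basis relation $\Ext^\bullet_\BG(\CU^\mu,\CE^\lambda)\cong \kk\cdot\delta_{\lambda\mu}$ concentrated in a single degree, I expect the isomorphism $(\star)$ to transfer to a natural isomorphism
$$
\bigoplus_{\nu\in\SBS_\sj}\Ext^\bullet_\BG(\CE^\lambda,\CE^\nu)\ot\Hom(\CE^\nu,\CE^\mu)\;\xrightarrow{\;\sim\;}\;\Ext^\bullet(\CE^\lambda,\CE^\mu).\eqno{(\star\star)}
$$
The verification of $(\star\star)$ should go by resolving each $\CE^\nu$ by the $\CU^\mu$'s, substituting into $(\star)$, and then cancelling the redundant terms via the dual basis relation.

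Next I would analyse the cohomological amplitude $[a_\lambda,b_\lambda]$ of each $\CE^\lambda$, viewed as a $\BG$-equivariant complex obtained by iterated mutation from the $\CU^\mu$'s. Because $\{\CE^\lambda\}$ is exceptional in $\D^\BG(X)$, the diagonal equivariant Ext $\Ext^\bullet_\BG(\CE^\lambda,\CE^\lambda)=\kk$ is concentrated in degree $0$, and semiorthogonality forces $\Ext^\bullet_\BG(\CE^\lambda,\CE^\mu)$ to vanish in the decreasing direction of the order; the remaining equivariant Ext's are rigidly tied, through $(\star\star)$ and amplitude bounds, to the shifts encoded by the pairs $[a_\lambda,b_\lambda]$ and $[a_\mu,b_\mu]$.

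With this setup in place, both implications become transparent. For \emph{purity} $\Rightarrow$ \emph{strongness}: if every $\CE^\lambda$ is a vector bundle then $\Hom(\CE^\nu,\CE^\mu)$ lives in degree $0$, and the bookkeeping of shifts on the left-hand side of $(\star\star)$ yields $\Ext^{\ne 0}(\CE^\lambda,\CE^\mu)=0$. For \emph{strongness} $\Rightarrow$ \emph{purity}: if the non-equivariant collection is strong then the left side of $(\star\star)$ must be concentrated in degree $0$; reading off the $\nu=\lambda$ summand, whose non-equivariant factor $\Hom(\CE^\lambda,\CE^\lambda)$ contains $\kk$, forces $\Ext^{\ne 0}_\BG(\CE^\lambda,\CE^\lambda)=0$, from which the amplitude $[a_\lambda,b_\lambda]$ collapses to a single degree that exceptionality pins at $0$. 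The main obstacle I anticipate is establishing $(\star\star)$ itself: while formally analogous to $(\star)$, its verification requires careful tracking of mutation shifts and a check that no spurious terms appear after the change of basis; a secondary difficulty is controlling the amplitude bounds tightly enough to close both implications.
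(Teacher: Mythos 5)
Your key lemma $(\star\star)$ is false, and this breaks the plan at its foundation. Here is a minimal counterexample inside a single exceptional block. Suppose $\SB=\{\lambda,\mu\}$ with $\lambda\prec\mu$, $\Ext^1_\BG(\CU^\mu,\CU^\lambda)=V\ne 0$, $\Hom(\CU^\lambda,\CU^\mu)=W$, and all other equivariant $\Ext$'s and $\Hom$'s trivial. Then $\CE^\lambda=\CU^\lambda$ and $\CE^\mu$ is the universal extension $0\to V^\vee\ot\CU^\lambda\to\CE^\mu\to\CU^\mu\to 0$. A direct computation gives $\Ext^\bullet(\CE^\lambda,\CE^\mu)=V^\vee\oplus W$ (in degree $0$), while the left-hand side of $(\star\star)$ contains the $\nu=\lambda$ term $\kk\ot\Hom(\CE^\lambda,\CE^\mu)=V^\vee\oplus W$ \emph{and} the $\nu=\mu$ term $\Ext^\bullet_\BG(\CE^\lambda,\CE^\mu)\ot\Hom(\CE^\mu,\CE^\mu)=V^\vee$, so the map cannot be injective: the $V^\vee$ piece is counted twice. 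The isomorphism $(\star)$ does not ``transfer'' under the change of basis, because $\Forget^!\Forget(\CE^\mu)$ does not split as $\bigoplus_\nu\Hom(\CE^\nu,\CE^\mu)\ot\CE^\nu$ the way $\Forget^!\Forget(\CU^\mu)$ does in Proposition~\ref{fshr}.

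The second gap is in your strongness $\Rightarrow$ purity step. You deduce $\Ext^{\ne 0}_\BG(\CE^\lambda,\CE^\lambda)=0$ and claim the cohomological amplitude then ``collapses to a single degree''. But the vanishing of higher self-$\Ext$'s is automatic from exceptionality of $\CE^\lambda$ in $\D^\BG(X)$ and carries no information about purity; an exceptional object of a derived category need not be (a shift of) a sheaf, and ruling this out is exactly the content of the theorem. The paper's actual argument (Proposition~\ref{E-vec-bun-prop}, to which Theorem~\ref{sp-equiv} reduces) works in the abelian category $\CC_\SB$ of representations of the finite directed quiver $(\CQ_\SB,\CI_\SB)$: purity is identified with $\CE^\lambda$ being the projective cover of $\CU^\lambda$ in $\CC_\SB$, which is in turn identified with the comparison map $\Ext^\bullet_{\CC_\SB}(\CU^\lambda,\CU^\mu)\to\Ext^\bullet_\BG(\CU^\lambda,\CU^\mu)$ being an isomorphism, and strongness is extracted from that; the converse uses the spectral sequence reconstructing $\CE^\lambda$ from the $\CU_p$'s, where strongness of the \emph{whole} collection (not just self-$\Ext$'s) forces concentration in degree $0$. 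If you want to salvage your approach, you would need to replace $(\star\star)$ by the correct Postnikov-type description of $\Ext^\bullet(\CE^\lambda,\CE^\mu)$ in terms of $\Hom(\CU^\lambda,\CU^\nu)$ and the equivariant resolution of $\CE^\mu$, which essentially recreates the projective-cover argument.
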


The proof will be given in Proposition \ref{E-vec-bun-prop}.
In fact, we conjecture that

\begin{conjecture}\label{strong-and-pure}
The collections constructed in Theorem~$\ref{mainthm}(iii)$ are pure and their blocks are strong.
\end{conjecture}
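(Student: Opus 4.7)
My plan is to reduce the computation of $\RHom(\CE^\lambda,\CE^\mu)$ in $\D(X)$ to equivariant data via the defining property $(\star)$ of an exceptional block, and then to read off the equivalence between strongness and purity directly from that reduction.

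The first step is to upgrade $(\star)$ from the generating pairs $(\CU^\lambda,\CU^\mu)$ to arbitrary pairs of objects in $\CA_\sj^{\BG}:=\langle\CU^\nu\rangle_{\nu\in\SBS_\sj}\subset\D^\BG(X)$. Every such object is built from the $\CU^\nu$'s by iterated cones and shifts, and both sides of $(\star)$ are triangulated bifunctors, so a devissage on each argument in turn propagates the isomorphism to
\[
\RHom(A,B)\;\cong\;\bigoplus_{\nu\in\SBS_\sj}\Ext^\bullet_\BG(A,\CU^\nu)\otimes\RHom(\CU^\nu,B)
\]
for all $A,B\in\CA_\sj^{\BG}$. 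Specializing $A=\CE^\lambda$ and invoking the right-dual-basis identity $\Ext^\bullet_\BG(\CE^\lambda,\CU^\nu)=\kk[-t_\lambda]\cdot\delta_{\lambda\nu}$, where $t_\lambda$ is the integer shift determined by the position of $\lambda$ in the equivariant collection, collapses the sum to the clean formula
\[
\RHom(\CE^\lambda,\CE^\mu)\;\cong\;\RHom(\CU^\lambda,\CE^\mu)[-t_\lambda].
\]

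With this identity in hand the two conditions line up. Strongness of $\{\CE^\lambda\}$ in $\D(X)$ demands that every $\RHom(\CE^\lambda,\CE^\mu)$ be concentrated in degree $0$; via the displayed formula this is equivalent to $\RHom(\CU^\lambda,\CE^\mu)$ being concentrated in degree $t_\lambda$ for each pair $\lambda,\mu$. Specializing to $\mu=\lambda$ and using exceptionality of $\CE^\lambda$ in $\D(X)$ pins $t_\lambda$ to $0$, so strongness becomes the requirement that $\Ext^{\ne 0}(\CU^\lambda,\CE^\mu)=0$ for all $\lambda,\mu$. Since the $\CU^\lambda$'s generate the subcategory $\CA_\sj$ (the dual basis generates the same subcategory as the original collection), this blanket vanishing is in turn equivalent to each $\CE^\mu$ having cohomology concentrated in a single degree, that is, being a vector bundle---and this is exactly purity.

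The main obstacle will be the rigorous execution of the devissage extension of $(\star)$. Concretely, one must verify that the tensor-product decomposition of $(\star)$ is compatible with the triangulated structure on both arguments; algebraically, this amounts to showing that the non-equivariant Ext-algebra $\End^\bullet\bigl(\bigoplus_\nu\CU^\nu\bigr)$ is a free graded module over its equivariant counterpart on the basis given by the degree-zero morphisms $\Hom(\CU^\nu,\CU^\mu)$, and that this freeness is preserved under forming cones in $\CA_\sj^{\BG}$. Once this structural point is in place, the remaining argument is a formal comparison of cohomological supports and the equivalence of strongness and purity drops out.
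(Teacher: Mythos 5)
There is a fundamental mismatch between what you prove and what the statement asserts. Conjecture~\ref{strong-and-pure} claims that the collections of Theorem~\ref{mainthm}(iii) actually \emph{are} pure and blockwise strong. Your argument, even granting every technical step, only establishes that strongness and purity are \emph{equivalent} to one another --- your own closing sentence says ``the equivalence of strongness and purity drops out.'' That equivalence is a separate result in the paper (Theorem~\ref{sp-equiv}, proved as Proposition~\ref{E-vec-bun-prop} via the identification of $\CC_\SB$ with representations of a finite directed quiver and the characterization of the $\CE^\lambda$ as projective covers). It does not yield the conjecture: to get the conjecture you must actually verify one of the two equivalent conditions, and nothing in your proposal does that. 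Indeed the statement is labelled a conjecture precisely because the paper does not prove it in general; it is verified only for maximal isotropic Grassmannians (section~\ref{lag-purity-sec}), where the explicit description of the blocks shows they are closed under the quiver-path condition of Proposition~\ref{path-closed-prop} (and Example~\ref{eee} shows that for non-maximal parabolics that criterion can fail even though purity still holds, so some genuinely new input is needed). Your reduction $\RHom(\CE^\lambda,\CE^\mu)\cong\RHom(\CU^\lambda,\CE^\mu)$ contains no mechanism for proving the required vanishing $\Ext^{\ne 0}(\CU^\lambda,\CE^\mu)=0$; that vanishing is exactly the hard, case-dependent content.

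There are also local problems in the reduction itself. The map $(\star)$ has $\Hom(\CU^\nu,\CU^\mu)$ (degree-zero morphisms only) in its left-hand side, so the left-hand side is not a cohomological functor in the second argument; a naive d\'evissage over cones in $B$ therefore does not propagate the isomorphism, and this is why the paper instead works with the adjoint $\Forget^!$ and Proposition~\ref{fshr}. Moreover, by the standard dual-collection identity~\eqref{geu} one has $\Ext^\bullet_\BG(\CE^\lambda,\CU^\nu)=\kk\cdot\delta_{\lambda\nu}$ in degree zero with no shift $t_\lambda$ to ``pin down,'' and the resulting statement $\Ext^\bullet(\CE^\lambda,\CU^\mu)=\Hom(\CU^\lambda,\CU^\mu)$ is exactly Corollary~\ref{cor-eu}. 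Finally, your last inference --- that $\Ext^{\ne 0}(\CU^\lambda,\CE^\mu)=0$ for all $\lambda$ in the block forces $\CE^\mu$ to be a vector bundle --- is not immediate from generation alone; the paper's proof of the corresponding implication uses the $t$-structure on $\D^\BG_\SB(X)$, projective covers in $\CC_\SB$, and a spectral-sequence argument. So even as a proof of Theorem~\ref{sp-equiv} the proposal needs repair, and as a proof of Conjecture~\ref{strong-and-pure} it is missing the essential content.
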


We verify this conjecture for all maximal isotropic Grassmannians (symplectic and orthogonal).

\subsection{Further questions}\label{ss-qu}

There are several questions to be investigated.

\begin{question}\label{qu-choices}
Is there a way to make choices (C1)--(C3) in a canonical way?
Is restriction (R) really necessary for the construction?
\end{question}

It seems that our constructions should work under
a certain weakening of the restriction (R) which would allow to construct
many interesting exceptional collections even for $\D^b(\Gr(k,n))$.
In particular, it would allow to construct an exceptional collection in $\D^b(\Gr(k,2k))$
which is invariant under the outer automorphism. For more details see section~\ref{s-typea}.



\begin{question}\label{qu-ef}
Assume that $\BG$ is an exceptional group (types $E_6$, $E_7$, $E_8$ and $F_4$).
Is it possible to make the choices (C1)--(C3) in a way analogous to Theorem~\ref{mainthm}(iii),
so as to get an exceptional collection of expected length?
\end{question}

Note that in the case of groups of type $BCD$  the equality of the length of the constructed
collection with the rank of the Grothendieck group is a result of direct calculation
without an a priori explanation. It would be nice to understand the combinatorics
behind this coincidence. Recall that the rank of the Grothendieck group $K_0(\BG/\BP)$
is equal to $|\BW_\BG/\BW_\BL|$, so the following question seems natural.

\begin{question}\label{qu-comb}
Find a decomposition of the set $\BW_\BG/\BW_\BL = \bigsqcup_{\sj \in \SJ} W_\sj$
and a bijection between the sets $W_\sj$ and the sets $\SBS_\sj$.
\end{question}

The above decomposition should depend on a chain of subgroups
$\BH_b \subset \ldots \BH_1 \subset \BH_0 = \BG$.

\begin{question}\label{qu-char}
What happens with our exceptional collections in positive characteristic?
\end{question}

For the case of the Grassmannians of type $A$ this was studied in \cite{BLV}.


\subsection{The structure of the paper}\label{ss-str}

We start by collecting in section~\ref{s-prelim} the notation and basic facts about representation
theory of algebraic groups.

In section~\ref{ss-eb} we define exceptional blocks, prove that they produce exceptional
collections, investigate their properties, and state a criterion of exceptionality of a block.

In section~\ref{s-snp} 
we discuss strongness and purity of the collection obtained from an exceptional block.

In section~\ref{ss-ckb} we define the blocks $\SB_{\sj}$
and $\SBS_{\sj}$ and show that $(\CA_\sj)$
is a semiorthogonal collection of subcategories.


In section~\ref{s-np} 
we verify the first part of the exceptionality criterion from section~\ref{ss-eb}
--- the invariance condition --- for the blocks $\SB_{\sj}$ and $\SBS_{\sj}$.

In section~\ref{ss-aw} we verify the second part of the criterion --- the compatibility condition ---
modulo a technical assumption (that the outer part of each block is closed under passing to 
Young subdiagrams).

In section~\ref{ss-ed} we rewrite the definition of the blocks in a
more explicit form.

In section~\ref{ss-ex} we write down the precise choices for classical groups
and prove that they give exceptional collections of expected length.

Finally, in the Appendix (section~\ref{ss-kp}) we prove a certain property of representations of the general
linear group which is used for the proof of the exceptionality of the blocks.


\subsection{Acknowledgements}\label{ss-ackn}

A.K. is very grateful to Misha Finkelberg and Serezha Loktev for numerous patient explanations of representation theory. We also thank the anonimous referee for many useful remarks which in particular helped us to
revise the proof of the key technical Proposition in the Appendix.


\section{Preliminaries}\label{s-prelim}

\subsection{Notation}\label{ss-not}

\begin{enumerate}
\item Groups
\begin{itemize}
\item $\BG$, a simple simply connected algebraic group;
\item $\BP \subset \BG$, a maximal parabolic subgroup;
\item $\BU \subset \BP$, the unipotent radical;
\item $\BL = \BP/\BU$, the Levi quotient, there is also an embedding $\BL \subset \BP \subset \BG$;
\item $\BLi \subset \BL$, the inner part of $\BL$, see section~\ref{s-oi};
\item $\BLo \subset \BL$, the outer part of $\BL$, see section~\ref{s-oi};
\item $\BH_a \subset \BG$, $\BLi \subset \BH_a$, a semisimple subgroup, see section~\ref{s-oi};
\item $\BM_a = \BL \cap \BH_a$, the Levi of $\BH_a$;
\item $\BM_{a,\inn} = \BLi \cap \BH_a = \BLi$, the inner part of the Levi of $\BH_a$;
\item $\BM_{a,\out} = \BLo \cap \BH_a$, the outer part of the Levi of $\BH_a$;
\end{itemize}

\item Roots, weights
\begin{itemize}
\item $D = D_\BG$, $D_\BLi = \Di \subset D$, $D_\BLo = \Do \subset D$, $D_{\BH_a} = D_a \subset D$, the Dynkin diagrams;
\item $Q_\BG$, $Q_\BL$, $Q_\BLi$, $Q_\BLo$, $Q_{\BH_a}$, the root lattices;
\item $Q_\BG^+$, $Q_\BL^+$, $Q_\BLi^+$, $Q_\BLo^+$, $Q_{\BH_a}^+$, the cones generated by simple roots;
\item $P_\BG$, $P_\BL = P_\BL$, $P_\BLi$, $P_\BLo$, $P_{\BH_a}$, the weight lattices;
\item $P_\BG^+ \subset P_\BG$, $P_\BL^+ \subset P_\BL$, $P_\BLo^+ \subset P_\BLo$, $P_\BLi^+ \subset P_\BLi$, $P_{\BH_a}^+ \subset P_{\BH_a}$, the dominant cones;
\item $\alpha_i$, the simple roots;
\item $\omega_i$, the fundamental weights;
\item $\beta$, the simple root corresponding to the maximal parabolic $\BP$;
\item $\xi$, the fundamental weight corresponding to the maximal parabolic $\BP$;
\item $\rho = \rho_\BG = \sum_{i \in D_\BG} \omega_i \in P_\BG$;
\item $\rho_{\BH_a} = \sum_{i \in D_a} \omega_i \in P_{\BH_a}$;
\item $(-,-)$, the scalar product on the root/weight lattices;
\end{itemize}

\item Weyl groups
\begin{itemize}
\item $\BW_\BG$, $\BW_\BL$, $\BW_\BLi$, $\BW_\BLo$, $\BW_{\BH_a}$, the Weyl groups;
\item $s_\alpha$, $s_i = s_{\alpha_i}$, $s_\beta$, the simple reflections corresponding to simple roots;
\item $\ell:\BW \to \ZZ_{\ge 0}$, the length function on a Weyl group;
\item $w_0^\BG$, $w_0^\BL$, $w_0^\BLi$, $w_0^\BLo$, $w_0^{\BH_a}$, the longest elements in the corresponding Weyl groups;
\item $\SR_\BG^\BL$, the set of special representatives of left $\BW_\BL$-cosets in $\BW_\BG$, see section~\ref{s-sr};
\item $\SR_\BH^\BM$, the set of special representatives of left $\BW_\BM$-cosets in $\BW_\BH$;
\end{itemize}

\item Maps
\begin{itemize}
\item $\ji:\BLi \to \BL$, $\BLi \to \BH_a$, $\BLi \to \BG$, the natural embeddings;
\item $\jo:\BLo \to \BL$, $\BLo \to \BG$, the natural embeddings;
\item $h_a:\BH_a \to \BG$, the natural embedding;
\item $\ji^*:P_\BG \to P_\BLi$, $\jo^*:P_\BG \to P_\BLo$, $h_a^*:P_\BG \to P_{\BH_a}$, the restriction of weights;
\item $\ji_*:Q_\BLi \to Q_\BG$, $\jo_*:Q_\BLo \to Q_\BG$, ${h_a}_*:Q_{\BH_a} \to Q_\BG$, the embedding of roots;
\end{itemize}

\item Representations and bundles
\begin{itemize}
\item $V_\BG^\lambda$, the irreducible representation of $\BG$ with the highest weight $\lambda \in P_\BG^+$;
\item $V_\BL^\lambda$, the irreducible representation of $\BL$ with the highest weight $\lambda \in P_\BL^+$;
\item $\CU^\lambda$, the $\BG$-equivariant vector bundle on $\BG/\BP$ corresponding to $(V_\BL^\lambda)_{|\BP}$;
\end{itemize}

\item Other
\begin{itemize}
\item $X = \BG/\BP$, the generalized Grassmannian associated with a parabolic subgroup $\BP$;
\item $\SGr(k,2n)$ (resp., $\OGr(k,n)$), symplectic (resp., orthogonal) isotropic Grassmannian;
\item $\D(X)$, the bounded derived category of coherent sheaves on $X$;
\item $\D^\BG(X)$, the bounded derived category of $\BG$-equivariant coherent sheaves on $X$;
\end{itemize}
\end{enumerate}

\subsection{Roots and weights}\label{ss-rw}

Let $\BG$ be a simple algebraic group, $\BP$ be a maximal parabolic subgroup, $\BG/\BP = X$.
Let $\beta$ be the corresponding simple root of $\BG$ and $\xi$ the corresponding fundamental weight.

We denote by $\BU \subset \BP$ the unipotent radical of $\BP$ and by $\BL = \BP/\BU$
the Levi quotient. Recall that the projection $\BP \to \BL$ admits a splitting. We choose such a splitting
and consider $\BL$ as a subgroup of $\BP$, and hence of $\BG$. 
We also choose a maximal torus $\BT\subset\BL$ and a Borel subgroup $\BB$ in $\BP$ such that
$\BT\subset\BB$ and $\BL\cap\BB$ is a Borel subgroup in $\BB$.
Note that
the set of simple roots of $\BL$ is the complement of $\beta$ in the set of simple roots of $\BG$.

The embedding of groups $\BL \subset \BG$ induces an isomorphism
of weight lattices $P := P_\BG \stackrel{\sim}{\to} P_\BL$. We use this isomorphism
to identify the lattices. Let $P_\BL^+$ and $P_\BG^+$ denote the dominant
cones in $P$ of $\BL$ and $\BG$ respectively.

We identify simple roots of the group $\BG$ with the vertices of the Dynkin diagram $D_\BG$.
In particular, we say that simple roots $\alpha$ and $\alpha'$ are {\sf adjacent}
if the corresponding vertices are connected by an edge, or equivalently if $\alpha \ne \alpha'$
but $(\alpha,\alpha') \ne 0$.

The fundamental weight of $\BG$ corresponding to the vertex $i \in D_\BG$ is denoted by $\omega_i$.
Also, we denote by $\rho=\rho_\BG$ half the sum of simple roots of~$\BG$, or equivalently, the sum of fundamental weights.

We consider the root lattice $Q_\BG$ of $\BG$ as a sublattice of the weight lattice
(roots are weights in the adjoint representation). We denote by $(-,-)$ the scalar product
on the weight lattice. This scalar product is defined uniquely up to a multiplicative
constant. We choose the standard scaling as in \cite{Bou}. Note that with this choice all scalar
products of roots are integers and scalar products of weights are rational.

\subsection{Weyl group action}\label{ss-weyl}

The simple reflection corresponding to a root $\alpha = \alpha_i$
is denoted by $s_\alpha = s_{\alpha_i} = s_i$. Note that
\begin{equation}\label{sioj}
s_i(\omega_j) = \omega_j - \delta_{ij}\alpha_j,
\end{equation}
which means that
\begin{equation}\label{ojai}
(\omega_j,\alpha_i) = \delta_{ij}\alpha_i^2/2.
\end{equation}
It follows that
\begin{equation}\label{sirho}
s_i\rho = \rho - \alpha_i
\end{equation}
for all $i$.

We identify $\BW_\BL$ with the subgroup in $\BW_\BG$ generated by all simple reflections
$s_{\alpha_i}$ with $\alpha_i\neq\beta$.
Together with \eqref{sioj} this immediately implies the following

\begin{lemma}\label{xi-inv}
The weight $\xi$ is invariant under the action of $\BW_\BL$.
\end{lemma}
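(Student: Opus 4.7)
The lemma is an immediate consequence of the formula $s_i(\omega_j)=\omega_j-\delta_{ij}\alpha_j$ stated in \eqref{sioj}, combined with the given identification of $\BW_\BL$ as the subgroup of $\BW_\BG$ generated by the simple reflections $s_{\alpha_i}$ with $\alpha_i\neq\beta$. So the plan is to reduce the statement to checking invariance under those generators.

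My proof would go as follows. Write $\xi=\omega_{j_0}$, where $j_0$ is the vertex of $D_\BG$ with $\alpha_{j_0}=\beta$. For any simple root $\alpha_i\neq\beta$, one has $i\neq j_0$, so $\delta_{i j_0}=0$, and hence by \eqref{sioj}
\[
s_i(\xi)=s_i(\omega_{j_0})=\omega_{j_0}-\delta_{i j_0}\alpha_i=\omega_{j_0}=\xi.
\]
Since every element of $\BW_\BL$ is a product of such simple reflections $s_i$ with $\alpha_i\neq\beta$, invariance of $\xi$ under each generator propagates to invariance under all of $\BW_\BL$.

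There is essentially no obstacle here; the only thing to verify is that the subgroup generated by the simple reflections different from $s_\beta$ really is the Weyl group of the Levi $\BL$, which is the standard description of parabolic Weyl subgroups and is given as part of the setup just above the lemma. So the whole proof fits in two lines once \eqref{sioj} is invoked.
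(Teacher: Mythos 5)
Your proof is correct and is exactly the paper's argument: the lemma is stated as an immediate consequence of \eqref{sioj} together with the identification of $\BW_\BL$ with the subgroup generated by the simple reflections $s_{\alpha_i}$ with $\alpha_i\neq\beta$. Nothing to add.
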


The length function on the Weyl group is denoted by $\ell$ (recall that $\ell(w)$ is the length of a minimal representation
of $w$ as a product of simple reflections). The following Lemma is well-known
(see \cite{Hum}, Lemma 10.3A and its proof).

\begin{lemma}\label{simple-positive-lem}
If $w\in \BW_\BG$ and $s_j$ is a simple reflection corresponding to the simple root $\alpha_j$
then one has $\ell(w s_j)>\ell(w)$ if and only if the root $w(\alpha_j)$ is positive.
\end{lemma}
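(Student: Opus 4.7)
The plan is to reduce the statement to the standard description of the length function in terms of the ``inversion set'' of $w$. For $w \in \BW_\BG$, let $N(w)$ denote the set of positive roots $\gamma$ such that $w(\gamma)$ is a negative root. The first step is to invoke (or re-prove by induction on a reduced expression) the classical identity $\ell(w) = |N(w)|$, which holds for every element of a Weyl group. Granted this, the lemma reduces to comparing $|N(w)|$ and $|N(ws_j)|$.

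The key auxiliary fact is that a simple reflection $s_j$ permutes the positive roots different from $\alpha_j$ and sends $\alpha_j$ to $-\alpha_j$. This is immediate from writing an arbitrary positive root $\gamma \ne \alpha_j$ as a nonnegative integral combination of simple roots: $s_j$ only alters the coefficient of $\alpha_j$, and since at least one coefficient at some $\alpha_i$ with $i \neq j$ remains positive, $s_j(\gamma)$ is still a positive root.

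Using these two ingredients, I would argue as follows. For a positive root $\gamma \ne \alpha_j$, the map $\gamma \mapsto s_j(\gamma)$ is a bijection of the set of such roots with itself, and $\gamma \in N(ws_j)$ iff $ws_j(\gamma) < 0$ iff $s_j(\gamma) \in N(w)$. Hence this bijection identifies $N(ws_j) \setminus \{\alpha_j\}$ with $N(w) \setminus \{\alpha_j\}$, giving
$$|N(ws_j) \setminus \{\alpha_j\}| = |N(w) \setminus \{\alpha_j\}|.$$
It remains to examine the root $\alpha_j$ itself: we have $\alpha_j \in N(w)$ iff $w(\alpha_j) < 0$, while $\alpha_j \in N(ws_j)$ iff $ws_j(\alpha_j) = -w(\alpha_j) < 0$, i.e., iff $w(\alpha_j) > 0$. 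These two conditions are mutually exclusive, so exactly one of the two sets $N(w), N(ws_j)$ contains $\alpha_j$, and the sign of the difference $|N(ws_j)| - |N(w)| = \pm 1$ is determined by the sign of $w(\alpha_j)$. Combining with $\ell = |N(\cdot)|$ yields the claim.

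The only nontrivial input is the identity $\ell(w) = |N(w)|$, which is the main obstacle if one wants a self-contained argument; everything else is bookkeeping with the $s_j$-action on positive roots. Since this identity is standard in Coxeter group theory, referring to Humphreys (as the statement itself already does) is the most efficient path.
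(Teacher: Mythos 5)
Your argument is correct and is exactly the standard proof of this fact (via the identity $\ell(w)=|N(w)|$ for the inversion set and the fact that $s_j$ permutes the positive roots other than $\alpha_j$). The paper gives no proof of its own here — it simply cites \cite{Hum}, Lemma 10.3A — and your write-up is precisely the argument found there, so there is nothing to compare beyond noting that you have reproduced the cited proof.
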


Recall that the dominant cone $P_\BG^+$ is a fundamental domain for the action of $\BW_\BG$ on $P_\BG$.
In particular, for each $\lambda \in P_\BG$ there is an element $w \in \BW_\BG$ such that $w\lambda \in P_\BG^+$.
Moreover, such $w$ is unique unless $\lambda$ is orthogonal to a root of $\BG$ (i.e.,
unless $\lambda$ lies on a wall of a Weyl chamber).

Let us denote by $Q_\BG^+\subset P_\BG$ the cone of all linear combinations of simple roots with
nonnegative integer coefficients.
The following Lemma is also well known but we provide a proof for completeness.

\begin{lemma}\label{w-dom}
If $\lambda$ is dominant then for any $w \in \BW_\BG$ one has $\lambda - w\lambda \in Q_\BG^+$.
\end{lemma}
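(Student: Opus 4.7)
The plan is to prove this by induction on the length $\ell(w)$ of the Weyl group element. The base case $w = e$ is immediate since $\lambda - \lambda = 0 \in Q_\BG^+$. For the inductive step, given $w \ne e$, I would choose a simple root $\alpha_i$ such that $w^{-1}(\alpha_i)$ is a negative root; such an $\alpha_i$ exists because any non-identity element of $\BW_\BG$ sends some positive root to a negative one. Setting $w' = s_i w$, the standard exchange/length property gives $\ell(w') = \ell(w) - 1$, so the induction hypothesis yields $\lambda - w'\lambda \in Q_\BG^+$.

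Next I would split the difference as
$$
\lambda - w\lambda \;=\; (\lambda - w'\lambda) \;+\; (w'\lambda - s_i w'\lambda),
$$
and use the standard reflection formula $w'\lambda - s_i w'\lambda = (w'\lambda, \alpha_i^\vee)\,\alpha_i$ (an integer multiple of $\alpha_i$, thanks to $\lambda \in P_\BG$). The task then reduces to checking that this integer coefficient is nonnegative, i.e.\ that $(w'\lambda,\alpha_i^\vee) = (\lambda, (w')^{-1}\alpha_i^\vee) \ge 0$.

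For this last step I would argue that $(w')^{-1}(\alpha_i)$ is a positive root. Since $w = s_i w'$ with $\ell(w) = \ell(w') + 1$, the left-multiplication version of Lemma~\ref{simple-positive-lem} (which follows from Lemma~\ref{simple-positive-lem} itself by replacing $w$ with $w^{-1}$ and using $\ell(w)=\ell(w^{-1})$) asserts that $\ell(s_i w') > \ell(w')$ if and only if $(w')^{-1}(\alpha_i) > 0$. Hence $(w')^{-1}(\alpha_i)$ is positive, so $(w')^{-1}(\alpha_i^\vee)$ is a positive coroot, and since $\lambda$ is dominant we obtain $(\lambda, (w')^{-1}\alpha_i^\vee) \ge 0$. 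Combining the two summands completes the induction.

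The only mildly technical point is the transition from the right-multiplication form of Lemma~\ref{simple-positive-lem} (as stated in the paper) to the left-multiplication form needed here; this is a straightforward translation via inversion, so I do not anticipate a serious obstacle. Everything else is a clean induction on length using standard properties of dominant weights and simple reflections.
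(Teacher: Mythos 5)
Your proof is correct and follows essentially the same route as the paper's: induction on $\ell(w)$, with positivity of the relevant root supplied by Lemma~\ref{simple-positive-lem}. The only difference is cosmetic --- the paper peels a simple reflection off the \emph{right} of $w$ and first reduces to fundamental weights via $s_j\omega_i=\omega_i-\delta_{ij}\alpha_j$, whereas you peel from the left and keep $\lambda$ general using the reflection formula $w'\lambda-s_iw'\lambda=(w'\lambda,\alpha_i^\vee)\alpha_i$; your translation of the lemma to its left-multiplication form via $\ell(w)=\ell(w^{-1})$ is exactly right, so there is no gap.
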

\begin{proof}
Since $\lambda$ is a positive linear combination of fundamental weights, it is enough to check
that for every $\omega_i$ and every $w$ the weight $\omega_i-w\omega_i$ is a sum of positive roots.
This can be checked by induction on the length of $w$. When $w$ is a simple reflection $s_j$ this follows from~\eqref{sioj}.
Let $s = s_j$ be a simple reflection, and assume $\ell(ws_j)=\ell(w)+1$. Then
$$
\omega_i-ws_j\omega_i=\omega_i-w\omega_i+w(\omega_i-s_j\omega_i) = \omega_i-w\omega_i + w(\delta_{ij}\alpha_j).
$$
Now the assertion follows from the induction assumption and from Lemma~\ref{simple-positive-lem}.
\end{proof}

The following consequence of this Lemma will be extremely important for us.

\begin{corollary}\label{max-scalar-prod-cor}
For a pair of weights $\lambda$ and $\mu$ the maximum {\rm(}resp., minimum{\rm)} of the scalar product $(w\lambda,\mu)$ 
when $w$ runs through the Weyl group $\BW$ is achieved when $w\lambda$ and $\mu$ lie in the same Weyl chamber 
{\rm(}resp., opposite Weyl chambers{\rm)}.
\end{corollary}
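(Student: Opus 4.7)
The plan is to reduce to the case where both weights are dominant using the Weyl-invariance of the scalar product, and then to apply Lemma~\ref{w-dom} together with the fact that dominant weights pair non-negatively with positive roots.

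More precisely, I would first observe that by Weyl-invariance of $(-,-)$ we have $(w\lambda,\mu) = (vw\lambda, v\mu)$ for any $v \in \BW_\BG$. So we may replace $\mu$ by its unique dominant representative; this does not affect the set of values $\{(w\lambda,\mu)\}_{w\in\BW_\BG}$, and moving $\mu$ back to any prescribed chamber is an equally valid choice of $\mu$ as far as the statement of the corollary is concerned. Thus we may assume $\mu \in P_\BG^+$. Let $\bar\lambda \in P_\BG^+$ be the dominant representative of the $\BW_\BG$-orbit of $\lambda$; writing each $w\lambda = v\bar\lambda$ for a suitable $v \in \BW_\BG$, the corollary reduces to the inequality
$$
(v\bar\lambda,\mu) \le (\bar\lambda,\mu) \quad \text{for all } v \in \BW_\BG,
$$
with equality (in particular) when $v = 1$, i.e.\ when $v\bar\lambda$ lies in the same (dominant) Weyl chamber as $\mu$.

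The key step is then rewriting this as $(\bar\lambda - v\bar\lambda,\mu) \ge 0$. Since $\bar\lambda$ is dominant, Lemma~\ref{w-dom} gives $\bar\lambda - v\bar\lambda \in Q_\BG^+$, i.e.\ it is a non-negative integer combination of simple roots. On the other hand, for $\mu \in P_\BG^+$ and any simple root $\alpha_i$ we have $(\mu,\alpha_i) = (\mu,\alpha_i^\vee)\cdot(\alpha_i,\alpha_i)/2 \ge 0$, because dominance is precisely the statement that $(\mu,\alpha_i^\vee)\ge 0$. Combining these two facts yields the required inequality, and proves the maximum assertion.

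For the minimum, I would simply remark that minimizing $(w\lambda,\mu)$ is the same as maximizing $(w\lambda,-\mu)$, and $-\mu$ lies in a Weyl chamber precisely when $\mu$ does, with the chamber being the opposite one (since $-\id$ preserves the chamber decomposition, and the opposite of the Weyl chamber of $\mu$ is the chamber containing $-\mu$). Applying the already-proved maximum statement to $-\mu$ gives the minimum statement. I do not expect any serious obstacle here: both ingredients (Lemma~\ref{w-dom} and non-negativity of the pairing between dominant weights and positive roots) are essentially built into the setup, and the main subtlety is only the bookkeeping of which chamber one ends up in, which is handled by the reduction in the first paragraph.
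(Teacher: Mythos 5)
Your proof is correct and follows essentially the same route as the paper's: reduce to dominant $\mu$ by Weyl-invariance, apply Lemma~\ref{w-dom} together with the non-negativity of pairings between dominant weights and positive roots, and obtain the minimum statement by replacing $\mu$ with $-\mu$. No issues.
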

\begin{proof}
Since the scalar product is $\BW$-invariant, we can assume that $\mu$ is dominant. To prove the
assertion about the maximum we have to check that if $\lambda$
is also dominant then $(w\lambda,\mu) \le (\lambda,\mu)$ for any $w \in \BW$. But this follows easily from Lemma~\ref{w-dom} since the scalar product of a positive root with a dominant weight is
nonnegative by~\eqref{ojai}. The assertion about the minimum follows
as well since $(w\lambda,\mu)$ is minimal exactly when $(w\lambda,-\mu)$ is maximal.
\end{proof}

Assume that $\BH$ is a simply connected semisimple algebraic group and let $\BH = \BH_1 \times \dots \times \BH_k$ be
its decomposition into the product of simple groups. Then $P_\BH = P_{\BH_1} \oplus \dots \oplus P_{\BH_k}$
and $P_\BH^+ = P_{\BH_1}^+ \times \dots \times P_{\BH_k}^+$. Denote by $\lambda_i$ the component of a weight $\lambda \in P_\BH$
in the summand $P_{\BH_i}$.

\begin{definition}\label{def-sdom}
A weight $\lambda \in P_\BH^+$ is {\sf strictly dominant} if all its components $\lambda_i \in P_{\BH_i}^+$ are nonzero.
\end{definition}

\begin{lemma}\label{scal-dom}
If $\lambda,\mu \in P_\BH^+$ then $(\lambda,\mu) \ge 0$.
Moreover, if $\lambda$ is strictly dominant and $\mu \ne 0$ then $(\lambda,\mu) > 0$.
In particular, if $\BH$ is simple then the latter inequality holds for any pair of nonzero dominant weights.
\end{lemma}
\begin{proof}
This follows immediately from the fact that all scalar products of fundamental weights of a simple group are strictly positive.
\end{proof}

Let $\beta$ be the simple root corresponding to $\BP$. The $\BW_\BL$-orbit of $\beta$
has the following nice description.

\begin{lemma}\label{beta-orbit-lem}
The $\BW_\BL$-orbit of $\beta$ consists of all roots of $\BG$ that have the coefficient of
$\beta$ equal to $1$, when expressed as a linear combination of simple roots,
and have the same length as $\beta$.
\end{lemma}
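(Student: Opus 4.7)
The plan is to prove both inclusions of the asserted equality. One direction is straightforward: for any simple reflection $s_j$ with $j\ne\beta$, we have $s_j(\gamma)=\gamma-(\gamma,\alpha_j^\vee)\alpha_j$, which differs from $\gamma$ by a multiple of $\alpha_j$ and therefore preserves the coefficient of $\beta$; and any Weyl group element preserves the scalar product, hence the length. Iterating over the generators of $\BW_\BL$, every element of the orbit $\BW_\BL\cdot\beta$ is a root with coefficient $1$ on $\beta$ and the same length as $\beta$.

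For the converse, I would argue by induction on the height $\mathrm{ht}(\gamma)=\sum c_i$ of $\gamma=\sum c_i\alpha_i$. Because $c_\beta=1>0$, the root $\gamma$ is positive, so all $c_i\ge 0$. The base case is $\gamma=\beta$, which has the smallest possible height among such roots. For the inductive step, assume $\gamma\ne\beta$ and write $\gamma-\beta=\sum_{j\ne\beta}c_j\alpha_j$, a nonzero sum with $c_j\ge 0$. The key computation combines the length condition with positivity:
\[
0<(\gamma-\beta,\gamma-\beta)=2(\gamma,\gamma)-2(\gamma,\beta)=2(\gamma,\gamma-\beta),
\]
where the first equality uses $(\gamma,\gamma)=(\beta,\beta)$. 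Expanding the right-hand side as $\sum_{j\ne\beta}c_j(\gamma,\alpha_j)$, some index $j\ne\beta$ with $c_j>0$ must satisfy $(\gamma,\alpha_j)>0$.

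For any such $j$, the reflection $s_j\gamma=\gamma-(\gamma,\alpha_j^\vee)\alpha_j$ is again a positive root (since $\gamma\ne\alpha_j$, as $\gamma$ has $\beta$-coefficient $1$ while $\alpha_j$ has $\beta$-coefficient $0$), still has coefficient $1$ on $\beta$ and the same length as $\beta$, but has strictly smaller height. By induction $s_j\gamma\in\BW_\BL\cdot\beta$, and hence $\gamma=s_j(s_j\gamma)\in\BW_\BL\cdot\beta$ as well. I do not anticipate any real obstacle here: the only nontrivial ingredient is the identity displayed above, which allows one to extract a descending reflection within $\BW_\BL$ at every step; the remainder is elementary bookkeeping about heights and positivity.
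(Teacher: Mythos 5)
Your proof is correct. The overall skeleton coincides with the paper's: both directions are handled the same way at the top level, and the converse is an induction on the height of $\gamma$, reducing to the existence of a simple root $\alpha_j\ne\beta$ with $(\gamma,\alpha_j)>0$ so that $s_j\in\BW_\BL$ strictly decreases the height while preserving the $\beta$-coefficient and the length. Where you genuinely diverge is in how that key simple root is produced. The paper argues by contradiction: assuming $(\gamma,\alpha_j)\le 0$ for all $\alpha_j\ne\beta$, it deduces $(\beta,\beta)\le(\beta,\gamma)$ and then contradicts this by observing that $s_\beta(\gamma)$ is a positive root, so its $\beta$-coefficient $1-2(\gamma,\beta)/(\beta,\beta)$ must be nonnegative. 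You instead get the conclusion directly from the positivity of the form: $(\gamma-\beta,\gamma-\beta)=2(\gamma,\gamma-\beta)>0$ once $(\gamma,\gamma)=(\beta,\beta)$, and expanding $\gamma-\beta=\sum_{j\ne\beta}c_j\alpha_j$ forces some $c_j>0$ with $(\gamma,\alpha_j)>0$. Your identity is shorter, avoids invoking the positivity of $s_\beta(\gamma)$, and makes the role of the equal-length hypothesis completely transparent; the paper's route is the more traditional root-string style argument. (For the easy direction the paper uses the $\BW_\BL$-invariance of $\xi$ and the formula $(\xi,\alpha)/(\xi,\beta)$ for the $\beta$-coefficient, while you observe that each $s_j$ with $j\ne\beta$ shifts a root by a multiple of $\alpha_j$; these are equivalent.)
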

\begin{proof} The coefficient of $\beta$ in a root $\alpha$ is given by $(\xi,\alpha)/(\xi,\beta)$,
where $\xi$ is the fundamental weight corresponding to $\beta$. Since $\xi$ is invariant
under the action of $\BW_\BL$, we have
$$
(\xi,w_\BL\beta) = (w_\BL^{-1}\xi,\beta) = (\xi,\beta)
$$
for all $w_\BL \in \BW_\BL$, so the coefficient of $\beta$ is equal to $1$ for all roots in
the $\BW_\BL$-orbit of $\beta$.

Conversely, let us check that if a positive root $\alpha$ has the coefficient of $\beta$ equal
to $1$ and $(\alpha,\alpha)=(\beta,\beta)$ then $\alpha$ is in the $\BW_\BL$-orbit of $\beta$.
Let us write $\alpha=\sum c_i \alpha_i$, where $\alpha_i$ are simple roots.
We will use induction on $\sum c_i$. If $\sum c_i=1$ then $\alpha=\beta$, so the statement
is true. Now assume that $\sum c_i>1$. It is enough to prove that there exists a simple
root $\alpha_i\neq\beta$ such that $(\alpha,\alpha_i)>0$. Indeed, then
$s_i\alpha$ will have a smaller sum of coefficients and by the induction assumption, we would deduce
that $s_i\alpha$ is in $\BW_\BL$-orbit of $\beta$. Suppose $(\alpha,\alpha_i)\le 0$ for all
$\alpha_i\neq\beta$. Then
$$
(\alpha,\alpha) =
(\beta + \sum_{\alpha_i \ne \beta} c_i\alpha_i,\alpha) =
(\beta,\alpha) + \sum_{\alpha_i \ne \beta} c_i(\alpha_i,\alpha) \le (\beta,\alpha).
$$
Since $(\alpha,\alpha) = (\beta,\beta)$ by assumption, we get $(\beta,\beta) \le (\beta,\alpha)$.
But $s_\beta(\alpha)=\alpha-2\frac{(\alpha,\beta)}{(\beta,\beta)}\beta$ should be a positive root
(since $\alpha\neq \beta$). Looking at the coefficient of $\beta$ in $s_\beta(\alpha)$ we obtain
$$2\frac{(\alpha,\beta)}{(\beta,\beta)}\le 1$$
which contradicts the previous inequality.
\end{proof}

We denote by $w_0^\BG$ and $w_0^\BL$ the longest elements of the Weyl groups $\BW_\BG$
and $\BW_\BL$ respectively. Note that
$$
(w_0^\BL)^2 = (w_0^\BG)^2 = 1.
$$
Note also that $w_0^\BG$ takes any simple root of $\BG$ to minus a simple root,
and hence any fundamental weight to minus a fundamental weight. In particular,
\begin{equation}\label{worho}
w_0^\BG\rho_\BG = - \rho_\BG
\end{equation}
and $w_0^\BG(P_\BG^+) = - P_\BG^+$, $w_0^\BL(P_\BL^+) = - P_\BL^+$.

\subsection{Representations}\label{ss-reps}

For each dominant weight $\lambda \in P_\BG^+$ (resp., $\lambda \in P_\BL^+$)
we denote by $V_\BG^\lambda$ (resp., $V_\BL^\lambda$) the corresponding irreducible
representation of $\BG$ (resp., $\BL$).

The dual of any irreducible representation is also irreducible. To be more precise we have
\begin{equation}\label{vldual}
(V_\BL^\lambda)^\vee = V_\BL^{-w_0^\BL\cdot\lambda}.
\end{equation}
Indeed, if $\lambda$ is the highest weight of an irreducible representation of $\BL$ then
$w_0^\BL\lambda$ is the lowest weight, so $-w_0^\BL\lambda$ is the highest weight of the dual.

Since the group $\BL$ is reductive the tensor product of two irreducible representations
of $\BL$ is a direct sum of irreducibles. We denote by $\mult(V_\BL^\nu,V_\BL^\lambda \otimes V_\BL^\mu)$
the multiplicity of $V_\BL^\nu$ in the tensor product. The following simple result will be useful.

\begin{lemma}\label{mult-hom}
We have
$$
\mult(V_\BL^\nu,V_\BL^\lambda \otimes V_\BL^\mu) = \dim \Hom_\BL(V_\BL^\nu,V_\BL^\lambda \otimes V_\BL^\mu) =
\dim \Hom_\BL(V_\BL^\lambda \otimes V_\BL^\mu, V_\BL^\nu).
$$
In particular,
$\mult(V_\BL^\nu,V_\BL^\lambda \otimes V_\BL^\mu) = \mult((V_\BL^\mu)^\vee,V_\BL^\lambda \otimes (V_\BL^\nu)^\vee)$.
\end{lemma}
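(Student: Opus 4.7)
The plan is to reduce everything to Schur's lemma combined with complete reducibility of representations of the reductive group $\BL$.

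First, since $\BL$ is reductive, the tensor product $V_\BL^\lambda \otimes V_\BL^\mu$ decomposes as a direct sum of irreducibles, say $\bigoplus_\tau m_\tau V_\BL^\tau$, with $m_\tau = \mult(V_\BL^\tau, V_\BL^\lambda \otimes V_\BL^\mu)$ by definition. Schur's lemma gives $\Hom_\BL(V_\BL^\nu, V_\BL^\tau) = \kk$ if $\nu = \tau$ and $0$ otherwise; the same holds for $\Hom_\BL(V_\BL^\tau, V_\BL^\nu)$. Distributing $\Hom_\BL(V_\BL^\nu, -)$ and $\Hom_\BL(-, V_\BL^\nu)$ over the above direct sum decomposition then gives both $\dim \Hom_\BL(V_\BL^\nu, V_\BL^\lambda \otimes V_\BL^\mu) = m_\nu$ and $\dim \Hom_\BL(V_\BL^\lambda \otimes V_\BL^\mu, V_\BL^\nu) = m_\nu$, establishing the two equalities in the first display.

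For the ``In particular'' statement, I would use the standard tensor-hom adjunction: for any finite-dimensional $\BL$-representations $A$, $B$, $C$, one has
$$
\Hom_\BL(A \otimes B, C) \cong \Hom_\BL(A, C \otimes B^\vee) \cong \Hom_\BL(\kk, A^\vee \otimes B^\vee \otimes C),
$$
and all such quantities are invariant (up to applying duality on each factor) under the symmetric group action permuting the roles of the three tensor factors. Concretely, applying the first part of the lemma to the pair $(V_\BL^\lambda, V_\BL^\mu)$ and then to the pair $(V_\BL^\lambda, (V_\BL^\nu)^\vee)$, and rewriting both multiplicities as $\dim \Hom_\BL(\kk, V_\BL^\lambda \otimes V_\BL^\mu \otimes (V_\BL^\nu)^\vee)$ using the above adjunction together with the identification $(V_\BL^\mu)^{\vee\vee} \cong V_\BL^\mu$, gives the stated equality.

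There is no real obstacle here; the entire argument is formal and relies only on complete reducibility, Schur's lemma, and the tensor-hom adjunction. The only small point requiring care is keeping track of which representation gets dualized when moving factors across the $\Hom$, but this is routine bookkeeping.
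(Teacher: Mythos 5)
Your proof is correct and follows essentially the same route as the paper: the two equalities in the first display come from complete reducibility plus Schur's lemma, and the ``in particular'' statement comes from the canonical (tensor--hom) isomorphism $\Hom(V_\BL^\nu,V_\BL^\lambda\otimes V_\BL^\mu)\cong\Hom((V_\BL^\mu)^\vee,V_\BL^\lambda\otimes(V_\BL^\nu)^\vee)$, which is exactly what the paper invokes. Your version merely spells out the adjunction bookkeeping in more detail.
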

\begin{proof}
The first part follows from the fact that there are no maps between different irreducibles
and a one-dimensional space of maps between isomorphic irreducibles. 
The second part follows from the canonical isomorphism
$\Hom_\BL(V_\BL^\nu,V_\BL^\lambda \otimes V_\BL^\mu) \cong \Hom_\BL((V_\BL^\mu)^\vee,V_\BL^\lambda \otimes (V_\BL^\nu)^\vee)$.
\end{proof}

We also need the following standard result that gives restrictions on the possible highest weights of irreducible summands of the tensor product of two irreducible representations (obtained e.g. by combining
\cite[Thm.\ 14.18]{FH} with \cite[\textsection 131, Thm.\ 5]{Zhel};
see also \cite[Exer.\ 24.12]{Hum} and \cite[Exer.\ 25.33]{FH}).

\begin{lemma}\label{tens-conv}
If $\mult(V_\BL^\nu,V_\BL^\lambda \otimes V_\BL^\mu) > 0$ then $\nu \in \Conv(\lambda + w\mu)_{w \in \BW_\BL}$,
where $\Conv$ stands for the convex hull and $\BW_\BL$ is the Weyl group of $\BL$. Similarly, if
$\mult(V_\BL^\nu,V_\BL^\lambda \otimes (V_\BL^\mu)^\vee) > 0$ then $\nu \in \Conv(\lambda - w\mu)_{w \in \BW_\BL}$.
\end{lemma}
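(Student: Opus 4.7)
The plan is to prove the lemma by reducing it to the stronger (classical) fact that any $\nu$ appearing with positive multiplicity in $V_\BL^\lambda \otimes V_\BL^\mu$ satisfies $\nu - \lambda \in \mathrm{wts}(V_\BL^\mu)$. Granting this, the first assertion is immediate: every weight of $V_\BL^\mu$ lies in $\Conv(w\mu)_{w \in \BW_\BL}$, so $\nu \in \lambda + \Conv(w\mu)_{w} = \Conv(\lambda + w\mu)_{w}$.

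To establish $\nu - \lambda \in \mathrm{wts}(V_\BL^\mu)$, I will pick a non-zero $\BL$-equivariant embedding $\varphi\colon V_\BL^\nu \hookrightarrow V_\BL^\lambda \otimes V_\BL^\mu$ (which exists by the multiplicity hypothesis together with Lemma~\ref{mult-hom}), and let $v_\nu \in V_\BL^\nu$ be a highest weight vector. Writing $\varphi(v_\nu) = \sum_\kappa e_\kappa \otimes u_\kappa$ in a weight basis $\{e_\kappa\}$ of $V_\BL^\lambda$, weight-counting forces each $u_\kappa \in V_\BL^\mu$ to have weight $\nu - \kappa$. The main step is to show $u_\lambda \neq 0$. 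I will argue by contradiction: pick a weight $\kappa_0$ maximal in the dominance order among those occurring in the expansion and suppose $\kappa_0 \ne \lambda$. Then some simple root $\alpha$ of $\BL$ satisfies $E_\alpha e_{\kappa_0} \ne 0$ (with $\kappa_0 + \alpha$ a weight of $V_\BL^\lambda$). Applying $E_\alpha$ to $\varphi(v_\nu)$ gives zero since $v_\nu$ is a highest weight vector, but the coefficient of $e_{\kappa_0 + \alpha}$ in $E_\alpha \varphi(v_\nu)$ receives only the contribution $(E_\alpha e_{\kappa_0}) \otimes u_{\kappa_0}$ (higher-weight terms are excluded by the maximality of $\kappa_0$, and terms from $e_\kappa \otimes E_\alpha u_\kappa$ have first factor of weight $\kappa \ne \kappa_0 + \alpha$). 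Hence $u_{\kappa_0} = 0$, contradicting the choice of $\kappa_0$. So $\kappa_0 = \lambda$ and $u_\lambda \neq 0$.

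The second assertion then follows from the first by duality: using $(V_\BL^\mu)^\vee = V_\BL^{-w_0^\BL \mu}$ from \eqref{vldual}, the first assertion applied to $V_\BL^\lambda \otimes V_\BL^{-w_0^\BL \mu}$ gives $\nu \in \Conv(\lambda + w(-w_0^\BL\mu))_{w \in \BW_\BL}$; reparameterizing $w' = w w_0^\BL$ (which ranges over $\BW_\BL$ as $w$ does) yields $\nu \in \Conv(\lambda - w'\mu)_{w' \in \BW_\BL}$, as required.

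The main obstacle in the argument is the combinatorial input that every weight $\kappa_0 \ne \lambda$ of $V_\BL^\lambda$ admits a simple root $\alpha$ with $E_\alpha e_{\kappa_0} \ne 0$ (equivalently, with $\kappa_0 + \alpha$ a weight of $V_\BL^\lambda$ whose preimage under $E_\alpha$ includes $e_{\kappa_0}$). This is a standard fact of highest weight theory — the weights of an irreducible module are connected to the highest weight by a chain of simple-root shifts — but must be invoked carefully, since the naive statement for positive (non-simple) roots would not suffice for extracting a unique $e_{\kappa_0 + \alpha}$ component as above.
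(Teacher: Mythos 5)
The paper does not actually prove this lemma: it is quoted as a standard fact with references to Fulton--Harris, Zhelobenko and Humphreys, so there is no in-paper argument to compare yours against. Your proposal is the standard proof of the cited exercise (the stronger statement that $\nu-\lambda$ is a weight of $V_\BL^\mu$, followed by the inclusion of the weight set of $V_\BL^\mu$ in $\Conv(w\mu)_{w\in\BW_\BL}$ and the duality reduction via $(V_\BL^\mu)^\vee=V_\BL^{-w_0^\BL\mu}$), and the overall strategy is correct. The key input you flag at the end --- that in an irreducible module the only vectors killed by all simple raising operators $E_\alpha$ are the multiples of the highest weight vector --- is indeed the right one and is standard.

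The one step that does not survive as literally written is the deduction ``hence $u_{\kappa_0}=0$''. You index a weight basis of $V_\BL^\lambda$ by weights, which tacitly assumes the weight spaces are one-dimensional; in general $(V_\BL^\lambda)_{\kappa_0}$ and $(V_\BL^\lambda)_{\kappa_0+\alpha}$ have higher dimension, so ``the coefficient of $e_{\kappa_0+\alpha}$'' is ambiguous and receives contributions $(E_\alpha e_j)\otimes u_j$ from \emph{every} basis vector $e_j$ of weight $\kappa_0$; the vanishing of that coefficient then only gives a linear relation among the $u_j$, not $u_{\kappa_0}=0$. The repair is routine: let $x_{\kappa_0}\in(V_\BL^\lambda)_{\kappa_0}\otimes(V_\BL^\mu)_{\nu-\kappa_0}$ be the full weight-$\kappa_0$ component of $\varphi(v_\nu)$ (relative to the first factor); maximality of $\kappa_0$ gives $(E_\alpha\otimes 1)x_{\kappa_0}=0$ for every simple $\alpha$, and since the map $e\mapsto(E_\alpha e)_\alpha$ is injective on $(V_\BL^\lambda)_{\kappa_0}$ for $\kappa_0\neq\lambda$ (by your key input), so is its tensor product with the identity, whence $x_{\kappa_0}=0$ --- the desired contradiction. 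Equivalently, write $x_{\kappa_0}=\sum_j e_j\otimes u_j$ with the $u_j$ linearly independent and conclude $E_\alpha e_j=0$ for all $j$ and all simple $\alpha$. With this adjustment your argument is complete.
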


\subsection{Special representatives}\label{s-sr}

For any $w\in\BW$ the set $w(P_\BG^+)$ belongs to a unique $\BW_\BL$-chamber, so
in the coset $\BW_\BL w\subset\BW$ of $\BW_\BL$ there is a unique representative which takes
the $\BG$-dominant cone to the $\BL$-dominant cone.
We call it the {\sf $\BL$-special representative of the coset} and denote the set of all
$\BL$-special representatives in $\BW$ by $\SR_\BG^\BL$. 
Note that the $\BW_\BL$-chamber containing $w(P_\BG^+)$ is determined by $w(\rho)$, hence,
the $\BL$-special representative $w_1$ is determined by the condition $w_1(\rho)\in P_\BL^+$.

The elements of $\SR_\BG^\BL$ can also be characterized as follows.

\begin{lemma}\label{sr-min}
The set $\SR_\BG^\BL\subset\BW$ consists of the elements that have minimal length in their
left $\BW_\BL$-cosets.
\end{lemma}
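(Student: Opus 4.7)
The plan is to translate the definition of $\SR_\BG^\BL$ into a condition on positive roots and then invoke Lemma~\ref{simple-positive-lem} together with a standard Coxeter-theoretic fact about minimal coset representatives.

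First, I would reformulate the condition ``$w(P_\BG^+) \subset P_\BL^+$''. Since $P_\BL^+$ is cut out by the inequalities $(\mu,\alpha_i) \ge 0$ for $\alpha_i$ running over the simple roots of $\BL$, the condition is
$$
(w\lambda,\alpha_i) = (\lambda, w^{-1}\alpha_i) \ge 0 \qquad\text{for all } \lambda \in P_\BG^+ \text{ and all simple roots } \alpha_i \text{ of }\BL.
$$
By the duality between $P_\BG^+$ and the cone $Q_\BG^+$, this is equivalent to requiring $w^{-1}\alpha_i \in Q_\BG^+$ for every such $\alpha_i$. But $w^{-1}\alpha_i$ is a root of $\BG$, so lying in $Q_\BG^+$ forces it to be a positive root. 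Hence $w\in\SR_\BG^\BL$ if and only if $w^{-1}\alpha_i$ is a positive root for every simple root $\alpha_i$ of $\BL$.

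Second, I would apply Lemma~\ref{simple-positive-lem} with $w$ replaced by $w^{-1}$: the positivity of $w^{-1}\alpha_i$ is equivalent to $\ell(w^{-1}s_i) > \ell(w^{-1})$, i.e.\ (using length-invariance under inversion) to $\ell(s_i w) > \ell(w)$. Thus membership in $\SR_\BG^\BL$ is equivalent to
$$
\ell(s_i w) > \ell(w) \qquad \text{for every simple reflection } s_i \in \BW_\BL.
$$

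Third, I would match this with the characterization of minimal length representatives. If $\ell(s_i w) > \ell(w)$ for every simple reflection $s_i\in \BW_\BL$, then by an easy induction on the length of $v\in\BW_\BL$ (writing $v$ as a reduced product of simple reflections in $\BW_\BL$ and using that multiplication by a simple reflection changes length by $\pm 1$) one gets $\ell(vw) \ge \ell(w)$, so $w$ has minimal length in $\BW_\BL w$. Conversely, if $w$ is of minimal length in $\BW_\BL w$ then in particular $\ell(s_i w) \ge \ell(w)$ for each simple $s_i\in\BW_\BL$, and equality is impossible, so the strict inequality holds. Uniqueness of the minimal length element in each left $\BW_\BL$-coset is standard (and matches the uniqueness already built into the definition of $\SR_\BG^\BL$).

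The argument is essentially a bookkeeping exercise; the only subtle point is the duality step transforming the geometric condition on Weyl chambers into the combinatorial condition on simple roots of $\BL$, and that is immediate from the pairing between $P_\BG^+$ and $Q_\BG^+$ together with the fact that $w^{-1}$ permutes the roots of $\BG$.
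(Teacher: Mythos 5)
Your argument follows essentially the same route as the paper: both reduce membership in $\SR_\BG^\BL$ to positivity of the roots $w^{-1}(\alpha_j)$ for $\alpha_j$ simple in $\BL$, and both convert that into a length condition via Lemma~\ref{simple-positive-lem}. The only cosmetic difference is that you test dominance against the whole dual cone $Q_\BG^+$, while the paper evaluates on the single strictly dominant weight $\rho$; these are equivalent.

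One step deserves a caveat. In your third paragraph, the ``easy induction'' showing that $\ell(s_i w)>\ell(w)$ for all simple $s_i\in\BW_\BL$ implies $\ell(vw)\ge\ell(w)$ for all $v\in\BW_\BL$ does not close as stated: writing $v=s_iv'$ with $\ell(v)=\ell(v')+1$, the inductive hypothesis $\ell(v'w)\ge\ell(w)$ is not enough when $\ell(s_iv'w)=\ell(v'w)-1$. To make this work you need the stronger statement $\ell(vw)=\ell(v)+\ell(w)$, i.e.\ the standard lifting/exchange property for minimal coset representatives. This is a genuinely standard fact, so the gap is minor, but note that the paper avoids this direction altogether: it proves only that a minimal-length element of a coset is a special representative, and then invokes the uniqueness of the special representative in each coset (built into the definition of $\SR_\BG^\BL$) to conclude that the two one-element-per-coset sets coincide. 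You could adopt the same shortcut and drop the induction entirely.
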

\begin{proof} Let $w\in\BW$ be an element of minimal length in its left $\BW_\BL$-coset.
Then $\ell(w^{-1}s_j) = \ell(s_jw) > \ell(w) = \ell(w^{-1})$ for every simple reflection $s_j$ in $\BW_\BL$. Hence,
by Lemma \ref{simple-positive-lem}, the root $w^{-1}(\alpha_j)$ is positive for every simple root
$\alpha_j$
that belongs to the root system of $\BL$. Thus,
$(w\rho,\alpha_j)=(\rho,w^{-1}\alpha_j)>0$ for every such simple root, i.e., $w\rho$ is $\BL$-dominant.
Hence, $w$ is a special representative.
\end{proof}


\begin{lemma}\label{srlg}
\noindent$(0)$
The only element of length $0$ in $\SR_\BG^\BL$ is $1$.

\noindent$(1)$
The only element of length $1$ in $\SR_\BG^\BL$ is $s_\beta$.

\noindent$(2)$
All elements of length $2$ in $\SR_\BG^\BL$ are of the form $s_\beta s_\alpha$, where $\alpha$ is a simple root of $\BG$ adjacent to $\beta$.
%
\end{lemma}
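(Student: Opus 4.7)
The strategy throughout is to use the characterization of $\SR_\BG^\BL$ given by Lemma~\ref{sr-min}: an element $w\in\BW_\BG$ lies in $\SR_\BG^\BL$ iff it has minimal length in its left $\BW_\BL$-coset $\BW_\BL w$, equivalently (by the argument in the proof of Lemma~\ref{sr-min}, combined with Lemma~\ref{simple-positive-lem}) iff $w^{-1}(\alpha_i)$ is a positive root for every simple root $\alpha_i$ of $\BL$, that is, for every $\alpha_i\neq\beta$. To rule out candidates of small length $w$ I will simply exhibit a simple reflection $s_\alpha\in\BW_\BL$ with $\ell(s_\alpha w)<\ell(w)$; to confirm a candidate I will verify the positive-root condition directly.

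Part (0) is immediate: the only element of length $0$ is the identity, which satisfies the condition vacuously. For part (1), any length-$1$ element is a simple reflection $s_i$. If $\alpha_i\neq\beta$ then $s_i\in\BW_\BL$, so $s_i\cdot s_i=1$ lies in the same coset and has smaller length, ruling $s_i$ out. The remaining candidate $s_\beta$ genuinely works: since $\BW_\BL$ is generated by the $s_i$ with $\alpha_i\neq\beta$, the element $s_\beta$ does not lie in $\BW_\BL$, so its coset does not contain $1$, and $s_\beta$ is minimal there.

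For part (2), a length-$2$ element has the form $s_i s_j$ with $\alpha_i\neq\alpha_j$. I will split into three cases. If $\alpha_i\neq\beta$, then $s_i\in\BW_\BL$ and $s_i\cdot s_is_j=s_j$ has length $\le 1$ in the same coset, so $s_is_j\notin\SR_\BG^\BL$. If $\alpha_i=\beta$ but $\alpha_j$ is not adjacent to $\beta$, then $(\beta,\alpha_j)=0$, so $s_\beta$ and $s_j$ commute; then $s_j\in\BW_\BL$ and $s_j\cdot s_\beta s_j=s_\beta$ has length $1$ in the same coset, so again $s_\beta s_j\notin\SR_\BG^\BL$. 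Only the case $\alpha_i=\beta$ with $\alpha_j$ adjacent to $\beta$ remains as a candidate.

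The main obstacle is to verify that in this remaining case the candidate $w=s_\beta s_j$ actually lies in $\SR_\BG^\BL$; that is, I must check that $w^{-1}(\alpha_k)=s_j s_\beta(\alpha_k)$ is positive for every simple root $\alpha_k\neq\beta$. For $\alpha_k\neq\alpha_j$ not adjacent to $\beta$, one has $s_\beta(\alpha_k)=\alpha_k$ and $s_j(\alpha_k)=\alpha_k-c\alpha_j$ with $c=2(\alpha_k,\alpha_j)/(\alpha_j,\alpha_j)\le 0$, so the result is positive. For $\alpha_k$ adjacent to $\beta$ (possibly equal to $\alpha_j$), the computation is slightly more delicate: one expands both reflections in terms of the Cartan integers $c=2(\alpha_j,\beta)/(\beta,\beta)<0$ and $m=2(\beta,\alpha_j)/(\alpha_j,\alpha_j)<0$, and the positivity of $s_j s_\beta(\alpha_j)=(-1+cm)\alpha_j+(-c)\beta$ reduces to the key fact $cm\ge 1$, which holds for any pair of adjacent simple roots in a reduced crystallographic root system (the product of Cartan integers of a rank-$2$ subsystem is $1$, $2$, or $3$). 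The remaining case of a simple root $\alpha_k\neq\alpha_j$ adjacent to $\beta$ is analogous. This completes the verification.
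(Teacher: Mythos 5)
Your proof is correct and the exclusion arguments (cancelling a leading reflection from $\BW_\BL$, and commuting $s_\beta$ past a non-adjacent $s_\alpha$) are exactly the paper's argument. The only difference is that you additionally verify the converse in parts (1) and (2) — that $s_\beta$ and $s_\beta s_\alpha$ with $\alpha$ adjacent to $\beta$ really do lie in $\SR_\BG^\BL$, via the positive-root criterion from Lemma~\ref{sr-min} — which the paper omits since the statement only asserts the "only/all elements are of this form" direction; your extra verification is correct but not needed for the lemma as stated.
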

\begin{proof}
Part $(0)$ is clear. For $(1)$ we note that elements of length 1 in $\BW_\BG$ are just simple reflections and
for $\alpha \ne \beta$ the reflection $s_\alpha$ is in the same $\BW_\BL$-coset as $1$, which has smaller length.
Similarly, all elements of length 2 are products $s_{\alpha_1}s_{\alpha_2}$ of simple reflections.
If $\alpha_1 \ne \beta$ then $s_{\alpha_2}$ is in the same coset and has smaller length, hence $\alpha_1 = \beta$.
And if $\alpha := \alpha_2$ is not adjacent to $\beta$ then reflections $s_\alpha$ and $s_\beta$ commute, so
$s_\beta s_\alpha = s_\alpha s_\beta$ is in the same coset as $s_\beta$, which has smaller length.
%
\end{proof}


Take any reductive subgroup $\BH \subset \BG$ compatible with the torus and the Borel subgroups $\BT\subset \BB\subset \BG$,
i.e.\ such that $\BH\cap\BT\subset\BH\cap\BB$ is a maximal torus and a Borel subgroup in $\BH$,
and such that $\BM = \BH \cap \BL$ is the Levi subgroup in a parabolic subgroup of $\BH$. Let $\BW_\BH$ and $\BW_\BM$ be the corresponding
Weyl groups. Note that $\BW_\BM = \BW_\BH \cap \BW_\BL$.
It follows that $\BW_\BH/\BW_\BM \subset \BW_\BG/\BW_\BL$.
Actually, the same inclusion holds for the sets of special representatives.

\begin{lemma}\label{srlg-wh}
We have $\SR_\BG^\BL \cap \BW_\BH = \SR_\BH^\BM$.
\end{lemma}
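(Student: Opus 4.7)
My plan is to translate both sides of the claimed equality into one and the same root-theoretic condition, and then compare. By Lemma~\ref{sr-min}, an element $w\in\BW_\BG$ lies in $\SR_\BG^\BL$ iff it has minimal length in its left $\BW_\BL$-coset, which by Lemma~\ref{simple-positive-lem} is equivalent to $w^{-1}(\alpha)>0$ for every simple root $\alpha$ of $\BL$. Running exactly the same argument inside $\BW_\BH$ gives the parallel statement: $w\in\SR_\BH^\BM$ iff $w\in\BW_\BH$ and $w^{-1}(\alpha)>0$ for every simple root $\alpha$ of $\BM$. I will also use the structural compatibility built into the setup: the simple roots of $\BL$, of $\BH$, and of $\BM=\BL\cap\BH$ all lie among the simple roots of $\BG$, and the simple roots of $\BM$ are precisely those simple roots of $\BG$ that are simultaneously simple in $\BL$ and in $\BH$.

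With these two reformulations in hand, the inclusion $\SR_\BG^\BL\cap\BW_\BH\subset\SR_\BH^\BM$ is immediate: every simple root of $\BM$ is a simple root of $\BL$, so the $\BL$-positivity condition is strictly stronger than the $\BM$-positivity condition.

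For the reverse inclusion I will take $w\in\SR_\BH^\BM$ and verify $w^{-1}(\alpha)>0$ for each simple root $\alpha$ of $\BL$. If $\alpha$ is also a simple root of $\BH$, then it is a simple root of $\BM$ and the inequality is part of the hypothesis. The only real case is when $\alpha$ is not a root of $\BH$ at all. Here I will exploit that $w^{-1}$ is a product of reflections $s_\gamma$ with $\gamma$ a positive root of $\BH$, and each such reflection modifies any weight by an element of $\ZZ\gamma\subset Q_\BH$; hence $w^{-1}(\alpha)-\alpha\in Q_\BH$. Since $Q_\BH$ is spanned by exactly those simple roots of $\BG$ which are simple roots of $\BH$, the expansion of $w^{-1}(\alpha)$ in the simple basis of $\BG$ has coefficient $1$ on $\alpha$ itself and vanishing coefficients on every other simple root of $\BG$ that is not a simple root of $\BH$. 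Invoking the standard sign dichotomy --- that the coefficients of any root in the simple basis are either all nonnegative or all nonpositive --- and noting that one of them equals $+1$, I conclude that $w^{-1}(\alpha)$ is a positive root, as required. The one point that needs any care in the whole argument is the compatibility of simple bases tacitly used throughout, but this is automatic in the setting at hand, where $\BH$ is cut out by a sub-Dynkin-diagram of $D_\BG$ and $\BL$ is the Levi of a standard parabolic.
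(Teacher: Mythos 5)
Your proof is correct and follows essentially the same route as the paper: both reduce to the characterization of special representatives via Lemma~\ref{sr-min} and Lemma~\ref{simple-positive-lem}, and both split the simple roots of $\BL$ according to whether they lie in $D_\BH$. The only difference is cosmetic: where the paper asserts the length inequality $\ell(w^{-1}s_i)>\ell(w^{-1})$ for $w\in\BW_\BH$ and $\alpha_i\notin D_\BH$ without comment, you prove the underlying fact directly by observing that $w^{-1}(\alpha_i)-\alpha_i\in Q_\BH$ and reading off the sign from the coefficient of $\alpha_i$ --- a standard argument that, if anything, makes the step more self-contained.
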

\begin{proof}
The inclusion $\SR_\BG^\BL \cap \BW_\BH \subset \SR_\BH^\BM$ is clear.
Now let $w \in \SR_\BH^\BM$. We have to show that
$(w\rho, \alpha_i)\ge 0$, where $\alpha_i$ is any simple root of $\BL$. If $\alpha_i$ belongs
to the root system of $\BH\cap \BL=\BM$ then this follows from the definition of $\SR_\BH^\BM$.
Otherwise, the simple reflection $s_i$ associated with $\alpha_i$ is different from all simple reflections
in $W_\BH$, so $\ell(w^{-1}s_i)>\ell(w^{-1})$.
Hence, by Lemma \ref{simple-positive-lem}, $w^{-1}\alpha_i$ is a positive root, and so
$(w\rho, \alpha_i)=(\rho,w^{-1}\alpha_i)\ge 0$.
\end{proof}

The following inequality is very important for us.

\begin{lemma}\label{xil}
Assume that $v \in \SR_\BG^\BL$. Then
$$
(\xi,\rho - v\rho) \ge \ell(v)(\xi,\beta),
$$
where $\beta$ is the simple root corresponding to $\xi$. If $\ell(v) = 1$ then this inequality becomes
an equality.
\end{lemma}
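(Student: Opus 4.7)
The plan is to work with the standard identity
\[
\rho - v\rho \;=\; \sum_{\alpha \in \Phi(v)} \alpha, \qquad \Phi(v) := \{\text{positive roots } \alpha \text{ of } \BG : v^{-1}\alpha \text{ is negative}\},
\]
noting that $|\Phi(v)| = \ell(v)$. Pairing with $\xi$ and invoking \eqref{ojai}, which gives $(\xi, \alpha_i) = 0$ for every simple root $\alpha_i \ne \beta$, yields
\[
(\xi, \rho - v\rho) \;=\; \sum_{\alpha \in \Phi(v)} c_\beta(\alpha) \cdot (\xi, \beta),
\]
where $c_\beta(\alpha)$ denotes the coefficient of $\beta$ in the decomposition of $\alpha$ as a sum of simple roots. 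Since $(\xi, \beta) > 0$, the asserted inequality reduces to showing that $c_\beta(\alpha) \ge 1$ for every $\alpha \in \Phi(v)$, or equivalently that $\Phi(v)$ contains no positive root of $\BL$.

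To establish this, I will extract from the proof of Lemma~\ref{sr-min} the key property that for $v \in \SR_\BG^\BL$ one has $v^{-1}(\alpha_j) > 0$ for every simple root $\alpha_j$ of $\BL$; this follows from $\ell(s_j v) \ge \ell(v)$ (valid since $v$ has minimal length in its left $\BW_\BL$-coset) combined with Lemma~\ref{simple-positive-lem} applied to $v^{-1}$. Any positive root $\alpha$ of $\BL$ is a nonnegative integer combination $\sum_j n_j \alpha_j$ of simple roots of $\BL$, so $v^{-1}\alpha = \sum_j n_j v^{-1}\alpha_j$ is a nonzero nonnegative integer combination of positive roots of $\BG$, hence itself a positive root. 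Thus $\alpha \notin \Phi(v)$, which proves the claim.

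For the equality case $\ell(v) = 1$, Lemma~\ref{srlg}(1) forces $v = s_\beta$, so $\Phi(v) = \{\beta\}$ and the above sum collapses to the single equality $(\xi, \rho - s_\beta\rho) = (\xi, \beta)$. No substantial obstacle arises in this plan: the argument is essentially a compatibility check between the inversion-set formula for $\rho - v\rho$ and the positivity characterization of $\SR_\BG^\BL$ already isolated in Lemma~\ref{sr-min}.
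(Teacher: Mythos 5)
Your proof is correct. It takes a recognizably different route from the paper's: the paper argues by induction on $\ell(v)$, peeling off a simple reflection $v = u s_i$ with $u \in \SR_\BG^\BL$ and showing at each step that the positive root $u(\alpha_i)$ has nonzero coefficient of $\beta$ (by a contradiction argument), whereas you invoke the closed-form identity $\rho - v\rho = \sum_{\alpha \in \Phi(v)} \alpha$ over the inversion set and show in one stroke that $\Phi(v)$ is disjoint from the positive roots of $\BL$. The two arguments are really the same computation packaged differently — the roots $u(\alpha_i)$ accumulated along the induction are exactly the elements of $\Phi(v)$, and both proofs ultimately rest on the same consequence of Lemma~\ref{simple-positive-lem}, namely that $v^{-1}$ sends every simple root of $\BL$ to a positive root. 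What your version buys is transparency: the inequality becomes a termwise bound with exactly one term per unit of length, and it isolates the clean structural statement that minimal-length coset representatives have inversion sets contained in the roots outside $\BL$ (which also makes visible exactly when equality holds, cf.\ Remark~\ref{cominuscule-rem-1}). The paper's inductive version has the mild advantage of not presupposing the inversion-set formula, using only the elementary relation \eqref{sirho}. All steps in your argument check out, including the justification that $\ell(s_j v) > \ell(v)$ forces $v^{-1}\alpha_j > 0$ and the treatment of the case $\ell(v)=1$ via Lemma~\ref{srlg}.
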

\begin{proof}
Let us prove this by induction on the length of $v$. In the case $v=1$ both sides of our
inequality are equal to zero. Now assume that $\ell(v)\ge 1$.
Recall that $v$ is the representative of minimal length in the coset $\BW_\BL v$. Thus, we
can write $v=u s_i$, where $s_i$ is a simple reflection, $\ell(u)=\ell(v)-1$, and
$u\in \SR_\BG^\BL$. We have
$$(\xi,\rho-v\rho)=(\xi,\rho-u\rho)+(\xi,u(\rho-s_i\rho))=(\xi,\rho-u\rho)+(\xi,u(\alpha_i)).$$
The first summand in the right-hand side is $\ge \ell(u)(\xi,\beta)$ by the induction assumption.
Thus, it suffices to check that $(\xi, u(\alpha_i))\ge (\xi,\beta)$.

Since $\ell(u s_i)=\ell(u)+1$,
the root $u(\alpha_i)$ is positive (by Lemma \ref{simple-positive-lem}), so we only have to check that
$\beta$ appears in $u(\alpha_i)$ with nonzero coefficient, i.e., that $(\xi, u(\alpha_i))\neq 0$.
Suppose $(\xi,u(\alpha_i))=0$. Then $u(\alpha_i)=\sum_{\alpha_j\ne \beta}n_j\alpha_j$ with
$n_j\ge 0$.
The fact that $v$ has minimal length in its
right $W_\BL$-cosets implies that $\ell(v^{-1}s_j)>\ell(v^{-1})$ for every $j$ such that $\alpha_j\ne \beta$. Hence,
all the roots $v^{-1}(\alpha_j)$ are positive, and therefore,
$$-\alpha_i=s_i\alpha_i=v^{-1}u(\alpha_i)=\sum_{\alpha_j\ne \beta}n_j v^{-1}(\alpha_j)$$
should be positive, so we get a contradiction.

If $\ell(v) = 1$ then $v = s_\beta$ by Lemma~\ref{srlg}, hence $\rho - v\rho = \beta$, and both sides are equal to $(\xi,\beta)$.
\end{proof}

\begin{remark}\label{cominuscule-rem-1}
Note that if the root $\beta$ is {\sf cominuscule}, which means that the coefficient of $\beta$
in any root of $\BG$ does not exceed $1$, then
$$(\xi,\rho - v\rho) = \ell(v)(\xi,\beta)$$
for all $v \in \SR_\BG^\BL$. Indeed, in the argument above we conclude that the coefficient
of $\beta$ in $u(\alpha_i)$ is precisely $1$, hence we obtain an inductive proof of the equality.
\end{remark}

\subsection{Equivariant bundles $\CU^\lambda$ and Borel--Bott--Weil Theorem}

Since $X = \BG/\BP$ is a homogeneous variety, the category $\Coh^\BG(X)$ of $\BG$-equivariant coherent sheaves on $X$
is equivalent to the category of representations of $\BP$:
\begin{equation}\label{GP-equiv}
\Coh^\BG(X) \cong \Rep-\BP,
\end{equation}
see~\cite{BK}, \cite{Hille} and references therein.
This equivalence is compatible with the structures of tensor abelian categories on both sides,
i.e.\ it preserves tensor products and duals.

For each $\lambda \in P_\BL^+$, a dominant weight of the Levi quotient $\BL = \BP/\BU$,
we consider $V_\BL^\lambda$, the corresponding irreducible representation of $\BL$.
Extending $V_\BL^\lambda$ to $\BP$ (via the projection $\BP \to \BL$) we obtain a representation of $\BP$,
and hence a $\BG$-equivariant vector bundle on $X$ which we denote by $\CU^\lambda$.
Since the above equivalence preserves the tensor structure, we deduce from Lemma~\ref{mult-hom} and~\eqref{vldual} that
\begin{equation}\label{tp-ul}
\CU^\lambda\otimes\CU^{\lambda'} = \bigoplus_{\mu \in P_\BL^+} \Hom(V_\BL^\mu,V_\BL^\lambda\otimes V_\BL^{\lambda'}) \otimes \CU^\mu,\qquad
(\CU^\lambda)^\vee \cong \CU^{-w_0^\BL\lambda}.
\end{equation}

Note that $V_\BL^\xi$ is a one-dimensional representation of $\BL$, hence $\CU^\xi$ is a line bundle on $X$.
Moreover, it is the ample generator of $\Pic X = \ZZ$, so we will denote it by $\CO_X(1)$.
Thus,
\begin{equation}\label{oxt}
\CO_X(t) = \CU^{t\xi}.
\end{equation}
Similarly, we will denote the bundle $\CU^{\lambda + t\xi}$ by $\CU^\lambda(t)$.


The cohomology groups of bundles $\CU^\lambda$ can be computed via the Borel--Bott--Weil Theorem.
Recall that a weight $\lambda \in P_\BG$ is called {\sf $\BG$-singular}\/ if it lies on a wall
of a Weyl chamber of $\BG$ (equivalently, if it is orthogonal to some root of $\BG$).
If a weight does not lie on a wall of a Weyl chamber it is called {\sf $\BG$-regular}.
If the group $\BG$ is clear from the context we will write just singular and regular.
The sets of $\BG$-singular and $\BG$-regular weights are invariant under the natural action
of the Weyl group $\BW_\BG$ on $P_\BG$.

\begin{theorem}\label{bbw}{\em (\cite[Thm.\ IV']{Bott})}\
Take any $\lambda \in P_\BL^+ \subset P_\BL = P_\BG$.
If $\lambda + \rho_\BG$ is $\BG$-singular then $H^\bullet(X,\CU^\lambda) = 0$.
If~$\lambda + \rho_\BG$ is $\BG$-regular then there exists a unique $w \in \BW_\BG$ such that $w(\lambda + \rho_\BG)$
is dominant. In this case
$$
H^{\ell(w)}(X,\CU^\lambda) = V_\BG^{w(\lambda+\rho_\BG) - \rho_\BG}
$$
and the other cohomology groups vanish. In particular, if $\lambda$ is $\BG$-dominant then $H^0(X,\CU^\lambda) = V_\BG^\lambda$.
\end{theorem}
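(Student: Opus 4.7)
The plan is to reduce the assertion on $X=\BG/\BP$ to its analogue on the full flag variety $\BG/\mathbf{B}$, and then to prove that analogue by induction on the length of a Weyl element, using the $\PP^1$-fibration trick for minimal parabolics. The reduction is immediate: let $\pi\colon \BG/\mathbf{B}\to\BG/\BP$ be the natural projection, whose fibres are the full flag variety $\BL/\mathbf{B}_\BL$ of $\BL$. For $\lambda\in P_\BL^+$ write $\CL_\lambda$ for the line bundle on $\BG/\mathbf{B}$ associated with the character $\lambda$ of $\mathbf{B}$. The dominant case of Borel--Weil for $\BL$ on the fibres, combined with flat base change, gives $\pi_*\CL_\lambda=\CU^\lambda$ and $R^i\pi_*\CL_\lambda=0$ for $i>0$, so that $H^\bullet(X,\CU^\lambda)\cong H^\bullet(\BG/\mathbf{B},\CL_\lambda)$.

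The key ingredient on $\BG/\mathbf{B}$ is the following simple-reflection lemma. For each simple root $\alpha_i$ of $\BG$ let $\mathbf{P}_i\supset\mathbf{B}$ be the associated minimal parabolic and $p_i\colon\BG/\mathbf{B}\to\BG/\mathbf{P}_i$ the resulting $\PP^1$-fibration; the restriction of $\CL_\mu$ to a fibre is $\CO_{\PP^1}(\langle\mu,\alpha_i^\vee\rangle)$. Two statements follow from the Leray spectral sequence for $p_i$ combined with relative Serre duality (the relative dualizing sheaf along $p_i$ being a specific twist by $-\alpha_i$). If $\langle\mu+\rho,\alpha_i^\vee\rangle=0$, then $\CL_\mu$ restricts to $\CO_{\PP^1}(-1)$ on each fibre, so $Rp_{i*}\CL_\mu=0$ and hence $H^\bullet(\BG/\mathbf{B},\CL_\mu)=0$. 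If $\langle\mu+\rho,\alpha_i^\vee\rangle>0$, setting $\mu':=s_i\cdot\mu=s_i(\mu+\rho)-\rho$, there is a canonical $\BG$-equivariant isomorphism
\begin{equation*}
H^{j+1}(\BG/\mathbf{B},\CL_{\mu'})\cong H^{j}(\BG/\mathbf{B},\CL_\mu)\qquad\text{for all }j.
\end{equation*}

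With the lemma in hand, suppose first that $\lambda+\rho$ is regular and let $w\in\BW_\BG$ be the unique element with $w(\lambda+\rho)$ strictly dominant. Fix a reduced expression $w=s_{i_\ell}\cdots s_{i_1}$ and set $\lambda_k=s_{i_k}\cdots s_{i_1}\cdot\lambda$. The standard inversion characterization of reduced expressions guarantees $\langle\lambda_{k-1}+\rho,\alpha_{i_k}^\vee\rangle<0$ at every stage, so the lemma applies with $\mu=\lambda_k$ and $\mu'=\lambda_{k-1}$, giving $H^{j+1}(\BG/\mathbf{B},\CL_{\lambda_{k-1}})\cong H^{j}(\BG/\mathbf{B},\CL_{\lambda_k})$. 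Iterating yields $H^{j+\ell(w)}(\BG/\mathbf{B},\CL_\lambda)\cong H^{j}(\BG/\mathbf{B},\CL_{w\cdot\lambda})$, and since $w\cdot\lambda$ is dominant the classical Borel--Weil theorem identifies the right-hand side with $V_\BG^{w\cdot\lambda}$ in degree $0$ and zero otherwise, yielding the claimed formula. The singular case, in which $\lambda+\rho$ is orthogonal to some positive root $\gamma$, reduces similarly: one conjugates $\gamma$ to a simple root by a Weyl element and chooses a reduced expression that passes through a step where the singular case of the lemma applies, forcing the vanishing.

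The main obstacle is to justify the degree-shift isomorphism in the second case of the lemma. The Leray spectral sequence degenerates because only one of $R^0p_{i*}\CL_\mu$ and $R^1p_{i*}\CL_\mu$ is nonzero depending on the sign of $\langle\mu,\alpha_i^\vee\rangle$, but identifying the resulting bundle on $\BG/\mathbf{P}_i$ with the one attached to the $\mathbf{P}_i$-dominant representative of the pair $\{\mu,\mu'\}$, and tracking through relative Serre duality to see that the two cohomologies on $\BG/\mathbf{B}$ differ by exactly one degree, requires a careful local analysis on each $\PP^1$-fibre. Once this is established, the inductive arguments in the preceding paragraph become routine.
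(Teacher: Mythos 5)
The paper does not prove this statement at all: it is quoted verbatim as Bott's theorem, with a citation to \cite{Bott}, and is used as a black box throughout (via Corollary~\ref{extuu} and Proposition~\ref{extuu-g}). So there is no internal proof to compare yours against. What you have written is a correct outline of the standard proof of Borel--Bott--Weil due to Demazure (which, as it happens, appears in the paper's bibliography as the item ``A very simple proof of Bott's theorem''): reduce to $\BG/\mathbf{B}$ by pushing forward along the fibration with fibre $\BL/\mathbf{B}_\BL$ (where ordinary Borel--Weil applies fibrewise because $\lambda$ is $\BL$-dominant), then induct on $\ell(w)$ using the $\PP^1$-fibrations $\BG/\mathbf{B}\to\BG/\mathbf{P}_i$. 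Your two cases of the key lemma are stated correctly ($\langle\mu+\rho,\alpha_i^\vee\rangle=0$ forces $\CO_{\PP^1}(-1)$ on fibres, hence total vanishing; $\langle\mu+\rho,\alpha_i^\vee\rangle>0$ gives the one-step degree shift), and the inversion-set argument showing that each step of a reduced expression for $w$ lands in the shift case is the right one. The only substantive point you defer --- the identification $R^1p_{i*}\CL_{s_i\cdot\mu}\cong R^0p_{i*}\CL_\mu$ underlying the degree-shift isomorphism --- is genuinely the crux of the proof, but it is true and is exactly what Demazure establishes (via an $\SL_2$-computation on the fibres, or equivalently via the short exact sequences relating $\CL_\mu$ to $p_i^*p_{i*}\CL_\mu$); since you flag it explicitly rather than assert it without justification, I would not call this a gap so much as the expected remaining work. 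The singular case is sketched a little loosely --- the cleanest formulation is to take $v$ of minimal length with $v(\lambda+\rho)$ dominant, observe that $v(\lambda+\rho)$ is then orthogonal to some simple root so that the vanishing case of the lemma applies at the top, and propagate the vanishing back down through the shift isomorphisms --- but this is a presentational matter, not a flaw in the approach.
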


Let $P_\BG^\reg$ denote the set of all regular weights of $\BG$ and $P_\BG^\reg - \rho_\BG$ denote the
set of all weights $\mu \in P_\BG$ such that $\mu + \rho_\BG \in P_\BG^\reg$. Further, for each $\mu \in P_\BG^\reg - \rho_\BG$
denote by $w_\mu$ the unique element of the Weyl group $\BW_\BG$ such that $w_\mu(\mu + \rho_\BG)$ is $\BG$-dominant.
Combining the theorem above with~\eqref{tp-ul} and Lemma~\ref{tens-conv} we deduce

\begin{corollary}\label{extuu}
We have
$$
\Ext^\bullet(\CU^\lambda,\CU^{\lambda'}) =
\bigoplus_{\mu \in \Conv(\lambda' - w\lambda)_{w\in\BW_\BL} \cap P_\BL^+ \cap (P_\BG^\reg - \rho_\BG)}
\Hom(V_\BL^\mu,V_\BL^{\lambda'}\otimes V_\BL^{-w_0^\BL\lambda}) \otimes V_\BG^{w_\mu(\mu + \rho_\BG) - \rho_\BG}[-\ell(w_\mu)],
$$
where $[-\ell(w_\mu)]$ stands for cohomological shift.
\end{corollary}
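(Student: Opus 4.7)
The plan is to compute $\Ext^\bullet(\CU^\lambda,\CU^{\lambda'})$ as the cohomology of an equivariant vector bundle and then decompose summand by summand via the Borel--Bott--Weil theorem. The three previously established tools --- the duality/tensor formula \eqref{tp-ul}, Theorem~\ref{bbw}, and the convexity bound of Lemma~\ref{tens-conv} --- combine directly; the proof is essentially an assembly argument.

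The first step is to rewrite
\[
\Ext^\bullet(\CU^\lambda,\CU^{\lambda'}) = H^\bullet\bigl(X,\,(\CU^\lambda)^\vee \otimes \CU^{\lambda'}\bigr),
\]
use the duality formula in \eqref{tp-ul} to identify $(\CU^\lambda)^\vee$ with $\CU^{-w_0^\BL\lambda}$, and apply the tensor product formula in \eqref{tp-ul} to obtain
\[
(\CU^\lambda)^\vee \otimes \CU^{\lambda'} \;=\; \bigoplus_{\mu \in P_\BL^+}\Hom\bigl(V_\BL^\mu,\,V_\BL^{\lambda'}\otimes V_\BL^{-w_0^\BL\lambda}\bigr)\otimes \CU^\mu.
\]

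The second step is to take $H^\bullet(X,-)$ and apply Theorem~\ref{bbw} termwise. Each $\CU^\mu$ contributes nothing unless $\mu+\rho_\BG$ is $\BG$-regular, in which case its cohomology is concentrated in degree $\ell(w_\mu)$ and equals $V_\BG^{w_\mu(\mu+\rho_\BG)-\rho_\BG}$. Substituting reproduces the direct sum written in the statement, except that the index $\mu$ a priori ranges over all of $P_\BL^+ \cap (P_\BG^\reg - \rho_\BG)$.

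The third step is to restrict the range of $\mu$ by means of Lemma~\ref{tens-conv}: the multiplicity $\dim\Hom(V_\BL^\mu,V_\BL^{\lambda'}\otimes V_\BL^{-w_0^\BL\lambda})$ vanishes unless $\mu$ lies in the convex hull $\Conv(\lambda'+w(-w_0^\BL\lambda))_{w\in\BW_\BL}$. The only piece of genuine bookkeeping, which I would flag as the main (if modest) obstacle, is the identification of this convex hull with $\Conv(\lambda'-w\lambda)_{w\in\BW_\BL}$; this follows because right multiplication by $w_0^\BL$ is a bijection of $\BW_\BL$, so the sets $\{w(-w_0^\BL\lambda) : w\in\BW_\BL\}$ and $\{-w\lambda : w\in\BW_\BL\}$ coincide and the two convex hulls agree. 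Intersecting with the regularity condition coming from Borel--Bott--Weil then yields precisely the index set in the statement.
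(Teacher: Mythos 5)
Your argument is exactly the one the paper intends: pass to $H^\bullet(X,(\CU^\lambda)^\vee\otimes\CU^{\lambda'})$, decompose via \eqref{tp-ul}, apply Theorem~\ref{bbw} termwise, and cut down the index set with Lemma~\ref{tens-conv} (whose second clause already gives the convex hull in the form $\Conv(\lambda'-w\lambda)_{w\in\BW_\BL}$, so your bookkeeping step is consistent with it). The proof is correct and matches the paper's, which states the corollary as an immediate combination of these same ingredients.
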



We will also need a way to compute $\Ext$-groups in the derived category $\D^\BG(X)$ of $\BG$-equivariant
coherent sheaves on $X$.
Let us denote these $\Ext$ groups between $F,F' \in \D^\BG(X)$ by $\Ext_\BG^i(F,F') = \Hom_{\D^\BG(X)}(F,F'[i])$.

\begin{proposition}\label{extuu-g}
One has

\noindent$(i)$
$\Ext^i_\BG(F,F') = (\Ext^i(F,F'))^\BG$, the space of $\BG$-invariants in the $\Ext$-group between $F$ and $F'$ in $\D(X)$.

\noindent$(ii)$
$\Ext^\bullet_\BG(\CU^\lambda,\CU^{\lambda'}) = \bigoplus_{v \in \SR_\BG^\BL} \Hom(V_\BL^{v\rho - \rho},V_\BL^{\lambda'}\otimes V_\BL^{-w_0^\BL\lambda})[-\ell(v)]$.

\noindent$(iii)$
$\Ext^1_\BG(\CU^\lambda,\CU^{\lambda'}) = \Hom(V_\BL^{-\beta},V_\BL^{\lambda'}\otimes V_\BL^{-w_0^\BL\lambda})$.
\end{proposition}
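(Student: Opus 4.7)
Plan: All three parts reduce to Corollary~\ref{extuu} once (i) is in place. For (i) I would invoke the standard fact that the forgetful functor $\D^\BG(X)\to\D(X)$ is faithful and that $\Ext^i(F,F')$ carries a natural $\BG$-action (induced from the equivariant structures on $F$ and $F'$) whose invariants recover $\Ext^i_\BG(F,F')$. This is the equivariant analogue of taking global sections and follows from the tensor equivalence $\Coh^\BG(X)\simeq\Rep\BP$ together with the adjunction between forgetting and inducing the $\BG$-equivariant structure.

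For (ii), I would apply (i) to the formula of Corollary~\ref{extuu}. Taking $\BG$-invariants of the direct sum retains precisely the summands for which $V_\BG^{w_\mu(\mu+\rho)-\rho}$ is the trivial representation, that is, $w_\mu(\mu+\rho)=\rho$. Writing $v:=w_\mu^{-1}$ gives $\mu=v\rho-\rho$, with $\ell(w_\mu)=\ell(v^{-1})=\ell(v)$. Since $\rho$ is strictly $\BG$-dominant, the condition $\mu+\rho=v\rho\in P_\BG^{\reg}$ is automatic. The condition $\mu\in P_\BL^+$ is equivalent to $v\in\SR_\BG^\BL$: if $\mu$ is $\BL$-dominant then $v\rho=\mu+\rho$ is $\BL$-dominant and also $\BG$-regular (hence, since every $\BL$-wall is a $\BG$-wall, strictly $\BL$-dominant), so $v$ sends the interior of $P_\BG^+$ into the interior of $P_\BL^+$ and thus $v\in\SR_\BG^\BL$; conversely, for $v\in\SR_\BG^\BL$ Lemma~\ref{simple-positive-lem} forces $v^{-1}\alpha_j$ to be a positive root for every simple $\BL$-root $\alpha_j$, and since $|v^{-1}\alpha_j|=|\alpha_j|$ the standard bound $(\rho,\gamma)\ge(\rho,\alpha_j)$ for a positive root $\gamma$ of the same length as a simple root $\alpha_j$ yields $(v\rho-\rho,\alpha_j)\ge 0$, so $v\rho-\rho$ is indeed $\BL$-dominant and $V_\BL^{v\rho-\rho}$ is defined. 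The remaining constraint $\mu\in\Conv(\lambda'-w\lambda)_{w\in\BW_\BL}$ from Corollary~\ref{extuu} is automatic whenever the Hom space is nonzero (Lemma~\ref{tens-conv}), so extending the sum to all of $\SR_\BG^\BL$ is harmless.

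Part (iii) is the immediate specialization of (ii) to $\ell(v)=1$: by Lemma~\ref{srlg}(1) the unique length-one element of $\SR_\BG^\BL$ is $v=s_\beta$, and \eqref{sirho} gives $s_\beta\rho-\rho=-\beta$. The only genuinely nontrivial point in the whole argument is the equivalence $\mu\in P_\BL^+\iff v\in\SR_\BG^\BL$ in (ii) and, within it, the inequality $(\rho,v^{-1}\alpha_j)\ge(\rho,\alpha_j)$; I expect this short root-system verification to be the main obstacle, while the rest of the proof is bookkeeping.
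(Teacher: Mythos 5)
Your proposal is correct and follows essentially the same route as the paper: part (i) is the standard identity $\Ext^i_\BG(F,F')=(\Ext^i(F,F'))^\BG$ (valid because taking $\BG$-invariants is exact for reductive $\BG$), part (ii) is obtained by extracting the trivial summands $V_\BG^{w_\mu(\mu+\rho)-\rho}$ from Corollary~\ref{extuu}, and part (iii) is the specialization to $\ell(v)=1$ via Lemma~\ref{srlg}. The point you single out as the main obstacle --- that $v\rho-\rho\in P_\BL^+$ precisely for $v\in\SR_\BG^\BL$, via $(\rho,v^{-1}\alpha_j)\ge(v^{-1}\alpha_j)^2/2=(\rho,\alpha_j)$ --- is a correct verification of something the paper absorbs into the definition of special representatives (cf.\ the proof of Lemma~\ref{sr-min}), so it adds detail rather than changing the argument.
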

\begin{proof}
(i) This follows from $\Hom_\BG(F,F') = \Hom(F,F')^\BG$ because the functor of invariants is exact
(since the group $\BG$ is reductive).

\noindent
(ii) Note that $(V_\BG^\nu)^\BG$ is zero for $\nu \ne 0$
and $\kk$ for $\nu = 0$, hence $\mu$ from the formula of Corollary~\ref{extuu} contributes to $\Ext_\BG$
if and only if $w_\mu(\mu + \rho) - \rho = 0$, that is if $\mu = v\rho - \rho$ for some $v \in \BW_\BG$.
Since $\mu$ should be $\BL$-dominant, the element $v$ should be a special representative, that is $v \in \SR_\BG^\BL$.
Of course, if $v\rho - \rho \not\in \Conv(\lambda'-w\lambda)$ then $\Hom$ is zero, so we can forget
this restriction.

\noindent
(iii) This follows from (ii) using the fact that by Lemma~\ref{srlg}(1) the only special representative
of length 1 is $s_\beta$ and $s_\beta\rho = \rho - \beta$.
\end{proof}


%

\subsection{The canonical class}\label{ss-can}

Let $\BG$ be a semisimple algebraic group (not necessarily simple).
Recall that by~\cite[sec.~1.5]{Hille}, the canonical class of $X = \BG/\BP$ is the line bundle corresponding to the weight
equal to minus the sum of all positive roots of $\BG$ which are not roots of $\BL$. The following formula is also
well known but we add a proof for completeness.

\begin{lemma}\label{canclass}
The canonical class $\omega_X$ of $X=\BG/\BP$ is isomorphic to the line bundle $\CU^{w_0^\BL w_0^\BG \rho - \rho}$.
\end{lemma}
\begin{proof}
Recall that $\rho$ is half the sum of all positive roots of $\BG$. As $w_0^\BG$ takes all positive roots of $\BG$ to negative roots
and $w_0^\BL$ takes all negative roots of $\BL$ to positive roots of $\BL$, it follows that $w_0^\BL w_0^\BG\rho$ is half the sum of all
positive roots of $\BL$ minus half the sum of all positive roots of $\BG$ which are not roots of $\BL$. So, subtracting $\rho$ we obtain
minus the sum of all positive roots of $\BG$ which are not the roots of $\BL$.
\end{proof}

We will also need the following more explicit formula.

\begin{lemma}\label{index}
Let $\beta$ be the simple root corresponding to $\BP$ and $\xi$ the corresponding fundamental weight.
There exists a maximal root in the $\BW_\BL$-orbit of $\beta$, i.e., a positive
root $\bar\beta\in \BW_\BL\beta$ satisfying
$\bar\beta-w\bar\beta\in Q_\BG^+$ for any $w\in\BW_\BL$.
Then $\omega_X = \CO_X(-r)=\CU^{-r\xi}$, where
$$
r = (\rho,\bar\beta + \beta)/(\xi,\beta).
$$
\end{lemma}
\begin{proof}
The Picard group of $\BG/\BP$ is generated by $\CU^\xi$, hence $\omega_X \cong \CU^{w_0^\BL w_0^\BG \rho - \rho} \cong \CU^{-k\xi}$
for some $k \in \ZZ$. To find $k$ we compute the scalar product with $\beta$. We get
$$
k = (\rho - w_0^\BL w_0^\BG \rho,\beta)/(\xi,\beta).
$$
Further $(- w_0^\BL w_0^\BG \rho,\beta) = (w_0^\BL \rho,\beta) = (\rho,w_0^\BL \beta)$ by~\eqref{worho}. 
Note that $\beta$ considered as a weight of $\BL$ is antidominant (its scalar products with the simple roots of $\BL$
are nonpositive), hence $w_0^\BL \beta$ is $\BL$-dominant. By Lemma~\ref{w-dom} we conclude that $\bar\beta := w_0^\BL\beta$ 
is the maximal root in the $\BW_\BL$-orbit of $\beta$. Finally, it is a positive root since
$(\xi,w_0^\BL\beta) = (w_0^\BL\xi,\beta) = (\xi,\beta) > 0$ since $\xi$ is $\BW_\BL$-invariant.
\end{proof}

\begin{remark}\label{barbeta}
By Lemma~\ref{beta-orbit-lem},
$\bar\beta$ is in fact the maximal root
of the same length as $\beta$ and with the coefficient of $\beta$ equal to $1$.
This gives a very easy way to find $\bar\beta$
just by looking at the table of roots.
\end{remark}

\begin{remark}\label{rem-ind}
The integer $r$ is called {\sf the index} of the Grassmannian $\BG/\BP$.
\end{remark}

The following consequence of the above formula is useful.

\begin{corollary}\label{rinc}
Let $\BP$ be a maximal parabolic subgroup in $\BG$ and $\beta$ the corresponding simple root.
Let $\BH \subset \BH' \subset \BG$ be a pair of semisimple subgroups corresponding to a pair of
Dynkin subdiagrams $D_\BH \subset D_{\BH'} \subset D_\BG$ such that $\beta \in D_\BH$ and there is
a simple root $\alpha \in D_{\BH'}\setminus D_\BH$ adjacent to the connected component
of $\beta$ in~$D_\BH$.
Let $r$ and $r'$ be the indices of the Grassmannians $\BH/(\BH\cap \BP)$ and $\BH'/(\BH' \cap \BP)$
respectively. Then $r' > r$.
\end{corollary}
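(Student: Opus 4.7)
The plan is to apply Lemma~\ref{index} to both Grassmannians and reduce the inequality $r'>r$ to a strict comparison of coroot heights of two maximal roots in Weyl orbits.

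First, I would apply Lemma~\ref{index} to $\BH/(\BH\cap\BP)$ and $\BH'/(\BH'\cap\BP)$. Denote by $\bar\beta_\BH$ (resp.\ $\bar\beta_{\BH'}$) the maximal root in the $\BW_{\BH\cap\BL}$-orbit (resp.\ $\BW_{\BH'\cap\BL}$-orbit) of $\beta$; by the proof of Lemma~\ref{index} these equal $w_0^{\BH\cap\BL}\beta$ and $w_0^{\BH'\cap\BL}\beta$ respectively, each of the same length as $\beta$. By~\eqref{ojai} we have $(\xi_\BH,\beta)=(\xi_{\BH'},\beta)=\beta^2/2$, and using $(\rho,\alpha_i^\vee)=1$ for every simple root in the ambient group, a direct calculation shows that for any root $\gamma$ of the same length as $\beta$, the pairing $(\rho,\gamma)$ equals $\beta^2/2$ times the coroot height $N(\gamma):=\sum d_i$, where $\gamma^\vee=\sum d_i\alpha_i^\vee$. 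Thus Lemma~\ref{index} simplifies to
$$
r=N(\bar\beta_\BH)+1,\qquad r'=N(\bar\beta_{\BH'})+1,
$$
and the desired inequality becomes $N(\bar\beta_{\BH'})>N(\bar\beta_\BH)$.

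Next, I would show that the support of $\bar\beta_\BH$ is the entire connected component $C$ of $\beta$ in $D_\BH$. Indeed, were some vertex $\alpha_i\in C$ adjacent to but outside the support, one computes $(\bar\beta_\BH,\alpha_i^\vee)<0$ (the nonzero contributions to the sum come from vertices of the support adjacent to $\alpha_i$, each yielding a nonpositive term, and at least one is strictly negative), whence $s_{\alpha_i}(\bar\beta_\BH)=\bar\beta_\BH+k_i\alpha_i$ with $k_i>0$ would be a strictly larger element of the $\BW_{\BH\cap\BL}$-orbit of $\beta$ in $Q^+$-order (using Lemma~\ref{w-dom}), contradicting maximality. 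The hypothesis says $\alpha\in D_{\BH'}\setminus D_\BH$ is adjacent to $C$, hence to the support of $\bar\beta_\BH$, so the same argument yields $(\bar\beta_\BH,\alpha^\vee)<0$ and $s_\alpha(\bar\beta_\BH)=\bar\beta_\BH+k\alpha$ with $k>0$. Since $\alpha\ne\beta$, $s_\alpha\in\BW_{\BH'\cap\BL}$, so $s_\alpha(\bar\beta_\BH)$ lies in the $\BW_{\BH'\cap\BL}$-orbit of $\beta$; by maximality of $\bar\beta_{\BH'}$ in that orbit, $\bar\beta_{\BH'}-\bar\beta_\BH\ge k\alpha$ in $Q^+$-order. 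A short rescaling computation using that $\beta$, $\bar\beta_\BH$ and $\bar\beta_{\BH'}$ all have the same length then gives $N(\bar\beta_{\BH'})-N(\bar\beta_\BH)>0$.

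The main technical obstacle is the non-simply-laced coroot bookkeeping, since $\rho_\BH$ and $\rho_{\BH'}$ differ as weights and one must carefully distinguish roots from coroots. This is handled uniformly by the equal-length property of $\beta$, $\bar\beta_\BH$ and $\bar\beta_{\BH'}$, which makes passage from root to coroot a common rescaling by $2/\beta^2$, and by the compatibility of simple coroots under the inclusion $\BH\subset\BH'$, which makes $N$ unambiguous.
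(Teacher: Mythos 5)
Your proof is correct and follows essentially the same route as the paper: both rest on showing that the support of $\bar\beta_\BH$ is the whole connected component of $\beta$ in $D_\BH$, deducing $(\alpha,\bar\beta_\BH)<0$, and comparing $s_\alpha(\bar\beta_\BH)=\bar\beta_\BH+k\alpha$ with $\bar\beta_{\BH'}$ via maximality in the $Q^+$-order. The only difference is cosmetic: you rephrase the index via coroot heights $N(\gamma)$, whereas the paper compares $(\rho,\bar\beta_{\BH'})$ with $(\rho,\bar\beta_\BH)$ directly, which avoids the root/coroot rescaling bookkeeping.
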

\begin{proof}
Let $\BM = \BL \cap \BH$ and $\BM' = \BL \cap \BH'$. Let $\bar\beta$ be the maximal root in the $\BW_\BM$-orbit of $\beta$
and $\bar\beta'$ the maximal root in the $\BW_{\BM'}$-orbit of $\beta$. Let $C\subset D_\BH$ denote the connected component of $\beta$ in $D_\BH$, and let $\alpha$ be a simple root of $\BH'$
adjacent to $C$. Note that since $\bar\beta$ is maximal, the coefficient
of any simple root of $C$ in $\bar\beta$ is strictly positive.
In particular, the coefficients of simple roots in $C$ adjacent to $\alpha$ are positive, 
hence the scalar product $(\alpha,\bar\beta)$
is strictly negative. Therefore,
$$
s_\alpha(\bar\beta) = \bar\beta - 2\frac{(\alpha,\bar\beta)}{\alpha^2}\alpha
$$
has a strictly positive coefficient of $\alpha$.
Therefore, $(\rho,\bar\beta') \ge (\rho,s_\alpha(\bar\beta)) \ge (\rho,\bar\beta) + (\rho,\alpha) > (\rho,\bar\beta)$
since $(\rho,\alpha) = \alpha^2/2 > 0$. Now the assertion follows from Lemma \ref{index}.
\end{proof}

\section{Exceptional blocks}\label{ss-eb}

Let $\BG$ be a simple simply connected algebraic group and $\BP \subset \BG$ a maximal parabolic subgroup.
We take $X = \BG/\BP$ and denote by $\D(X)$ the bounded derived category of coherent sheaves on $X$
and $\D^\BG(X)$ --- the bounded derived category of $\BG$-equivariant coherent sheaves.
We denote by $\Forget:\D^\BG(X) \to \D(X)$ the forgetful functor.

We denote as usual $\Ext^i(F,F') = \Hom(F,F'[i])$, $\Ext$-groups in category $\D(X)$.
Similarly, $\Ext$-groups in the equivariant category $\D^\BG(X)$ are denoted by $\Ext_\BG^i(F,F')$.
Recall that $\Ext_\BG^i(F,F') = \Ext^i(F,F')^\BG$ by Proposition~\ref{extuu-g}(i).
Note that the forgetful functor induces a linear map
$$
\Forget:\Ext_\BG^i(F,F') \to \Ext^i(F,F').
$$

For each triple of $\BL$-dominant weights $\lambda,\mu,\nu \in P_\BL^+$ consider the map
$$
\Ext^\bullet_\BG(\CU^\lambda,\CU^\nu) \otimes \Hom(\CU^\nu,\CU^\mu)\to
\Ext^\bullet(\CU^\lambda,\CU^\mu),
$$
the composition of the action of the forgetful functor with the Yoneda multiplication.

Now we can introduce the main notion of this section.

\begin{definition}\label{def-eb}
A set of $\BL$-dominant weights $\SB \subset P_\BL^+$ is called an {\sf exceptional block}\/ if
for all $\lambda,\mu \in \SB$ the canonical map
\begin{equation}\label{he}
\bigoplus_{\nu \in \NB} \Ext^\bullet_\BG(\CU^\lambda,\CU^\nu) \otimes \Hom(\CU^\nu,\CU^\mu)\to
\Ext^\bullet(\CU^\lambda,\CU^\mu)
\end{equation}
is an isomorphism.
\end{definition}

The goal of this section is to show that for any exceptional block $\SB \subset P_\BL^+$
the category
$$
\D_\SB(X) = \langle \CU^\lambda \rangle_{\lambda \in \SB} \subset \D(X)
$$
generated in $\D(X)$ by the bundles $\CU^\lambda$ with $\lambda \in \SB$, has a full exceptional collection.

%

\subsection{The $\xi$-ordering}\label{ss-xi-ord}

Recall that $\beta$ is the simple root of $\BG$ corresponding to the maximal parabolic $\BP$
and $\xi$ is the corresponding fundamental weight. By Lemma~\ref{xi-inv} it is invariant
under the action of $\BW_\BL$.

Consider the partial ordering on the weight lattice $P_\BL$ defined by:
\begin{equation}\label{sp-ord}
\begin{array}{lll}
\lambda \prec \mu & \qquad & \text{if $(\xi,\lambda) < (\xi,\mu)$}\\
\lambda \preceq \mu & \qquad & \text{if either $\lambda \prec \mu$ or $\lambda = \mu$}
\end{array}
\end{equation}
We will call it {\sf the $\xi$-ordering}.

\begin{lemma}\label{uuord}
If $\Hom(\CU^\lambda,\CU^\mu) \ne 0$ then $\lambda \preceq \mu$.
\end{lemma}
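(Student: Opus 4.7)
The plan is to read off the vanishing from the explicit description of $\Hom$-spaces given by the Borel--Bott--Weil computation in Corollary~\ref{extuu}, combined with the $\BW_\BL$-invariance of $\xi$ from Lemma~\ref{xi-inv}.

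First I would apply Corollary~\ref{extuu} with cohomological degree zero: the space $\Hom(\CU^\lambda,\CU^\mu)$ is a sum of terms indexed by those $\nu\in\Conv(\mu-w\lambda)_{w\in\BW_\BL}\cap P_\BL^+$ for which $w_\nu=1$, i.e.\ $\nu\in P_\BG^+$, and such that $\Hom(V_\BL^\nu,V_\BL^\mu\otimes V_\BL^{-w_0^\BL\lambda})\ne 0$. Assuming $\Hom(\CU^\lambda,\CU^\mu)\ne 0$, pick such a $\nu$.

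Next, I would compute the pairing $(\xi,\nu)$. By Lemma~\ref{xi-inv} the weight $\xi$ is fixed by $\BW_\BL$, so for every $w\in\BW_\BL$ one has $(\xi,\mu-w\lambda)=(\xi,\mu)-(w^{-1}\xi,\lambda)=(\xi,\mu)-(\xi,\lambda)$. Since the pairing is affine linear, this value is constant on the convex hull, so in particular $(\xi,\nu)=(\xi,\mu)-(\xi,\lambda)$. On the other hand, $\xi$ is a fundamental weight, hence $\BG$-dominant, and $\nu\in P_\BG^+$, so by Lemma~\ref{scal-dom} we have $(\xi,\nu)\ge 0$. This already gives $(\xi,\lambda)\le(\xi,\mu)$.

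It remains to dispose of the equality case: if $(\xi,\lambda)=(\xi,\mu)$ then $(\xi,\nu)=0$ with $\nu\in P_\BG^+$. Since $\BG$ is simple, its Dynkin diagram is connected, and the inverse Cartan matrix has strictly positive entries, so $(\xi,\omega_i)>0$ for every fundamental weight $\omega_i$ of $\BG$. Writing $\nu=\sum c_i\omega_i$ with $c_i\ge 0$ forces $c_i=0$ for every $i$, so $\nu=0$. Then the surviving factor $\Hom(V_\BL^0,V_\BL^\mu\otimes(V_\BL^\lambda)^\vee)$ is nonzero if and only if $V_\BL^\mu\cong V_\BL^\lambda$, i.e.\ $\mu=\lambda$. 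In either case $\lambda\preceq\mu$. No real obstacle is expected; the only subtlety is remembering that $\preceq$ requires strict inequality \emph{or} honest equality of weights, which is exactly why the equality case has to be traced back to $\nu=0$ and the nondegeneracy of the $\BL$-Hom pairing.
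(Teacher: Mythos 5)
Your proof is correct and follows essentially the same route as the paper's: both extract from Corollary~\ref{extuu} a $\BG$-dominant weight contributing to $\Ext^0$, use the $\BW_\BL$-invariance of $\xi$ (Lemma~\ref{xi-inv}) to see that pairing with $\xi$ detects the difference $(\xi,\mu)-(\xi,\lambda)$, and invoke strict dominance of $\xi$ (Lemma~\ref{scal-dom}) to force that weight to vanish in the equality case, whence $\mu=\lambda$. The only cosmetic difference is that you pair $\xi$ directly against the indexing weight $\nu\in\Conv(\mu-w\lambda)$, while the paper first transposes via Lemma~\ref{tens-conv} to place $\mu\in\Conv(\lambda+w\kappa)$; the computation is the same.
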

\begin{proof}
By Corollary~\ref{extuu} if $\Hom(\CU^\lambda,\CU^\mu) \ne 0$ then there is a
non-trivial $\BL$-map $V_\BL^\kappa \subset (V_\BL^\lambda)^\vee \otimes V_\BL^\mu$
for some $\BG$-dominant weight $\kappa$. This means that there is a non-trivial $\BL$-map
$V_\BL^\kappa\otimes V_\BL^\lambda \to V_\BL^\mu$, hence
$\mu \in \Conv(\lambda + w\kappa)_{w \in \BW_\BL}$ by Lemma~\ref{tens-conv}.
But for any $w \in \BW_\BL$ we have
$$
(\xi,\lambda + w\kappa) - (\xi,\lambda) = (\xi,w\kappa) = (w^{-1}\xi,\kappa) = (\xi,\kappa) \ge 0,
$$
where the last inequality follows from Lemma~\ref{scal-dom} since both $\xi$ and $\kappa$ are $\BG$-dominant.
Moreover, since $\BG$ is simple, the inequality is strict
unless $\kappa = 0$. Thus, we see that $\lambda \prec \mu$ unless $\kappa = 0$.
But if $\kappa = 0$ then $V_\BL^\kappa\otimes V_\BL^\lambda = V_\BL^\lambda$, hence $\mu = \lambda$.
\end{proof}

Thus, we see that $\Hom$ groups between $\CU^\lambda$ in $\D(X)$ go in the direction of the $\xi$-ordering.
It turns out that $\Ext$ groups in the equivariant category go in the opposite direction!

\begin{lemma}\label{uugord}
If $\Ext^\bullet_\BG(\CU^\lambda,\CU^\mu) \ne 0$ then $\mu \preceq \lambda$.
More precisely, if $\Ext^i_\BG(\CU^\lambda,\CU^\mu)\neq 0$ then
$$(\xi,\lambda)-(\xi,\mu)\ge i(\xi,\beta),$$
and for $i=1$ this inequality becomes an equality.
Also, each bundle $\CU^\lambda$ is exceptional in $\D^\BG(X)$.
\end{lemma}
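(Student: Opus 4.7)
The plan is to reduce everything to Proposition~\ref{extuu-g}(ii), which expresses
$$
\Ext^\bullet_\BG(\CU^\lambda,\CU^\mu) = \bigoplus_{v\in\SR_\BG^\BL} \Hom(V_\BL^{v\rho-\rho},\,V_\BL^\mu\otimes V_\BL^{-w_0^\BL\lambda})[-\ell(v)],
$$
and then extract the $\xi$-ordering information purely by convex-hull/weight bookkeeping. The exceptionality is then an immediate degenerate case.

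First I would note that if $\Ext^i_\BG(\CU^\lambda,\CU^\mu)\neq 0$ then there exists $v\in\SR_\BG^\BL$ with $\ell(v)=i$ such that $\mult(V_\BL^{v\rho-\rho},\,V_\BL^\mu\otimes (V_\BL^\lambda)^\vee)>0$ (using $V_\BL^{-w_0^\BL\lambda}\cong(V_\BL^\lambda)^\vee$ from~\eqref{vldual} and Lemma~\ref{mult-hom}). By the second part of Lemma~\ref{tens-conv}, this yields
$$
v\rho-\rho \in \Conv(\mu - w\lambda)_{w\in\BW_\BL}.
$$

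Next I would pair with $\xi$. Since $\xi$ is $\BW_\BL$-invariant (Lemma~\ref{xi-inv}), every point of the right-hand convex hull has the same $\xi$-pairing $(\xi,\mu)-(\xi,\lambda)$, so the inclusion forces the \emph{equality}
$$
(\xi,\lambda)-(\xi,\mu) \;=\; (\xi,\rho - v\rho).
$$
Now I apply Lemma~\ref{xil} to the $\BL$-special representative $v$, giving $(\xi,\rho-v\rho)\ge \ell(v)(\xi,\beta)=i(\xi,\beta)$, with equality when $\ell(v)=1$. Since $\xi$ is strictly $\BG$-dominant and $\beta$ is a positive root, $(\xi,\beta)>0$ (Lemma~\ref{scal-dom}), so the two displayed inequalities together give $(\xi,\lambda)-(\xi,\mu)\ge i(\xi,\beta)$ and in particular $\mu\preceq\lambda$. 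For the case $i=1$, Lemma~\ref{srlg}(1) tells us the only contributing $v$ is $s_\beta$, which has length $1$, so Lemma~\ref{xil} gives the equality $(\xi,\lambda)-(\xi,\mu)=(\xi,\beta)$.

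Finally, for exceptionality of $\CU^\lambda$ in $\D^\BG(X)$, set $\mu=\lambda$. For any $i>0$ the bound $(\xi,\lambda)-(\xi,\lambda)=0\ge i(\xi,\beta)>0$ is absurd, so $\Ext^i_\BG(\CU^\lambda,\CU^\lambda)=0$. For $i=0$ the only element of $\SR_\BG^\BL$ of length zero is $v=1$ (Lemma~\ref{srlg}(0)), whence
$$
\Ext^0_\BG(\CU^\lambda,\CU^\lambda) \;=\; \Hom(V_\BL^0,\,V_\BL^\lambda\otimes(V_\BL^\lambda)^\vee) \;=\; \Hom(V_\BL^\lambda,V_\BL^\lambda) \;=\; \kk
$$
by irreducibility of $V_\BL^\lambda$. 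There is no real obstacle here: the argument is a clean chain of Proposition~\ref{extuu-g}(ii), Lemmas~\ref{mult-hom}, \ref{tens-conv}, \ref{xi-inv}, \ref{xil}, \ref{srlg}, with the key observation being that $\BW_\BL$-invariance of $\xi$ collapses the convex-hull constraint into a single equality.
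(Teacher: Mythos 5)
Your proof is correct and follows essentially the same route as the paper's: extract $v\in\SR_\BG^\BL$ of length $i$ from Proposition~\ref{extuu-g}(ii), place the relevant weight in a $\BW_\BL$-convex hull via Lemma~\ref{tens-conv}, and use the $\BW_\BL$-invariance of $\xi$ together with Lemma~\ref{xil}; your observation that the $\xi$-pairing is constant on the hull (giving $(\xi,\lambda)-(\xi,\mu)=(\xi,\rho-v\rho)$ exactly) is the same computation the paper performs termwise. One small step to tighten: for $i=0$ the inequality only yields $(\xi,\mu)\le(\xi,\lambda)$, which is weaker than $\mu\preceq\lambda$; you need to add that $i=0$ forces $v=1$ (Lemma~\ref{srlg}(0)), so $V_\BL^{v\rho-\rho}$ is trivial and $\Hom(V_\BL^\lambda,V_\BL^\mu)\neq 0$ gives $\mu=\lambda$ by Schur, exactly as the paper does.
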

\begin{proof}
By Proposition~\ref{extuu-g} if $\Ext^i_\BG(\CU^\lambda,\CU^\mu) \ne 0$ then there is a non-trivial $\BL$-map
$V_\BL^{v\rho - \rho} \to (V_\BL^\lambda)^\vee \otimes V_\BL^\mu$
for some $v \in \SR_\BG^\BL$ with $\ell(v) = i$.
This means that there is a non-trivial $\BL$-map
$V_\BL^{v\rho - \rho}\otimes V_\BL^\lambda \to V_\BL^\mu$, hence
$\mu \in \Conv(\lambda + w(v\rho - \rho))_{w \in \BW_\BL}$ by Lemma~\ref{tens-conv}.
Now by Lemma~\ref{xil}, for any $w \in \BW_\BL$ we have
$$
(\xi,\lambda + w(v\rho - \rho)) - (\xi,\lambda) = (\xi,w(v\rho - \rho)) = (w^{-1}\xi,v\rho - \rho) = (\xi,v\rho - \rho) \le -i(\xi,\beta),
$$
where the last inequality becomes an equality for $i=1$. This implies that
$$(\xi,\mu)-(\xi,\lambda)\le -i (\xi,\beta)$$
with equality for $i=1$, as required.
Thus, we see that $\mu \prec \lambda$ unless $v = 1$.
But if $v = 1$ then $V_\BL^{v\rho - \rho}\otimes V_\BL^\lambda = V_\BL^\lambda$, hence $\mu = \lambda$.
Also, if $v=1$ then $i=\ell(v)=0$, so $\Ext^{>0}_\BG(\CU^\lambda,\CU^\lambda)=0$ and by Proposition~\ref{extuu-g}
we have $\Hom_\BG(\CU^\lambda,\CU^\lambda) = \Hom_\BL(V_\BL^\lambda,V_\BL^\lambda) = \kk$, hence $\CU^\lambda$ is exceptional in $\D^\BG(X)$.
\end{proof}

Lemma \ref{uugord} has the following important consequence.

\begin{theorem}\label{dgx-ec}
The bundles $\{\CU^\lambda\}_{\lambda \in P_\BL^+}$ ordered with respect to any total ordering extending
the opposite of the $\xi$-ordering constitute a full exceptional collection in the derived category of equivariant 
sheaves $\D^\BG(X)$.
\end{theorem}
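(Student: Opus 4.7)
The plan is to break the statement into three parts: each $\CU^\lambda$ is exceptional, the collection is semiorthogonal with respect to any total order extending the opposite of~$\preceq$, and the collection generates $\D^\BG(X)$.

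The exceptionality of each individual $\CU^\lambda$ in $\D^\BG(X)$ has already been observed as the last sentence of Lemma \ref{uugord}, so no further work is needed there. For semiorthogonality, suppose $\lambda$ comes after $\mu$ in a total ordering that extends the opposite of the $\xi$-ordering. This means that either $(\xi,\lambda)<(\xi,\mu)$, or $(\xi,\lambda)=(\xi,\mu)$ but $\lambda\neq\mu$. In both cases we have $\mu\not\preceq\lambda$: the first case is immediate, and in the second case $\mu\preceq\lambda$ would force $\mu=\lambda$. Hence Lemma~\ref{uugord} gives $\Ext^\bullet_\BG(\CU^\lambda,\CU^\mu)=0$, which is precisely the semiorthogonality condition. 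Thus this step is essentially a direct application of what was already established.

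The main step is fullness. I will use the equivalence \eqref{GP-equiv}, $\Coh^\BG(X)\cong\Rep\BP$. Any finite-dimensional $\BP$-module $V$ carries an action of the unipotent radical $\BU$; since $\BU$ is unipotent and $V$ is finite dimensional, the subspace $V^{\BU}$ of invariants is nonzero, and it is a $\BP$-submodule on which $\BU$ acts trivially, hence an $\BL$-module via the projection $\BP\to\BL=\BP/\BU$. Since $\BL$ is reductive, $V^{\BU}$ decomposes as a direct sum of irreducibles $V_\BL^\lambda$ with $\lambda\in P_\BL^+$. Iterating this construction on the quotient $V/V^\BU$ (which again has nonzero $\BU$-invariants), one obtains a finite filtration of $V$ by $\BP$-submodules whose successive quotients are of the form $V_\BL^\lambda$, viewed as $\BP$-modules via $\BP\to\BL$. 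Translating back through \eqref{GP-equiv}, every object of $\Coh^\BG(X)$ is an iterated extension of sheaves $\CU^\lambda$ with $\lambda\in P_\BL^+$, so the $\CU^\lambda$ generate $\Coh^\BG(X)$, and therefore they generate $\D^\BG(X)=\D^b(\Coh^\BG(X))$ as a triangulated category.

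The main obstacle is the fullness step, and the key input is the (standard) filtration of a $\BP$-module by $\BU$-invariants; the exceptionality and semiorthogonality amount to bookkeeping based on Lemma~\ref{uugord}. Combining the three pieces yields the desired full exceptional collection for any total ordering that refines the opposite of the partial ordering~$\preceq$.
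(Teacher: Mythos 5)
Your proposal is correct and follows essentially the same route as the paper: exceptionality and semiorthogonality are read off from Lemma~\ref{uugord}, and fullness is reduced via the equivalence $\Coh^\BG(X)\cong\Rep\BP$ to the existence of a filtration of any $\BP$-module with simple $\BL$-quotients. The only difference is that you construct this filtration explicitly by iterating $\BU$-invariants, where the paper simply invokes the radical filtration; the content is the same.
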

\begin{proof}
The fact that we get an exceptional collection follows from Lemma~\ref{uugord}.
It remains to check that it is full.

Indeed, let us show that every object belongs to the triangulated subcategory generated by this collection. 
It suffices to check this
for pure objects, that is for $\BG$-equivariant coherent sheaves. As we know, the category of
$\BG$-equivariant coherent sheaves is equivalent to the category of $\BP$-representations. But each such representation has a filtration (an extension of the radical
filtration) with the quotients that are simple $\BL$-representations, i.e. correspond to bundles $\CU^\lambda$
with appropriate $\lambda \in P_\BL^+$. Thus, it is contained in the subcategory generated by the
$\CU^\lambda$.
\end{proof}

\begin{remark}\label{rem-opp}
The fact that the orderings of $\Hom$'s in $\D(X)$ and $\Ext$'s in $\D^\BG(X)$ are opposite is the reason
for the fact that an object $\CU^\lambda$ is typically not exceptional in $\D(X)$ --- one can construct a nontrivial element of
$\Ext^\bullet(\CU^\lambda,\CU^\lambda)$ by composing $\Hom$'s and equivariant $\Ext$'s. As we will see in section~\ref{ss-el} below,
the cure is, in a sense, to reverse one of the orderings.
\end{remark}


\subsection{The forgetful functor and its adjoint}\label{ss-fg-adj}

Let $\SB \subset P_\BL^+$ be an exceptional block.
Let
$$
\D_\SB^\BG(X) = \langle \CU^\lambda \rangle_{\lambda \in \SB}
$$
denote the subcategory of $\D^\BG(X)$ generated by $\CU^\lambda$ with $\lambda$ in $\SB$.
Since the collection $\{\CU^\lambda\}_{\lambda \in \SB}$ is exceptional,
the category $\D_\SB^\BG$ is saturated (see~\cite{BK89}), hence the forgetful functor $\Forget:\D_\SB^\BG(X) \to \D_\SB(X)$
has a right adjoint functor which we denote by $\Forget^!:\D_\SB(X) \to \D_\SB^\BG(X)$ 
(cf.\ \cite[Prop.\ 2.7]{BK89}).

The crucial observation is the following

\begin{proposition}\label{fshr}
If $\SB$ is an exceptional block then
$$
\Forget^!(\Forget(\CU^\mu)) = \bigoplus_{\nu \in \SB} \Hom(\CU^\nu,\CU^\mu)\otimes\CU^\nu,
$$
where $\Hom(\CU^\lambda,\CU^\mu)$ are considered just as vector spaces, not as representations of $\BG$.
\end{proposition}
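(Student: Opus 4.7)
The plan is to construct a natural morphism $\sigma\colon T\to \Forget^!(\Forget(\CU^\mu))$, where $T:=\bigoplus_{\nu\in\SB}\Hom(\CU^\nu,\CU^\mu)\otimes\CU^\nu$, and then verify that $\sigma$ is an isomorphism by testing against the generators of $\D^\BG_\SB(X)$. Since $\D^\BG_\SB(X)$ is saturated, the right adjoint $\Forget^!$ to the restricted forgetful functor exists and takes values in $\D^\BG_\SB(X)$; in particular both $T$ and $\Forget^!(\Forget(\CU^\mu))$ are objects of $\D^\BG_\SB(X)$.

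To define $\sigma$, I would take the evaluation morphism $\ev\colon\Forget(T)\to \CU^\mu$ in $\D_\SB(X)$ induced by the canonical pairings $\Hom(\CU^\nu,\CU^\mu)\otimes\CU^\nu\to \CU^\mu$, and let $\sigma\colon T\to \Forget^!(\Forget(\CU^\mu))$ be the morphism corresponding to $\ev$ under the adjunction $\Forget\dashv \Forget^!$.

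To see that $\sigma$ is an isomorphism, it suffices to verify that $\Ext^\bullet_\BG(\CU^\lambda,\sigma)$ is an isomorphism for every $\lambda\in\SB$: the cone of $\sigma$ lies in $\D^\BG_\SB(X)$, which by construction is generated by the bundles $\CU^\lambda$, $\lambda\in\SB$, so vanishing of $\Ext^\bullet_\BG(\CU^\lambda,\Cone(\sigma))$ for all such $\lambda$ forces $\Cone(\sigma)=0$. On the source side,
$$
\Ext^\bullet_\BG(\CU^\lambda,T)=\bigoplus_{\nu\in\SB}\Ext^\bullet_\BG(\CU^\lambda,\CU^\nu)\otimes\Hom(\CU^\nu,\CU^\mu),
$$
while on the target side the adjunction gives
$$
\Ext^\bullet_\BG\bigl(\CU^\lambda,\Forget^!(\Forget(\CU^\mu))\bigr)=\Ext^\bullet\bigl(\Forget(\CU^\lambda),\Forget(\CU^\mu)\bigr)=\Ext^\bullet(\CU^\lambda,\CU^\mu).
$$

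The remaining step --- and the only place where I expect to need care --- is to unwind the adjunction and identify $\Ext^\bullet_\BG(\CU^\lambda,\sigma)$ with the composition-of-equivariant-$\Ext$s-with-$\Hom$s map appearing in \eqref{he}. Once this naturality is established, the defining property of an exceptional block immediately implies that this map is an isomorphism for every $\lambda\in\SB$, and hence that $\sigma$ is an isomorphism. The main obstacle is thus not a difficult computation but a careful bookkeeping of the Yoneda and adjunction identifications to confirm that the adjoint of the evaluation morphism is exactly the map~\eqref{he}.
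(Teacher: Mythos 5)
Your proposal is correct and follows essentially the same route as the paper: define the comparison map as the adjoint of the evaluation morphism, test it against the generators $\CU^\lambda$, and use the exceptional-block condition to conclude that the cone vanishes. The ``bookkeeping'' step you defer --- identifying $\Ext^\bullet_\BG(\CU^\lambda,\sigma)$ with the map~\eqref{he} --- is exactly what the paper's commutative diagram carries out, and it is indeed routine.
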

\begin{proof}
Let
$$
\TCU^\mu := \bigoplus_{\nu \in \SB} \Hom(\CU^\nu,\CU^\mu)\otimes\CU^\nu \in \D_\SB^\BG(X).
$$
We have a canonical evaluation map $\ev:\Forget(\TCU^\mu) \to \Forget(\CU^\mu)$ in $\D(X)$.
By adjunction it gives a map $\TCU^\mu \to \Forget^!\Forget(\CU^\mu)$. Let us show it is an isomorphism.
For this let us check that the induced map
$$
f:\Ext^\bullet_\BG(\CU^\lambda,\TCU^\mu) \to \Ext^\bullet_\BG(\CU^\lambda,\Forget^!\Forget(\CU^\mu))
$$
is an isomorphism for all $\lambda \in \SB$.
Indeed, we have a commutative diagram
$$
\xymatrix{
\hbox{\raisebox{0pt}[0pt][0pt]{$\displaystyle\bigoplus_{\nu\in\SB}$}} \Ext^\bullet_\BG(\CU^\lambda,\CU^\nu) \otimes \Hom(\CU^\nu,\CU^\mu) \ar@{=}[r] \ar[d]^{\Forget\otimes 1} &
\Ext^\bullet_\BG(\CU^\lambda,\TCU^\mu) \ar[r]^-f \ar[d]^\Forget & 
\Ext^\bullet_\BG(\CU^\lambda,\Forget^!\Forget(\CU^\mu)) \ar@{=}[d] \\
\hbox{\raisebox{0pt}[0pt][0pt]{$\displaystyle\bigoplus_{\nu\in\SB}$}} \Ext^\bullet(\CU^\lambda,\CU^\nu) \otimes \Hom(\CU^\nu,\CU^\mu) \ar@{=}[r] &
\Ext^\bullet(\Forget(\CU^\lambda),\Forget(\TCU^\mu)) \ar[r]^\ev & 
\Ext^\bullet(\Forget(\CU^\lambda),\Forget(\CU^\mu))
}
$$
The composition of the left vertical map with the maps in the bottom row is the map of~\eqref{he}
which is an isomorphism since $\SB$ is an exceptional block. Hence, the map $f$ in the top row
is an isomorphism as well.

It follows that the cone of the map $\TCU^\mu \to \Forget^!\Forget(\CU^\mu)$ is orthogonal to all $\CU^\lambda$ in $\D_\SB^\BG(X)$.
But $\CU^\lambda$ generate this category, hence the cone is zero.
\end{proof}

\begin{question}\label{qu-fgadj}
It is interesting to find a general formula for $\Forget^!$ (or maybe for $\Forget^!\circ\Forget$).
\end{question}

\subsection{Exceptional bundles $\CE^\lambda$}\label{ss-el}
The crucial step is to replace the exceptional collection $\CU^\lambda$ in $\D_\SB^\BG(X)$
by its {\sf right dual exceptional collection} (see~\cite{B}).

Recall that if $(E,F)$ is an exceptional pair in a triangulated category $\CT$
then the {\sf right mutation}\/ $\BR_F(E)$ is defined as the (shifted) cone
$$
\BR_F(E) := \Cone(\xymatrix@1{E \ar[rr]^-{\text{{\sf coev}}} && \Hom^\bullet(E,F)^\vee\otimes F})[-1],
$$
It is well known that $(F,\BR_F(E))$ is also an exceptional pair which generates the same subcategory
in $\CT$ as the initial pair $(E,F)$.

Now assume that $E_1,\dots,E_n$ is an exceptional collection.
Its right dual collection is defined as the collection obtained by a sequence of right mutations:
$$
(E_n,\BR_{E_n}E_{n-1},\BR_{E_n}\BR_{E_{n-1}}E_{n-2},\dots,\BR_{E_n}\cdots\BR_{E_2}E_1).
$$
This collection is exceptional and generates the same subcategory as the initial collection.
Note that the composition of mutations $\BR_{E_n}\ldots \BR_{E_{n-i}}$ 
depends only on the subcategory generated by $E_n,\ldots,E_{n-i}$, so we denote it by
$\BR_{\langle E_n,\ldots,E_{n-i}\rangle}$. 

Now we apply this construction to the exceptional collection $(\CU^\lambda)_{\lambda \in \SB}$ (with 
respect to some total ordering extending the opposite of the $\xi$-ordering)
in the derived category of equivariant sheaves $\D^\BG(X)$ and denote by
\begin{equation}\label{def-el}
\CE_\SB^\lambda := \BR_{\langle \CU^\mu \rangle_{\{\mu \in \SB\ |\ \mu \prec \lambda\}}}\CU^\lambda,
\end{equation}
the objects of the right dual collection (as this formula indicates,
$\CE_\SB^\lambda$ does not depend on a choice of the total ordering). 
Further on we will frequently drop the index $\SB$ in the notation $\CE^\lambda_\SB$
if it is clear which block $\SB$ is considered.

By definition, the objects $\CE^\lambda$ are exceptional in the derived category of equivariant sheaves.
Our goal now is to show that the objects $\Forget(\CE^\lambda)$ in the usual derived category $\D(X)$
are also exceptional and moreover form a full exceptional collection in $\D_\SB(X)$.

First of all, recall that the standard property of the right dual exceptional collections gives
\begin{equation}\label{geu}
\Ext^\bullet_\BG(\CE^\lambda,\CU^\mu) = \begin{cases} \kk, & \text{for $\lambda = \mu$}\\0, & \text{otherwise}\end{cases}
\end{equation}
(see e.g.~\cite{B}).
Also, it follows from the construction of the dual collection that the subcategories
both in $\D^\BG(X)$ and $\D(X)$ generated by objects $\CE^\mu$ and $\CU^\mu$ coincide:
\begin{equation}\label{ut1}
\langle \CE^\mu \rangle_{\mu \preceq \lambda} = \langle \CU^\mu \rangle_{\mu \preceq \lambda},
\end{equation}
and moreover, for each $\lambda$ there is a morphism $\CE^\lambda \to \CU^\lambda$ such that
\begin{equation}\label{ut2}
\Cone(\CE^\lambda \to \CU^\lambda) \in \langle \CU^\mu \rangle_{\mu \prec \lambda}.
\end{equation}

\begin{corollary}\label{cor-eu}
For all $\lambda,\mu \in \SB$ we have
\begin{equation}\label{etou}
\Ext^\bullet(\Forget(\CE^\lambda),\CU^\mu) = \Hom(\CU^\lambda,\CU^\mu).
\end{equation}
\end{corollary}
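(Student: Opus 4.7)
The plan is to use the adjunction between the forgetful functor $\Forget$ and its right adjoint $\Forget^!$ on the block subcategories, combined with the explicit description of $\Forget^!\circ\Forget$ given by Proposition~\ref{fshr} and the right-dual orthogonality relation~\eqref{geu}.

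First, I would note that since $\CE^\lambda$ lies in $\D_\SB^\BG(X)$ (by \eqref{ut1}, it generates the same subcategory as the $\CU^\mu$ with $\mu \in \SB$) and $\Forget(\CU^\mu)$ lies in $\D_\SB(X)$, the adjunction $(\Forget, \Forget^!)$ set up in Section~\ref{ss-fg-adj} applies and gives a natural isomorphism
$$
\Ext^\bullet(\Forget(\CE^\lambda),\CU^\mu) \cong \Ext^\bullet_\BG(\CE^\lambda, \Forget^!\Forget(\CU^\mu)).
$$

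Next, I would substitute the formula from Proposition~\ref{fshr}, namely
$$
\Forget^!\Forget(\CU^\mu) \;=\; \bigoplus_{\nu \in \SB} \Hom(\CU^\nu,\CU^\mu) \otimes \CU^\nu,
$$
which converts the right-hand side into
$$
\bigoplus_{\nu \in \SB} \Hom(\CU^\nu,\CU^\mu) \otimes \Ext^\bullet_\BG(\CE^\lambda,\CU^\nu).
$$
Then I would invoke the right-dual orthogonality~\eqref{geu}, which gives $\Ext^\bullet_\BG(\CE^\lambda,\CU^\nu) = \kk$ when $\nu = \lambda$ and vanishes otherwise, so only the $\nu = \lambda$ summand survives, yielding exactly $\Hom(\CU^\lambda,\CU^\mu)$.

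This proof is essentially formal and contains no real obstacle; the two nontrivial inputs (Proposition~\ref{fshr} and the property~\eqref{geu} of the dual collection) have already been established. The only point where one should be a bit careful is that the adjoint $\Forget^!$ is only defined on $\D_\SB(X)$, so one must check that both arguments live in the appropriate block subcategories — but this is immediate from the definition of $\CE^\lambda$ as a mutation of objects within $\D_\SB^\BG(X)$.
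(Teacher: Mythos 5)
Your proposal is correct and follows exactly the same route as the paper: adjunction for $(\Forget,\Forget^!)$, the formula of Proposition~\ref{fshr} for $\Forget^!\Forget(\CU^\mu)$, and the orthogonality~\eqref{geu} to kill all summands except $\nu=\lambda$. Your added remark about checking that both objects lie in the block subcategories is a sensible (if minor) point of care that the paper leaves implicit.
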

\begin{proof}
Indeed, by Proposition~\ref{fshr} we have
$$
\Ext^\bullet(\Forget(\CE^\lambda),\CU^\mu) \cong
\Ext^\bullet_\BG(\CE^\lambda,\Forget^!(\CU^\mu)) \cong
\Ext^\bullet_\BG(\CE^\lambda,\bigoplus_{\nu \in \SB} \Hom(\CU^\nu,\CU^\mu)\otimes\CU^\nu).
$$
Now note that by~\eqref{geu} we have
$\Ext^\bullet_\BG(\CE^\lambda,\CU^\nu) = 0$  unless $\lambda = \nu$.
Thus, the RHS equals $\Hom(\CU^\lambda,\CU^\mu)$.
\end{proof}

\begin{proposition}\label{el-fec}
For an exceptional block $\SB$
the objects $\Forget(\CE^\lambda)$ form a full exceptional collection in $\D_\SB(X)$
with respect to any total ordering extending the $\xi$-ordering.
\end{proposition}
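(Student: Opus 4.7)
The plan is to reduce everything to the formula of Corollary~\ref{cor-eu} together with the basic properties \eqref{ut1} and \eqref{ut2} of the right dual collection, exploiting the fact that $\Hom$'s between the $\CU^\lambda$'s respect the $\xi$-ordering (Lemma~\ref{uuord}).

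\medskip

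\noindent\textbf{Exceptionality and semiorthogonality.} First I would compute $\Ext^\bullet(\Forget(\CE^\lambda),\Forget(\CE^\mu))$ for $\lambda,\mu\in\SB$ by resolving the second argument in terms of the $\CU^\nu$. By \eqref{ut1} applied after applying $\Forget$, the object $\Forget(\CE^\mu)$ lies in $\langle \Forget(\CU^\nu)\rangle_{\nu\preceq\mu}$. It therefore suffices to evaluate $\Ext^\bullet(\Forget(\CE^\lambda),\CU^\nu)$ for $\nu\preceq\mu$, and by Corollary~\ref{cor-eu} this equals $\Hom(\CU^\lambda,\CU^\nu)$. By Lemma~\ref{uuord} this $\Hom$ vanishes unless $\lambda\preceq\nu$. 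Hence when $\mu\prec\lambda$, every relevant $\nu$ satisfies $\nu\preceq\mu\prec\lambda$, so all contributions vanish and $\Ext^\bullet(\Forget(\CE^\lambda),\Forget(\CE^\mu))=0$, giving semiorthogonality with respect to the $\xi$-ordering.

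\medskip

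\noindent\textbf{Exceptionality of each $\Forget(\CE^\lambda)$.} For $\mu=\lambda$ I would use the distinguished triangle provided by \eqref{ut2},
\[
\CE^\lambda \to \CU^\lambda \to C \to \CE^\lambda[1],\qquad C\in\langle\CU^\nu\rangle_{\nu\prec\lambda},
\]
apply $\Forget$, and then apply $\Ext^\bullet(\Forget(\CE^\lambda),-)$. By the previous paragraph, $\Ext^\bullet(\Forget(\CE^\lambda),\Forget(C))=0$ since $C$ is built from $\CU^\nu$ with $\nu\prec\lambda$ (again via Corollary~\ref{cor-eu} combined with Lemma~\ref{uuord}). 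Hence $\Ext^\bullet(\Forget(\CE^\lambda),\Forget(\CE^\lambda)) = \Ext^\bullet(\Forget(\CE^\lambda),\CU^\lambda) = \Hom(\CU^\lambda,\CU^\lambda) = \kk$ by Corollary~\ref{cor-eu}, where the last equality uses that $\CU^\lambda$ is irreducible as an $\BL$-representation.

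\medskip

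\noindent\textbf{Fullness.} Applying $\Forget$ to the identity of triangulated subcategories \eqref{ut1} taken for the maximal $\lambda$ (or applied stepwise for all $\lambda\in\SB$) gives
\[
\langle \Forget(\CE^\mu)\rangle_{\mu\in\SB} = \langle \Forget(\CU^\mu)\rangle_{\mu\in\SB} = \D_\SB(X),
\]
the last equality being the definition of $\D_\SB(X)$. So the collection generates $\D_\SB(X)$.

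\medskip

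No step presents a genuine obstacle: the whole proof is a formal consequence of the duality property \eqref{geu}, the two mutation identities \eqref{ut1}--\eqref{ut2}, Proposition~\ref{fshr} (packaged into Corollary~\ref{cor-eu}), and the monotonicity of $\Hom$'s in the $\xi$-ordering (Lemma~\ref{uuord}). The one point to be careful about is that the $\xi$-ordering is only a partial order, so I would verify that the statement really means: any linearization extending the opposite $\xi$-ordering yields an exceptional collection, exactly as in Theorem~\ref{dgx-ec}, and the argument above does prove this because the vanishing holds whenever $\mu\prec\lambda$ in the partial order.
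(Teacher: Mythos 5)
Your proposal is correct and follows essentially the same route as the paper's proof: semiorthogonality from Corollary~\ref{cor-eu} plus Lemma~\ref{uuord} combined with \eqref{ut1}, exceptionality from the triangle \eqref{ut2}, and fullness from \eqref{ut1}. Your closing remark about the $\xi$-ordering being only a partial order is a fair point of care, and your argument handles it correctly.
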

\begin{proof}
First, take $\mu \prec \lambda$. By~\eqref{etou} and Lemma~\ref{uuord} we have $\Ext^\bullet(\Forget(\CE^\lambda),\CU^\mu) = 0$.
Then~\eqref{ut1} implies $\Ext^\bullet(\Forget(\CE^\lambda),\Forget(\CE^\mu)) = 0$ as well.
On the other hand, using this semiorthogonality and~\eqref{ut2} we deduce that
$\Ext^\bullet(\Forget(\CE^\lambda),\Forget(\CE^\lambda)) \cong \Ext^\bullet(\Forget(\CE^\lambda),\CU^\lambda) = \Hom(\CU^\lambda,\CU^\lambda) = \kk$,
so each $\Forget(\CE^\lambda)$ is exceptional. Finally, the fullness of the collection $\{\Forget(\CE^\lambda)\}_{\lambda \in \SB}$
in $\D_\SB(X)$ follows from~\eqref{ut1}.
\end{proof}

From now on to unburden the notation we will denote $\Forget(\CE^\lambda)$ simply by $\CE^\lambda$.

\subsection{Properties of exceptional blocks}\label{ss-ebprops}

Let $\SB$ be any subset of $P_\BL^+$ and $\mu \in P_\BL^+$. Denote
$$
\SB + \mu = \{ \lambda + \mu\ |\ \lambda \in \SB \}.
$$

\begin{lemma}\label{sb+xi}
If $\SB$ is an exceptional block then for each $t \in \ZZ$ the block $\SB + t\xi$ is exceptional.
Moreover, $\CE^{\lambda + t\xi}_{\SB + t\xi} = \CE^\lambda_\SB(t)$.
\end{lemma}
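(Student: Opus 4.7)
The plan is to exploit the fact that tensoring with the line bundle $\CO_X(t)=\CU^{t\xi}$ is an autoequivalence of both $\D(X)$ and $\D^\BG(X)$ commuting with $\Forget$. First I would observe that, because $\xi$ is $\BW_\BL$-invariant by Lemma~\ref{xi-inv}, the irreducible representation $V_\BL^{t\xi}$ is one-dimensional (its unique weight being $t\xi$), so $\CU^{t\xi}$ is a line bundle and the tensor product formula \eqref{tp-ul} collapses to a single term:
$$
\CU^\lambda\otimes\CU^{t\xi} \;=\; \CU^{\lambda+t\xi},\qquad\text{i.e.\ }\CU^\lambda(t)=\CU^{\lambda+t\xi}.
$$

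Next I would verify the exceptionality of $\SB+t\xi$. Twisting by $\CO_X(t)$ induces canonical isomorphisms
$$
\Ext^\bullet_\BG(\CU^{\lambda+t\xi},\CU^{\nu+t\xi})\cong\Ext^\bullet_\BG(\CU^\lambda,\CU^\nu),\qquad
\Hom(\CU^{\nu+t\xi},\CU^{\mu+t\xi})\cong\Hom(\CU^\nu,\CU^\mu),
$$
and similarly for the non-equivariant $\Ext^\bullet(\CU^{\lambda+t\xi},\CU^{\mu+t\xi})$. Since the twist is a tensor autoequivalence commuting with $\Forget$, these isomorphisms are compatible with Yoneda composition and with the action of $\Forget$, so the canonical map \eqref{he} for $\SB+t\xi$ is identified with the one for $\SB$. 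The latter is an isomorphism by assumption, hence so is the former. This establishes the first claim.

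For the identification of the dual collection I would note that the $\xi$-ordering is shift-equivariant: $\lambda\prec\mu$ if and only if $\lambda+t\xi\prec\mu+t\xi$, since the pairing with $\xi$ shifts by the same constant $t(\xi,\xi)$ on both sides. Thus the total orderings used to build the right dual collections match under $\lambda\mapsto\lambda+t\xi$. Right mutations commute with any autoequivalence, so applying $(-)\otimes\CO_X(t)$ to each step of the inductive definition \eqref{def-el} of $\CE^\lambda_\SB$ yields $\CE^{\lambda+t\xi}_{\SB+t\xi}=\CE^\lambda_\SB(t)$.

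There is no real obstacle here; the only thing to be slightly careful about is checking that the evaluation/coevaluation maps defining each right mutation transport correctly under the twist, but this is automatic from the naturality of mutation with respect to tensor autoequivalences.
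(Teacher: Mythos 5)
Your proof is correct and follows essentially the same route as the paper, which simply notes that twisting by the line bundle $\CO_X(t)=\CU^{t\xi}$ is an autoequivalence sending $\CU^\lambda$ to $\CU^{\lambda+t\xi}$ and hence preserves exceptionality of blocks; your additional verifications (compatibility of the twist with the map \eqref{he}, shift-equivariance of the $\xi$-ordering, and commutation of right mutations with the autoequivalence) are just the details the paper leaves implicit.
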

\begin{proof}
Recall that $\CU^{t\xi} = \CO_X(t)$ and twisting by this bundle takes $\CU^\lambda$ to $\CU^{\lambda + t\xi}$.
Since such a twisting is an autoequivalence, it follows that it preserves the exceptionality of a block.
\end{proof}

Let us say that a subset $\SB' \subset \SB$ is {\sf closed with respect to decreasing in the $\xi$-ordering},
if for any $\lambda,\mu \in \SB$ if $\lambda \in \SB'$ and $\mu \preceq \lambda$ then $\mu \in \SB'$.

\begin{lemma}
Let $\SB$ be an exceptional block and $\SB' \subset \SB$ be a subset closed with respect to decreasing in the
$\xi$-ordering.
Then $\SB'$ is an exceptional block. Moreover, $\CE^\lambda_{\SB'} = \CE^\lambda_\SB$ for all $\lambda \in \SB'$.
\end{lemma}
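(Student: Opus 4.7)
The plan is to reduce the condition for $\SB'$ to be exceptional directly to the corresponding condition for $\SB$, using the $\xi$-ordering lemmas (Lemma~\ref{uuord} and Lemma~\ref{uugord}) to control which indices $\nu$ can contribute a nonzero summand to~(\ref{he}).

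More precisely, fix $\lambda,\mu \in \SB'$. I would examine term-by-term the summand
$$
\Ext^\bullet_\BG(\CU^\lambda,\CU^\nu) \otimes \Hom(\CU^\nu,\CU^\mu)
$$
for $\nu \in \SB$. By Lemma~\ref{uugord}, if $\Ext^\bullet_\BG(\CU^\lambda,\CU^\nu) \ne 0$ then $\nu \preceq \lambda$; by Lemma~\ref{uuord}, if $\Hom(\CU^\nu,\CU^\mu)\ne 0$ then $\nu \preceq \mu$. Hence any $\nu \in \SB$ that contributes nontrivially satisfies $\nu \preceq \lambda$, and since $\lambda \in \SB'$ and $\SB'$ is closed under decreasing in the $\xi$-ordering, we conclude $\nu \in \SB'$. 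Therefore
$$
\bigoplus_{\nu \in \SB'} \Ext^\bullet_\BG(\CU^\lambda,\CU^\nu) \otimes \Hom(\CU^\nu,\CU^\mu)
\;=\;
\bigoplus_{\nu \in \SB} \Ext^\bullet_\BG(\CU^\lambda,\CU^\nu) \otimes \Hom(\CU^\nu,\CU^\mu),
$$
and the latter maps isomorphically to $\Ext^\bullet(\CU^\lambda,\CU^\mu)$ by the hypothesis that $\SB$ is an exceptional block. This shows $\SB'$ is exceptional.

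For the identification $\CE^\lambda_{\SB'} = \CE^\lambda_\SB$, recall from~(\ref{def-el}) that these objects are defined by right mutation through the subcategories $\langle \CU^\mu\rangle_{\mu \in \SB,\ \mu \prec \lambda}$ and $\langle \CU^\mu\rangle_{\mu \in \SB',\ \mu \prec \lambda}$ respectively. The closure property of $\SB'$ under decreasing in the $\xi$-ordering gives, for $\lambda \in \SB'$, the set equality
$$
\{\mu \in \SB \mid \mu \prec \lambda\} \;=\; \{\mu \in \SB' \mid \mu \prec \lambda\},
$$
so the two subcategories through which we mutate coincide, and hence the resulting mutated objects coincide. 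No step here looks like a real obstacle; the whole argument is a direct bookkeeping application of the two $\xi$-ordering lemmas, and the only point to be careful about is that the closure property applies to $\lambda$ (which lies in $\SB'$) rather than to $\mu$ (which is only known to be in $\SB'$).
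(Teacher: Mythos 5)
Your proof is correct and follows essentially the same route as the paper: both reduce the isomorphism~(\ref{he}) for $\SB'$ to that for $\SB$ by using Lemma~\ref{uugord} to show that only $\nu\preceq\lambda\in\SB'$ (hence $\nu\in\SB'$) can contribute, and both identify $\CE^\lambda_{\SB'}$ with $\CE^\lambda_\SB$ directly from the definition~(\ref{def-el}). The appeal to Lemma~\ref{uuord} is harmless but not needed, since Lemma~\ref{uugord} alone already forces the contributing $\nu$ into $\SB'$.
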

\begin{proof}
Take $\lambda,\mu \in \SB'$ and consider the map~\eqref{he}. It is an isomorphism since $\SB$ is exceptional.
On the other hand, $\nu \in \SB$ contributes to the LHS only if $\Ext^\bullet_\BG(\CU^\lambda,\CU^\nu) \ne 0$
which by Lemma~\ref{uugord} implies that $\nu \prec \lambda$. But then $\nu \in \SB'$ since $\SB'$ is closed
with respect to decreasing in $\xi$-ordering. Thus, the LHS of~\eqref{he} coincides with the LHS of analogous map
written for the block $\SB'$, hence $\SB'$ is exceptional.

An isomorphism between $\CE^\lambda_{\SB'}$ and $\CE^\lambda_\SB$ follows immediately from the definition~\eqref{def-el}.
\end{proof}

\subsection{The output set and the criterion of exceptionality}

In this section we give a criterion for a block $\SB$ to be exceptional
in terms of the Weyl group action on weights and the representation theory of~$\BL$.
We start with some preparations.

\begin{lemma}
Let $\mu \in P_\BL^+ \cap (P_\BG^\reg - \rho)$. Then there exists a unique pair $(\kappa,v)$,
where $\kappa \in P_\BG^+$ and $v \in \BW_\BG$ such that
$$
\mu = v(\kappa + \rho) - \rho.
$$
Moreover, $v \in \SR_\BG^\BL$.
\end{lemma}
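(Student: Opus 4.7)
The plan is to derive the statement from the elementary geometry of the Weyl chamber decomposition together with the defining property of $\SR_\BG^\BL$.

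For existence and uniqueness of $(\kappa,v)$: since $\mu+\rho$ is $\BG$-regular, it lies in the interior of some $\BG$-Weyl chamber. Because $\BW_\BG$ acts simply transitively on the set of $\BG$-Weyl chambers, there is a unique $v\in\BW_\BG$ such that $v^{-1}(\mu+\rho)$ lies in the interior of $P_\BG^+$, i.e.\ such that $v^{-1}(\mu+\rho)-\rho$ is $\BG$-dominant. Setting $\kappa:=v^{-1}(\mu+\rho)-\rho\in P_\BG^+$ then gives $\mu=v(\kappa+\rho)-\rho$. Any other pair $(\kappa',v')$ with $\kappa'\in P_\BG^+$ and $v'\in\BW_\BG$ realizing $\mu$ in this form would yield another element $v'(\kappa'+\rho)=\mu+\rho$ in the $\BW_\BG$-orbit of the strictly dominant weight $\kappa'+\rho$; by simple transitivity on chambers this forces $v=v'$, and then $\kappa=\kappa'$.

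For the claim $v\in\SR_\BG^\BL$: by the description of $\SR_\BG^\BL$ given in Section~\ref{s-sr}, it suffices to check that $v(P_\BG^+)\subset P_\BL^+$. The key observation is that the set of roots of $\BL$ is a subset of the set of roots of $\BG$, so every hyperplane bounding an $\BL$-Weyl chamber is also a hyperplane bounding a $\BG$-Weyl chamber. Consequently the $\BL$-Weyl chamber decomposition is coarser than the $\BG$-Weyl chamber decomposition: each $\BG$-chamber is contained in a single $\BL$-chamber. Applying this to the $\BG$-chamber $v(P_\BG^+)$, it suffices to exhibit one point of $v(P_\BG^+)$ that lies in the interior of $P_\BL^+$. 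I will use the weight $\mu+\rho=v(\kappa+\rho)\in v(P_\BG^+)$. Since $\mu\in P_\BL^+$ and $(\rho,\alpha_i)=\alpha_i^2/2>0$ for every simple root $\alpha_i\ne\beta$ of $\BL$ by \eqref{ojai}, we have $(\mu+\rho,\alpha_i)>0$ for all simple roots of $\BL$, i.e.\ $\mu+\rho$ is strictly $\BL$-dominant. Hence $v(P_\BG^+)\subset P_\BL^+$ and $v\in\SR_\BG^\BL$.

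The argument is essentially bookkeeping about Weyl chambers and does not involve any substantial obstacle; the one point worth flagging is the refinement statement that each $\BG$-chamber is contained in a single $\BL$-chamber, which is exactly what makes the conclusion $v\in\SR_\BG^\BL$ automatic once $\mu+\rho$ has been checked to be strictly $\BL$-dominant.
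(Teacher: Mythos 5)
Your proof is correct and follows the same route as the paper, which simply asserts that existence and uniqueness follow from the regularity of $\mu+\rho$ and that $v\in\SR_\BG^\BL$ follows from $\mu\in P_\BL^+$; you have merely spelled out the standard Weyl-chamber bookkeeping behind those two assertions.
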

\begin{proof}
The existence and uniqueness of the pair $(\kappa,v)$ follow from the regularity of $\mu + \rho$.
And since $\mu \in P_\BL^+$ we conclude that $v \in \SR_\BG^\BL$.
\end{proof}

Using this simple observation we can rewrite the formula of Corollary~\ref{extuu} as follows:
$$
\Ext^\bullet(\CU^\lambda,\CU^{\lambda'}) = \bigoplus_{\kappa \in P_\BG^+,\ v \in \SR_\BG^\BL\ |\ v(\kappa+\rho) - \rho \in \Conv(\lambda' - w\lambda)_{w \in \BW_\BL}}
\Hom(V_\BL^{v(\kappa+\rho)-\rho},V_\BL^{\lambda'}\otimes V_\BL^{-w_0^\BL\lambda}) \otimes V_\BG^\kappa [-\ell(v)].
$$
It is clear from this formula that it is convenient to have a control
over the set of all pairs $(\kappa,v)$ which can appear in the RHS.
So, we define the {\sf output set} for the pair of weights $\lambda,\lambda'$ of $\BL$ as
$$
\OP(\lambda,\lambda') = \{ (\kappa,v) \in P_\BG^+ \times \SR_\BG^\BL\ |\ v(\kappa + \rho) - \rho \in \Conv(\lambda' - w\lambda)_{w\in \BW_\BL} \}.
$$
Consequently, we define the {\sf output set} of a block $\SB$ to be
$$
\OP(\SB) = \bigcup_{\lambda,\lambda' \in \SB} \OP(\lambda,\lambda') \subset P_\BG^+ \times \SR_\BG^\BL,
$$
and we denote by $\OP_1(\SB) \subset P_\BG^+$ and $\OP_2(\SB) \subset \SR_\BG^\BL$
the projections of $\OP(\SB)$ to $P_\BG^+$ and $\SR_\BG^\BL$ respectively, so that
$$
\OP(\SB) \subset \OP_1(\SB) \times \OP_2(\SB).
$$
Using these definitions we can rewrite the formula of Corollary~\ref{extuu} as follows:
\begin{equation}\label{extuu2}
\Ext^\bullet(\CU^\lambda,\CU^{\lambda'}) = \bigoplus_{(\kappa,v) \in \OP(\lambda,\lambda')}
\Hom(V_\BL^{v(\kappa+\rho)-\rho},V_\BL^{\lambda'}\otimes V_\BL^{-w_0^\BL\lambda}) \otimes V_\BG^\kappa [-\ell(v)].
\end{equation}
Note that we can extend the area of summation 
in the above formula. Indeed,
if for a pair $(\kappa,v)$ one has
$v(\kappa+\rho)-\rho\not\in \Conv(\lambda' - w\lambda)_{w \in \BW_\BL}$ then
$\Hom(V_\BL^{v(\kappa+\rho)-\rho},V_\BL^{\lambda'}\otimes V_\BL^{-w_0^\BL\lambda}) = 0$
by Lemma \ref{tens-conv},
and we have no contribution. So, we can replace $\OP(\lambda,\lambda')$ by $\OP(\SB)$,
or even by $\OP_1(\SB)\times\OP(\SB_2)$.


Also, for each set of $\BL$-dominant weights $S \subset P_\BL^+$ denote by $\Pi_S:\Rep\BL \to \Rep\BL$
the projector onto the subcategory formed by all $V_\BL^\nu$ with $\nu \in S$. In other words, $\Pi_S$
is a functor such that
$$
\Pi_S(V_\BL^\lambda) = \begin{cases} V_\BL^\lambda, & \text{if $\lambda \in S$}\\0, & \text{otherwise}\end{cases}
$$

\begin{proposition}\label{crit-eb}
Assume that a subset $\SB \subset P_\BL^+$ has the following two properties:
\begin{enumerate}\renewcommand{\theenumi}{\alph{enumi}}
\item for all $\kappa \in \OP_1(\SB)$, $v \in \OP_2(\SB)$ we have $v\kappa = \kappa$;

\item for all $\kappa \in \OP_1(\SB)$, $v \in \OP_2(\SB)$, $\lambda \in \SB$ the canonical map
\begin{equation}\label{pinb}
\Pi_{\SB}(V_\BL^{\kappa + v\rho -\rho}\otimes V_\BL^{\lambda})\to
\Pi_{\SB}(V_\BL^\kappa \otimes \Pi_{\SB}(V_\BL^{v\rho - \rho} \otimes V_\BL^{\lambda}))
\end{equation}
is an isomorphism.
\end{enumerate}
Then the block $\SB$ is exceptional.
\end{proposition}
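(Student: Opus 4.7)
The plan is to decompose both sides of the comparison map~\eqref{he} into components indexed by pairs $(\kappa,v)\in P_\BG^+\times\SR_\BG^\BL$ and to match these components one at a time using the two hypotheses.

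First, I would use Proposition~\ref{extuu-g}(ii) and the $v=1$ specialization of Corollary~\ref{extuu} (which selects the $H^0$ part) to expand
$$\Ext^\bullet_\BG(\CU^\lambda,\CU^\nu)=\bigoplus_{v\in\SR_\BG^\BL}\Hom(V_\BL^{v\rho-\rho},V_\BL^\nu\otimes V_\BL^{-w_0^\BL\lambda})[-\ell(v)],$$
$$\Hom(\CU^\nu,\CU^\mu)=\bigoplus_{\kappa\in P_\BG^+}\Hom(V_\BL^\kappa,V_\BL^\mu\otimes V_\BL^{-w_0^\BL\nu})\otimes V_\BG^\kappa.$$
Combined with formula~\eqref{extuu2} for the right-hand side of~\eqref{he}, both sides split as direct sums of terms of the form $(-)\otimes V_\BG^\kappa[-\ell(v)]$, and Yoneda composition preserves this bigrading, so it suffices to verify the isomorphism on each $(\kappa,v)$-component.

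Second, using Lemma~\ref{mult-hom} together with~\eqref{vldual} to rewrite $\Hom(V_\BL^a,V_\BL^b\otimes V_\BL^{-w_0^\BL c})=\Hom(V_\BL^a\otimes V_\BL^c,V_\BL^b)$ throughout, the $(\kappa,v)$-piece of the problem becomes: the Yoneda composition map
$$\bigoplus_{\nu\in\SB}\Hom(V_\BL^{v\rho-\rho}\otimes V_\BL^\lambda,V_\BL^\nu)\otimes\Hom(V_\BL^\kappa\otimes V_\BL^\nu,V_\BL^\mu)\to\Hom(V_\BL^{v(\kappa+\rho)-\rho}\otimes V_\BL^\lambda,V_\BL^\mu)$$
should be an isomorphism for all $\lambda,\mu\in\SB$. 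By the canonical isotypic decomposition $W=\bigoplus_\eta\Hom(V_\BL^\eta,W)\otimes V_\BL^\eta$ of any $\BL$-module, the left-hand side is naturally identified with $\Hom(V_\BL^\mu,V_\BL^\kappa\otimes\Pi_\SB(V_\BL^{v\rho-\rho}\otimes V_\BL^\lambda))$, while the right-hand side (since $\mu\in\SB$) equals $\Hom(V_\BL^\mu,\Pi_\SB(V_\BL^{v(\kappa+\rho)-\rho}\otimes V_\BL^\lambda))$.

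Third, hypothesis (a) rewrites $v(\kappa+\rho)-\rho=\kappa+v\rho-\rho$ (and in particular this weight is $\BL$-dominant, being the Cartan component of $V_\BL^\kappa\otimes V_\BL^{v\rho-\rho}$), and the required statement on the $\SB$-projections is then exactly hypothesis (b). The only delicate step is matching the abstract Yoneda composition under the identifications above to the canonical map of~\eqref{pinb} coming from the Cartan inclusion $V_\BL^{\kappa+v\rho-\rho}\hookrightarrow V_\BL^\kappa\otimes V_\BL^{v\rho-\rho}$ followed by the projection onto the $\SB$-isotypic part of the inner tensor factor. This compatibility is the main bookkeeping obstacle; it should follow from naturality of the Borel--Bott--Weil identifications and of Yoneda composition, but it is the step where one must genuinely use how the equivariant $\Ext$ classes represent sheaf morphisms rather than merely their abstract dimensions.
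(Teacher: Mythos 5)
Your proposal is correct and follows essentially the same route as the paper: expand both sides of~\eqref{he} via Borel--Bott--Weil into $(\kappa,v)$-indexed pieces, use the invariance condition (a) to replace $v(\kappa+\rho)-\rho$ by $\kappa+v\rho-\rho$, identify the composed source with the double-projector expression via the isotypic decomposition, and then invoke the compatibility condition (b). The ``bookkeeping obstacle'' you flag --- that the Yoneda composition really is induced by the canonical map~\eqref{pinb} under these identifications --- is also asserted without further detail in the paper's proof, so your treatment is on par with it.
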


In what follows we will refer to part (a) of this criterion as the {\sf invariance condition},
and to part (b) as the {\sf compatibility condition}.

\begin{proof}
Fix a pair of weights $\lambda, \lambda' \in \SB$.
We have to check that the map~\eqref{he} (with $\mu=\lambda'$) is an isomorphism.

We start by rewriting~\eqref{extuu2} in a more convenient form.
First of all, we extend the summation area to $\OP_1(\SB)\times\OP_2(\SB)$
(as was mentioned above, this does not spoil the equality).
Next, we use the isomorphism
$$\Hom(V_\BL^{v(\kappa+\rho)-\rho},V_\BL^{\lambda'}\otimes V_\BL^{-w_0^\BL\lambda})\simeq
\Hom(V_\BL^{\lambda'},V_\BL^{v(\kappa+\rho)-\rho}\otimes V_\BL^{\lambda})^\vee\simeq
\Hom(V_\BL^{\lambda'},\Pi_\SB(V_\BL^{v(\kappa+\rho)-\rho}\otimes V_\BL^{\lambda}))^\vee,$$
where the second isomorphism follows from the condition $\lambda' \in \SB$.
Finally, by the invariance condition we have
$v(\kappa+\rho) - \rho=\kappa + v\rho - \rho$.
Thus, we obtain
\begin{equation}\label{rhs}
\Ext^\bullet(\CU^{\lambda},\CU^{\lambda'}) =
\bigoplus_{\kappa \in \OP_1(\SB),\ v \in \OP_2(\SB)}
\Hom(V_\BL^{\lambda'},\Pi_\SB(V_\BL^{\kappa+v\rho-\rho}\otimes V_\BL^{\lambda}))^\vee \otimes V_\BG^\kappa[-\ell(v)],
\end{equation}

Now specializing~\eqref{rhs} we can obtain an expression for $\Ext_\BG$ and $\Hom$ in the LHS of~\eqref{he}.
To obtain an expression for $\Ext_\BG$ we should restrict to the case $\kappa = 0$.
Replacing also $\lambda'$ by $\nu \in \SB$ we obtain
\begin{equation}\label{lhs1}
\Ext_\BG^\bullet(\CU^{\lambda},\CU^\nu) = \bigoplus_{v \in \OP_2(\SB)}
\Hom(V_\BL^\nu,\Pi_{\SB}(V_\BL^{v\rho-\rho}\otimes V_\BL^{\lambda}))^\vee[-\ell(v)].
\end{equation}

On the other hand, to obtain an expression for $\Hom$ we should restrict to $v = 1$.
Replacing also $\lambda$ by $\nu$ we obtain
\begin{equation}\label{lhs2}
\Hom(\CU^\nu,\CU^{\lambda'}) = \bigoplus_{\kappa \in \OP_1(\SB)}
\Hom(V_\BL^{\lambda'},\Pi_{\SB}(V_\BL^\kappa\otimes V_\BL^\nu))^\vee \otimes V_\BG^\kappa.
\end{equation}

Combining~\eqref{lhs1} with~\eqref{lhs2} we rewrite the LHS of~\eqref{he} as
\begin{multline*}
\bigoplus_{\nu \in \SB}\Ext_\BG^\bullet(\CU^{\lambda},\CU^\nu) \otimes \Hom(\CU^\nu,\CU^{\lambda'}) = \\
\bigoplus_{\nu \in \SB,\ \kappa \in \OP_1(\SB),\ v \in \OP_2(\SB)}
\Hom(V_\BL^\nu,\Pi_{\SB}(V_\BL^{v\rho-\rho}\otimes V_\BL^{\lambda}))^\vee \otimes
\Hom(V_\BL^{\lambda'},\Pi_{\SB}(V_\BL^\kappa\otimes V_\BL^\nu))^\vee \otimes V_\BG^\kappa[-\ell(v)] = \\
\bigoplus_{\kappa \in \OP_1(\SB),\ v \in \OP_2(\SB)}
\Hom(V_\BL^{\lambda'},\Pi_{\SB}(V_\BL^\kappa\otimes \Pi_{\SB}(V_\BL^{v\rho-\rho}\otimes V_\BL^{\lambda})))^\vee \otimes V_\BG^\kappa[-\ell(v)],
\end{multline*}
where the second equality follows from the formula
$$
\Pi_{\SB}(V_\BL^{v\rho-\rho}\otimes V_\BL^{\lambda}) =
\bigoplus_{\nu \in \SB} \Hom(V_\BL^\nu,\Pi_{\SB}(V_\BL^{v\rho-\rho}\otimes V_\BL^{\lambda}))^\vee \otimes V_\BL^\nu.
$$

To conclude we compare the obtained expression for the LHS of~\eqref{he} with the expression~\eqref{rhs} for the RHS
and note that the map from the LHS of~\eqref{he} to the RHS is induced by the map~\eqref{pinb}.
Thus, if the compatibility property~(b) holds then the map is an isomorphism, hence the block $\SB$ is exceptional.
\end{proof}

\section{On strongness and purity}\label{s-snp}

Note that a priori the exceptional objects $\CE^\lambda$ constructed above are complexes.
However, we have the following

\begin{conjecture}\label{evb}
For any exceptional block $\SB \subset P_\BL^+$ and any $\lambda \in \SB$ the object $\CE^\lambda$ is a vector bundle.
\end{conjecture}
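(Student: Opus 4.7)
The plan is to prove the conjecture by induction on $\lambda \in \SB$ with respect to the $\xi$-ordering. Since $X = \BG/\BP$ is $\BG$-homogeneous and each $\CE^\lambda$ is the image under $\Forget$ of a $\BG$-equivariant object, it suffices to show that $\CE^\lambda$ is concentrated in a single cohomological degree; local freeness then follows automatically from equivariance under the transitive $\BG$-action. The base case is immediate: for any $\xi$-minimal weight $\lambda_0 \in \SB$, the subcategory appearing in \eqref{def-el} is trivial and $\CE^{\lambda_0} = \CU^{\lambda_0}$ is a vector bundle.

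For the inductive step, assume $\CE^\mu$ is a vector bundle for every $\mu \in \SB$ with $\mu \prec \lambda$. The defining triangle for the iterated right mutation reads
$$P \to \CU^\lambda \to \CE^\lambda \to P[1]$$
with $P \in \langle \CU^\mu \rangle_{\mu \prec \lambda} = \langle \CE^\mu \rangle_{\mu \prec \lambda}$ by \eqref{ut1}. The strategy is to identify $P$ explicitly as a vector bundle---ideally, as the direct sum $\bigoplus_{\mu \prec \lambda} \Ext^\bullet_\BG(\CU^\lambda, \CE^\mu)^\vee \otimes \CE^\mu$ concentrated in degree $0$---and then to argue that the morphism $P \to \CU^\lambda$ is injective as a map of coherent sheaves, so that $\CE^\lambda$ realises its cokernel. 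Two key computational inputs drive this: Proposition~\ref{fshr}, which makes $\Forget^!\Forget$ explicit and allows one to compute the cone of the unit morphism $\eta_{\CU^\lambda} \colon \CU^\lambda \to \Forget^!\Forget(\CU^\lambda)$, and Corollary~\ref{cor-eu}, which pins $\Ext^\bullet(\CE^\lambda, \CU^\mu)$ in degree zero and, combined with \eqref{ut2}, yields analogous degree information for $\Ext^\bullet(\CE^\lambda, \CE^\mu)$.

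The principal obstacle is that each individual right mutation involves a cone on a morphism whose target is $\Hom^\bullet_\BG(\CU^\lambda, \CU^\mu)^\vee \otimes \CU^\mu$, and by Lemma~\ref{uugord} the equivariant groups $\Ext^{>0}_\BG(\CU^\lambda, \CU^\mu)$ may well be nonzero for $\mu \prec \lambda$. For $\CE^\lambda$ to collapse to a single cohomological degree, these higher-$\Ext$ contributions must cancel telescopically through the sequence of mutations, and this cancellation is precisely what the exceptional-block axiom of Definition~\ref{def-eb} is engineered to enforce. Nevertheless, translating the axiomatic cancellation into an actual concentration statement for $\CE^\lambda$ appears to require either a Koszul-type resolution adapted to the block setting, or a sharpening of the compatibility condition of Proposition~\ref{crit-eb} that keeps track of cohomological degrees. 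This combinatorial book-keeping, rather than the structural induction, is the main difficulty, and is presumably why the statement is recorded as a conjecture rather than a theorem.
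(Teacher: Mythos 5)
The statement you are addressing is recorded in the paper as Conjecture~\ref{evb}; the paper offers no general proof. What it does provide is a reformulation (Proposition~\ref{E-vec-bun-prop}: $\CE^\lambda$ is a vector bundle for all $\lambda\in\SB$ iff $\CE^\lambda$ is a projective cover of $\CU^\lambda$ in the heart $\CC_\SB$, iff $\Ext^\bullet_{\CC_\SB}(\CU^\lambda,\CU^\mu)\to\Ext^\bullet_\BG(\CU^\lambda,\CU^\mu)$ is an isomorphism, iff the collection is strong), two sufficient criteria (path-closedness of the subquiver $\CQ_\SB$, Proposition~\ref{path-closed-prop}; Koszulity of $A_\SB$, Proposition~\ref{Koszul-alg-prop}), and a case-by-case verification for maximal isotropic Grassmannians. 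Your proposal is likewise not a proof: you set up an induction and then explicitly leave open the only step that carries any content, namely that the iterated right mutation is concentrated in a single cohomological degree. (Your surrounding observations are fine: the base case, and the fact that an equivariant complex concentrated in one degree on $\BG/\BP$ is automatically locally free.)

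Two concrete points about why the gap is genuine and not just book-keeping. First, your proposed identification of $P$ as the direct sum $\bigoplus_{\mu\prec\lambda}\Ext^\bullet_\BG(\CU^\lambda,\CE^\mu)^\vee\otimes\CE^\mu$ in degree $0$ cannot be right in general: the dual objects $\CE^\mu$ are an exceptional collection, not a mutually orthogonal one, so the projection of $\CU^\lambda$ onto $\langle\CE^\mu\rangle_{\mu\prec\lambda}$ is an iterated cone, and there is no a priori reason for it to be pure. Second, and more fundamentally, the ``telescopic cancellation'' you hope for is \emph{not} a formal consequence of Definition~\ref{def-eb}. Proposition~\ref{E-vec-bun-prop} shows that purity is equivalent to the statement that all higher $\Ext$'s in $\Ext^\bullet_\BG(\CU^\lambda,\CU^\mu)$ are realized by extensions inside the abelian subcategory $\CC_\SB$, and Example~\ref{eee} exhibits an exceptional block where this holds only because a specific $\Ext^2$ class is a Massey product of three $\Ext^1$ classes passing through a vertex \emph{outside} the block --- a fact that has to be checked by hand and does not follow from the exceptional-block axiom. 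So the missing step in your induction is precisely the open content of the conjecture, as you yourself suspect; if you want unconditional results you should instead aim at one of the sufficient criteria (path-closedness or Koszulity), which is what the paper does for the maximal isotropic Grassmannians.
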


Note that the standard $t$-structure on $\D^\BG(X)$ restricts to a $t$-structure on
the category $\D_\SB^\BG(X)$ whose heart $\CC_\SB$ consists of $\BG$-equivariant coherent
sheaves that are obtained by successive extensions from $\CU^\lambda$ with $\lambda\in\SB$.
As was already mentioned before, the category of $\BG$-equivariant coherent sheaves on $X$ is equivalent to
the category of finite-dimensional representations of $\BP$, which in turn is equivalent to the category
of finite-dimensional representations of a certain infinite quiver with relations $(\CQ,\CI)$  (see \cite{Hille}).
Recall that the vertices of $\CQ$ are in bijection with the set $P^+_\BL$ of dominant weights of $\BL$,
and there is an arrow $\lambda\to\mu$
if and only if $V^{\mu}_\BL$ appears in  $V^{-\beta}_\BL\otimes V^\lambda_\BL$ (i.e., when there is
a nontrivial $\Ext^1_\BG(\CU^{\lambda},\CU^\mu)$).
Note that by Lemma~\ref{uugord}, this quiver is leveled by the function
\begin{equation}\label{quiver-level-function}
\w(\lambda)=-(\xi,\lambda)/(\xi,\beta),
\end{equation}
which means that for every arrow $\lambda\to\mu$ one has $s(\mu)=s(\lambda)+1$.
The subcategory $\CC_\SB$ corresponds to the subcategory of representations supported at the
vertices $\SB\subset P^+_\BL$. Hence, it is equivalent to the category of finite-dimensional representations
$\kk[\CQ_\SB]/\CI_\SB$, where $\kk[\CQ_\SB]$ is the path-algebra
of the full subquiver $\CQ_\SB\subset\CQ$ corresponding to the set of vertices $\SB$, $\CI_\SB$ is an
ideal of relations.

We refer to \cite{ARS} for an introduction to
quivers and representation theory of finite-dimensional algebras. In particular, we use the notion
of a projective cover $P$ of a simple object $S$ corresponding to an object (such $P$ is an indecomposable
projective object with a surjective map $P\to S$).

\begin{proposition}\label{E-vec-bun-prop}
The following conditions are equivalent:

\noindent
$(i)$ Each $\CE^\lambda$ for $\lambda\in\SB$ is a vector bundle.

\noindent
$(ii)$ For each $\lambda\in\SB$, $\CE^\lambda$ is a projective cover of $\CU^\lambda$ in the category $\CC_\SB$.

\noindent
$(iii)$ The natural map $\Ext^\bullet_{\CC_\SB}(\CU^\lambda,\CU^\mu)\to\Ext^\bullet_\BG(\CU^\lambda,\CU^\mu)$
is an isomorphism for any $\lambda,\mu\in\SB$.

\noindent
$(iv)$ The exceptional collection $(\CE^\lambda)_{\lambda\in\SB}$ in $\D(X)$ is strong.

\noindent
Furthermore, under these conditions the canonical map $\CE^\lambda\to \CU^\lambda$ induces an 
isomorphism
\begin{equation}\label{strong-exc-hom-eq}
\Hom(\CU^\lambda,\CE^\mu)\stackrel{\sim}{\to}\Ext^\bullet(\CE^\lambda,\CE^\mu),
\end{equation}
where $\lambda,\mu\in\SB$.
\end{proposition}
\begin{proof}
(i)$\Rightarrow$(ii). If the objects $\CE^\lambda$ are vector bundles then they belong to $\CC_\SB$. Furthermore,
since $\CC_\SB$ is a heart of a t-structure of a full subcategory $\D^\BG_\SB(X)$ of $\D^\BG(X)$ we have
$\Ext^1_{\CC_\SB}(\CE^\lambda,\CU^\mu)\simeq\Ext^1_\BG(\CE^\lambda,\CU^\mu)=0$ for $\lambda,\mu\in \SB$.
This implies that $\Ext^1_{\CC_\SB}(\CE^\lambda,\CF)=0$ for any $\CF$ in $\CC_\SB$,
i.e., $\CE^\lambda$ is projective.
\medskip

\noindent
(ii)$\Rightarrow$(i). If $\CE^\lambda$ is a projective cover of $\CU^\lambda$ in $\CC_\SB$ then $\CE^\lambda$
itself is an object of $\CC_\SB$, hence a successive extension of $\CU^\mu$ with $\mu \in \SB$. In particular, it is a vector bundle on $X$.
\medskip

\noindent
(ii)$\Rightarrow$(iii). Using (ii) we can construct for any object $\CF$ in $\CC_\SB$
a projective resolution consisting of direct sums of objects $\CE^\lambda$.
Computing $\Ext^\bullet_{\CC_\SB}(\CF,\CU^\mu)$ using such a resolution and using the isomorphisms
$$\Hom_{\CC_\SB}(\CE^\lambda,\CU^\mu)\simeq\Ext^\bullet_\BG(\CE^\lambda,\CU^\mu)$$
(coming from the assumption that $\CE^\lambda$ is a projective cover of $\CU^\lambda$ and from
the orthogonality relations \eqref{geu}),
we derive that the map $\Ext^\bullet_{\CC_\SB}(\CF,\CU^\mu)\to\Ext^\bullet_\BG(\CF,\CU^\mu)$ is
an isomorphism.
\medskip

\noindent
(iii)$\Rightarrow$(ii).
For $\lambda\in\SB$ let $\CP^\lambda\to\CU^\lambda$ be the projective cover of $\CU^\lambda$
in $\CC_{\SB}$. The condition (iii) implies that the natural map
$$
\Ext^\bullet_{\CC_\SB}(\CP^\lambda,\CU^\mu)\to\Ext^\bullet_\BG(\CP^\lambda,\CU^\mu) 
$$
is an isomorphism. It follows that $(\CP^\lambda)$ satisfies the orthogonality relations \eqref{geu},
characterizing the right dual exceptional sequence to $(\CU^\lambda)_{\lambda\in\SB}$, so we get
$\CP^\lambda\simeq \CE^\lambda$ for $\lambda\in\SB$.
\medskip

\noindent
(i)$\Rightarrow$(iv). The condition \eqref{he} implies that the natural map
$$\bigoplus_{\nu \in \NB} \Ext^\bullet_\BG(A,\CU^\nu) \otimes \Hom(\CU^\nu,B)\to
\Ext^\bullet(A,B)$$
is an isomorphism for any $A,B\in \CC_\SB$. Applying this to $A=\CE^\lambda$, $B=\CE^\mu$,
where $\lambda,\mu\in\SB$, and
using \eqref{geu}, we derive the isomorphism \eqref{strong-exc-hom-eq}, which implies
that the exceptional collection $(\CE^\lambda)$ is strong.
\medskip

\noindent
(iv)$\Rightarrow$(i). Choose any ordering of $\CU^\lambda$ compatible with the partial ordering $\prec$.
Let $\CU_p$ denote the $p$-th object for this ordering. Let $\CE_p$ be the objects
of the dual collection. Then for any $\CF \in \D_\SB^\BG(X)$ there is a spectral sequence
$\Ext^q_\BG(\CE_p[p],\CF)\otimes \CU_p \Rightarrow H^{q-p}\CF$. For $\CF = \CE^\lambda$
this spectral sequence implies (i).
\end{proof}

Now we are going to prove two criteria for the equivalent conditions of Proposition~\ref{E-vec-bun-prop} to hold.

\begin{proposition}\label{path-closed-prop}
Assume that the subquiver
$\CQ_\SB\subset\CQ$ contains entirely any path in $\CQ$ 
that starts and ends in $\CQ_\SB$. Then the equivalent
conditions of Proposition~$\ref{E-vec-bun-prop}$ hold.
\end{proposition}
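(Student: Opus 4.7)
The plan is to verify condition (3) of Proposition \ref{E-vec-bun-prop}, namely that the natural map
$$
\Ext^\bullet_{\CC_\SB}(\CU^\lambda,\CU^\mu) \to \Ext^\bullet_\BG(\CU^\lambda,\CU^\mu)
$$
is an isomorphism for all $\lambda,\mu\in \SB$, and then appeal to the equivalences of Proposition \ref{E-vec-bun-prop} to conclude. The entire argument takes place inside the quiver description $\Coh^\BG(X) \cong \Rep(\CQ,\CI)$, under which $\CC_\SB$ corresponds to $\Rep(\CQ_\SB,\CI_\SB)$.

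First I would observe that $\CC_\SB$ is a Serre subcategory of the (finite dimensional part of) $\Rep\BP$: a subquotient of an object whose composition factors are $\CU^\nu$ for $\nu\in\SB$ has composition factors in the same set. In particular $\CC_\SB$ is closed under extensions, so the Yoneda $\Ext$ groups in $\CC_\SB$ make sense and the natural map to the Yoneda $\Ext$'s in $\Coh^\BG(X)$ is well defined. Then I would interpret $\Ext^n_\BG(\CU^\lambda,\CU^\mu)$ via $n$-extensions
$$
0 \to \CU^\mu \to E_{n-1} \to \cdots \to E_0 \to \CU^\lambda \to 0.
$$
The key combinatorial claim is that every such extension is Yoneda-equivalent to one in which each intermediate $E_j$ has all of its composition factors among $\{\CU^\nu : \nu\text{ lies on a directed path from }\lambda\text{ to }\mu\text{ in }\CQ\}$. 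Any composition factor not of this form can be split off and canceled: an arrow in $\CQ$ realizes a nontrivial $\Ext^1$, so by definition of $\CQ$ any composition factor that does not lie on a path from $\lambda$ to $\mu$ contributes a direct summand that can be removed without affecting the Yoneda equivalence class.

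Applying the path-closed hypothesis on $\CQ_\SB \subset \CQ$, every vertex lying on a directed path from $\lambda$ to $\mu$ belongs to $\SB$. Consequently the reduced $n$-extension lives entirely inside $\CC_\SB$, which gives surjectivity of the natural map. The same reduction, applied to the morphisms of extensions that witness Yoneda equivalence, gives injectivity: two $n$-extensions already in $\CC_\SB$ that become equivalent in $\Coh^\BG(X)$ are connected by a chain of morphisms whose intermediate extensions can again be truncated into $\CC_\SB$ using the path-closed hypothesis.

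The main technical obstacle is making the minimization of $n$-extensions rigorous. The cleanest way I foresee is to pass to the pro-category of $\Rep\BP$ and work with the minimal projective resolution $\CP^\bullet \to \CU^\lambda$: the $i$-th syzygy has composition factors $\CU^\nu$ for $\nu$ reachable from $\lambda$ along a directed path of length $i$ in $\CQ$ (modulo $\CI$). Since for $\mu\in\SB$ we are computing $\Hom(\CP^\bullet,\CU^\mu)$, only composition factors sitting on a path from $\lambda$ that can be completed to a path ending at $\mu$ contribute, and by path-closedness these factors all lie in $\SB$. Hence the induced resolution in $\CC_\SB$ (obtained by discarding the parts supported outside $\SB$) computes both $\Ext^\bullet_{\CC_\SB}(\CU^\lambda,\CU^\mu)$ and $\Ext^\bullet_\BG(\CU^\lambda,\CU^\mu)$, yielding the desired isomorphism.
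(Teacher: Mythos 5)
Your closing paragraph is essentially the paper's proof: both verify condition (3) of Proposition~\ref{E-vec-bun-prop} by noting that, thanks to path-closedness, the restriction to $\CQ_\SB$ of a projective resolution of the simple at $\lambda$ in $\Rep(\CQ,\CI)$ remains a projective resolution and yields the same $\Hom$-complex against the simple at $\mu\in\SB$. The earlier Yoneda-extension reduction (``split off and cancel'' composition factors) is not rigorous and is superfluous; the one detail worth making explicit in the resolution argument is that for a vertex $\nu$ reachable from $\lambda$ with $\nu\notin\SB$, path-closedness forces the restriction to $\CQ_\SB$ of the projective cover of the simple at $\nu$ to vanish (no path from $\nu$ can end in $\SB$), so the restricted complex really does consist of projectives of $(\CQ_\SB,\CI_\SB)$.
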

\begin{proof}
Recall that the projective cover of a simple object of a vertex $\lambda$
is the representation of $(\CQ_\SB,\CI_\SB)$ associating with a vertex $\mu \in \SB$
the vector space generated by all paths in the quiver from the vertex $\lambda$ to~$\mu$
(modulo the relations). The condition of the Proposition ensures that this representation
is isomorphic to the restriction to $\CQ_\SB$ of the projective cover of the simple object
of the vertex $\mu$ in the category of representations of $\CQ$. It follows that $\Hom$'s
from projective objects to simple objects in $\CQ_\SB$ are the same as in $\CQ$,
and moreover, the restrictions to $\CQ_\SB$ of projective resolutions of simple
objects in $\CQ$ give projective resolutions in $\CQ_\SB$. Combining all this we deduce
that $\Ext$'s between simple objects in $\CC_\SB$ are isomorphic to those in $\CC = \Coh^\BG(X)$,
i.e.\ the condition (iii) of Proposition~\ref{E-vec-bun-prop} holds.
\end{proof}

Also, the properties of purity and strongness of the collection $\CE^\lambda$ are related to the Koszulity
 of a certain algebra. We refer to~\cite{BGS}, \cite{PP} for basic facts about Koszul algebras.

\begin{proposition}\label{Koszul-alg-prop}
$(i)$ Assume that the graded algebra
$$A_\SB=\bigoplus_{\lambda,\mu\in\SB}\Ext_\BG^\bullet(\CU^\lambda,\CU^\mu)$$
is Koszul (with respect to the cohomological grading).
Then the equivalent conditions of Proposition~$\ref{E-vec-bun-prop}$ hold.

\noindent
$(ii)$ If the algebra $A_\SB$ is one-generated then
Koszulity of $A_\SB$ is equivalent to the conditions of Proposition~$\ref{E-vec-bun-prop}$.
\end{proposition}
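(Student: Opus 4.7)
Let $B_\SB$ denote the finite-dimensional algebra whose module category is equivalent to $\CC_\SB$. The key object of the proof is the natural homomorphism of cohomologically graded algebras
\[
\phi_\SB: E(B_\SB) := \bigoplus_{\lambda,\mu \in \SB} \Ext^\bullet_{\CC_\SB}(\CU^\lambda, \CU^\mu) \longrightarrow A_\SB
\]
induced by the inclusion of the Serre subcategory $\CC_\SB \hookrightarrow \Coh^\BG(X)$. Both sides have the same semisimple degree-$0$ part $T := \bigoplus_{\lambda\in\SB}\kk$, and since $\CC_\SB$ is closed under extensions in $\Coh^\BG(X)$, the map $\phi_\SB$ is automatically an isomorphism in cohomological degrees $0$ and $1$. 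Moreover, condition (3) of Proposition~\ref{E-vec-bun-prop} is precisely the statement that $\phi_\SB$ is an isomorphism in every degree, so both (i) and (ii) reduce to analyzing when $\phi_\SB$ is an isomorphism.

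For part (i), assume $A_\SB$ is Koszul. Then $A_\SB$ is quadratic (generated in degree $1$ over $T$ with quadratic relations), and Koszul duality (\cite{PP}) produces a quadratic dual $A_\SB^!$ which is also Koszul and satisfies $E(A_\SB^!) \cong A_\SB$ canonically. The plan is to identify $A_\SB^! \cong B_\SB$ as graded $T$-algebras: in degree $1$ the identification is forced by $\phi_\SB^1$, since both $(A_\SB^!)^1 = (A_\SB^1)^*$ and $B_\SB^1$ are canonically the span of the arrows of the quiver $\CQ_\SB$; in degree $2$, the standard bar-complex computation identifies $E(B_\SB)^2 = \Ext^2_{\CC_\SB}(T, T)$ with the space of quadratic relations defining $B_\SB$, and under $\phi_\SB$ this image is the space of quadratic relations of $A_\SB$, whose annihilator in $(A_\SB^1)^* \otimes_T (A_\SB^1)^*$ defines $A_\SB^!$ by construction. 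Once $A_\SB^! \cong B_\SB$ is established, Koszul duality gives $E(B_\SB) \cong E(A_\SB^!) \cong A_\SB$, yielding condition (3).

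For part (ii), the direction ``$A_\SB$ Koszul $\Rightarrow$ the conditions'' is exactly part (i). For the converse, assume the conditions of Proposition~\ref{E-vec-bun-prop} hold (so $\phi_\SB$ is an iso and $A_\SB \cong E(B_\SB)$) and that $A_\SB$ is one-generated. Then $E(B_\SB)$ is one-generated, which by the classical characterization of Koszulity for algebras with semisimple top (see \cite[Ch.~2]{PP}) forces $B_\SB$ to be Koszul. By Koszul duality, $A_\SB \cong E(B_\SB) \cong B_\SB^!$ is then Koszul as well.

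The main obstacle is the detailed identification $A_\SB^! \cong B_\SB$ in part (i), specifically the matching of quadratic relations. This matching is a standard piece of Koszul-duality machinery but requires carefully unwinding the bar complex of $B_\SB$ (to identify relations with $\Ext^2$-classes of simples) and checking compatibility of the Yoneda products on both sides under $\phi_\SB$; this is the only step where the geometry of $X = \BG/\BP$ interacts nontrivially with the purely algebraic Koszul theory.
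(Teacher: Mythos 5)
Your reduction of both parts to the question of whether the comparison map $\phi_\SB:\bigoplus\Ext^\bullet_{\CC_\SB}(\CU^\lambda,\CU^\mu)\to A_\SB$ is an isomorphism is correct and is exactly the framing the paper uses (this is condition (3) of Proposition~\ref{E-vec-bun-prop}), and your argument for part (ii) coincides with the paper's. The difference is in part (i): the paper simply cites \cite[Cor.\ 8]{Posic} (and the proofs of Theorems 4.1--4.2 of \cite{Pol-perv}), whereas you attempt a self-contained argument by identifying $B_\SB$ with the quadratic dual $A_\SB^!$ and then invoking Koszul duality. That route can in principle be made to work, but the step you defer as ``standard Koszul-duality machinery'' is not standard --- it is essentially the content of the cited result --- and as written it contains a genuine gap.

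Concretely, the identification $A_\SB^!\cong B_\SB$ requires three things you assume rather than prove. First, your claim that ``the bar-complex computation identifies $E(B_\SB)^2$ with the space of quadratic relations defining $B_\SB$'' presupposes that $B_\SB$ is a graded one-generated algebra whose minimal relations are quadratic; for a general quiver with relations, $\Ext^2$ of the simples is dual to a minimal set of relations of arbitrary path-lengths. Here this can be rescued, but only by observing that by the equality case of Lemma~\ref{uugord} every arrow of $\CQ_\SB$ shifts $(\xi,-)$ by exactly $(\xi,\beta)$, so all paths between two fixed vertices have the same length and every ideal of the path algebra is automatically homogeneous --- a point you never make. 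Second, you need $\phi_\SB$ to be an isomorphism in cohomological degree $2$: injectivity is a nontrivial fact about extension-closed subcategories, and surjectivity must be deduced from one-generatedness of $A_\SB$ together with the multiplicativity of $\phi_\SB$ (so that $A_\SB^2=A_\SB^1\cdot A_\SB^1\subset\operatorname{\mathsf{Im}}\phi_\SB^2$); only then does it follow that $\Ext^2_{\CC_\SB}$ is concentrated in internal degree $2$, i.e.\ that $B_\SB$ has no higher minimal relations, and that its quadratic relations are the annihilator of those of $A_\SB$. Third, even granting $B_\SB\cong A_\SB^!$, one should check that the resulting abstract isomorphism $E(B_\SB)\cong A_\SB$ is compatible with $\phi_\SB$ (or argue by dimensions in each bidegree). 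Without these steps the degree-$2$ matching is circular --- you are using the conclusion (comparison of $\Ext$'s in the two categories) in low degrees to set up the argument that is supposed to prove it --- and Example~\ref{eee} shows that the internal-degree issues you are gliding over are exactly where non-Koszul blocks fail, so they cannot be dismissed as formalities.
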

\begin{proof} $(i)$ This follows from the main result of \cite[Cor.\ 8]{Posic}
(see also the proofs of Theorems 4.1 and 4.2
in \cite{Pol-perv}).

\medskip

\noindent
$(ii)$ If the condition (iii) of Proposition \ref{E-vec-bun-prop} is satisfied then
$A_\SB$ is isomorphic (as a graded algebra) to the $\Ext$-algebra
between simple objects in the abelian category $\CC_\SB$. Thus, the assumption
that $A_\SB$ is one-generated implies that the function \eqref{quiver-level-function} is a Koszul weight
function on the set of simple objects of $\CC_\SB$, i.e., $\Ext^i_{\CC_\SB}(\CU^{\lambda},\CU^\mu)\neq 0$
only for $\w(\mu)-\w(\lambda)=i$. Thus, by \cite[Prop.\ 2.1.3]{BGS}, the algebra 
$\kk[\CQ_\SB]/\CI_\SB$ is Koszul, hence, its Yoneda algebra $A_\SB$ is also Koszul.
\end{proof}

\begin{remark} In the case when the unipotent radical of $\BP$ is abelian (in this case the Grassmannian
$X = \BG/\BP$ is called {\sf cominuscule}) and the subquiver $\CQ_\SB\subset\CQ$ contains entirely
any path that starts and ends in $\CQ_\SB$, the algebra $A_\SB$ is Koszul as follows from
the main result of \cite{Hille} and from Proposition \ref{path-closed-prop}.
\end{remark}

\begin{remark}
In \ref{lag-purity-sec} we will give an example (Example~\ref{eee}) of an exceptional block
for which Proposition~\ref{path-closed-prop} does not apply, and at the same time the inequality
of Lemma~\ref{uugord} becomes strict in some cases (and so the algebra $A_\SB$ is not one-generated)
and so Proposition~\ref{Koszul-alg-prop} does not apply as well,
but the equivalent conditions of Proposition \ref{E-vec-bun-prop} still hold.
See Conjecture~\ref{ainfty} for a possible explanation of this.
\end{remark}





\section{Constructing exceptional blocks}\label{ss-ckb}

In this section we suggest a construction of an exceptional block
which depends on a choice of a semisimple subgroup $\BH \subset \BG$.
We start with some preparation.

\subsection{Cores}\label{ss-cores}

Let $\BH$ be a semisimple group. 
Let $\delta \in P_\BH^+$ be a strictly dominant weight
(see Definition~\ref{def-sdom}).

\begin{definition}\label{def-core}
The polyhedron
\begin{equation}\label{core}
\BR_\delta = \{ \lambda \in P_\BH \otimes \RR\ |\ \forall w\in \BW_\BH\ (w\delta,\lambda) \le (\delta,\rho_\BH) \}
\end{equation}
is called {\sf the core} of shape $\delta$.
\end{definition}

We will denote by
\begin{equation}\label{intcore}
\BR_\delta^* := \{ \lambda \in P_\BH \otimes \RR \ |\ \forall w\in \BW_\BH\ (w\delta,\lambda) < (\delta,\rho_\BH) \}.
\end{equation}
the interior of the core $\BR_\delta$.
Note that both $\BR_\delta$ and $\BR_\delta^*$ are $\BW_\BH$-invariant and convex.


\begin{lemma}\label{dom-plan}
The intersection of a core with the set of dominant weights is given by
$$
\BR_\delta \cap P_\BH^+ = \{ \lambda\in P_\BH^+\ |\ (\delta,\lambda) \le (\delta,\rho_\BH) \}.
$$
Similarly,
$$
\BR_\delta^* \cap P_\BH^+ = \{ \lambda\in P_\BH^+\ |\ (\delta,\lambda) < (\delta,\rho_\BH) \}.
$$
\end{lemma}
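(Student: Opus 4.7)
The plan is to reduce the infinite system of inequalities defining $\BR_\delta$ to the single inequality corresponding to $w=1$, using the fact that a scalar product of two dominant weights is maximized among Weyl group translates when both sit in the same (dominant) chamber.

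First I would observe that the inclusion ``$\subseteq$'' is immediate: if $\lambda\in \BR_\delta\cap P_\BH^+$, then specializing the defining inequality of $\BR_\delta$ to $w=1$ gives $(\delta,\lambda)\le (\delta,\rho_\BH)$, which is the only condition appearing on the right-hand side (together with dominance of $\lambda$).

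For the reverse inclusion, assume $\lambda\in P_\BH^+$ satisfies $(\delta,\lambda)\le(\delta,\rho_\BH)$. I need to verify $(w\delta,\lambda)\le(\delta,\rho_\BH)$ for \emph{every} $w\in\BW_\BH$. Since $\delta$ and $\lambda$ are both $\BH$-dominant, Corollary~\ref{max-scalar-prod-cor} applies and tells us that $(w\delta,\lambda)\le (\delta,\lambda)$ for all $w\in\BW_\BH$ (the maximum of $(w\delta,\lambda)$ is attained when $w\delta$ and $\lambda$ lie in the same Weyl chamber, namely at $w=1$ since $\delta\in P_\BH^+$). Combined with the hypothesis, this yields $(w\delta,\lambda)\le(\delta,\lambda)\le(\delta,\rho_\BH)$, as desired.

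The statement for $\BR^*_\delta$ is proved by exactly the same two-step argument, with weak inequalities replaced by strict ones in the final comparison $(\delta,\lambda)<(\delta,\rho_\BH)$; the intermediate inequality $(w\delta,\lambda)\le(\delta,\lambda)$ remains weak, but this is enough to get strict inequality in the composite. There is no real obstacle here — the only thing to be careful about is invoking Corollary~\ref{max-scalar-prod-cor} in the correct direction (maximum, not minimum) and noting that the strict dominance assumption on $\delta$ is not actually needed for this particular lemma, only dominance.
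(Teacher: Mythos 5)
Your proof is correct and follows exactly the paper's argument: the inclusion $\subseteq$ comes from specializing to $w=1$, and the reverse inclusion from Corollary~\ref{max-scalar-prod-cor} applied to the two dominant weights $\delta$ and $\lambda$, with the strict case handled identically. Nothing to add.
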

\begin{proof} Let us check the first equality (the second is proved analogously).
By definition, the LHS is contained in the RHS. On the other hand, since both $\lambda$ and $\delta$ are $\BH$-dominant,
by Corollary~\ref{max-scalar-prod-cor}, we have $(w\delta,\lambda) \le (\delta,\lambda)$ for all $w \in \BW_\BH$,
hence the RHS is contained in the LHS.
\end{proof}

We will say that a point of $P_\BH\otimes\RR$ is {\sf integral} if it lies in the weight lattice $P_\BH \subset P_\BH\otimes\RR$.

\begin{lemma}\label{core-sing}
All integral points of $\BR_\delta^*$ are $\BH$-singular.
All $\BH$-regular integral points of the core $\BR_\delta$ are contained in the $\BW_\BH$-orbit of $\rho_\BH$.
\end{lemma}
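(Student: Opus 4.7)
The plan is to reduce both statements to the dominant Weyl chamber using $\BW_\BH$-invariance of the core and then to apply Lemma~\ref{dom-plan} together with Lemma~\ref{scal-dom}.

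First, observe that both $\BR_\delta$ and $\BR_\delta^*$ are $\BW_\BH$-invariant, and the property of being regular (respectively, integer) is also $\BW_\BH$-invariant. So, given a regular integer point $\lambda \in \BR_\delta$, I may replace $\lambda$ by a Weyl-translate and assume $\lambda$ lies in the closed dominant chamber. Regularity then forces $(\lambda, \alpha_i) > 0$ for every simple root $\alpha_i$ of $\BH$, and since $\lambda$ is an integer weight this means $\lambda = \sum c_i \omega_i$ with $c_i \geq 1$. Consequently, $\lambda - \rho_\BH \in P_\BH^+$.

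Now I would apply Lemma~\ref{dom-plan} to write the containment $\lambda \in \BR_\delta$ as the single inequality $(\delta, \lambda) \leq (\delta, \rho_\BH)$, i.e.\ $(\delta, \lambda - \rho_\BH) \leq 0$. Since $\delta$ is strictly dominant and $\lambda - \rho_\BH$ is dominant, Lemma~\ref{scal-dom} tells us that $(\delta, \lambda - \rho_\BH) \geq 0$, with equality only when $\lambda - \rho_\BH = 0$. Combining the two inequalities forces $\lambda = \rho_\BH$. Undoing the initial Weyl-translation gives the second statement: every regular integer point of $\BR_\delta$ lies in the $\BW_\BH$-orbit of $\rho_\BH$.

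For the first statement, I would run the same argument but with the strict inequality defining $\BR_\delta^*$. If a regular integer $\lambda \in \BR_\delta^*$ existed, the reduction above would yield $\lambda = \rho_\BH$ in the dominant chamber, but then $(\delta, \lambda) = (\delta, \rho_\BH)$ contradicts the strict inequality $(\delta, \lambda) < (\delta, \rho_\BH)$ from~\eqref{intcore} (via the dominant-chamber version of Lemma~\ref{dom-plan}). Hence no such $\lambda$ exists, and every integer point of $\BR_\delta^*$ must be singular.

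There is no real obstacle here: the proof is essentially a one-line application of Lemma~\ref{scal-dom} once the reduction to the dominant chamber has been carried out. The only subtlety is making sure that "strictly dominant" in the sense of Definition~\ref{def-sdom} is enough to invoke the strict positivity clause of Lemma~\ref{scal-dom}, but since $\BH$ decomposes into simple factors and $\delta$ has a nonzero component in each factor, the scalar product $(\delta, \lambda - \rho_\BH)$ is indeed strictly positive as soon as $\lambda - \rho_\BH \neq 0$.
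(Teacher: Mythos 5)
Your proof is correct and follows essentially the same route as the paper's: reduce to the dominant chamber by $\BW_\BH$-invariance, use regularity and integrality to write the point as $\rho_\BH$ plus a dominant weight, and conclude by Lemma~\ref{scal-dom}. The only cosmetic difference is that you pass through Lemma~\ref{dom-plan}, while the paper applies the defining inequality of the core directly with $w^{-1}\delta$; these are equivalent.
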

\begin{proof}
Assume that $\lambda \in P_\BH \cap \BR_\delta$ is $\BH$-regular.
Take $w \in \BW_\BH$ such that $w\lambda$ is $\BH$-dominant. Then
$w\lambda \in \BR_\delta \cap P_\BH^+$ and since $w\lambda$ is $\BH$-regular
we can write $w\lambda = \rho_\BH + \mu$, $\mu \in P_\BH^+$. Therefore,
$$
(\delta,\rho_\BH+\mu) = (\delta,w\lambda) = (w^{-1}\delta,\lambda) \le (\delta,\rho_\BH),
$$
hence $(\delta,\mu) \le 0$. Since $\delta$ is strictly dominant, this implies $\mu = 0$ by Lemma~\ref{scal-dom},
hence $\lambda = w^{-1}\rho_\BH$.
\end{proof}


\subsection{The setup}\label{s-oi}

Consider the complement $D_\BG \setminus \beta$ of the vertex $\beta$ of the Dynkin diagram $D_\BG$ of $\BG$.
In general it consists of several (up to 3) connected components of different types.
We choose one component of type $A$ (possibly empty) to be called {\sf the outer component} and denote it by $\Do$.
The union of the other components will be called {\sf the inner component} and denoted by $\Di$.
We denote the corresponding connected semisimple groups by $\BLo$ and $\BLi$ and by
$$
\jo:\BLo \to \BL,
\qquad
\ji:\BLi \to \BL
$$
the canonical embeddings.
Abusing the notation we will also denote by $\jo$ (resp., $\ji$) the embedding of $\BLo$
(resp., $\BLi$) into $\BG$.
Note that the groups $\BLo$ and $\BLi$ are simply connected (this follows from the fact that
an embedding of Dynkin diagrams induces a surjection of the weight lattices).
In particular, we have
\begin{equation}\label{a-ass}
\BLo \cong \SL_k
\end{equation}
for some $k \ge 1$. We fix a numbering of the vertices of $D = D_\BG$ as follows.
First, we number the vertices of the outer part $\Do = A_{k-1}$ by integers from $1$ to $k-1$ in a standard way
(if $k \ge 3$ there are two ways to number the vertices of $D_\out$, see section~\ref{ss-cnr} for an illustration).
Then we number the vertex $\beta$ by $k$ and the remaining vertices in an arbitrary way.
We denote by $b$ the number of the vertex in $\Do$ which is adjacent to $\beta$ (note that
such vertex is unique).

Note that we have the following decomposition of the Weyl group of $\BL$:
$$
\BW_\BL = \BW_\BLo \times \BW_\BLi.
$$
(since $\Do$ and $\Di$ are not adjacent, the corresponding simple reflections commute).

Now consider the chain of subdiagrams
$$
D_b \subset D_{b-1} \subset \dots \subset D_1 \subset D_0 = D_\BG,
\qquad
D_a = D_\BG \setminus \{1,\dots,a\}.
$$
Let
$$
\BH_b \subset \BH_{b-1} \subset \dots \subset \BH_1 \subset \BH_0 = \BG
$$
be the corresponding chain of semisimple subgroups of $\BG$. For $a=0,\ldots,b$ we denote by
$$h_a:\BH_a \to \BG
$$
the embedding. Note that any $\BH_a$ contains $\BLi$.
Abusing the notation we will denote the corresponding embedding by $\ji:\BLi\to \BH_a$.

For each $a=0,\ldots,b$ we choose a strictly dominant weight $\delta_a \in P_{\BH_a}^+$
(in the sense of Definition~\ref{def-sdom} --- note that the Dynkin diagram $D_a$ can be disconnected,
so the group $\BH_a$ can be nonsimple)
and consider the corresponding core $\BR_{\delta_a} \subset P_{\BH_a}\otimes \RR$.
To unburden the notation we denote this core by $\BR_a$.
The interior of this core will be denoted by $\BR^*_a$.
%

Let $r$ be the index of $\BG/\BP$ and let $r_a$ be the index of $\BH_a/(\BH_a \cap \BP)$.
Note that by Corollary~\ref{rinc} we have
$$
0 < r_b < r_{b-1} < \dots < r_1 < r_0 = r
$$




\subsection{The indexing set}

Let us denote by $\theta$ an element of $P_\BL\otimes\QQ$ such that
\begin{equation}\label{theta}
\theta \in \langle \omega_1,\dots,\omega_{k-1}\rangle ^\perp \cap \Ker i^*,
\qquad\text{and}\qquad
(\theta,\xi) = 1.
\end{equation}
Since $\omega_1,\dots,\omega_{k-1},\xi$ form a basis of $\Ker i^* \subset P_\BL\otimes\QQ$,
such $\theta$ exists and unique.
Note that the set $(\theta,P_\BL)$ of all scalar products of $\theta$
with weights of $\BL$ is a cyclic subgroup of $\QQ$ containing $\ZZ$.
We consider the intersection of this subgroup with the half-closed interval $[0,r) \subset \QQ$:
$$
\SJ = \{ \sj \in (\theta,P_\BL)\ |\ 0 \le \sj < r    \}.
$$
This set will number the blocks in the collection.
Note that it is naturally linearly ordered.
The blocks will be shown to be semiorthogonal with respect to this order.

For each $\sj \in \SJ$ there is a unique integer $a(\sj)$ in the interval $0 \le a(\sj) \le b$ such that
\begin{equation}\label{aj}
r-r_{a(\sj)} \le \sj < r-r_{a(\sj) + 1},
\end{equation}
where we set $r_{b+1}=0$.
To unburden the notation we will write $\BH_\sj = \BH_{a(\sj)}$ , $h_\sj = h_{a(\sj)}$ and
$\BR_\sj=\BR_{\delta_{a(\sj)}}$.

Below we will need the following simple observation.

\begin{lemma}\label{existsp}
For any $\nu \in P_\BLi$ there is a rational number $p \in (\theta,P_\BL)$ such that $p\xi + \ji_*\nu \in P_\BL$.
\end{lemma}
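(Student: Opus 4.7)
The plan is to exhibit an explicit weight $\mu^+ \in P_\BL$ together with a rational number $p\in (\theta,P_\BL)$ such that $\mu^+ = p\xi + \ji_*\nu$. Concretely, writing $\nu = \sum_{i \in \Di} n_i\omega_i^{\BLi}$ in terms of the fundamental weights of $\BLi$, I will set $\mu^+ := \sum_{i \in \Di} n_i\omega_i \in P_\BL$, where on the right $\omega_i$ denotes the $i$-th fundamental weight of $\BG$. Since $\langle \omega_i,\alpha_j^\vee\rangle = \delta_{ij}$ agrees with the defining pairing of the fundamental weight $\omega_i^{\BLi}$ of $\BLi$ for $i,j \in \Di$, one has $\ji^*\mu^+ = \nu$; thus $\mu^+$ is a canonical lift of $\nu$ to $P_\BL$.

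The main step will be to show that the difference $\mu^+ - \ji_*\nu \in P_\BG \otimes \QQ$ is a rational multiple of $\xi$, by pairing it with every simple coroot $\alpha_j^\vee$ of $\BG$ with $j \neq k$. For $j \in \Di$ both $\mu^+$ and $\ji_*\nu$ pair to $n_j$ with $\alpha_j^\vee$, so the difference cancels. For $j \in \Do$ the pairing of $\mu^+$ vanishes since $j \notin \Di$, while $\ji_*\nu$ pairs trivially with $\alpha_j^\vee$ because it lies in the $\QQ$-span of the roots of $\BLi$ and this span is orthogonal to $\alpha_j$ (since $\Di$ and $\Do$ come from disjoint connected components of $D \setminus \beta$, so the corresponding simple roots are mutually orthogonal). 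As the fundamental weights $\omega_1,\ldots,\omega_n$ form the dual basis to the simple coroots, any element of $P_\BG \otimes \QQ$ annihilating all $\alpha_j^\vee$ with $j \neq k$ must be a $\QQ$-multiple of $\omega_k=\xi$. Hence $\mu^+ - \ji_*\nu = p\xi$ for a unique $p \in \QQ$.

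To identify $p$ as an element of $(\theta, P_\BL)$, I will pair the identity $\mu^+ - \ji_*\nu = p\xi$ with $\theta$: using $(\theta, \xi) = 1$ one obtains $p = (\theta, \mu^+) - (\theta, \ji_*\nu)$, and the first term clearly lies in $(\theta, P_\BL)$ by definition. For the second, the condition $\ji^*\theta = 0$ in the definition of $\theta$ forces $(\theta, \alpha) = 0$ for every root $\alpha$ of $\BLi$, so $\theta$ is orthogonal to the $\QQ$-span of these roots, and in particular $(\theta, \ji_*\nu) = 0$. Therefore $p = (\theta, \mu^+) \in (\theta, P_\BL)$, and the identity $\mu^+ = p\xi + \ji_*\nu$ gives the required element of $P_\BL$. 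The only place real content enters the argument is the coroot computation in the second paragraph, whose crucial input is the orthogonality between the roots of $\BLi$ and the simple roots lying in $\Do$ --- supplied for free by the decomposition $D\setminus\beta = \Di \sqcup \Do$ into disjoint unions of connected components.
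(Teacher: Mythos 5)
Your proof is correct and is essentially the paper's argument: the paper reduces by linearity to $\nu=\ji^*\omega_t$ and observes that $\omega_t$ is the required lift (via $\ji_*\ji^*\omega_t=\omega_t-\frac{(\omega_t,\xi)}{\xi^2}\xi$, then $p=(\theta,\omega_t)$), while you treat a general $\nu$ at once using the lift $\mu^+=\sum n_i\omega_i$ and verify $\mu^+-\ji_*\nu\in\QQ\xi$ by pairing with simple coroots --- the dual formulation of the same orthogonality computation. The identification $p=(\theta,\mu^+)\in(\theta,P_\BL)$ is likewise the same final step as in the paper.
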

\begin{proof}
Since any $\nu$ is a linear combination of fundamental weights, it suffices to consider the case
of $\nu = \ji^*\omega_t$ for some $t \in \Di$. Then it is clear that $\ji_*\nu = \ji_*\ji^*\omega_t$
is just the orthogonal projection of $\omega_t$ onto the subspace $\ji_*(P_\BLi\otimes\QQ) \subset P_\BL\otimes\QQ$.
Its orthogonal complement is generated by the lattice $Q_\BLo$
and by the weight $\xi$. 
Moreover, $\omega_t$ is orthogonal to
the lattice $Q_\BLo$ since $t \in \Di$. Hence,
$$
\ji_*\ji^*\omega_t = \omega_t - \frac{(\omega_t,\xi)}{\xi^2}\xi.
$$
It remains to check that $(\omega_t,\xi)/\xi^2 \in (\theta,P_\BL)$. For this we apply the linear function $(\theta,-)$
to the above equality. Since $\theta$ is orthogonal to the image of $i_*$, we conclude that
$(\omega_t,\xi)/\xi^2 = (\theta,\omega_t) \in (\theta,P_\BL)$.
\end{proof}

%

\subsection{The first approximation}\label{sbpab}

For each element of the indexing set $\sj \in \SJ$ we will define a subset $\HSB_\sj \subset P_\BL^+$.
We will show that this is an exceptional block if $\BG$ is of type $BCD$.
In other cases we will have to replace $\HSB_\sj$ by an appropriate
smaller subset $\SB_\sj$.

First, we define the inner part as
\begin{equation}\label{sbij}
\HSBi_\sj = \left\{ \nu \in P_\BLi^+\ \left|\
\begin{array}{ll}
(1) & \rho_{\BH_\sj} \pm 2 i_* (w \nu) \in \BR_{\sj}\ \text{for all $w \in \BW_{\BLi}$}\\
(2) & \sj\xi + i_*\nu \in P_\BL
\end{array}
\right\}\right..
\end{equation}
After that we define the outer part as
\begin{equation}\label{sboj}
\HSBo_\sj = \left\{ \mu \in \Ker h_\sj^* \cap P_\BG^+\ \left|\
\begin{array}{l}
\rho_{\BH_\sj} - h_\sj^*(w_\BLo\mu) - i_*(w_\BLi\nu) + i_*(w'_\BLi\nu') \in \BR_{\sj}\\
\ \text{for all $\nu,\nu' \in \HSBi_\sj$, $w_\BLo \in \BW_\BLo$, and $w_\BLi,w'_\BLi \in \BW_\BLi$}
\end{array}
\right\}\right..
\end{equation}
And finally, we consider the set
\begin{equation}\label{bbplus}
\HSB_\sj = \HSBo_\sj + \sj\xi + \ji_*(\HSBi_\sj).
\end{equation}

\begin{remark}\label{w-nu-rem}
In both definitions \eqref{sbij} and \eqref{sboj} we can replace all terms
of the form $i_*(w\nu)$ with $w\in \BW_\BLi$ by $wi_*(\nu)$ and allow $w$ to run through the entire
group $\BW_\BL$. Indeed, this follows from the decomposition $\BW_\BL=\BW_\BLo\times\BW_\BLi$
together with the fact that $\BW_\BLo$ acts trivially on the image of $i_*$.
\end{remark}

\subsection{Very special representatives}\label{sbap}

In this section we will define a certain class of elements of the set $\SR_\BH^\BM$
and using them define a subblock $\SB_\sj \subset \HSB_\sj$. In fact, $\SB_\sj = \HSB_\sj$
if $\BG$ is of type $BCD$.

Recall that $\BLo = \SL_k$, see~\eqref{a-ass}.
We will use the following representation of the weight lattice of $\SL_k$:
$$
P_{\SL_k} = \{ (\lambda_1,\dots,\lambda_k) \in \QQ^k\ |\ \text{$\lambda_i - \lambda_{i+1} \in \ZZ$ for all $1\le i \le k-1$ and $\sum_{i=1}^k \lambda_i = 0$} \},
$$
where the simple roots and the fundamental weights are given by
$$
\alpha_t = \Big(\underbrace{0,\dots,0}_{\text{$t-1$}},1,-1,\underbrace{0,\dots,0}_{\text{$k-t-1$}}\Big),
\qquad
\omega_t = \Big(\underbrace{\frac{k-t}k,\dots,\frac{k-t}k}_{\text{$t$}},\underbrace{-\frac{t}k,\dots,-\frac{t}k}_{\text{$k-t$}}\Big).
$$

\begin{remark}\label{scaling}
Note that this representation fixes the scaling of the scalar product as $\alpha_t^2 = 2$
for all $1 \le t \le k-1$. From now on we fix this scaling.
\end{remark}

Let $\BH=\BH_a$ for some $a$, $1\le a\le b$. For each $v \in \SR_\BH^\BM$ define a rational number
\begin{equation}\label{phi-v}
\phi(v) :=
\frac{(\xi,\rho - v\rho)}{k(\xi,\omega_1)} \left( 1 - k \frac{(\xi,\omega_1)^2}{\xi^2} \right).
%
\end{equation}

\begin{definition}\label{very-s}
An element $v \in \SR_\BH^\BM$ is {\sf very special} if $\phi(v)$ is a positive integer.
\end{definition}

\begin{lemma}\label{vse}
If $\BG$ is a group of type $B$, $C$ or $D$, then there are no very special elements.
\end{lemma}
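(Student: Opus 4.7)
My plan is to separate $\phi(v)$ into a factor depending only on the configuration $(\BG,\BP,\Do,\text{vertex }1)$ and one depending on $v$, and to show the former is nonpositive. Concretely, I would write
$$
\phi(v) = \frac{(\xi,\rho-v\rho)}{k(\xi,\omega_1)}\cdot F, \qquad F := 1 - k\,\frac{(\xi,\omega_1)^2}{\xi^2},
$$
so that only $(\xi,\rho-v\rho)$ involves $v$. By Lemma~\ref{srlg-wh}, $\SR_\BH^\BM \subset \SR_\BG^\BL$, so Lemma~\ref{xil} yields $(\xi,\rho-v\rho) \ge \ell(v)(\xi,\beta) \ge 0$. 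Furthermore, $\xi$ is a nonzero (strictly) $\BG$-dominant weight and $\omega_1$ is a nonzero $\BG$-dominant weight, so Lemma~\ref{scal-dom} gives $(\xi,\omega_1) > 0$. Hence $\phi(v) = 0$ at $v = 1$, and for $v \ne 1$ the sign of $\phi(v)$ coincides with the sign of $F$. It is thus enough to verify $F \le 0$ in every admissible configuration in types $B,C,D$.

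Next, I would go through the possible configurations using Bourbaki coordinates $\{e_i\}$ normalized so that $\alpha_i^2 = 2$ for the simple roots of $\Do$ (consistent with Remark~\ref{scaling}). For the generic choice, where $\Do$ is the long $A$-chain ending at $\beta$ and vertex $1 = \alpha_1$ in Bourbaki numbering, one has $\omega_1 = e_1$, while $\xi = \omega_k$ equals $e_1 + \cdots + e_k$ when $\beta$ is not a spinor root, or $\tfrac12(e_1 + \cdots \pm e_n)$ when $\beta$ is a spinor root of $B_n$ or $D_n$. In each such case a direct computation gives $k(\xi,\omega_1)^2 = \xi^2$, so $F = 0$ and $\phi(v) \equiv 0$. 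This settles all parabolics in types $B$ and $C$, all non-spinor parabolics in type $D$, and the spinor parabolics in type $D$ with the standard choice of vertex~$1$.

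The residual configurations exist only in type $D_n$: (a) $\beta \in \{\alpha_{n-1},\alpha_n\}$ with $\Do = A_{n-1}$ and vertex~$1$ taken to be the opposite spinor simple root (so $\omega_1 = \omega_n$ or $\omega_{n-1}$), which a short computation shows gives $F = 1 - (n-2)^2/4$; and (b) $\beta = \alpha_{n-2}$ with $\Do$ a singleton $A_1$ component $\{\alpha_{n-1}\}$ or $\{\alpha_n\}$ (so $k = 2$), which similarly yields $F = (4-n)/2$. The main (mild) obstacle is the small-$n$ bookkeeping here: both formulas return $F = 0$ at $n = 4$ and are strictly negative for $n \ge 5$, so $F \le 0$ in every case. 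Combined with the sign analysis above, this shows $\phi(v) \le 0$ uniformly on $\SR_\BH^\BM$, so $\phi(v)$ is never a positive integer and no very special element exists.
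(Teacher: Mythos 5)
Your core computation coincides with the paper's own proof: for the standard choices (the ones actually made in Section~\ref{ss-ex}, with $\Do=\{1,\dots,k-1\}$ and the standard chain), the second factor $F=1-k(\xi,\omega_1)^2/\xi^2$ in \eqref{phi-v} vanishes identically, so $\phi(v)=0$ and one is done. The paper stops exactly there, explicitly restricting to the standard numbering, and therefore never needs to analyse the sign of the first factor. Your extra observation --- that $(\xi,\rho-v\rho)\ge 0$ via Lemmas~\ref{srlg-wh} and~\ref{xil} and $(\xi,\omega_1)>0$ via Lemma~\ref{scal-dom}, so that the sign of $\phi(v)$ is governed by $F$ alone --- is a genuine addition, and it is what allows you to aim at the stronger statement covering all admissible choices (C1)--(C2).

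That stronger part, however, rests on an incomplete enumeration. The non-standard configurations do not ``exist only in type $D_n$'': in $B_n$ and $C_n$ with $\beta=\alpha_{n-1}$ the last vertex $\{\alpha_n\}$ is an $A_1$-component that may be taken as $\Do$ (so $k=2$, $\omega_1=\omega_n$), and in $D_n$ with $\beta=\alpha_{n-3}$ the fork $\{\alpha_{n-2},\alpha_{n-1},\alpha_n\}$ is of type $A_3$ and may be taken as $\Do$ (so $k=4$). Moreover $k(\xi,\omega_1)^2/\xi^2$ is \emph{not} invariant under rescaling the bilinear form, so in these cases one must first impose the normalization $\alpha_t^2=2$ on the roots of $\Do$ from Remark~\ref{scaling} (for the short-root $A_1$ in type $B$ this changes the value). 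With that normalization one finds $F=2-n$ in the $B_n$ and $C_n$ cases and $F=4-n$ in the $D_n$ fork case, so $F\le 0$ still holds and your conclusion survives; but as written the claim of exhaustiveness is false. Since the Lemma is only invoked for the standard choices, this does not affect the results of the paper, but if you want the statement for arbitrary admissible choices you need to complete the case list.
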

\begin{proof}
Consider the standard numbering of vertices. Let $\beta = \alpha_k$.
Note that if we take $\Do$ to be empty then we have nothing to check (since we assumed $a \ge 1$).
This means that we only have to consider the case when $\Do$ consists of vertices from $1$ to $k-1$.

First, assume that either $k \le n-1$ for type $B$ and $k \le n-2$ for type $D$ or any $k$ for type $C$.
Then $(\xi,\omega_1) = 1$, $\xi^2 = k$ and we see that the second factor in~\eqref{phi-v} vanishes, hence $\phi(v) = 0$.
In the remaining cases ($k = n$ for type $B$ and $k = n$ for type $D$) we have
$(\xi,\omega_1) = 1/2$, $\xi^2 = n/4$, and $k = n$, so the second factor vanishes as well.
\end{proof}

\begin{remark}
It seems plausible that for types $E$, $F$ and $G$ there are no very special elements as well,
although we have not checked this.
On the contrary, for type $A$
$$
\phi(v) = (\xi,\rho - v\rho)/(n + 1 - k),
$$
so very special elements correspond to permutations
$v \in \SSS_{n+1}$ such that $v(n+1) = k$.
\end{remark}

Now we are ready to define the block --- we just set
\begin{equation}\label{bap}
\begin{array}{lll}
\SBo_\sj &=& \{ \lambda \in \HSBo_\sj\ |\ (\lambda + \rho - v\rho,\alpha_1 + \dots + \alpha_{k-1}) < \phi(v)\ \text{for all very special $v$} \},\\
\SBi_\sj &=& \HSBi_\sj,\\
\SB_\sj &=& \SBo_\sj + \sj\xi + \ji_*(\SBi_\sj),
\end{array}
\end{equation}



Further we will show that the block $\SB_\sj$ defined by~\eqref{bap} is exceptional
if its outer part $\SBo_\sj$, viewed as a set of Young diagrams, is closed under
passing to a subdiagram. In fact, we will prove part (a) of the criterion~\ref{crit-eb}
for the block $\SB_\sj$ in Section~\ref{s-np} (without additional conditions).
Part (b) of this criterion will be proved in section~\ref{ss-aw} assuming that $\SBo_\sj$
is closed under passing to a subdiagram.
Finally, we will verify the latter condition for the groups of type $BCD$
by a direct computation in section~\ref{ss-ex}.

\subsection{Exceptional collections}\label{ss-ec}

Before we proceed to the proof that the constructed blocks are exceptional
we will explain how one can make these blocks smaller in order to achieve
semiorthogonality of the subcategories of $\D^b(X)$ generated by the corresponding
equivariant bundles.

First, we define subsets $\SBSi_\sj \subset \SBi_\sj$
by the formula
\begin{equation}\label{sbt-in}
\SBSi_\sj = \left\{ \nu \in \SBi_\sj\ \left|\
\begin{array}{l}
\text{for all $\sj' < \sj$, $\nu' \in \SBSi_{\sj'}$, and $w_\BLi,w'_\BLi \in \BW_\BLi$}\\[2pt]
\text{one has $\rho_{\BH_{\sj'}} - (\sj - \sj')\xi - w_\BLi\ji_*\nu + w'_\BLi\ji_*\nu' \in \BR^*_{\sj'}$}
\end{array}
\right\}\right..
\end{equation}
Note that the above formula is recursive --- it describes $\SBSi_\sj$ in terms of all $\SBSi_{\sj'}$ with $\sj' < \sj$.
We also set
\begin{equation}\label{sbt}
\SBSo_\sj = \left\{ \lambda_0 \in \SBo_\sj\ \left|\
\begin{array}{l}
\text{for all $\sj' < \sj$, $\nu \in \SBSi_{\sj}$, $\nu' \in \SBSi_{\sj'}$, $w_\BLi,w'_\BLi \in \BW_\BLi$, and $w_\BL \in \BW_\BL$}\\[2pt]
\text{one has $\rho_{\BH_{\sj'}} - h_{\sj'}^*(w_\BL\lambda_0 + (\sj - \sj')\xi) - w_\BLi\ji_*\nu + w'_\BLi\ji_*\nu' \in \BR^*_{\sj'}$}
\end{array}
\right\}\right.
\end{equation}
Note that by Remark \ref{w-nu-rem}, we can let the elements $w_\BLi$ and $w'_\BLi$
run through the entire group $\BW_\BL$ in the definitions \eqref{sbt-in} and \eqref{sbt}.
Finally, we set
\begin{equation}\label{sbsp}
\SBS_\sj = \SBSo_\sj + \sj\xi + \ji_*\SBSi_\sj,
\end{equation}
and define the subcategory
$$
\CA_\sj := \langle \CU^{\lambda} \rangle_{\lambda \in \SBS_\sj}.
$$

\begin{theorem}\label{aap-ec}
The collection of subcategories $\{ \CA_\sj \}_{\sj \in \SJ}$
ordered by increasing of $\sj$ is semiorthogonal.
\end{theorem}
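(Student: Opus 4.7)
The plan is to reduce the semiorthogonality of $\{\CA_\sj\}$ to the vanishing of $\Ext^\bullet(\CU^\mu,\CU^\lambda)$ for $\mu\in\SBS_\sj$, $\lambda\in\SBS_{\sj'}$ with $\sj>\sj'$, and to deduce this vanishing directly from the recursive definitions of the blocks. By Corollary~\ref{extuu}, each such $\Ext$-group is a direct sum indexed by weights $\eta\in\Conv(\lambda-w\mu)_{w\in\BW_\BL}\cap P_\BL^+\cap(P_\BG^\reg-\rho_\BG)$, so it is enough to show that every $\eta\in\Conv(\lambda-w\mu)_{w\in\BW_\BL}$ has $\eta+\rho_\BG$ that is $\BG$-singular; this is stronger than necessary but is the natural statement handled by a convex-hull argument.

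The restriction map $h_{\sj'}^*$ is the key tool: since the root system of $\BH_{\sj'}$ is a subsystem of the root system of $\BG$ and the scalar products are compatible, any weight whose restriction to $\BH_{\sj'}$ is $\BH_{\sj'}$-singular is automatically $\BG$-singular. By Lemma~\ref{core-sing}, every integer point of $\BR^*_{\sj'}$ is $\BH_{\sj'}$-singular, so it suffices to show $h_{\sj'}^*(\eta+\rho_\BG)\in\BR^*_{\sj'}$. The elementary identity $h_{\sj'}^*(\rho_\BG)=\rho_{\BH_{\sj'}}$ (which follows from the observation that $h_{\sj'}^*(\omega_i)$ equals the fundamental weight of $\BH_{\sj'}$ at $i$ when $i\in D_{a(\sj')}$ and vanishes otherwise) combined with the convexity of $\BR^*_{\sj'}$ and the linearity of $h_{\sj'}^*$ reduces the task to verifying $h_{\sj'}^*(\lambda-w\mu)+\rho_{\BH_{\sj'}}\in\BR^*_{\sj'}$ separately for each $w\in\BW_\BL$.

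Write $\mu=\mu_0+\sj\xi+\ji_*\tilde\mu$ with $\mu_0\in\SBSo_\sj$, $\tilde\mu\in\SBSi_\sj$, similarly $\lambda=\lambda_0+\sj'\xi+\ji_*\tilde\lambda$, and decompose $w=w_\BLo w_\BLi$ with $w_\BLo\in\BW_\BLo$, $w_\BLi\in\BW_\BLi$. Four facts streamline the calculation: $\xi$ is $\BW_\BL$-invariant (Lemma~\ref{xi-inv}); $w_\BLo$ acts trivially on the image of $\ji_*$, because the components $\Do$ and $\Di$ of $D_\BG\setminus\beta$ are non-adjacent; $w_\BLi\in\BW_{\BH_{\sj'}}$ commutes with $h_{\sj'}^*$; and $h_{\sj'}^*(\lambda_0)=0$, since $\SBSo_{\sj'}\subset\Ker h_{\sj'}^*$. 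A direct substitution then yields
\begin{equation*}
h_{\sj'}^*(\lambda-w\mu)+\rho_{\BH_{\sj'}} = \rho_{\BH_{\sj'}} - h_{\sj'}^*\bigl(w\mu_0+(\sj-\sj')\xi\bigr) - w_\BLi\ji_*\tilde\mu + \ji_*\tilde\lambda,
\end{equation*}
which is exactly the expression occurring in the defining condition~\eqref{sbt} of $\SBSo_\sj$ specialized at $w_\BL=w$, $\lambda_0=\mu_0$, $\nu=\tilde\mu$, $\nu'=\tilde\lambda$, $w_\BLi=w_\BLi$, $w'_\BLi=1$. Since $\mu_0\in\SBSo_\sj$, $\tilde\mu\in\SBSi_\sj$, $\tilde\lambda\in\SBSi_{\sj'}$, and $\sj'<\sj$, the defining condition of $\SBSo_\sj$ guarantees that this expression lies in $\BR^*_{\sj'}$, completing the argument.

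The main technical point is the bookkeeping around the interaction of the Weyl-group actions with the restriction $h_{\sj'}^*$, particularly the identity $h_{\sj'}^*(\rho_\BG)=\rho_{\BH_{\sj'}}$ and the triviality of the $\BW_\BLo$-action on $\ji_*(P_\BLi)$; once these compatibilities are in place, the recursive definitions~\eqref{sbt-in} and~\eqref{sbt} are engineered so that the desired expression lands in $\BR^*_{\sj'}$, and the proof becomes essentially tautological.
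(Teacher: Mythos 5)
Your proof is correct and follows essentially the same route as the paper's: reduce via Corollary~\ref{extuu} to showing that every weight in the relevant convex hull becomes singular after adding $\rho_\BG$, apply $h_{\sj'}^*$ to land in $\BR^*_{\sj'}$ by the defining condition~\eqref{sbt} of $\SBSo_\sj$, invoke Lemma~\ref{core-sing}, and pull singularity back to $\BG$ since the roots of $\BH_{\sj'}$ are roots of $\BG$. The only difference is presentational — you spell out the decomposition $w=w_\BLo w_\BLi$ and the four compatibility facts explicitly, which the paper leaves implicit.
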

\begin{proof}
Assume that $\sj' < \sj$.
Let $\lambda_0 \in \SBSo_\sj$, $\lambda'_0 \in \SBSo_{\sj'}$, $\nu \in \SBSi_{\sj}$, $\nu' \in \SBSi_{\sj'}$.
We have to check that
$$
\Ext^\bullet(\CU^{\lambda_0+\sj\xi+\ji_*\nu},\CU^{\lambda'_0+\sj'\xi+\ji_*\nu'}) = 0.
$$
By Corollary~\ref{extuu} we have to check that for any $\BL$-dominant weight
$$
\mu \in \Conv(\lambda'_0-w_\BL\lambda_0+(\sj'-\sj)\xi+\ji_*\nu' - w_\BL\ji_*\nu)_{w_\BL \in \BW_\BL}
$$
the sum $\mu + \rho_\BG$ is $\BG$-singular.
Note that $h_{\sj'}^*(\lambda'_0) = 0$ since $\lambda'_0 \in \SBSo_{\sj'} \subset \Ker h_{\sj'}^*$, hence
$$
h_{\sj'}^*(\rho + \lambda'_0-w_\BL\lambda_0+(\sj'-\sj)\xi+\ji_*\nu' - w_\BL\ji_*\nu) =
\rho_{\BH_{\sj'}} - h_{\sj'}^*(w_\BL\lambda_0 - (\sj-\sj')\xi) + \ji_*\nu' - w_\BL\ji_*\nu.
$$
By definition of $\SBSo_\sj$, all these weights for $w_\BL\in \BW_\BL$
lie in the interior of the core $\BR_{\sj'}$,
hence we have $h_{\sj'}^*(\mu+\rho)\in\BR^*_{\sj'}$, and so
by Lemma~\ref{core-sing} $h_{\sj'}^*(\mu+\rho)$ is $\BH_{\sj'}$-singular.
But the map $h_{\sj'}^*$ preserves regularity,
hence $\mu + \rho$ is $\BG$-singular as well.
%
\end{proof}

\section{Verification of the invariance condition}\label{s-np}

In this section we prove that the blocks $\SB_\sj$ and $\SBS_\sj$ constructed
in section \ref{ss-ckb} satisfy the invariance condition (part~(a) of the criterion~\ref{crit-eb}).


First, we will need the following simple fact. 
Assume that $\BH \subset \BH'$ is an embedding of semisimple groups corresponding
to the embedding of the Dynkin diagrams $D_\BH \subset D_{\BH'}$ such that 
%
%
$D_{\BH'} \setminus D_\BH$ consists only of one vertex. Let $\alpha$ be
the corresponding simple root and $\eta$ the corresponding fundamental weight of $\BH'$.

\begin{lemma}\label{ccl}
There is a positive integer $k = k_{{\BH'},\BH}$ such that
$$
\rho_{\BH'} - k \eta = w_0^\BH w_0^{\BH'} \rho_{\BH'}.
$$
Moreover, for all $0 < c < k$ the weight $\rho_{\BH'} - c\eta$ is $\BH'$-singular.
\end{lemma}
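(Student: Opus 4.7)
The plan is to interpret this lemma geometrically via the generalized Grassmannian $X := \BH'/\BP_\alpha$, where $\BP_\alpha$ is the maximal parabolic of $\BH'$ corresponding to $\alpha$. Its Levi quotient has Dynkin diagram $D_\BH$, so its Weyl group is $\BW_\BH$ with longest element $w_0^\BH$; moreover $\CU^\eta$ is the ample generator of $\Pic X \cong \ZZ$, and $k$ will turn out to be the index of $X$. In the case when $\BH'$ is not simple, write $\BH' = \BH'_1 \times \BH'_2$ with $\alpha \in D_{\BH'_1}$ so that $\BH = \BH_1 \times \BH'_2$; the two $w_0^{\BH'_2}$ factors cancel in the product $w_0^\BH w_0^{\BH'} = w_0^{\BH_1} w_0^{\BH'_1}$ and $\rho_{\BH'_2}$ is fixed by this element, so the whole claim reduces to the simple-factor case $\BH_1 \subset \BH'_1$. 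I therefore assume $\BH'$ is simple.

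For part~(1), I combine the standard expression $\omega_X = \CU^{w_0^\BH w_0^{\BH'}\rho_{\BH'} - \rho_{\BH'}}$ (recalled at the start of Section~\ref{ss-can}) with Lemma~\ref{index}, which identifies $\omega_X \cong \CU^{-k\eta}$ for the (strictly positive) index $k$ of $X$. Matching exponents in $\Pic X$, which is generated by $\CU^\eta$, yields the identity $\rho_{\BH'} - k\eta = w_0^\BH w_0^{\BH'}\rho_{\BH'}$.

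For part~(2), the key observation is that $w_0^\BH w_0^{\BH'} \in \BW_{\BH'}$ acts by an isometry on $P_{\BH'} \otimes \RR$, so part~(1) forces $(\rho_{\BH'} - k\eta,\, \rho_{\BH'} - k\eta) = (\rho_{\BH'},\, \rho_{\BH'})$, which rearranges to $k(\eta,\eta) = 2(\rho_{\BH'}, \eta)$. Substituting this into the analogous expansion with $c$ in place of $k$ gives
\begin{equation*}
(\rho_{\BH'} - c\eta,\, \rho_{\BH'} - c\eta) - (\rho_{\BH'},\, \rho_{\BH'}) = c(c-k)(\eta,\eta),
\end{equation*}
which is strictly negative for $0 < c < k$, since $(\eta,\eta) > 0$. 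On the other hand, any regular integer weight $\mu \in P_{\BH'}$ can be moved by $\BW_{\BH'}$ to a strictly dominant integer weight of the form $\rho_{\BH'} + \lambda$ with $\lambda \in P_{\BH'}^+$, and hence satisfies $(\mu,\mu) = (\rho_{\BH'},\rho_{\BH'}) + 2(\rho_{\BH'},\lambda) + (\lambda,\lambda) \ge (\rho_{\BH'}, \rho_{\BH'})$ by Lemma~\ref{scal-dom}. Therefore $\rho_{\BH'} - c\eta$, having strictly smaller squared norm than $\rho_{\BH'}$, cannot be regular, and so it is singular.

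There is no substantial obstacle once Lemma~\ref{index} is in hand; the essence of part~(2) is the brief norm computation enabled by the Weyl-invariance of the scalar product, together with the elementary fact that regular integer weights have squared norm at least $(\rho_{\BH'}, \rho_{\BH'})$. The only minor bookkeeping is the reduction to simple $\BH'$ sketched above.
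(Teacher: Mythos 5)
Your proof is correct, and it departs from the paper's argument in an interesting way. For the identity $\rho_{\BH'} - k\eta = w_0^\BH w_0^{\BH'}\rho_{\BH'}$ the paper computes directly: it applies $h^*$ to see that the difference $\rho_{\BH'} - w_0^\BH w_0^{\BH'}\rho_{\BH'}$ lies in $\Ker h^* = \ZZ\eta$, and invokes Lemma~\ref{w-dom} for positivity of $k$; your route through the canonical class and Lemma~\ref{index} is precisely the geometric deduction the authors themselves sketch in the remark immediately following the lemma, so that half is a known alternative. The genuine divergence is in the singularity statement. The paper shows that $\rho_{\BH'} - c\eta$ for $0<c<k$ lies in the convex hull of the two weights $\rho_{\BH'}-\eta$ and $\rho_{\BH'}-(k-1)\eta = w_0^\BH w_0^{\BH'}\rho_{\BH'}+\eta$, checks that both endpoints lie in the interior $\BR_\delta^*$ of an arbitrary core, and concludes by convexity and Lemma~\ref{core-sing}; the remark offers Kodaira vanishing as yet another route. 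You instead exploit that $w_0^\BH w_0^{\BH'}$ is an isometry to extract the relation $2(\rho_{\BH'},\eta)=k(\eta,\eta)$ from part~(1), compute $\bigl(\rho_{\BH'}-c\eta,\rho_{\BH'}-c\eta\bigr)-\bigl(\rho_{\BH'},\rho_{\BH'}\bigr)=c(c-k)(\eta,\eta)<0$, and appeal to the fact that every regular integral weight has squared norm at least $(\rho_{\BH'},\rho_{\BH'})$ (which you correctly reduce to Lemma~\ref{scal-dom} after conjugating into the strictly dominant chamber). This is more elementary and self-contained — it replaces the linear functional $(\delta,-)$ underlying the core machinery by the Weyl-invariant quadratic form and never mentions cores — whereas the paper's version has the advantage of staying inside the formalism (convexity of $\BR_\delta^*$, Lemma~\ref{core-sing}) that is reused throughout Section~\ref{s-np}. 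Your preliminary reduction to simple $\BH'$ is sound and is genuinely needed for the clean application of Lemma~\ref{index}, though the norm computation in part~(2) would in fact go through for semisimple $\BH'$ without it.
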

\begin{proof}
Let us denote the embedding $\BH \to \BH'$ by $h$. Then as we know
$h^*\rho_{\BH'} = \rho_\BH$ and $\Ker h^* = \ZZ\eta$.
Since
$$
h^*w_0^\BH w_0^{\BH'}\rho_{\BH'} = - h^*w_0^\BH\rho_{\BH'} = - w_0^\BH\rho_\BH = \rho_\BH,
$$
we get
$$
\rho_{\BH'} - w_0^\BH w_0^{\BH'}\rho_{\BH'}  = k \eta
$$
for some $k\in\ZZ$.
Moreover, the LHS is a sum of positive roots by Lemma~\ref{w-dom}, hence $(k\eta,\eta) > 0$, hence $k$ is positive.
This proves the first statement.

For the second, by Lemma~\ref{core-sing} it is enough to show that $\rho_{\BH'} - c\eta$ with $0 < c < k$ is in the interior of a core $\BR^*_\delta$
for some strictly dominant $\delta$. In fact, we will show that one can take any strictly dominant $\delta$. 
Indeed, since $\BR^*_\delta$ is convex and $\rho_{\BH'} - c\eta$
lies in the convex hull of $\rho_{\BH'} - \eta$ and $\rho_{\BH'} - (k-1)\eta$ it is enough
to check that the latter two weights are in $\BR^*_\delta$. Fix some strictly dominant $\delta$.


First, we have $(\delta,\rho_{\BH'}-\eta) = (\delta,\rho_{\BH'}) - (\delta,\eta) < (\delta,\rho_{\BH'})$,
so since $\rho_{\BH'}-\eta$ is dominant we have $\rho_{\BH'} - \eta\in\BR_\delta^*$ by Lemma~\ref{dom-plan}. On the other hand,
$$
\rho_{\BH'} - (k-1)\eta =
w_0^\BH w_0^{\BH'}\rho_{\BH'} + \eta =
w_0^\BH w_0^{\BH'}(\rho_{\BH'} +w_0^{\BH'}w_0^\BH\eta)=
w_0^\BH w_0^{\BH'}(\rho_{\BH'}+w_0^{\BH'}\eta).
$$
Since $-w_0^{\BH'}\eta$ is a fundamental weight of $\BH'$,
the same argument as above shows that
$$\rho_{\BH'}+w_0^{\BH'}\eta=\rho_{\BH'}-(-w_0^{\BH'}\eta)\in \BR_\delta^*.$$
Hence, we obtain that $\rho_{\BH'}-(k-1)\eta$ is also in $\BR_\delta^*$.
This finishes the proof.
\end{proof}


\begin{remark}
One can also deduce the claim geometrically. Consider the Grassmannian of ${\BH'}$
corresponding to the root $\alpha$. Then its Picard group is $\ZZ$ and its generator
is the line bundle corresponding to the weight $\eta$.
By Lemma~\ref{canclass} the canonical class of the Grassmannian is given by the weight
$w^0_\BH w^0_{\BH'} \rho - \rho$. On the other hand, it is equal to the line bundle
corresponding to the weight $-k \eta$ for some $k \in \ZZ$. This gives the equality.
Having all this, the singularity of weights $\rho - c\eta$ with $0 < c < k$ is clear.
Indeed by the Borel--Bott--Weil the singularity of $\rho - c\eta$ is equivalent to
the vanishing of the cohomology of the line bundle corresponding to the weight $-c\eta$, 
which indeed vanishes by Kodaira vanishing theorem.
\end{remark}

Now we can verify the invariance condition.

\begin{proposition}\label{inv-cond-prop}
Let $\kappa \in \OP_1(\SB_\sj)$, $v \in \OP_2(\SB_\sj)$.
Then $\kappa \in \Ker h_\sj^*$ and $v \in \BW_{\BH_\sj}$.
In particular, $v\kappa = \kappa$.
\end{proposition}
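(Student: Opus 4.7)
The plan is to apply $h_\sj^*$ to the containment that defines $\OP(\SB_\sj)$, reduce the problem to a regularity argument inside the core $\BR_\sj$, and then use the $\BL$-dominance coming from $v \in \SR_\BG^\BL$ to pin things down. Take $(\kappa, v) \in \OP(\SB_\sj)$ and $\lambda = \lambda_0 + \sj\xi + \ji_*\nu$, $\lambda' = \lambda'_0 + \sj\xi + \ji_*\nu'$ in $\SB_\sj$ witnessing it, so that $v(\kappa+\rho) - \rho \in \Conv(\lambda' - w\lambda)_{w \in \BW_\BL}$. Decomposing $w = w_\BLo w_\BLi$ according to $\BW_\BL = \BW_\BLo \times \BW_\BLi$, using that $\BW_\BLo$ fixes $\ji_*\nu$ (which is orthogonal to the outer roots) and $\BW_\BLi$ fixes $\lambda_0$ (supported on the outer fundamental weights), while $\xi$ is $\BW_\BL$-invariant, we rewrite
\[
\lambda' - w\lambda = \lambda'_0 - w_\BLo\lambda_0 + \ji_*\nu' - \ji_*(w_\BLi\nu).
\]

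Applying $h_\sj^*$ and using $h_\sj^*(\lambda'_0) = 0$ together with $h_\sj^*(\ji_*\nu) = \ji_*\nu$ (since $\ji$ factors through $\BH_\sj$), we get
\[
h_\sj^*\bigl(v(\kappa+\rho)\bigr) \in \rho_{\BH_\sj} + \Conv\bigl(-h_\sj^*(w_\BLo\lambda_0) + \ji_*\nu' - \ji_*(w_\BLi\nu)\bigr)_{w_\BLo,\, w_\BLi}.
\]
By the defining inequality of $\HSBo_\sj$ (with $w'_\BLi = 1$), each point in this set lies in $\BR_\sj$, so by convexity $h_\sj^*(v(\kappa+\rho)) \in \BR_\sj$. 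Since $\kappa + \rho$ is strictly $\BG$-dominant, $v(\kappa+\rho)$ is $\BG$-regular, and hence so is its $h_\sj^*$-restriction (any root of $\BH_\sj$ being a root of $\BG$). By Lemma~\ref{core-sing}, $h_\sj^*(v(\kappa+\rho)) = u\rho_{\BH_\sj}$ for some $u \in \BW_{\BH_\sj}$. Setting $\zeta := v(\kappa+\rho) - u\rho$, we have $\zeta \in \Ker h_\sj^* = \langle\omega_1,\dots,\omega_{a(\sj)}\rangle$, and since $u \in \BW_{\BH_\sj}$ fixes these outer fundamental weights, $u\zeta = \zeta$ and $\sigma(\kappa+\rho) = \rho + \zeta$, where $\sigma := u^{-1}v$.

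The main obstacle is now to show that $\rho + \zeta$ is strictly $\BG$-dominant; granting this, uniqueness of the strictly dominant representative in the Weyl orbit of $\kappa+\rho$ forces $\rho + \zeta = \kappa + \rho$, whence $\kappa = \zeta \in \Ker h_\sj^*$ and $\sigma(\kappa+\rho) = \kappa+\rho$ gives $\sigma = 1$ (the Weyl group acts freely on regular weights), i.e.\ $v = u \in \BW_{\BH_\sj}$, and finally $v\kappa = u\kappa = \kappa$. To prove strict dominance, write $\zeta = \sum_{i \le a(\sj)} c_i \omega_i$ and pair the identity $v(\kappa+\rho) = u\rho + \zeta$ with $\alpha_i^\vee$ for $i \le a(\sj) \le b < k$, which is a simple root of $\BLo \subset \BL$. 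Using that $u$ fixes $\alpha_i^\vee$ one obtains $(u\rho, \alpha_i^\vee) = 1$, whence
\[
c_i + 1 = \bigl(v(\kappa+\rho),\,\alpha_i^\vee\bigr) \ge 0,
\]
the inequality because $v \in \SR_\BG^\BL$ sends the dominant $\kappa+\rho$ to an $\BL$-dominant weight. Thus $c_i \ge -1$; but $\rho + \zeta = \sigma(\kappa+\rho)$ is regular, so its $\omega_i$-coefficient $c_i + 1$ cannot vanish, giving $c_i \ge 0$ for all $i \le a(\sj)$ and hence $\rho + \zeta$ strictly dominant, as required.
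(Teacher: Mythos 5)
Your reduction to the core is sound and is essentially the paper's own first step: the rewriting of $\lambda'-w\lambda$ using $\BW_\BL=\BW_\BLo\times\BW_\BLi$, the conclusion $h_\sj^*(v(\kappa+\rho))\in\BR_\sj$ by convexity, the identification $h_\sj^*(v(\kappa+\rho))=u\rho_{\BH_\sj}$ via Lemma~\ref{core-sing}, and the plan of proving that $\rho+\zeta$ is strictly dominant and invoking uniqueness all reproduce the first half of the paper's Proposition~\ref{key2}. The problem is in the dominance verification, specifically at $i=a(\sj)$.

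Your bound $c_i\ge -1$ rests on the equality $(u\rho,\alpha_i^\vee)=1$, justified by the claim that $u\in\BW_{\BH_\sj}$ fixes $\alpha_i$. This is true for $i<a(\sj)$, since then every vertex adjacent to $i$ lies in $\{1,\dots,a(\sj)\}$ and $\alpha_i$ is orthogonal to all simple roots of $\BH_\sj$. But the vertex $a(\sj)$ is adjacent to $D_{\BH_\sj}$ (to $a(\sj)+1$, or to $\beta$ when $a(\sj)=b$), so $s_{\alpha_{a(\sj)+1}}$ already moves $\alpha_{a(\sj)}$; in general $u^{-1}\alpha_{a(\sj)}=\alpha_{a(\sj)}+q$ with $q$ a nonzero element of ${h_\sj}_*Q_{\BH_\sj}$, and $(u\rho,\alpha_{a(\sj)}^\vee)=(\rho,u^{-1}\alpha_{a(\sj)}^\vee)$ can be much larger than $1$. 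Hence $c_{a(\sj)}\ge -1$ does not follow from the $\BL$-dominance of $v(\kappa+\rho)$ alone, and the strict dominance of $\rho+\zeta$ is not established. This is precisely Case~3 of the paper's Proposition~\ref{key2}, which is the hard part of the argument: one passes to $\BH'=\BH_{a(\sj)-1}$ with fundamental weight $\eta$ corresponding to $\alpha_{a(\sj)}$, writes $(h')^*(\mu+\rho)=v(\rho_{\BH'}+c\eta)$ with $c$ proportional to $(\kappa,\alpha_{a(\sj)})$, and rules out $c<0$ by combining the regularity of $\mu+\rho$ with Lemma~\ref{ccl} (the weights $\rho_{\BH'}-c\eta$ with $0<c<k_{\BH',\BH}$ are singular, so a negative $c$ must jump past the entire singular band, after which a direct computation gives $(\mu,\alpha_{a(\sj)})<0$, contradicting the $\BL$-dominance of $\mu$). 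Your proof needs this argument, or a substitute for it, to close the case $i=a(\sj)$.
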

\begin{proof}
Take arbitrary $\lambda,\lambda' \in \SB_\sj$.
Then $\lambda = \lambda_0 + p\xi + \ji_*\nu$, $\lambda' = \lambda'_0 + p\xi + \ji_*\nu'$, with $\lambda_0,\lambda'_0 \in \SBo_\sj$
and $\nu,\nu' \in \SBi_\sj$.
Note that for any $w_\BL \in \BW_\BL$ we have
\begin{equation}\label{temp-w}
h_\sj^*(\rho + \lambda' - w_\BL\lambda) =
h_\sj^*(\rho + \lambda'_0 + \ji_*\nu' - w_\BL\lambda_0 - w_\BL\ji_*\nu) =
h_\sj^*(\rho - w_\BL\lambda_0) + \ji_*\nu' - w_\BL\ji_*\nu
\end{equation}
since $\lambda'_0 \in \Ker i^*$ and $h_\sj \circ \ji = \ji$.
So, by definition of $\SB_\sj$ (using Remark \ref{w-nu-rem})
we conclude that the weight~\eqref{temp-w} is in $\BR_\delta$.

Let $(\kappa,v) \in \OP(\SB_\sj)$, that is $(\kappa,v) \in \OP(\lambda,\lambda')$ for some $\lambda,\lambda' \in \SB_\sj$.
By definition of the output set the weight
$$
\mu := v(\kappa + \rho) - \rho \in \Conv(\lambda' - w_\BL\lambda)_{w_\BL \in \BW_\BL}
$$
is $\BL$-dominant and $\mu + \rho$ is $\BG$-regular.
Moreover, $h_\sj^*(\mu+\rho)$ is in the convex hull of the weights~\eqref{temp-w} (where $w_\BL$ runs through $\BW_\BL$)
hence is in the core $\BR_\delta$. So, Proposition~\ref{key2} below applies and we conclude that
$\kappa \in \Ker h_\sj^*$, $v \in \BW_{\BH_\sj}$.
%
%
%
\end{proof}

%

\begin{proposition}\label{key2}
Assume that a weight $\mu \in P_\BL$ satisfies
\begin{equation}\label{lambda-ass}
\mu \in P_\BL^+,
\qquad
\mu+\rho \in P_\BG^\reg,
\qquad
h_a^*(\mu+\rho) \in \BR_\delta,
\end{equation}
for some $a$, $0 \le a \le b$. Let also $\mu = v(\kappa + \rho) - \rho$ be the unique presentation of $\mu$
with $\kappa\in P_\BG^+$ and $v\in\SR_\BG^\BL$.
Then
$$
v \in \SR_\BG^\BL \cap \BW_{\BH_a}
\qquad\text{and}\qquad
\kappa \in P_\BG^+ \cap \Ker h_a^*.
$$
In particular, $v\kappa = \kappa$.
%
%
\end{proposition}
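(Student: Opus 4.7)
Set $\tau := \kappa + \rho$. The plan is to pin down $h_a^*(v\tau)$ via the core condition, reduce the conclusion to a positivity statement, and then verify positivity — the last step being the delicate one. Since $v\tau = \mu + \rho$ is $\BG$-regular, every pairing with a root of $\BG$ is nonzero; as roots of $\BH_a$ are roots of $\BG$ and pair with $\BG$-weights only through their $h_a^*$-images, the weight $h_a^*(v\tau)$ is $\BH_a$-regular. Combined with $h_a^*(v\tau) \in \BR_{\delta_a}$ and strict $\BH_a$-dominance of $\delta_a$, Lemma~\ref{core-sing} yields $h_a^*(v\tau) = u\rho_{\BH_a}$ for a unique $u \in \BW_{\BH_a}$. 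The $\BL$-dominance of $v\tau$ transfers to $\BM_a$-dominance of $u\rho_{\BH_a}$, forcing $u \in \SR_{\BH_a}^{\BM_a} = \SR_\BG^\BL \cap \BW_{\BH_a}$ by Lemma~\ref{srlg-wh}.

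Next, every element of $\BW_{\BH_a}$ commutes with $h_a^*$ on $P_\BG$ (verify on a simple reflection $s_j$ with $j \in D_a$) and fixes each $\omega_i$ with $i \le a$ (since $s_j\omega_i = \omega_i$ whenever $i \ne j$). Hence $h_a^*(u\rho) = u\rho_{\BH_a} = h_a^*(v\tau)$ gives $v\tau - u\rho \in \Ker h_a^* = \langle\omega_1, \dots, \omega_a\rangle$; setting $g := u^{-1}v$ and applying $u^{-1}$ yields $g\tau = \rho + \eta$ with $\eta = \sum_{i=1}^{a} c_i \omega_i$. Since $\tau$ is $\BG$-dominant regular, the unique $\BG$-dominant element of its $\BW_\BG$-orbit is $\tau$ itself, so $g = 1$ — which immediately gives $v = u \in \BW_{\BH_a}$, $\kappa = \eta \in \Ker h_a^*$, and $v\kappa = \kappa$ (the latter since $v \in \BW_{\BH_a}$ fixes each $\omega_i$ with $i \le a$) — is equivalent to $\rho + \eta$ being $\BG$-dominant, which by $\BG$-regularity of $g\tau$ amounts to $c_i \ge 0$ for every $i \le a$.

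For $i < a$, the generators $s_j$ ($j \ge a+1$) of $\BW_{\BH_a}$ fix $\alpha_i$ (they are non-adjacent to $\alpha_i$ in the Dynkin diagram, thanks to the type-$A$ structure of $\Do$ and the numbering), so $u$ fixes $\alpha_i$, and strict $\BL$-dominance $(v\tau, \alpha_i) > 0$ reads $(1+c_i)\alpha_i^2/2 > 0$, giving $c_i \ge 0$. The case $i = a$ is the main obstacle: $\BG$-regularity of $g\tau$ excludes $c_a = -1$ directly (it would make $(g\tau, \alpha_a) = (1+c_a)\alpha_a^2/2$ vanish), and for $c_a = -m$ with $m \ge 2$ I would consider the positive root $\alpha = \alpha_a + \alpha_{a+1} + \cdots + \alpha_{a+m-1}$ of $\BG$ (a genuine root whenever $a + m - 1$ remains within $D_\BG$, which is guaranteed by the bound $m \le n - a$ from the $\BL$-dominance inequality $(v\tau, \alpha_a) \ge \alpha_a^2/2$ combined with the explicit form $u^{-1}\alpha_a = \alpha_a + \sum_{i \in D_a} n_i\alpha_i$ of the positive root with preserved external coefficients; for types $B, C, D$ the same conclusion holds with obvious modifications near the branching or double-bond since $a \le b = k-1$ lies in the outer type-$A$ portion). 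A direct computation gives $(g\tau, \alpha) = m + c_a = 0$ in simply-laced normalization (and the analogous identity with the appropriate scalings in non-simply-laced), producing a singular weight in the regular $\BW_\BG$-orbit of $\tau$ — contradicting regularity. Hence $c_a \ge 0$, completing the proof.
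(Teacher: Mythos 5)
Your first two paragraphs follow the paper's proof almost verbatim in structure: you use Lemma~\ref{core-sing} to identify $h_a^*(\mu+\rho)$ with $u\rho_{\BH_a}$ for some $u\in\SR_{\BH_a}^{\BM_a}$, set $\kappa=\mu+\rho-u\rho\in\Ker h_a^*$, and reduce everything to the $\BG$-dominance of $\kappa$, which you then check against simple roots in three groups (roots of $\BH_a$, roots $\alpha_i$ with $i<a$, and the adjacent root $\alpha_a$). The first two groups are handled correctly and exactly as in the paper. The problem is the third group, which is precisely where the real content of the proposition sits.

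Your treatment of $i=a$ has a genuine gap. You must exclude $c_a=-m$ for every $m\ge 1$ compatible with $\BL$-dominance and regularity, and your mechanism is to exhibit the explicit root $\alpha=\alpha_a+\cdots+\alpha_{a+m-1}$ orthogonal to $\rho+\eta$. This fails on two counts. First, the bound you claim on $m$ is wrong outside type $A$: writing $u^{-1}\alpha_a=\alpha_a+\sum_{i\in D_a}n_i\alpha_i$, the inequality $(\mu+\rho,\alpha_a)>0$ gives only $m<1+\sum n_i\alpha_i^2/\alpha_a^2$, and for instance in type $C_n$ with $u^{-1}\alpha_a=\eps_a+\eps_{a+1}$ this allows $m$ up to $2n-2a$, far beyond $n-a$; so the chain $a,a+1,\dots,a+m-1$ can run off the end of the diagram and your root does not exist. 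Second, even when the indices stay inside $D_\BG$, the sum of $\alpha_a,\dots,\alpha_{a+m-1}$ is a root only if these vertices form a path, which depends on the (arbitrary) numbering of $\Di$ and fails past branch points; and the identity $(\rho+\eta,\alpha)=m+c_a$ uses equal root lengths, which breaks at a double bond. The "obvious modifications" you invoke for types $B$, $C$, $D$ would in fact require choosing roots of the form $\eps_a+\eps_j$ and a separate length bookkeeping, and the argument gives nothing for the exceptional types, whereas the proposition is used for all simple $\BG$ in Theorem~\ref{mainthm}(i). The paper avoids all of this with Lemma~\ref{ccl}: restricting to the subgroup $\BH'$ with $D_{\BH'}=D_{\BH_a}\cup\{\alpha_a\}$, the entire segment $\rho_{\BH'}-c\eta$ with $0<c<k_{\BH',\BH_a}$ lies in the interior of a core and is therefore singular, so regularity forces $c\le -k_{\BH',\BH_a}$, and that range is then killed by a direct $\BL$-dominance computation. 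You would need to either prove the singularity of the whole segment (essentially re-deriving Lemma~\ref{ccl}) or find a uniform source of orthogonal roots; the single chain root does not suffice.
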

\begin{proof}
To simplify the notation we write $\BH$ instead of $\BH_a$ and $h$ instead of $h_a$.
Set $\BM = \BL \cap \BH$. Note that $h^*$ takes $\BG$-regular $\BL$-dominant weights of $P_\BG$ to $\BH$-regular $\BM$-dominant weights of $P_{\BH}$,
hence $h^*(\mu + \rho)$ is $\BH$-regular and $\BM$-dominant. On the other hand, $h^*(\mu + \rho) \in \BR_\delta$,
so Lemma~\ref{core-sing} implies that $h^*(\mu + \rho) = v\rho_{\BH}$ with $v \in \BW_\BH$.
Thus, $v\rho_\BH$ is $\BM$-dominant, so we have $v \in \SR_{\BH}^{\BM}$.
Further, $v\rho_\BH = h^*(v\rho)$, hence
$h^*(\mu + \rho - v\rho) = 0$. Denoting
$$
\kappa = \mu + \rho - v\rho
$$
we see that $\kappa \in \Ker h^*$ and $\mu = v\rho - \rho + \kappa$.
Since $\kappa \in \Ker h^*$ and $v \in \BW_\BH$, we have $v\kappa = \kappa$,
so $\mu$ can be written as $v(\kappa+\rho) - \rho$.
So it remains to check that $\kappa$ is $\BG$-dominant.

To check the dominance of a weight we should check that its inner products with all simple roots are nonnegative.
We divide the simple roots into three groups and consider them one by one.

{\bf Case 1:}\/ the simple roots of $\BH$. If $\alpha \in D_\BH$ then $(\kappa,\alpha) = 0$
since $\kappa \in \Ker h^*$.

{\bf Case 2:}\/ the simple roots of $\BG$ not adjacent to $D_{\BH}$.
If $\alpha$ is such a root then $v^{-1}\alpha = \alpha$ since $v \in \BW_\BH$,
hence $(v\rho,\alpha) = (\rho,v^{-1}\alpha) = (\rho,\alpha)$, therefore
$(\kappa,\alpha) = (\mu,\alpha) \ge 0$. Here the last inequality follows from $\BL$-dominance of $\mu$
since simple roots not adjacent to $D_\BH$ are roots of $\BL$.

{\bf Case 3:}\/ the simple root adjacent to $D_\BH$.
Let $\alpha$ be such a root and let $\BH'$ be the reductive subgroup of $\BG$
such that $D_{\BH'} = D_\BH \cup \{\alpha\}$. Let $\eta \in P_{\BH'}$
be the fundamental weight of $\BH'$ corresponding to the root $\alpha$.
Let $h':\BH' \to \BG$ be the embedding, and let $h$ denote the embeddings $\BH \to \BH'$
and $\BH \to \BG$. Note that $\Ker (h^*:P_{\BH'} \to P_\BH) = \ZZ\eta$.

Note that $h^*(h')^*(\mu + \rho) = h^*(\mu + \rho) = v\rho_\BH = h^*(v\rho_{\BH'})$, hence
$(h')^*(\mu + \rho) = v\rho_{\BH'} + c\eta = v(\rho_{\BH'} + c\eta)$. It is enough to show that $c \ge 0$.
Indeed, since $\alpha$ is a root of $\BH'$ we have $\alpha = h'_*\alpha$, so
\begin{equation*}\label{mubeta}
(\kappa,\alpha) =
(\kappa,h'_*\alpha) =
((h')^*\kappa,\alpha) =
((h')^*(\mu + \rho - v\rho),\alpha) =
(v\rho_{\BH'} + c\eta - v\rho_{\BH'},\alpha) = c(\eta,\alpha) = c\alpha^2/2 \ge 0
\end{equation*}
and we are done. So, assume that $c < 0$.
Since $v^{-1}(h')^*(\mu+\rho)$ is $\BH'$-regular, Lemma~\ref{ccl} implies that
$v^{-1}(h')^*(\mu + \rho) =\rho_{\BH'}+c\eta= -w_0^\BH\rho_{\BH'} - c'\eta$ with $c' \ge 0$. Then
$$
(h')^*\mu = v( - w_0^\BH\rho_{\BH'} - c'\eta) - (h')^*\rho = -vw_0^\BH\rho_{\BH'} - \rho_{\BH'} - c'\eta.
$$
Let us check that the scalar product of this weight with $\alpha$ is always negative.
Indeed, $(\rho_{\BH'},\alpha) > 0$ since $\rho_{\BH'}$ is a strictly dominant weight of $\BH'$.
Further, the root
$w_0^\BH v^{-1}\alpha$ is positive since
$(\eta,w_0^\BH v^{-1}\alpha) = (v w_0^\BH\eta,\alpha) = (\eta,\alpha) > 0$.
Therefore,
$(v w_0^\BH\rho_{\BH'},\alpha) = (\rho_{\BH'},w_0^\BH v^{-1}\alpha) > 0$.
Finally, $(c'\eta,\alpha) \ge 0$ since $c' \ge 0$. Thus, we see that
$$
((h')^*\mu,\alpha) < 0.
$$
But this is equal to $(\mu,\alpha)$ which is nonnegative since $\mu$ is $\BL$-dominant.
This contradiction shows that we actually have $c \ge 0$ which completes the proof.
\end{proof}


\section{Adapted weights and compatibility condition}\label{ss-aw}

Let $L$ be a reductive algebraic group. For any subset $S\subset P_L^+$ of the set of dominant weights of $L$
we denote by $\Rep_S(L)$ the subcategory of $\Rep(L)$ consisting of direct sums of irreducible
representations with highest weights in $S$. We also denote by $\Pi_S:\Rep(L) \to \Rep(L)$
the corresponding projector (that leaves only representations in $\Rep_S(L)$).

A morphism $f:V_1 \to V_2$ in $\Rep(L)$ is called an {\sf $S$-isomorphism}\/
if $\Pi_S(f):\Pi_S(V_1) \to \Pi_S(V_2)$ is an isomorphism. In other words,
$f$ is an $S$-isomorphism if it induces an isomorphism on $\lambda$-isotypical components
for any $\lambda \in S$.

We say that a pair of $L$-dominant weights $(\kappa,\lambda)$ is
{\sf adapted to $S$}\/ (or {\sf $S$-adapted})
if the natural map
\begin{equation}\label{sadapted}
V_L^{\kappa+\lambda}\otimes V_L^\mu \to
V_L^{\kappa}\otimes V_L^{\lambda}\otimes V_L^\mu \to
V_L^{\kappa}\otimes \Pi_S(V_L^{\lambda}\otimes V_L^\mu)
\end{equation}
is an $S$-isomorphism for any $\mu\in S$.

The goal of this section is to show that for all $(\kappa,v) \in \OP_1(\SB)\times\OP_2(\SB)$
the pair $(\kappa,v\rho - \rho)$ (considered as a pair of weights of the Levi subgroup $\BL$)
is $\SB$-adapted for either $\SB = \SB_\sj$ or $\SB = \SBS_\sj$,
which will give the compatibility condition of Proposition~\ref{crit-eb}. 
In fact, we will prove the following more general statement.

Now let us return to the setup of section~\ref{s-oi}, i.e., fix a choice of
the outer component $D_\out$ of $D_\BG\setminus\beta$
of type $A_{k-1}$, a standard numbering of its vertices, and a subdiagram $D_a = D_\BG\setminus \{1,\dots,a\}$.
We will write $\BH$ for the corresponding semisimple subgroup $\BH_a \subset \BG$ and $h$ for its embedding into $\BG$
and put $\BM = \BL \cap \BH$.
Recall also that the subgroups $\BLo \subset \BL$ and $\BLi \subset \BL$ correspond to the outer and the inner parts of $D_\BG$.

Assume that any subsets $\SBi \subset P_\BLi^+$, $\SBo \subset P_\BG^+ \cap \Ker h^*$ and a rational number $\sj \in \QQ$
are given such that
$\sj\xi + \ji_*\SBi \subset P_\BL$.
Set
$$
\SB = \SBo + \sj\xi + \ji_*\SBi.
$$
Note that elements of $\SBo$, being
linear combinations of fundamental weights $\omega_1,\dots,\omega_a$
with nonnegative coefficients can be viewed as Young diagrams:
a weight
$x_1\omega_1 + \dots + x_a\omega_a$ corresponds to the Young diagram with $x_i$ columns of length $i$.
Recall the notion of a very special element of the set $\SR_\BH^\BM$ (Definition~\ref{very-s})
and of the the function $\phi(v)$ given by~\eqref{phi-v}.

\begin{theorem}\label{th-aw}
Assume that the set $\SBo$ has the following two properties:
\begin{enumerate}
\item for all $\lambda \in \SBo$  and all very special $v \in \SR_\BH^\BM$ we have
$(\lambda + \rho - v\rho,\alpha_1 + \dots + \alpha_{k-1}) < \phi(v)$;
\item the set $\SBo$ is closed under passing to Young subdiagrams.
\end{enumerate}
Then for any $\kappa \in P_\BG^+\cap  \Ker h^* $ and any $v \in \SR_\BH^\BM$
the pair $(\kappa,v\rho-\rho)$ is $\SB$-adapted.
\end{theorem}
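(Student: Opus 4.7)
The plan is to reduce the compatibility assertion to a pure $\SL_k$-representation-theoretic statement handled by the Appendix (section~\ref{ss-kp}). Since $\xi$ is $\BW_\BL$-invariant by Lemma~\ref{xi-inv}, the representation $V_\BL^{\sj\xi}$ is a one-dimensional character of $\BL$; tensoring with it commutes with every projector appearing in~\eqref{sadapted}, so we may absorb the $\sj\xi$ summand and reduce to checking $\SB$-adaptedness of $(\kappa,v\rho-\rho)$ with $\SB=\SBo+\ji_*\SBi$. Next, since $\kappa$ and every $\lambda_0\in\SBo$ lie in $\Ker h^*\subset\Ker\ji^*$, while the weights $\ji_*\nu$ with $\nu\in\SBi$ restrict trivially to $\BLo$, each $V_\BL^{\cdot}$ appearing in~\eqref{sadapted} restricts along the pair of commuting semisimple subgroups $\BLo,\BLi\subset\BL$ to a tensor product $V_\BLo^{\jo^*(\cdot)}\otimes V_\BLi^{\ji^*(\cdot)}$. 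Consequently, the multiplicity computations hidden inside the $\SB$-isomorphism statement decouple into an outer $\BLo$-part and an inner $\BLi$-part.

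A key preparatory step is the observation that $v\rho-\rho=v\rho_{\BH_a}-\rho_{\BH_a}$ for $v\in\SR_{\BH_a}^{\BM_a}\subset\BW_{\BH_a}$. Indeed, by~\eqref{ojai} every fundamental weight $\omega_i$ with $i\notin D_a$ is orthogonal to every simple root of $\BH_a$ and hence fixed by $v$; so $v\rho-\rho$ lies in the root lattice of $\BH_a$. I would decompose this weight into an outer part in the root lattice of $\BM_{a,\out}=\BLo\cap\BH_a\cong\SL_{k-a}$, an inner part in the root lattice of $\BLi$, and a $\xi$-component (controlled via Lemma~\ref{xil}). Under this decomposition the outer part interacts with $\kappa$ and $\lambda_0$ in the outer multiplicity, the inner part interacts with $\ji_*\nu$ in the inner multiplicity, and the $\xi$-part is absorbed into a central character shift of $\BL$.

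With these preparations, the multiplicity of each $V_\BL^{\mu'}$ with $\mu'\in\SB$ on both sides of~\eqref{sadapted} factors as a product of an outer and an inner multiplicity. For the inner comparison, the defining inequalities~\eqref{sbij} of $\SBi$ together with Lemma~\ref{core-sing} (singularity of the integer points in the interior of the core $\BR_{\sj}$) guarantee that the $\SB$-projection agrees with the $\SBi$-projection on the relevant summands, so the inner sides match automatically. The outer comparison becomes the assertion that a certain natural map of tensor products in $\Rep\SL_k$ is an isomorphism after projection onto the Young-diagram set $\jo^*\SBo$; this is precisely the content of the Appendix under the hypothesis that $\SBo$ be closed under passing to Young subdiagrams --- condition~(2) of the theorem.

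The main obstacle will be the cross-talk between the outer, inner, and $\beta$-components of $v\rho-\rho$ when $v$ involves the simple reflection $s_\beta$, and in particular the breakdown of the $\SL_k$-statement at certain exceptional elements of $\SR_{\BH_a}^{\BM_a}$: these are precisely the very special elements of Definition~\ref{very-s}. For such $v$ the Appendix's argument would produce a spurious $\SBo$-boundary contribution measured exactly by $\phi(v)$; condition~(1) of the theorem, the strict inequality $(\lambda+v\rho-\rho,\alpha_1+\dots+\alpha_{k-1})<\phi(v)$, is engineered to exclude this borderline term. Combining this final arithmetic correction with the outer/inner decomposition outlined above should then deliver the $\SB$-isomorphism, and hence the $\SB$-adaptedness of $(\kappa,v\rho-\rho)$.
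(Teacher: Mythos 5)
Your proposal follows essentially the same route as the paper: decompose along $\BLo\times\BZ\times\BLi$ (with the central character absorbing the $\xi$-component), reduce the outer comparison to the $\GL_k$ Key-Proposition of the Appendix via hypothesis (2), and use hypothesis (1) to kill the borderline contributions at very special $v$, where the forced tail value of the $\GL_k$-weight equals $\phi(v)$. The only substantive deviation is your appeal to Lemma~\ref{core-sing} for the inner comparison, which is unnecessary --- the inner map is an $S^\inn$-isomorphism by definition of the projector.
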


This result applies to the blocks $\SB_\sj$ and $\SBS_\sj$ defined by~\eqref{bap} and~\eqref{sbsp}.

\begin{corollary}\label{cor-aw}
Assume for some $\sj\in\SJ$ the set $\SBo_\sj$ (resp., $\SBSo_{\sj}$) is closed under
passing to Young subdiagrams. Then the block $\SB_\sj$ (resp., $\SBS_\sj$)
is exceptional.
\end{corollary}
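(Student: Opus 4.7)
The plan is to invoke the exceptionality criterion Proposition~\ref{crit-eb}, which reduces the claim to verifying the invariance condition (a) and the compatibility condition (b) for every $(\kappa,v) \in \OP(\SB_\sj)$ (resp.\ $\OP(\SBS_\sj)$) and every $\lambda$ in the block.

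The invariance condition is delivered by Proposition~\ref{inv-cond-prop}: for $(\kappa,v) \in \OP(\SB_\sj)$ one has $\kappa \in \Ker h_\sj^*$ and $v \in \BW_{\BH_\sj}$, and in particular $v\kappa = \kappa$. Although Proposition~\ref{inv-cond-prop} is formulated for $\SB_\sj$, its proof only uses the defining property $h_\sj^*(\rho + \lambda' - w_\BL \lambda) \in \BR_\sj$ shared by every pair of weights in the block; since $\SBS_\sj \subset \SB_\sj$, the same proof applies verbatim to $\SBS_\sj$, so the invariance condition holds in both cases.

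It remains to verify the compatibility condition, which is exactly the assertion that for every $(\kappa,v) \in \OP$ the pair $(\kappa, v\rho - \rho)$ is $\SB$-adapted in the sense of Section~\ref{ss-aw}. Here the invariance analysis is used in a crucial structural way: we now know that $\kappa$ is a $\BG$-dominant weight lying in $\Ker h_\sj^*$, and that $v \in \SR_\BG^\BL \cap \BW_{\BH_\sj}$, so by Lemma~\ref{srlg-wh} we have $v \in \SR_{\BH_\sj}^{\BM_\sj}$ with $\BM_\sj = \BL \cap \BH_\sj$. Thus we are exactly in the setup of Theorem~\ref{th-aw} with $a = a(\sj)$, and it suffices to check its two hypotheses on the outer part.

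For $\SB_\sj$: hypothesis (1) (the strict inequality at very special Weyl elements) is built directly into the definition of $\SBo_\sj$ in \eqref{bap}, and hypothesis (2) (closedness under passing to Young subdiagrams) is the standing assumption of the corollary. For $\SBS_\sj$: since $\SBSo_\sj \subset \SBo_\sj$, the pointwise inequality (1) is inherited without change, and hypothesis (2) is again the standing assumption. In either case Theorem~\ref{th-aw} gives the required $\SB$-adaptedness, and Proposition~\ref{crit-eb} concludes that the block is exceptional. The only potential obstacle is conceptual rather than computational: one must confirm that the invariance proof genuinely transfers to the subblock $\SBS_\sj$ and that the very-special inequality is preserved under restriction to $\SBSo_\sj$; both are immediate from the pointwise nature of the relevant conditions.
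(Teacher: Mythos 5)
Your proof is correct and follows essentially the same route as the paper: reduce to Proposition~\ref{crit-eb}, get the invariance condition from Proposition~\ref{inv-cond-prop}, and get the compatibility condition from Theorem~\ref{th-aw}, whose hypothesis (1) is built into definition~\eqref{bap} and whose hypothesis (2) is the standing assumption. Your added remarks — that $v\in\SR_{\BH_\sj}^{\BM_\sj}$ via Lemma~\ref{srlg-wh} and that both hypotheses restrict from $\SB_\sj$ to the subblock $\SBS_\sj$ — are exactly the observations the paper leaves implicit.
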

\begin{proof}
Set $\SB=\SB_\sj$ (resp., $\SBS_\sj$).
It is enough to check the two conditions of Proposition~\ref{crit-eb} for $\SB$.
The invariance condition holds for this block by Proposition~\ref{inv-cond-prop}.
To check the compatibility condition we can apply Theorem \ref{th-aw}.
The first condition of this theorem holds by the
definition~\eqref{bap} of the block $\SB_\sj$, while the second holds by assumption.
It remains to observe that for any pair $\kappa \in \OP_1(\SB)$, $v \in \OP_2(\SB)$
we have $\kappa \in P_\BG^+\cap  \Ker h^*$ and $v \in \SR_\BH^\BM$.
Hence, Theorem \ref{th-aw}, applied to $\SB$ and a pair $\kappa \in \OP_1(\SB)$, $v \in \OP_2(\SB)$,
implies that the compatibility condition is satisfied for $\SB$.
\end{proof}

Unfortunately, we were not able to find an abstract way of checking that
$\SB_\sj$ or $\SBSo_\sj$ is closed
under passing to Young subdiagrams.
So, we will check it for classical groups in section~\ref{ss-ex}
as a result of an explicit description of the blocks.

\subsection{Preparations}

We start with a description of the connected component of the center of $\BL$.

\begin{lemma}\label{integer-multiple-lem}
Let $\BZ\subset\BL$ be the connected component of the center of $\BL$.
Then $\BZ \cong \Gm$ and the map $P_\BL\to P_\BZ=\ZZ$, induced by the embedding $\BZ\to\BL$,
is given by the scalar product with the minimal rational multiple $c\xi$ of $\xi$,
such that $(c\xi,-)$ is an integral valued function on $P_\BL$.
\end{lemma}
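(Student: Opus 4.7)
My plan is as follows. First, I would establish that $\BZ \cong \Gm$ by a rank count. Since $\BP \subset \BG$ is a maximal parabolic, the Dynkin diagram of $\BL$ equals $D_\BG \setminus \{\beta\}$, so the semisimple rank of $\BL$ is $\rk(\BG)-1$. Combined with the equality $\rk(\BL) = \rk(\BG)$ (shared maximal torus), this forces $\dim \BZ = 1$, whence $\BZ \cong \Gm$.

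Next, fix a maximal torus $T \subset \BL \subset \BG$, so that $P_\BL = X^*(T)$, and identify the kernel of the restriction map $P_\BL \to P_\BZ$. Since $\BZ$ is the identity component of $\bigcap_{i \ne \beta} \ker \alpha_i \subset T$, each $\alpha_i$ with $i \ne \beta$ restricts trivially to $\BZ$, so the saturated sublattice $P_\BL \cap \langle \alpha_i : i \ne \beta \rangle_\QQ$ lies in $\ker(P_\BL \to P_\BZ)$. Comparing ranks (both equal to $\rk(T) - \dim \BZ = \rk(\BG) - 1$) and using that the kernel of a homomorphism to a free abelian group is saturated, I would conclude equality of the two sublattices. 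Now by the identities $(\omega_j,\alpha_i) = \delta_{ij}\alpha_i^2/2$ recorded in \eqref{ojai}, the weight $\xi$ is orthogonal to $\alpha_i$ for every $i \ne \beta$, and hence spans the one-dimensional orthogonal complement of the hyperplane $\langle \alpha_i : i \ne \beta \rangle_\QQ$ in $P_\BL \otimes \QQ$. Therefore
$$
\ker(P_\BL \to P_\BZ) \;=\; \{ \lambda \in P_\BL \;:\; (\lambda,\xi) = 0 \}.
$$

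To pin down the restriction map itself, I use that $P_\BZ \cong \ZZ$ and that the surjection $P_\BL \to P_\BZ$ has the same kernel as the $\QQ$-linear functional $\lambda \mapsto (\lambda,\xi)$. The image $(\xi, P_\BL) \subset \QQ$ is a finitely generated torsion-free subgroup of rank one, so it equals $\tfrac{1}{c}\ZZ$ for a unique $c \in \QQ_{>0}$; this $c$ is precisely the minimal positive rational for which $(c\xi, P_\BL) \subseteq \ZZ$. Composing with the identification $\tfrac{1}{c}\ZZ \xrightarrow{\sim} \ZZ$ given by multiplication by $c$, one sees that the restriction map $P_\BL \to P_\BZ = \ZZ$ is given by $\lambda \mapsto (c\xi,\lambda)$, as claimed.

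The only step that is not entirely formal is the saturation-and-rank argument identifying $\ker(P_\BL \to P_\BZ)$ with $P_\BL \cap \langle \alpha_i : i \ne \beta\rangle_\QQ$; the rest is elementary linear algebra together with the orthogonality property of $\xi$ recorded in \eqref{ojai}.
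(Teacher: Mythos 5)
Your proposal is correct and follows essentially the same route as the paper: identify $\BZ$ as a one-dimensional torus, observe that the simple roots of $\BL$ (equivalently, the orthogonal complement of $\xi$) die on $\BZ$, and then recover the normalization $c$ from surjectivity of $P_\BL \to P_\BZ$. Your saturation-and-rank argument just spells out in more detail the step the paper dispatches by noting that the adjoint representation of the semisimple part of $\BL$ is trivial on $\BZ$.
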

\begin{proof}
First, note that $\BZ \cong \Gm$ since it is a 1-dimensional (since $\BP$ is maximal)
connected commutative reductive group. As a consequence, $P_\BZ \cong \ZZ$.
Since the map $P_\BL \to P_\BZ$ is dual to the embedding of $\BZ$ into a maximal
torus of $\BL$, it is surjective. Note also that the adjoint representation
of the semisimple part of $\BL$ is a trivial representation of $\BZ$, hence all simple roots of $\BL$ are mapped to zero.
This implies that the map is given by the scalar product with a multiple $c\xi$ of $\xi$. Moreover, since the scalar
product should be a map to $\ZZ$, it follows that $(c\xi,-)$ should be an integral function on $P_\BL$
(and in particular, $c$ should be rational since the scalar product has rational values on the weight lattice),
and the surjectivity of the map implies that $c$ is minimal with this property.
\end{proof}

Consider the diagram of groups
$$
\xymatrix{
& \BLo \times \Gm \times \BLi \ar[dl]_{\varpi} \ar[dr]^{\pi} \\
\GL_k\times\Gm\times\BLi && \BL
}
$$
where $\pi$ and $\varpi$ are defined as follows.
The morphism $\pi$ is induced by the embeddings $\jo:\BLo \to \BL$, $\ji:\BLi \to \BL$ and
by the isomorphism $\Gm \cong \BZ$.
The restriction of $\varpi$
to $\BLo = \SL_k$ (resp., $\BLi$) is given by
the natural embedding  $\SL_k \subset \GL_k$ (resp., the
identity map to $\BLi$). Finally, the restriction of $\varpi$ to $\Gm$ is given by
$z \mapsto (z^{(c\xi,\omega_1)}\times 1,z,1)$.
Note that the map $\pi$ is an isogeny and the map $\varpi$ is an embedding.

Now take any $\kappa \in \Ker h^* \cap P_\BG^+$, $v \in \SR_\BH^\BM$ and $\mu \in \SB$,
and consider the morphisms
\begin{equation}\label{map}
V_\BL^{\kappa + v\rho - \rho} \otimes V_\BL^\mu \to
V_\BL^{\kappa} \otimes V_\BL^{v\rho - \rho} \otimes V_\BL^\mu \to
V_\BL^{\kappa} \otimes \Pi_\SB(V_\BL^{v\rho - \rho} \otimes V_\BL^\mu).
\end{equation}
Our goal is to show that after application of $\Pi_\SB$ this map becomes an isomorphism.
For this we pullback the map via $\pi$ to a map of representations of the group $\BLo\times\Gm\times\BLi$
and check that the same map can be realized as a pullback via $\varpi$ of a map of representations
of $\GL_k\times\Gm\times\BLi$. We also express the action of the projector $\Pi_\SB$ in terms
of group $\GL_k\times\Gm\times\BLi$ and thus reduce the verification to the latter group.
It turns out that the components $\Gm$ and $\BLi$ play no role, and the statement
essentially reduces to a similar statement for representations of the group $\GL_k$. 
The latter statement is proved in the Appendix.

%

Recall that irreducible representations of $\GL_k$ are numbered by nonincreasing sequences
of integers of length $k$. For a sequence $\kappa_\bullet = (\kappa_1 \ge \kappa_2 \ge \dots \ge \kappa_k)$
we will denote by $V_{\GL_k}^{\kappa_\bullet}$ the corresponding $\GL_k$-representation.

\begin{lemma}\label{pivarpi}
For any $\lambda \in P_\BL^+$ we have
$$
\pi^*V_\BL^\lambda = V_\BLo^{\jo^*\lambda} \otimes V_\Gm^{(c\xi,\lambda)} \otimes V_\BLi^{\ji^*\lambda}.
$$
On the other hand, for any nonincreasing sequence $\kappa_\bullet = (\kappa_1 \ge \dots \ge \kappa_k)$ of integers, 
any $z\in \ZZ$ and any $\nu \in P_\BLi^+$ we have
$$
\varpi^*(V_{\GL_k}^{\kappa_\bullet} \otimes V_\Gm^z \otimes V_\BLi^\nu) = V_\BLo^\kappa \otimes V_\Gm^{(c\xi,\omega_1)\sum_{i=1}^k \kappa_i + z} \otimes V_\BLi^\nu,
$$
where $\kappa = \sum_{i=1}^{k-1}(\kappa_i - \kappa_{i+1})\omega_i$ is the weight of $\BLo$ corresponding to $\kappa_\bullet$.
\end{lemma}
\begin{proof}
This is straightforward (to compute the $\Gm$-component of $\pi^*V_\BL^\lambda$ use Lemma~\ref{integer-multiple-lem}).
\end{proof}

Now we give a description of the pullbacks via $\pi$ of representations $V_\BL^\kappa$, $V_\BL^{v\rho-\rho}$ and $V_\BL^\mu$
entering into~\eqref{map} as the pullbacks via $\varpi$ of appropriate representations of $\GL_k\times\Gm\times\BLi$.
In fact, such description is not unique (which is clear from Lemma~\ref{pivarpi}), so we choose a description which is most convenient for our purposes.

\begin{lemma}\label{kapparep}
Let $\kappa \in \Ker h^* \cap P_\BG^+$. Then there exists a unique nonincreasing sequence of integers
$\kappa_\bullet = (\kappa_1 \ge \kappa_2 \ge \dots \ge \kappa_a \ge \kappa_{a+1} = \dots = \kappa_k = 0)$
such that
%
$$
\pi^*V_\BL^\kappa = \varpi^*V_{\GL_k}^{\kappa_\bullet}.
$$
%
%
\end{lemma}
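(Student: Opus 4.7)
The plan is to reduce the claimed equality of representations on $\BLo\times\Gm\times\BLi$ to matching the $\BLo$- and $\Gm$-components separately via Lemma~\ref{pivarpi}. First, since $\kappa\in\Ker h_a^*\cap P_\BG^+$ and $\ji:\BLi\to\BG$ factors through $h_a$, we can write $\kappa=\sum_{i=1}^{a}c_i\omega_i$ with $c_i\in\ZZ_{\ge 0}$ and $\ji^*\kappa=0$. Then Lemma~\ref{pivarpi} gives
$$
\pi^*V_\BL^\kappa = V_\BLo^{\jo^*\kappa}\otimes V_\Gm^{(c\xi,\kappa)}\otimes V_\BLi^0,
$$
while for any $\kappa_\bullet=(\kappa_1,\ldots,\kappa_k)$, viewing $V_{\GL_k}^{\kappa_\bullet}$ with trivial $\Gm$- and $\BLi$-action, Lemma~\ref{pivarpi} expresses $\varpi^*V_{\GL_k}^{\kappa_\bullet}$ as the tensor product of the $\SL_k$-restriction of $V_{\GL_k}^{\kappa_\bullet}$ with $V_\Gm^{(c\xi,\omega_1)\sum_{i=1}^k \kappa_i}$ and $V_\BLi^0$.

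The $\BLo$-components match exactly when the $\SL_k$-restriction of $\kappa_\bullet$, which in terms of $\SL_k$-fundamental weights is $\sum_{i=1}^{k-1}(\kappa_i-\kappa_{i+1})\omega_i$, equals $\jo^*\kappa=\sum_{i=1}^a c_i\omega_i$. Combined with the constraint $\kappa_{a+1}=\cdots=\kappa_k=0$, this forces $\kappa_i=\sum_{j=i}^a c_j$ for all $i\le a$, uniquely determining $\kappa_\bullet$ and making it nonincreasing since $c_j\ge 0$. A reindexing yields $\sum_{i=1}^k\kappa_i=\sum_{j=1}^a jc_j$, so matching the $\Gm$-components reduces to the identity
$$
(\xi,\omega_j)=j(\xi,\omega_1)\qquad\text{for }1\le j\le a.
$$

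To verify this identity I will write $\xi=\omega_k=\sum_l d_l\alpha_l$; then $(\omega_j,\alpha_l)=\delta_{jl}\alpha_l^2/2$ gives $(\xi,\omega_j)=d_j\alpha_j^2/2$ for $j\le k-1$, and since all $\alpha_j^2$ with $j\le k-1$ are equal (the chain is of type $A$), the claim reduces to $d_j=jd_1$. The coefficients satisfy $\sum_l C_{jl}d_l=0$ for $j=1,\ldots,k-1$, where $C$ is the Cartan matrix of $\BG$. Choices (C1)--(C2) provide exactly the structural input needed: vertex $1$ is terminal in $D$, so $C_{1l}=0$ for $l>2$; and the chain $1,2,\ldots,k-1$ is of type $A$, so for $2\le j\le k-2$ the only nonzero off-diagonal entries $C_{jl}$ with $l<k$ are $C_{j,j\pm 1}=-1$. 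The system reduces to $2d_1-d_2=0$ and $-d_{j-1}+2d_j-d_{j+1}=0$ for $2\le j\le k-2$, and a straightforward induction yields $d_j=jd_1$ for all $j\le k-1$.

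The main obstacle is the last identity; the Cartan-matrix induction depends essentially on the terminality of vertex $1$ and the $A$-type structure of $\Do$ imposed by (C1)--(C2), and the remainder of the argument is bookkeeping via Lemma~\ref{pivarpi}.
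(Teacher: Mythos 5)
Your proof is correct and follows essentially the same route as the paper's: define $\kappa_\bullet$ from the coefficients of $\kappa$ in the fundamental weights, apply Lemma~\ref{pivarpi}, and reduce everything to the identity $(c\xi,\omega_i)=i(c\xi,\omega_1)$ for $i\le a$. The paper obtains that identity by pairing $c\xi$ with the relation $\alpha_i=2\omega_i-\omega_{i-1}-\omega_{i+1}$ (valid for $i<b$ by the type-$A$ structure of $\Do$ and the terminality of vertex $1$), which is exactly the recurrence you derive in dual form from the Cartan matrix after expanding $\xi$ in the simple-root basis.
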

\begin{proof}
By definition, $\kappa$ is a nonnegative linear combination of $\omega_1,\dots,\omega_a$.
Let $\kappa_1-\kappa_2$, $\kappa_2 - \kappa_3$, \dots, $\kappa_{a-1} - \kappa_a$ and $\kappa_a$ be the coefficients.
Then $\kappa_1 \ge \dots \ge \kappa_a \ge 0$. Extending this sequence by $\kappa_{a+1} = \dots = \kappa_k = 0$ we obtain a sequence $\kappa_\bullet$.
To prove the required isomorphism we use Lemma~\ref{pivarpi}. By this Lemma, we only have to check that $(c\xi,\kappa) = (c\xi,\omega_1)\sum_{i=1}^k \kappa_i$.
For this we note that for $i < b$ we have $\alpha_i = 2\omega_i - \omega_{i-1} - \omega_{i+1}$, hence $(c\xi,\omega_i) = i(c\xi,\omega_1)$, so
$$(c\xi,\kappa) = (c\xi,\omega_1)\sum_{i=1}^a i(\kappa_i - \kappa_{i-1}) = (c\xi,\omega_1)\sum_{i=1}^a \kappa_i = (c\xi,\omega_1)\sum_{i=1}^k \kappa_i,$$
as required.
\end{proof}

\begin{lemma}\label{vrep}
Let $v \in \SR_\BH^\BM$. Set $\nu_v = \ji^*(v\rho - \rho)$.
Then there exists a unique sequence of integers $\tau_\bullet = (0 = \tau_1 = \dots = \tau_a \ge \tau_{a+1} \ge \dots \ge \tau_k)$
such that
$$
\pi^*V_\BL^{v\rho - \rho} = \varpi^*(V_{\GL_k}^{\tau_\bullet} \otimes V_\Gm^{z(v)} \otimes V_\BLi^{\nu_v}),
$$
where
\begin{equation}\label{zv}
z(v) = (v\rho - \rho,c\xi)(1 - k(\omega_1,\xi)^2/\xi^2).
\end{equation}
\end{lemma}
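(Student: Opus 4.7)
The plan is to write $\jo^*(v\rho - \rho)$ explicitly as an $\SL_k$-weight, choose the $\GL_k$-lift $\tau$ normalized so that $\tau_1 = 0$, and verify both the decreasing condition on $\tau$ and the formula for $z(v)$ via Lemma~\ref{pivarpi}. The first observation is that $v \in \BW_{\BH_a}$ is generated by simple reflections $s_j$ with $j \in D_a$, each of which fixes $\omega_i$ for $i \le a$ by~\eqref{sioj}; hence $v\rho - \rho = v\rho_{\BH_a} - \rho_{\BH_a}$, and by Lemma~\ref{w-dom} this weight can be written as $-\sum_{j \in D_a} n_j \alpha_j$ with all $n_j \ge 0$.

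Applying $\jo^*$: since $\Do$ and $\Di$ are not adjacent, $\jo^*\alpha_j = 0$ for $j \in \Di$; for $j \in \Do \cap D_a = \{a+1,\dots,k-1\}$, $\jo^*\alpha_j$ is the corresponding simple root of $\BLo = \SL_k$; and for $j = k$ (i.e.\ $j = \beta$), the identities $(\beta,\alpha_i) = 0$ for $i < k-1$ force $\jo^*\beta$ to be a multiple of $\omega_{k-1}$ in $P_\BLo$, namely $\jo^*\beta = -m\,\omega_{k-1}$ with $m = -\langle \beta,\alpha_{k-1}^\vee\rangle > 0$. Using the standard lifts $\alpha_j \mapsto e_j - e_{j+1}$ and $\omega_{k-1} \mapsto e_1 + \dots + e_{k-1}$, the first $a$ coordinates of the resulting $\GL_k$-lift of $\jo^*(v\rho-\rho)$ all equal $n_k m$, since the only contribution there comes from the $\omega_{k-1}$ term. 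Subtracting $n_k m$ from every coordinate produces the unique $\GL_k$-lift $\tau$ with $\tau_1 = \cdots = \tau_a = 0$, yielding both existence and uniqueness.

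The main technical step is verifying the inequalities $0 \ge \tau_{a+1} \ge \cdots \ge \tau_k$. Here I would use that $v \in \SR_{\BH_a}^\BM$ makes $v\rho_{\BH_a}$ strictly $\BM$-dominant, so $(v\rho_{\BH_a},\alpha_j^\vee) \ge 1$ for every simple root $\alpha_j$ of $\BM_{a,\out}$, i.e.\ for $j \in \{a+1,\dots,k-1\}$. Unwinding this through the Cartan entries of $\SL_k$ (with the boundary contribution of $\beta$ appearing via $\langle \beta,\alpha_{k-1}^\vee\rangle = -m$) translates into the convexity-type inequalities $n_{j-1} + n_{j+1} \ge 2n_j$ for $a+1 \le j \le k-2$, $n_{k-2} + mn_k \ge 2n_{k-1}$, and $n_{a+1} \ge 0$ (with the convention $n_a := 0$); a direct comparison with the explicit formulas for the shifted $\tau_j$ shows these are precisely the conditions $\tau_j \ge \tau_{j+1}$.

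For the central character, a direct count gives $\sum_i \tau_i = -n_k m$, so by Lemma~\ref{pivarpi} the required equality of representations reduces to $(c\xi,\omega_1)\sum_i \tau_i + z(v) = (c\xi, v\rho - \rho)$. Using~\eqref{ojai} one has $(\xi,v\rho-\rho) = -n_k\alpha_k^2/2$, and after substituting the claimed formula for $z(v)$ the remaining assertion is the root-system identity $m(\xi,\omega_1) = k\alpha_k^2(\omega_1,\xi)^2/(2\xi^2)$, which follows from a short calculation using the scalar products $\alpha_k^2,\alpha_{k-1}^2$ together with $-m = 2(\beta,\alpha_{k-1})/\alpha_{k-1}^2$ and the definitions of $\omega_1,\omega_{k-1},\xi$ in the root system of $\BG$.
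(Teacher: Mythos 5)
Your overall strategy coincides with the paper's: normalize the $\GL_k$-lift of $\jo^*(v\rho-\rho)$ by $\tau_1=0$, check that the first $a$ entries agree, and match central characters via Lemma~\ref{pivarpi}. The difference is in how you execute the two computations. The paper gets $\tau_1=\dots=\tau_a$ in one line from the $v$-invariance of $\alpha_1,\dots,\alpha_{a-1}$, gets the nonincreasing property for free from the $\BL$-dominance of $v\rho-\rho$ (so your convexity inequalities on the $n_j$ are correct but an unnecessary detour), and, most importantly, computes $\sum_i\tau_i$ without writing $v\rho-\rho$ in root coordinates: since $\tau_1=0$ one has $\sum\tau_i=-(k\omega_1,\jo^*(v\rho-\rho))=-(v\rho-\rho,k\jo_*\omega_1)$, and $\jo_*\omega_1=\omega_1-((\omega_1,\xi)/\xi^2)\xi$ together with $(v\rho-\rho,\omega_1)=0$ gives $\sum\tau_i=k(v\rho-\rho,\xi)(\omega_1,\xi)/\xi^2$, whence $z(v)$ drops out immediately.

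The one step that genuinely narrows your scope is the identification $\jo^*\beta=-m\,\omega_{k-1}$. This requires $\beta$ to be adjacent to the terminal vertex $k-1$ of the chain $\Do=A_{k-1}$, i.e.\ $b=k-1$ in the notation of section~\ref{s-oi}; the setup only guarantees that $\beta$ is adjacent to \emph{some} vertex $b$ of $\Do$ (e.g.\ for $D_n$ with $\beta=\alpha_{n-1}$ and $\Do$ the long branch of type $A_{n-1}$ one has $b=k-2$). When $b<k-1$ the contribution of $\beta$ to $\jo^*(v\rho-\rho)$ is a positive multiple of $\omega_b$ rather than of $\omega_{k-1}$, your formula $\sum\tau_i=-n_km$ acquires a factor $k-b$, and the closing identity changes accordingly. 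For the classical groups treated in section~\ref{ss-ex} one always has $b=k-1$, and there your final identity $m(\xi,\omega_1)=k\beta^2(\omega_1,\xi)^2/(2\xi^2)$ does hold (pair $\jo_*\omega_1=\omega_1-((\omega_1,\xi)/\xi^2)\xi$ with $\beta$ and use $(\omega_1,\omega_{k-1})_{\SL_k}=1/k$), so your argument is valid in every case the paper actually uses; but as a proof of the lemma in its stated generality it has this gap, which the paper's coordinate-free computation of $\sum\tau_i$ avoids.
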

\begin{proof}
Consider the restriction $\jo^*(v\rho - \rho)$. It is a weight of $\SL_k$. A weight of $\SL_k$ can be thought
of as a weight of $\GL_k$ up to adding a central character. In other words, it is given by a nonincreasing sequence
of integers up to a simultaneous translation. Consider the sequence $\tau_1 \ge \dots \ge \tau_k$ representing
$\jo^*(v\rho - \rho)$ such that $\tau_1 = 0$. Note that $v\rho - \rho$ is orthogonal to $\alpha_1,\dots,\alpha_{a-1}$
(because these roots are orthogonal to the roots of $\BH$ and hence are $v$-invariant), hence $\tau_1 = \tau_2 = \dots = \tau_a$.

Further, we denote by $\nu_v$ the weight $\ji^*(v\rho - \rho)$. Then the representations $\pi^*V_\BL^{v\rho -\rho}$
and $\varpi^*(V_{\GL_k}^{\tau_\bullet} \otimes V_\BLi^{\nu_v})$ have the same restrictions to $\BLo$ and $\BLi$, so it
remains to compare the central characters. First, the central character of $V_\BL^{v\rho - \rho}$ is $(c\xi,v\rho - \rho)$.
Further, the central character of $V_{\GL_k}^{\tau_\bullet}$ is $(c\xi,\omega_1)\sum \tau_i$, while the central character of $V_\BLi^{\nu_v}$ is $0$.
Note that since $\tau_1 = 0$ and $k\omega_1=(k-1,-1,\ldots,-1)$, we have
$$
\sum\tau_i = -(k\omega_1,\jo^*(v\rho - \rho)) =
(v\rho - \rho,-k\jo_*\omega_1) =
(v\rho - \rho,-k\omega_1 + k((\omega_1,\xi)/\xi^2)\xi) =
k(v\rho - \rho,\xi)(\omega_1,\xi)/\xi^2.
$$
In the third equality above we use the formula $\jo_*\omega_1 = \omega_1 - \frac{(\omega_1,\xi)}{\xi^2}\xi$
analogous to the formula in the proof of Lemma~\ref{existsp}.
So, we see that the difference of the characters is
$$
(v\rho - \rho,c\xi) - (c\xi,\omega_1)k(v\rho - \rho,\xi)(\omega_1,\xi)/\xi^2 =
(v\rho - \rho,c\xi)(1 - k(\omega_1,\xi)^2/\xi^2)=z(v).
$$
Thus, twisting $V_{\GL_k}^{\tau_\bullet} \otimes V_\BLi^{\nu_v}$ by $V_\Gm^{z(v)}$ we obtain an isomorphism.
\end{proof}

\begin{lemma}\label{murep}
Let $\mu = \mu_\out + \sj\xi + \ji_*\mu_\inn \in \SB$. Then there exists a unique nonincreasing sequence of integers
$\mu_\bullet = (\mu_1 \ge \mu_2 \ge \dots \ge \mu_a \ge \mu_{a+1} = \dots = \mu_k = 0)$
such that
$$
\pi^*V_\BL^\mu = \varpi^*(V_{\GL_k}^{\mu_\bullet} \otimes V_\Gm^{c\sj\xi^2} \otimes V_\BLi^{\mu_\inn}).
$$
\end{lemma}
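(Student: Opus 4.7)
The plan is to apply Lemma~\ref{pivarpi} to the weight $\mu = \mu_0 + \sj\xi + \ji_*\nu$ and then compute each ingredient (restriction to $\BLo$, restriction to $\BLi$, and $\Gm$-character) using the structural properties of the three summands. This should essentially parallel the proofs of Lemmas~\ref{kapparep} and~\ref{vrep}.

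First, by Lemma~\ref{pivarpi}, we have
$$
\pi^*V_\BL^\mu = V_\BLo^{\jo^*\mu} \otimes V_\Gm^{(c\xi,\mu)} \otimes V_\BLi^{\ji^*\mu}.
$$
I compute the three pieces separately. For the $\BLi$-restriction: $\jo^*\mu_0 = 0$ is false (this is where $\mu_0$ contributes); rather, $\ji^*\mu_0 = 0$ since $\mu_0 \in \langle \omega_1,\dots,\omega_a\rangle$ and these fundamental weights are orthogonal to every simple root of $\BLi$ (as $\Do$ and $\Di$ are non-adjacent). Also $\ji^*\xi = 0$ because $\xi$ is orthogonal to the simple roots of $\BLi$, and $\ji^*\ji_*\nu = \nu$ by a direct inner-product computation (the map $\ji_*$ is the transpose of $\ji^*$ under the $\BG$-invariant form, extended $\QQ$-linearly). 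Hence $\ji^*\mu = \nu$. For the $\BLo$-restriction: $\jo^*\xi = 0$ and $\jo^*\ji_*\nu = 0$ by the same orthogonality, so $\jo^*\mu = \jo^*\mu_0$; exactly as in Lemma~\ref{kapparep}, writing $\mu_0 = \sum_{i=1}^a x_i \omega_i$ with $x_i \ge 0$ and setting $\mu_i = \sum_{j=i}^a x_j$ (so $\mu_{a+1} = \dots = \mu_k = 0$) yields a unique nonincreasing sequence $\mu_\bullet$ with $\jo^*\mu_0$ equal to the $\SL_k$-weight of the $\GL_k$-representation $V_{\GL_k}^{\mu_\bullet}$.

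Finally, for the $\Gm$-character: $(c\xi,\ji_*\nu) = 0$ because $\xi$ is orthogonal to the $\QQ$-span of roots of $\BLi$, so $(c\xi,\mu) = (c\xi,\mu_0) + c\sj\xi^2$. Using $(c\xi,\omega_i) = i(c\xi,\omega_1)$ for $1 \le i \le b$ (which follows from the relations $\alpha_i = 2\omega_i - \omega_{i-1} - \omega_{i+1}$ together with $(c\xi,\alpha_i) = 0$ for $i \ne k$), I get $(c\xi,\mu_0) = (c\xi,\omega_1) \sum_{i=1}^k \mu_i$. Comparing with Lemma~\ref{pivarpi} applied to $V_{\GL_k}^{\mu_\bullet} \otimes V_\Gm^{c\sj\xi^2} \otimes V_\BLi^\nu$, whose central character is exactly $(c\xi,\omega_1)\sum \mu_i + c\sj\xi^2$, yields the desired equality. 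Uniqueness of $\mu_\bullet$ is immediate since $\mu_k = 0$ is fixed and the differences $\mu_i - \mu_{i+1}$ are determined by $\jo^*\mu_0$.

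There is essentially no obstacle here: every identity needed has already been established (or parallels one already proved in Lemmas~\ref{kapparep}--\ref{vrep}); the only point requiring a little care is the bookkeeping for $(c\xi,\omega_i)$, which relies on $a \le b \le k-1$ so that all the $\omega_i$ occurring in $\mu_0$ live inside the $A_{k-1}$-chain where the arithmetic recursion applies.
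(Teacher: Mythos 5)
Your proof is correct and follows essentially the same route as the paper: both reduce to Lemma~\ref{pivarpi} together with the orthogonality of $\xi$, $\Ker h_\sj^*$ and $\ji_*(P_\BLi)$, and the arithmetic identity $(c\xi,\omega_i)=i(c\xi,\omega_1)$. The only difference is organizational — the paper factors $V_\BL^\mu = V_\BL^{\mu_0}\otimes V_\BL^{\sj\xi+\ji_*\nu}$ and cites Lemma~\ref{kapparep} for the first factor, whereas you apply Lemma~\ref{pivarpi} to $\mu$ directly and inline the same computation.
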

\begin{proof}
Note that $V_\BL^\mu = V_\BL^{\mu_\out} \otimes V_\BL^{\sj\xi + \ji_*\mu_\inn}$.
Since $\mu_\out \in \Ker h^* \cap P_\BG^+$, we already know from Lemma~\ref{kapparep} that
$\pi^*V_\BL^{\mu_\out} = \varpi^*V_{\GL_k}^{\mu_\bullet}$ for a uniquely determined
sequence $\mu_\bullet = (\mu_1 \ge \mu_2 \ge \dots \ge \mu_a \ge \mu_{a+1} = \dots = \mu_k = 0)$.
So, it remains to express $\pi^*V_\BL^{\sj\xi + \ji_*\mu_\inn}$ as a product of representations of $\Gm$ and $\BLi$.
Since $\ji^*(\sj\xi + \ji_*\mu_\inn) = \mu_\inn$, the $\BLi$-component is $V_\BLi^{\mu_\inn}$. On the other hand, the
$\Gm$-component has weight $(c\xi,\sj\xi + \ji_*\mu_\inn) = c\sj\xi^2$.
%
%
\end{proof}

\begin{proposition}\label{linb}
A representation $\varpi^*(V_{\GL_k}^{\lambda_\bullet} \otimes V_\Gm^z \otimes V_\BLi^\nu)$ is isomorphic to the pullback via $\pi$
of a representation in $\SB$ if and only if $\nu \in \SBi$,
\begin{equation}\label{l1a}
\sum_{i=1}^a (\lambda_i - \lambda_{i+1})\omega_i \in \SBo,
\end{equation}
and
\begin{equation}\label{lk}
\lambda_{a+1} = \dots = \lambda_k = \frac{c\sj\xi^2 - z}{k(c\xi,\omega_1)}.
\end{equation}
\end{proposition}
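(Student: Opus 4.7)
The plan is to reduce the statement to Lemma~\ref{murep} by exploiting the redundancy in the presentation $\varpi^*(V_{\GL_k}^{\lambda_\bullet}\otimes V_\Gm^z\otimes V_\BLi^\nu)$. The key identity, immediate from the formula for $\varpi$ on the $\Gm$-factor and from the fact that $\det$ is trivial on $\SL_k$, is that replacing $\lambda_\bullet$ by $\lambda_\bullet+(c_0,\dots,c_0)$ while simultaneously replacing $z$ by $z-k(c\xi,\omega_1)c_0$ leaves the pullback unchanged. Equivalently, Lemma~\ref{pivarpi} shows that the irreducible $\BLo\times\Gm\times\BLi$-representation $\varpi^*(V_{\GL_k}^{\lambda_\bullet}\otimes V_\Gm^z\otimes V_\BLi^\nu)$ is determined by the triple $(\jo^*\lambda_\bullet,\,(c\xi,\omega_1)\sum\lambda_i+z,\,\nu)$.

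For sufficiency, given the three hypotheses, I would set $c_0:=(c\sj\xi^2-z)/(k(c\xi,\omega_1))$, which by assumption equals $\lambda_{a+1}=\dots=\lambda_k$, and put $\mu_\bullet:=\lambda_\bullet-(c_0,\dots,c_0)$. Then $\mu_\bullet$ is nonincreasing with integer entries and $\mu_{a+1}=\dots=\mu_k=0$, so by Lemma~\ref{kapparep} it corresponds to the weight $\mu_0=\sum_{i=1}^a(\lambda_i-\lambda_{i+1})\omega_i\in\SBo$. Setting $\mu:=\mu_0+\sj\xi+\ji_*\nu\in\SB$, Lemma~\ref{murep} yields $\pi^*V_\BL^\mu=\varpi^*(V_{\GL_k}^{\mu_\bullet}\otimes V_\Gm^{c\sj\xi^2}\otimes V_\BLi^\nu)$, which the shift identity with parameter $c_0$ converts back into $\varpi^*(V_{\GL_k}^{\lambda_\bullet}\otimes V_\Gm^z\otimes V_\BLi^\nu)$.

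For necessity, suppose $\varpi^*(V_{\GL_k}^{\lambda_\bullet}\otimes V_\Gm^z\otimes V_\BLi^\nu)=\pi^*V_\BL^\mu$ for some $\mu=\mu_0+\sj\xi+\ji_*\nu'\in\SB$ with $\mu_0\in\SBo$, $\nu'\in\SBi$. Lemma~\ref{murep} rewrites the right-hand side as $\varpi^*(V_{\GL_k}^{\mu_\bullet}\otimes V_\Gm^{c\sj\xi^2}\otimes V_\BLi^{\nu'})$ with $\mu_{a+1}=\dots=\mu_k=0$. Comparing the triples $(\jo^*\lambda_\bullet,(c\xi,\omega_1)\sum\lambda_i+z,\nu)$ and $(\jo^*\mu_\bullet,(c\xi,\omega_1)\sum\mu_i+c\sj\xi^2,\nu')$ produced by Lemma~\ref{pivarpi}, I obtain $\nu=\nu'\in\SBi$ from the $\BLi$-factor, the relation $\lambda_\bullet-\mu_\bullet=(c_0,\dots,c_0)$ for some $c_0$ from the $\SL_k$-factor, and finally $c_0=(c\sj\xi^2-z)/(k(c\xi,\omega_1))$ from the $\Gm$-factor. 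The vanishing of $\mu_\bullet$ past position $a$ then becomes $\lambda_{a+1}=\dots=\lambda_k=c_0$, and Lemma~\ref{kapparep} identifies $\mu_0\in\SBo$ with $\sum_{i=1}^a(\lambda_i-\lambda_{i+1})\omega_i$.

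The main obstacle is purely organizational: tracking the three pieces of data (the $\SL_k$-weight, the central $\Gm$-character, and the $\BLi$-weight) through the two different presentations of the same representation, and checking that the constant shift $c_0$ matches on both components. Nothing essential goes beyond Lemmas~\ref{pivarpi}, \ref{kapparep} and~\ref{murep} already established.
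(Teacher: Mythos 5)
Your proof is correct and follows the same route as the paper's: exploit the translation redundancy $\varpi^*(V_{\GL_k}^{\lambda_\bullet}\otimes V_\Gm^{z}\otimes V_\BLi^\nu)\cong\varpi^*(V_{\GL_k}^{\lambda_\bullet-(s,\dots,s)}\otimes V_\Gm^{z+sk(c\xi,\omega_1)}\otimes V_\BLi^\nu)$ coming from Lemma~\ref{pivarpi}, normalize by $s=\lambda_k$, and compare with Lemmas~\ref{kapparep} and~\ref{murep}. The paper states this in two lines; you have simply written out both implications explicitly.
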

\begin{proof}
Note that by Lemma~\ref{pivarpi} for any $s \in \ZZ$ we have
$$
\varpi^*(V_{\GL_k}^{(\lambda_1,\dots,\lambda_k)} \otimes V_\Gm^z \otimes V_\BLi^\nu) \cong
\varpi^*(V_{\GL_k}^{(\lambda_1-s,\dots,\lambda_k-s)} \otimes V_\Gm^{z + sk(c\xi,\omega_1)}\otimes V_\BLi^\nu).
$$
So, taking $s = \lambda_k$ and using Lemma~\ref{murep} we deduce $z + k\lambda_k(c\xi,\omega_1) = c\sj\xi^2$.
The Proposition follows.
\end{proof}

Denote by $\pi^*\SB$ the set of all representations of $\BLo\times\Gm\times\BLi$ which are pullbacks via $\pi$
of representations of $\BL$ from the block $\SB$. Now we can rewrite the action of the projector $\Pi_{\pi^*\SB}$
on the subcategory of representations of $\BLo\times\Gm\times\BLi$ with a given $\Gm$-component.

\begin{corollary}
We have
\begin{equation*}
\Pi_{\pi^*\SB}(\varpi^*(V_{\GL_k}^{\lambda_\bullet} \otimes V_\Gm^{z(v)+c\sj\xi^2} \otimes V_\BLi^\nu)) =
\varpi^*(\Pi_{S^\out}(V_{\GL_k}^{\lambda_\bullet}) \otimes V_\Gm^{z(v)+c\sj\xi^2} \otimes \Pi_{\SB^\inn}(V_\BLi^\nu)),
\end{equation*}
where $S^\out$ is the set of all $\lambda_\bullet$ such that~\eqref{l1a} holds and
\begin{equation}\label{lkn}
\lambda_{a+1} = \dots = \lambda_k =  \phi(v).
\end{equation}
\end{corollary}
\begin{proof}
Substituting $z = z(v) + c\sj\xi^2$ into~\eqref{lk} and comparing~\eqref{zv} with~\eqref{phi-v}
we deduce the condition~\eqref{lkn}.
\end{proof}

\subsection{Proof of the compatibility}\label{ss-proof-cc}

The goal of this section is to prove Theorem~\ref{th-aw}. So we take arbitrary $\mu = \mu_\out + \sj\xi + \ji_*\mu_\inn \in \SB$ and
consider the tensor product $V_\BL^\kappa \otimes V_\BL^{v\rho - \rho} \otimes V_\BL^\mu$. We have
%
%
\begin{align*}
\pi^*(V_\BL^{\kappa + v\rho - \rho} \otimes V_\BL^\mu) & = 
\varpi^*\left((V_{\GL_k}^{\kappa_\bullet + \tau_\bullet}\otimes V_{\GL_k}^{\mu_\bullet}) \bigotimes V_\Gm^{z(v) + c\sj\xi^2} \bigotimes
(V_\BLi^{\nu_v} \otimes V_\BLi^{\mu_\inn})\right),\\
\pi^*(V_\BL^\kappa \otimes V_\BL^{v\rho - \rho} \otimes V_\BL^\mu) & = 
\varpi^*\left((V_{\GL_k}^{\kappa_\bullet}\otimes V_{\GL_k}^{\tau_\bullet}\otimes V_{\GL_k}^{\mu_\bullet}) \bigotimes V_\Gm^{z(v) + c\sj\xi^2} \bigotimes
(V_\BLi^{\nu_v} \otimes V_\BLi^{\mu_\inn})\right),\\
\pi^*(V_\BL^\kappa \otimes \Pi_\SB(V_\BL^{v\rho - \rho} \otimes V_\BL^\mu)) & = 
\varpi^*\left((V_{\GL_k}^{\kappa_\bullet}\otimes \Pi_{S^\out}(V_{\GL_k}^{\tau_\bullet}\otimes V_{\GL_k}^{\mu_\bullet})) \bigotimes V_\Gm^{z(v) + c\sj\xi^2} \bigotimes
\Pi_{\SB^\inn}(V_\BLi^{\nu_v} \otimes V_\BLi^{\mu_\inn})\right),
\end{align*}
So, the $\pi$-pullback of the map~\eqref{map} is equal to the $\varpi$-pullback of the tensor product of the map
\begin{equation}\label{mapgl}
V_{\GL_k}^{\kappa_\bullet + \tau_\bullet} \otimes V_{\GL_k}^{\mu_\bullet} \to 
V_{\GL_k}^{\kappa_\bullet} \otimes V_{\GL_k}^{\tau_\bullet} \otimes V_{\GL_k}^{\mu_\bullet} \to 
V_{\GL_k}^{\kappa_\bullet}\otimes \Pi_{S^\out}(V_{\GL_k}^{\tau_\bullet}\otimes V_{\GL_k}^{\mu_\bullet})
\end{equation}
with the maps
\begin{equation*}
V_\Gm^{z(v) + c\sj\xi^2} \xrightarrow{\ \id\ } V_\Gm^{z(v) + c\sj\xi^2}
\qquad\text{and}\qquad
V_\BLi^{\nu_v} \otimes V_\BLi^{\mu_\inn} \to \Pi_{\SB^\inn}(V_\BLi^{\nu_v} \otimes V_\BLi^{\mu_\inn}).
\end{equation*}
Since the last map is a $\SB^\inn$-isomorphism, we only
have to check that the map \eqref{mapgl} is an $S^\out$-isomorphism.

Let $\TS^\out$ be the set of all $\lambda_\bullet$ satisfying only~\eqref{lkn}.
We claim
that if we replace in~\eqref{mapgl}
the projector $\Pi_{S^\out}$ by $\Pi_{\TS^\out}$, then the obtained map
\begin{equation}\label{mapgl-bis}
V_{\GL_k}^{\kappa_\bullet + \tau_\bullet}\otimes V_{\GL_k}^{\mu_\bullet} \to
V_{\GL_k}^{\kappa_\bullet} \otimes V_{\GL_k}^{\tau_\bullet}\otimes V_{\GL_k}^{\mu_\bullet} \to
V_{\GL_k}^{\kappa_\bullet} \otimes \Pi_{\TS^\out}(V_{\GL_k}^{\tau_\bullet}\otimes V_{\GL_k}^{\mu_\bullet})
\end{equation}
will be an $\TS^\out$-isomorphism.
Indeed, if $\phi(v)$ is a nonpositive integer then this is Corollary~\ref{GL-lem-cor} from Appendix.
If $\phi(v)$ is not an integer, then $\TS^\out = \emptyset$, so any map is an $\TS^\out$-isomorphism.
Finally, if $\phi(v)$ is a positive integer then $v$ is very special, hence we have $(\mu + \rho - v\rho,\alpha_1 + \dots + \alpha_{k-1}) < \phi(v)$.
This means that $\mu_1 + \tau_k < \phi(v)$, so by Littlewood--Richardson rule the tensor products 
$V_{\GL_k}^{\kappa_\bullet + \tau_\bullet} \otimes V_{\GL_k}^{\mu_\bullet}$ and
$V_{\GL_k}^{\tau_\bullet} \otimes V_{\GL_k}^{\mu_\bullet}$ contain no terms $V_{\GL_k}^{\lambda_\bullet}$ with $\lambda_k = \phi(v)$,
and a fortiori no terms in $\TS^\out$. Thus, both the source and the target of the map~\eqref{mapgl} become zero after applying $\Pi_{\TS^\out}$,
hence the map becomes an isomorphism. This finishes the proof that
\eqref{mapgl-bis} is an $\TS^\out$-isomorphism.

Since $S^\out \subset \TS^\out$ it remains to check that 
\begin{equation}\label{sotso}
\Pi_{S^\out}(V_{\GL_k}^{\kappa_\bullet} \otimes \Pi_{\TS^\out}(V_{\GL_k}^{\tau_\bullet}\otimes V_{\GL_k}^{\mu_\bullet})) =
\Pi_{S^\out}(V_{\GL_k}^{\kappa_\bullet} \otimes \Pi_{S^\out}(V_{\GL_k}^{\tau_\bullet}\otimes V_{\GL_k}^{\mu_\bullet}).
\end{equation}
Indeed, if~\eqref{sotso} is true then the result of applying $\Pi_{S^\out}$ to~\eqref{mapgl} coincides with 
that of applying $\Pi_{S^\out}$ to~\eqref{mapgl-bis}. Since the latter map becomes an isomorphism already after applying $\Pi_{\TS^\out}$, the assertion would follow.

Now to verify~\eqref{sotso} we have to check that for any $\lambda_\bullet \in \TS^\out$ such that
$V_{\GL_k}^{\lambda_\bullet}$ appears as a summand in
$V_{\GL_k}^{\tau_\bullet}\otimes V_{\GL_k}^{\mu_\bullet}$ and
$\Pi_{S^\out}(V_{\GL_k}^{\kappa_\bullet} \otimes V_{\GL_k}^{\lambda_\bullet}) \ne 0$, 
one has $\lambda_\bullet \in S^\out$.
Let $\lambda'_\bullet\in S^\out$ be such that
$V_{\GL_k}^{\lambda'_\bullet}$ is a summand in 
$V_{\GL_k}^{\kappa_\bullet} \otimes V_{\GL_k}^{\lambda_\bullet}$.
Note that both $\lambda_\bullet$ and $\lambda'_\bullet$ satisfy~\eqref{lkn}. 
Since $\kappa_\bullet$ is nonnegative and $V_{\GL_k}^{\lambda'_\bullet}$ is a summand in
$V_{\GL_k}^{\kappa_\bullet} \otimes V_{\GL_k}^{\lambda_\bullet}$, it follows that the Young diagram corresponding
to the weight $\sum_{i=1}^a (\lambda_i-\lambda_{i+1})\omega_i$ is a subdiagram in the Young diagram corresponding
to the weight $\sum_{i=1}^a (\lambda'_i-\lambda'_{i+1})\omega_i$. The latter is in $\SBo$ since 
$\lambda'_\bullet\in S^\out$. 
Hence, the former is also in $\SBo$, since $\SBo$ is closed with respect to passing to a Young subdiagram. 
Thus, $\lambda_\bullet$ is in $S^\out$ and we are done.

\section{Explicit description of the exceptional blocks}\label{ss-ed}

In this section we will pass from the abstract description of the blocks $\SB_\sj$
given in subsections~\ref{sbpab} and~\ref{sbap} to a more explicit description
which will be used later to deal with concrete examples. We show in fact that both the inner
and the outer parts of the blocks are described by several simple inequalities, numbered
by $\BW_{\BM_\sj}$-orbits in the $\BW_{\BH_\sj}$-orbit of the weight $\delta_{a(\sj)}$ 
(the shape of the core $\BR_\sj$).

Let us fix $\sj\in\SJ$.
It will be convenient to write the shape $\delta=\delta_{a(\sj)}\in P_{\BH_\sj}^+$ of the core
$\BR_\sj=\BR_\delta$ in the form
\begin{equation}\label{dega}
\delta = - h_\sj^*\gamma,
\end{equation}
where $\gamma \in P_\BG$.

\begin{remark}
Since the action of the Weyl group on roots is much better understood than on arbitrary weights
(for example, one can use tables of roots), the most convenient choice of $\gamma$ is
the simple root of the vertex of $D_\BG$ adjacent to $D_{\BH_\sj}$.
In this case the $\BW_{\BH_\sj}$-orbit of $\gamma$ is described in Lemma~\ref{beta-orbit-lem}.
\end{remark}




\subsection{The big blocks}

First, we give a description of the block $\SB_\sj$.

Assume that $\gamma \in P_\BG$ and that $\delta$ defined by~\eqref{dega} is $\BH_\sj$-dominant.
To unburden the notation we will write $\BH$ for $\BH_\sj$, $h$ for $h_\sj$, and $\BM$ for $\BM_\sj = \BL \cap \BH_\sj$.
Since $\BW_{\BM} \subset \BW_{\BH}$, the $\BW_{\BH}$-orbit of $\gamma$ splits into several $\BW_{\BM}$-orbits.
We number the orbits by integers $0,\dots,m$ in such a way that the $0$-th orbit
is the $\BW_{\BM}$-orbit of $\gamma$ itself.

In each $\BW_\BM$-orbit we have two special elements:
the unique $\BM$-dominant representative $\gamma_{t+}$ and the unique $\BM$-antidominant representative $\gamma_{t-}$ (where $0 \le t \le m$).
Note that $\gamma_{0-} = \gamma$, since we assumed that $h^*\gamma = - \delta$ is $\BH$-antidominant.
Using these data we can describe the block $\SB_\sj$ more explicitly.
We start with the inner part of the block.

\begin{proposition}\label{bia-exp}
We have
\begin{equation}\label{bbi}
\SBi_\sj = \left\{\nu \in P^+_\BLi \ \left|\
\begin{array}{l}
\max \{ (\ji^*\gamma_{t+},\nu), -(\ji^*\gamma_{t-},\nu) \} \le \frac12(h^*(\gamma_{t-} - \gamma),\rho_{\BH})\ \text{for all $0 \le t \le m$}\\
\qquad\text{and $\sj\xi + i_*\nu \in P_\BL$}
\end{array} 
\right\}\right..
\end{equation}
\end{proposition}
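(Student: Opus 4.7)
The plan is to unfold condition~(1) of~\eqref{sbij} explicitly, using the decomposition of the $\BW_\BH$-orbit of $\gamma$ into $\BW_\BM$-orbits; here I write $\BH = \BH_\sj$ and $\BM = \BL \cap \BH$. Substituting $\delta = -h^*\gamma$ into the definition of $\BR_\sj$, using adjointness $(-,i_*\nu)=(i^*-,\nu)$ together with $i^*h^* = i^*$ and the fact that $u \in \BW_\BH$ commutes with $h^*$ on $P_\BG$ (since the Weyl action of $\BW_\BH$ fixes the orthogonal complement of $P_\BH \otimes \RR$), the condition $\rho_\BH \pm 2 i_*(w\nu) \in \BR_\sj$ for all $w \in \BW_\BLi$ translates into: for every $\gamma' = u\gamma$ in $\BW_\BH\cdot\gamma$ and every $w \in \BW_\BLi$,
\begin{equation*}
\pm 2\,(i^*\gamma', w\nu) \le (\gamma' - \gamma, \rho_\BH).
\end{equation*}

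The next step is to collapse, for each $\BW_\BM$-orbit $O_t \subset \BW_\BH\cdot\gamma$, all constraints coming from $\gamma' \in O_t$ into a single inequality. The crucial point is that simple roots of $\Do$ are orthogonal to $Q_\BLi$ (since $\Di$ and $\Do$ are non-adjacent components of $D_\BG\setminus\beta$), so they lie in $\ker i^*$, and consequently $\BW_\BLo$ acts trivially on $i^*$. Because $D_\BM = \Di \sqcup(\Do\cap D_\BH)$ is a disjoint union, $\BW_\BM$ factors as $\BW_\BLi \times \BW_0$ with $\BW_0 \subset \BW_\BLo$, so $i^*$ is $\BW_0$-invariant and $i^*(O_t)$ is a single $\BW_\BLi$-orbit in $P_\BLi$. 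Its $\BLi$-dominant and $\BLi$-antidominant representatives are $i^*\gamma_{t+}$ and $i^*\gamma_{t-}$ respectively, because $\gamma_{t\pm}$ are $\BM$-dominant/antidominant and $\BLi \subset \BM$. Applying Corollary~\ref{max-scalar-prod-cor} to the $\BW_\BLi$-action on $P_\BLi$ then evaluates, uniformly for $\gamma' \in O_t$,
\begin{equation*}
\max_{w \in \BW_\BLi}\bigl|(i^*\gamma', w\nu)\bigr| = \max\bigl\{(i^*\gamma_{t+}, \nu),\ -(i^*\gamma_{t-}, \nu)\bigr\}.
\end{equation*}

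For the right-hand side $(\gamma'-\gamma, \rho_\BH)$, since $\rho_\BH$ is $\BH$-dominant, hence $\BM$-dominant, Corollary~\ref{max-scalar-prod-cor} applied to the $\BW_\BM$-action on $O_t$ yields $\min_{\gamma' \in O_t}(\gamma', \rho_\BH) = (\gamma_{t-}, \rho_\BH)$. Rewriting $(\gamma_{t-}-\gamma, \rho_\BH) = (h^*(\gamma_{t-}-\gamma), \rho_\BH)$, which is valid because $\rho_\BH \in P_\BH$ and $h^*$ is the orthogonal projection, and then dividing by two, I recover exactly the inequality in~\eqref{bbi}; condition~(2) of~\eqref{sbij} is carried over verbatim. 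The only non-routine ingredient is the triviality of the $\BW_\BLo$-action on $i^*$, which is what allows one to replace $\BW_\BLi$-optimization by $\BM$-dominant/antidominant representatives; everything else reduces to careful bookkeeping with Corollary~\ref{max-scalar-prod-cor}.
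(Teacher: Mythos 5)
Your argument is correct and is essentially the paper's own proof: both unfold the core condition with $\delta=-h^*\gamma$, split the $\BW_{\BH}$-orbit of $\gamma$ into $\BW_{\BM}$-orbits, use the factorization $\BW_{\BM}=\BW_{\BM_\out}\times\BW_{\BLi}$ together with the triviality of the outer action on $\ji_*(P_\BLi)$ to decouple the two sides, and then evaluate the max/min via dominant and antidominant representatives (Corollary~\ref{max-scalar-prod-cor}). The only cosmetic difference is that you transport everything to $P_\BLi$ via $\ji^*$ from the start, while the paper keeps $\ji_*\nu$ inside $P_{\BH}$ and only applies adjunction at the end; this changes nothing of substance.
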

\begin{proof}
By definition, $\SBi_\sj$ is the set of all $\nu$ such that $\sj\xi + i_*\nu \in P_\BL$
and $\rho_{\BH} \pm 2w_\BLi\ji_*\nu \in \BR_\delta$ for all $w_\BLi \in \BW_{\BLi}$.
We only need to rework the second condition. Substituting the Definition~\ref{def-core} 
of the core~$\BR_\delta$, it can be rewritten as
$$
-(w_{\BH}h^*\gamma,\rho_{\BH} \pm 2w_\BLi\ji_*\nu) \le -(h^*\gamma,\rho_{\BH}). 
$$
Since $h^*$ is $\BW_{\BH}$-equivariant, this inequality can be rewritten as
$$
\pm(h^*w_{\BH}\gamma,2w_\BLi\ji_*\nu) \le (h^*(w_{\BH}\gamma - \gamma),\rho_{\BH}).
$$
Note that $w_{\BH}\gamma = w_{\BM}\gamma_{t+}$ for appropriate $w_{\BM} \in \BW_{\BM}$ and $t \in \{0,1,\dots,m\}$.
After such a substitution the inequality takes the form
$$
\pm(h^*w_{\BM}\gamma_{t+},2w_\BLi\ji_*\nu) \le (h^*(w_{\BM}\gamma_{t+} - \gamma),\rho_{\BH}).
$$
Let $\BMo = \BLo \cap \BH$, $\BMi = \BLi \cap\BH = \BLi$. Then $\BW_{\BM} = \BW_\BMo\times\BW_\BMi$.
In particular, we can write $w_{\BM} = w_\BMo w_\BMi$ with $w_\BMo \in \BW_\BMo$, $w_\BMi \in \BW_\BMi$.
Moreover, $\ji_*\nu$ is fixed by $\BW_\BMo$, hence the LHS is equal to
$$
\pm2(h^*\gamma_{t+},w_{\BM}^{-1}w_\BLi\ji_*\nu) =
\pm2(h^*\gamma_{t+},w_\BMo^{-1}w_\BMi^{-1}w_\BLi\ji_*\nu) =
\pm2(h^*\gamma_{t+},w'_\BLi\ji_*\nu),
$$
where $w'_\BLi = w_\BMi^{-1}w_\BLi$. Note that $w'_\BLi$ in the LHS runs through $\BW_\BLi$ independently
of $w_{\BM}$ in the RHS running through $\BW_{\BM}$. Hence, the inequality for all $w'_\BLi \in \BW_\BLi$,
$w_{\BM} \in \BW_{\BM}$ is equivalent to
$$
\max_{w'_\BLi \in \BW_\BLi} \{ \pm2(h^*\gamma_{t+},w'_\BLi\ji_*\nu) \} \le
\min_{w_{\BM} \in \BW_{\BM}} \{ (h^*(w_{\BM}\gamma_{t+} - \gamma),\rho_{\BH}) \}.
$$
The expression under the maximum can be rewritten as $\pm2((w'_\BLi)^{-1}i^*\gamma_{t+},\nu)$.
Since both $\nu$ and $i^*\gamma_{t+}$ are $\BLi$-dominant, the expression with ``$+$'' sign
is maximal when $w'_\BLi = 1$, and the expression with ``$-$'' sign is maximal when
$(w'_\BLi)^{-1}i^*\gamma_{t+} = i^*\gamma_{t-}$.
Thus, the LHS is
$$
\max \{ 2(i^*\gamma_{t+},\nu), -2(i^*\gamma_{t-},\nu) \}.
$$
Similarly, since $\rho_{\BH}$ is $\BM$-dominant, the expression in the RHS is minimal
when $w_{\BM}\gamma_{t+} = \gamma_{t-}$. The claim follows.
\end{proof}

Now let us rewrite more explicitly the definition of the outer part of the block $\SBo_\sj$.
Denote by $\hgamma_t$ the $\BLo$-dominant representative in the $\BW_\BLo$-orbit
of $h_*h^*\gamma_{t+}$. Also, set
\begin{equation}\label{dap}
\arraycolsep = 0pt%
\begin{array}{lrl}
d_\sj^{t,+} := &  \max & \{ (\ji^*\gamma_{t+},\nu)\ |\ \nu \in \SBi_\sj \},\\
d_\sj^{t,-} := & - \min & \{ (\ji^*\gamma_{t-},\nu)\ |\ \nu \in \SBi_\sj \}.
\end{array}
\end{equation}


\begin{proposition}\label{wtgt}
We have
\begin{equation}\label{bbo}
\HSBo_\sj = \{\lambda \in \Ker h^*\cap P^+_\BG \ |\
(\lambda,\hgamma_t) + d_\sj^{t,+} + d_\sj^{t,-} \le (\rho_{\BH},\gamma_{t-} - \gamma)\ \text{for all $0 \le t \le m$} \}.
\end{equation}
\end{proposition}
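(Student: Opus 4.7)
The plan is to mirror the computation in the proof of Proposition~\ref{bia-exp} above. Substituting $\delta = -h_\sj^*\gamma$ into the definition~\eqref{core} of the core and using that $h_\sj^*$ intertwines the $\BW_\BH$-actions (write $h = h_\sj$, $\BH = \BH_\sj$, $\BM = \BM_\sj$), the core-membership condition in~\eqref{sboj} becomes
\begin{equation*}
(h^*w\gamma,\,h^*w_\BLo\lambda) + (h^*w\gamma,\,\ji_*w_\BLi\nu) - (h^*w\gamma,\,\ji_*w'_\BLi\nu') \;\le\; (h^*(w\gamma-\gamma),\,\rho_\BH)
\end{equation*}
for all $w\in\BW_\BH$, $w_\BLo\in\BW_\BLo$, $w_\BLi,w'_\BLi\in\BW_\BLi$, and $\nu,\nu'\in\SBi_\sj$. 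I would parametrize $w\gamma=w_\BM\gamma_{t+}$ with $t\in\{0,\ldots,m\}$ and $w_\BM=w_\BMo w_\BMi\in\BW_\BMo\times\BW_\BLi$, and for each fixed $t$ reduce this infinite family to the single inequality of~\eqref{bbo} by maximizing the LHS and minimizing the RHS over the remaining Weyl-group parameters. The key observation is that the three LHS summands end up depending on pairwise disjoint groups of Weyl parameters, so each can be maximized independently.

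For the two $\ji_*$ summands, the adjointness $(h^*a,\ji_*b)=(a,\ji_*b)$, together with the facts that $\BW_\BMo\subset\BW_\BLo$ acts trivially on $\ji_*P_\BLi$ (since $\BLo\perp\BLi$ inside $\BL$) and that $\BW_\BMi=\BW_\BLi$, rewrites $(h^*w\gamma,\ji_*w_\BLi\nu)=(\ji^*\gamma_{t+},\,w_\BMi^{-1}w_\BLi\nu)$, and similarly for the $\nu'$-term. Since $w_\BMi^{-1}w_\BLi$ and $w_\BMi^{-1}w'_\BLi$ range freely and independently over $\BW_\BLi$, Corollary~\ref{max-scalar-prod-cor} applied to the $\BLi$-dominant weight $\ji^*\gamma_{t+}$ identifies the maximum of the $\nu$-piece as $d_\sj^{t,+}$ and the maximum of the $-\nu'$-piece as $d_\sj^{t,-}$; the latter uses the identity $\ji^*\gamma_{t-}=w_0^\BLi\ji^*\gamma_{t+}$, which follows from $\gamma_{t-}=w_0^\BMo w_0^\BLi\gamma_{t+}$ and the fact that $w_0^\BMo$ acts trivially under $\ji^*$ (being in $\BW_\BLo$, which fixes $\ji_*P_\BLi$ pointwise).

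For the $\lambda$-summand, I would use $(h^*a,h^*b)=(h_*h^*a,b)$ and that the orthogonal projection $h_*h^*$ commutes with $\BW_\BH$ to write $(h^*w\gamma,h^*w_\BLo\lambda)=(w_\BMo w_\BMi\cdot h_*h^*\gamma_{t+},\,w_\BLo\lambda)$. Absorbing $w_\BMo$ into the free parameter $w_\BLo$ (setting $w''_\BLo := w_\BMo^{-1}w_\BLo$), then commuting $w_\BMi\in\BW_\BLi$ past $w''_\BLo\in\BW_\BLo$, and using that $\lambda$ is $\BW_\BLi$-invariant---being a nonnegative combination of $\omega_1,\ldots,\omega_a$, each orthogonal to $Q_\BLi$ by~\eqref{ojai}---eliminates $w_\BMi$ entirely and reduces the term to $(h_*h^*\gamma_{t+},w''_\BLo\lambda)$, whose maximum over free $w''_\BLo\in\BW_\BLo$ equals $(\lambda,\hgamma_t)$ by Corollary~\ref{max-scalar-prod-cor} applied to the $\BLo$-dominant weights $\lambda$ and $\hgamma_t$. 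For the RHS, the minimum of $(w_\BM h^*\gamma_{t+},\rho_\BH)-(h^*\gamma,\rho_\BH)$ over $w_\BM\in\BW_\BM$ is attained at the $\BM$-antidominant representative $h^*\gamma_{t-}$ (again by Corollary~\ref{max-scalar-prod-cor} applied to the $\BM$-dominant weights $\rho_\BH$ and $h^*\gamma_{t+}$), producing $(h^*(\gamma_{t-}-\gamma),\rho_\BH)=(\rho_\BH,\gamma_{t-}-\gamma)$. Assembling these bounds over all $t$ yields~\eqref{bbo}. The main subtlety is the $\lambda$-piece bookkeeping: ensuring that the factors $w_\BMo,w_\BMi$ from the parametrization of $w$ recombine with the external parameters $w_\BLo,w_\BLi,w'_\BLi$ so that all three LHS summands end up receiving an independent free Weyl element.
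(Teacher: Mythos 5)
Your argument is correct and follows the paper's own proof essentially verbatim: unwind the core condition into scalar-product inequalities, parametrize $\BW_{\BH}\gamma$ by the $\BW_{\BM}$-orbits of the $\gamma_{t+}$, redistribute the Weyl factors so that each of the three LHS summands and the RHS carries an independent free parameter, and then maximize/minimize termwise via Corollary~\ref{max-scalar-prod-cor}. The extra details you supply (the identity $\ji^*\gamma_{t-}=w_0^{\BLi}\ji^*\gamma_{t+}$ and the $\BW_{\BLi}$-invariance of $\lambda$) are correct and only make explicit what the paper leaves implicit.
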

\begin{proof}
Take $\lambda \in \Ker h^* \cap P_\BG^+$.
By definition $\lambda \in \HSBo_\sj$ if and only if
$h^*(\rho - w_\BL\lambda) - w_\BLi\ji_*\nu + w'_\BLi \ji_*\nu' \in \BR_\delta$.
By definition of $\BR_\delta$ this is equivalent to
$$
(h^*(\rho - w_\BL\lambda) - w_\BLi\ji_*\nu + w'_\BLi \ji_*\nu', - v_{\BH} h^*\gamma) \le (\rho_{\BH}, -h^*\gamma)
$$
for all $\nu,\nu' \in \SBi_\sj$, $w_\BL \in \BW_\BL$, $w_\BLi,w'_\BLi \in \BW_\BLi$, and $v_{\BH} \in \BW_{\BH}$.
Note that $\BW_\BL = \BW_\BLo\times\BW_\BLi$ and that $\lambda$ is $\BW_\BLi$-invariant.
So we can rewrite the above condition as
$$
(h^*(\rho - w_\BLo\lambda) - w_\BLi \ji_*\nu + w'_\BLi \ji_*\nu', - v_{\BH} h^*\gamma) \le (\rho_{\BH}, -h^*\gamma)
$$
Since $h^*$ is $\BW_\BH$-equivariant, we have $v_{\BH} h^*\gamma = h^*(v_{\BH}\gamma)$.
Further, each weight $v_{\BH}\gamma$ can be written as $v_{\BM}\gamma_{t+}$ for some $0 \le t \le m$ and $v_{\BM} \in \BW_{\BM}$.
This allows to rewrite the condition as
$$
(h^*(\rho - w_\BLo\lambda) - w_\BLi \ji_*\nu + w'_\BLi \ji_*\nu', - v_{\BM} h^*\gamma_{t+}) \le (\rho_{\BH}, -h^*\gamma)
$$
for all $\nu,\nu' \in \SBi_\sj$, $w_\BLo \in \BW_\BLo$, $w_\BLi,w'_\BLi \in \BW_\BLi$, $v_{\BM} \in \BW_{\BM}$, and all $t$, $0 \le t \le m$.
Now recall that $h^*\rho = \rho_{\BH}$ and move it from the LHS to the RHS:
$$
(h^*(- w_\BLo\lambda) - w_\BLi \ji_*\nu + w'_\BLi \ji_*\nu', - v_{\BM} h^*\gamma_{t+}) \le (\rho_{\BH}, h^*(v_{\BM} \gamma_{t+} - \gamma)).
$$
Now, writing $v_{\BM} = v_\BMo v_\BLi$ in the LHS, taking into account that $h^*$ is $\BW_\BM$-equivariant,
and substituting
$v_\BMo^{-1}w_\BLo$ with $w_\BLo$, $v_\BLi^{-1}w_\BLi$ with $w_\BLi$, and
$v_\BLi^{-1}w'_\BLi$ with $w'_\BLi$
we rewrite the condition as
$$
(h^*(- w_\BLo\lambda) - w_\BLi \ji_*\nu + w'_\BLi \ji_*\nu', - h^*\gamma_{t+}) \le (\rho_{\BH}, h^*(v_{\BM} \gamma_{t+} - \gamma)).
$$
Finally, using the adjunction of $h^*$ and $h_*$ and of $\ji^*$ and $\ji_*$ we rewrite this as
$$
(w_\BLo\lambda, h_*h^*\gamma_{t+}) + (w_\BLi \nu, \ji^*\gamma_{t+}) + (- w'_\BLi \nu', \ji^*\gamma_{t+}) \le
(\rho_{\BH}, h^*(v_{\BM} \gamma_{t+} - \gamma)).
$$
Note that each term on both sides
contains an action of a Weyl group element, and these elements run through
the corresponding Weyl groups independently. Therefore, one can replace each summand by its
maximum (in the LHS) or minimum (in the RHS) to obtain an equivalent inequality.

The maximums of the second and the third summands in the LHS are given by $d^{t,\pm}_\sj$ by definition.
The first summand can be rewritten as $(\lambda, w_\BLo^{-1}h_*h^*\gamma_{t+})$ and since
$\lambda$ is $\BLo$-dominant, to achieve the maximum one should choose $w_\BLo^{-1}$ in such a way that
the corresponding weight is also $\BLo$-dominant. By definition, it is $\hgamma_t$, hence
the maximum of the first summand is $(\lambda,\hgamma_t)$.
Finally, as in Proposition~\ref{bia-exp}, we obtain that
the minimum in the RHS is equal to $(\rho_{\BH},\gamma_{t-} - \gamma)$.
Combining all of this together we obtain the result.
\end{proof}

\subsection{The small blocks}

Now we will give a description of the blocks $\SBS_\sj$.

Take $\sj,\sj' \in \SJ$ and assume that $\sj' < \sj$. As before
we write $\BH$ for $\BH_\sj$, $h$ for $h_\sj$, and $\BM$ for $\BM_\sj = \BL \cap \BH_\sj$.
In addition, we will write $\BH'$ for $\BH_{\sj'}$, $h'$ for $h_{\sj'}$, and $\BM'$ for $\BM_{\sj'} = \BL \cap \BH_{\sj'}$.
Similarly we denote by $\gamma$ and $\gamma'$ the weights such that $\delta = -h^*\gamma$ and $\delta' = - (h')^*\gamma'$
are the shapes of the corresponding cores. We number the orbits of $\BW_{\BM'}$
on  $\BW_{\BH'}\gamma'$
from $0$ to $m'$, and we denote
by $\gamma'_{t\pm}$ the $\BM'$-dominant
and antidominant representatives of these orbits.

The proof of the next two results is analogous to that of Propositions~\ref{bia-exp} and \ref{wtgt}.





\begin{proposition}\label{sbsie}
The inner part of the block $\SBS_\sj$ can be described by the following system of inequalities
\begin{equation}\label{sbti}
\SBSi_\sj = \left\{ \nu \in \SBi_\sj \ \left|\
\begin{array}{l}
\text{$(\sj - \sj')((h')^*\xi,(h')^*\gamma'_{t+}) + (\ji^*\gamma'_{t+},\nu) + \bd_{\sj'}^{t,-} <
(\rho_{\BH'},(h')^*(\gamma'_{t-} - \gamma'))$}\\
\qquad\text{for all $\sj' < \sj$ and for all $0 \le t \le m'$}
\end{array}
\right\}\right.,
\end{equation}
where for $\sj'<\sj$
\begin{equation}\label{daap}
\bd_{\sj'}^{t,-} := - \min  \{ (\ji^*\gamma'_{t-},\nu')\ |\ \nu' \in \SBSi_{\sj'} \}.
\end{equation}
\end{proposition}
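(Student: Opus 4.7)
The proof follows the pattern of Proposition~\ref{bia-exp} with two additional features coming from the defining condition~\eqref{sbt-in}: an extra $(\sj-\sj')\xi$-translation term, and the use of the interior core $\BR^*_{\sj'}$ together with a quantifier over $\nu' \in \SBSi_{\sj'}$. My plan is as follows.

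First, I would unpack the defining condition. Writing the shape of $\BR^*_{\sj'}$ as $\delta' = -(h')^*\gamma'$ and invoking the definition~\eqref{intcore}, together with the $\BW_{\BH'}$-equivariance of $(h')^*$, the membership in $\BR^*_{\sj'}$ becomes the family of strict inequalities
\begin{equation*}
((h')^*w_{\BH'}\gamma',\; (\sj-\sj')(h')^*\xi + w_\BLi\ji_*\nu - w'_\BLi\ji_*\nu') < ((h')^*(w_{\BH'}\gamma' - \gamma'),\; \rho_{\BH'})
\end{equation*}
indexed by $\sj'<\sj$, $\nu' \in \SBSi_{\sj'}$, $w_\BLi, w'_\BLi \in \BW_\BLi$ (or more generally $\BW_\BL$, by Remark~\ref{w-nu-rem}), and $w_{\BH'} \in \BW_{\BH'}$. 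Writing $w_{\BH'}\gamma' = w_{\BM'}\gamma'_{t+}$ for the unique $0 \le t \le m'$ and some $w_{\BM'} \in \BW_{\BM'} = \BW_{\BM'_\out} \times \BW_\BLi$, with $\BM'_\out = \BLo \cap \BH'$, I use that $(h')^*\xi$ is $\BW_{\BM'}$-invariant (since $\xi$ is $\BW_\BL$-invariant by Lemma~\ref{xi-inv}) and that $\ji_*\nu,\ji_*\nu'$ are fixed by the $\BW_{\BM'_\out}$-factor, so as to absorb the $\BW_\BLi$-component of $w_{\BM'}$ into the free parameters $w_\BLi, w'_\BLi$.

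The system then reduces to: for all $\sj'<\sj$, $t$, and $\nu' \in \SBSi_{\sj'}$,
\begin{equation*}
(\sj-\sj')((h')^*\xi,(h')^*\gamma'_{t+}) + \max_{w_\BLi \in \BW_\BLi}(w_\BLi^{-1}\ji^*\gamma'_{t+},\nu) - \min_{w'_\BLi \in \BW_\BLi}((w'_\BLi)^{-1}\ji^*\gamma'_{t+},\nu') < \min_{w_{\BM'} \in \BW_{\BM'}}((h')^*(w_{\BM'}\gamma'_{t+} - \gamma'), \rho_{\BH'}).
\end{equation*}
Each extremum is evaluated using Corollary~\ref{max-scalar-prod-cor}. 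Since $\gamma'_{t+}$ is $\BM'$-dominant, the restriction $\ji^*\gamma'_{t+}$ is $\BLi$-dominant, so the maximum in the second summand is attained at $w_\BLi = 1$ and equals $(\ji^*\gamma'_{t+}, \nu)$. The minimum in the third summand is attained at the $\BLi$-antidominant representative of the $\BW_\BLi$-orbit of $\ji^*\gamma'_{t+}$, which is $\ji^*\gamma'_{t-}$: indeed $\gamma'_{t-}$ is $\BM'$-antidominant, and the $\BW_{\BM'_\out}$-component of the element carrying $\gamma'_{t+}$ to $\gamma'_{t-}$ acts trivially after $\ji^*$ because $\BLo$-roots lie in $\Ker\ji^*$. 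Taking the further maximum over $\nu' \in \SBSi_{\sj'}$ then produces exactly $\bd_{\sj'}^{t,-}$ as defined in~\eqref{daap}. On the right-hand side, both $\rho_{\BH'}$ and $(h')^*\gamma'_{t+}$ are $\BM'$-dominant, so the minimum over $w_{\BM'}$ is attained at the $\BM'$-antidominant representative, yielding $(\rho_{\BH'}, (h')^*(\gamma'_{t-} - \gamma'))$. Assembling the three extrema recovers the inequalities~\eqref{sbti}.

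I do not anticipate a genuine obstacle: the argument is purely a matter of organizing Weyl-group orbits and dominance, structurally identical to the proofs of Propositions~\ref{bia-exp} and~\ref{wtgt}. The only new ingredients are the bookkeeping for the $(\sj-\sj')(h')^*\xi$ shift, which is absorbed into a constant once the orbit representative $\gamma'_{t+}$ is fixed, and the extra supremum over $\nu' \in \SBSi_{\sj'}$, which just produces the quantity $\bd_{\sj'}^{t,-}$; both are handled routinely.
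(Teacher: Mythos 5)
Your proof is correct and is exactly the argument the paper intends: it explicitly omits the proof of Proposition~\ref{sbsie}, stating only that it is "analogous to that of Propositions~\ref{bia-exp} and \ref{wtgt}," and your write-up carries out precisely that analogous computation (unwinding $\BR^*_{\sj'}$ via $\delta'=-(h')^*\gamma'$, splitting the $\BW_{\BH'}$-orbit into $\BW_{\BM'}$-orbits, absorbing the $\BW_\BLi$-factors, and evaluating each extremum by Corollary~\ref{max-scalar-prod-cor}), with the $\xi$-shift and the supremum over $\nu'\in\SBSi_{\sj'}$ handled correctly.
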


%


\begin{proposition}\label{sbsoe}
The outer part of the block $\SBS_\sj$ can be described by the following system of inequalities
\begin{equation}\label{sbto}
\SBSo_\sj = \left\{ \lambda \in \SBo_\sj \ \left|\
\begin{array}{l}
\text{$(\lambda,\hgamma'_t) + (\sj - \sj')((h')^*\xi,(h')^*\gamma'_{t+}) + \bd_{\sj',\sj}^{t+} + \bd_{\sj'}^{t-} < (\rho_{\BH'},(h')^*(\gamma'_{t-} - \gamma'))$}\\
\qquad\text{for all $\sj' < \sj$ and for all $0 \le t \le m'$}
\end{array}
\right\}\right.,
\end{equation}
where
\begin{equation}\label{daam}
\bd_{\sj',\sj}^{t,+} := \max  \{ (\ji^*\gamma'_{t+},\nu)\ |\ \nu \in \SBSi_{\sj} \}
\end{equation}
and $\hgamma'_t$ is the $\BLo$-dominant representative in $\BW_{\BLo}h'_*(h')^*\gamma'_{t+}$.

\end{proposition}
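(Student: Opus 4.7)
Following the strategy of the proof of Proposition~\ref{wtgt}, the extra term $(\sj-\sj')\xi$ will be handled by the $\BW_\BL$-invariance of $\xi$ (Lemma~\ref{xi-inv}), and the strict inequality in the conclusion will come from the use of the open core $\BR^*_{\sj'}$.

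First I would unfold the defining condition in \eqref{sbt}. Membership of the weight $\rho_{\BH_{\sj'}}-h_{\sj'}^*(w_\BL\lambda+(\sj-\sj')\xi)-w_\BLi\ji_*\nu+w'_\BLi\ji_*\nu'$ in the open core $\BR^*_{\sj'}$, using the shape $\delta_{\sj'}=-(h')^*\gamma'$ and the description \eqref{intcore}, translates into the family of strict inequalities
$$
((h')^*v_{\BH'}\gamma',\,(h')^*(w_\BL\lambda+(\sj-\sj')\xi)+w_\BLi\ji_*\nu-w'_\BLi\ji_*\nu')<(\rho_{\BH'},(h')^*(v_{\BH'}\gamma'-\gamma'))
$$
indexed by $v_{\BH'}\in\BW_{\BH'}$. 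Since $\ji$ factors through $h_\sj$ and $\lambda\in\Ker h_\sj^*$, we have $\ji^*\lambda=0$, which gives $w''_\BLi\lambda=\lambda$ for any $w''_\BLi\in\BW_\BLi$; combined with Lemma~\ref{xi-inv} this reduces $w_\BL(\lambda+(\sj-\sj')\xi)$ to $w_\BLo\lambda+(\sj-\sj')\xi$.

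Next I would parametrize the orbit $\BW_{\BH'}\gamma'$ by its decomposition into $\BW_{\BM'}$-orbits, writing $v_{\BH'}\gamma'=v_{\BM'}\gamma'_{t+}$ for $0\le t\le m'$ and $v_{\BM'}\in\BW_{\BM'}=\BW_{\BM'_\out}\times\BW_\BLi$, and further decomposing $v_{\BM'}=v_1v_2$ with $v_1\in\BW_{\BM'_\out}$ and $v_2\in\BW_\BLi$. Expanding the left side by bilinearity and using $\BW_{\BH'}$-equivariance of $(h')^*$ together with the adjunctions $((h')^*\alpha,\beta)_{\BH'}=(\alpha,h'_*\beta)_\BG$ and $(\ji^*\alpha,\eta)_\BLi=(\alpha,\ji_*\eta)_\BG$, the inequality becomes a sum of four terms. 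The new $\xi$-summand $(\sj-\sj')(v_{\BM'}(h')^*\gamma'_{t+},(h')^*\xi)_{\BH'}$ collapses to $(\sj-\sj')((h')^*\xi,(h')^*\gamma'_{t+})$ because $(h')^*\xi$ is $\BW_{\BM'}$-invariant by Lemma~\ref{xi-inv}; this is exactly the new term appearing in the statement. The other three summands absorb the factor $v_{\BM'}$ via the substitutions $w_\BLo\mapsto v_1^{-1}w_\BLo$, $w_\BLi\mapsto v_2^{-1}w_\BLi$, $w'_\BLi\mapsto v_2^{-1}w'_\BLi$, exactly as in the proof of Proposition~\ref{wtgt}.

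After this substitution the left side depends only on $(w_\BLo,w_\BLi,w'_\BLi,\nu,\nu',t)$, while the right side retains only $(\rho_{\BH'},(h')^*(v_{\BM'}\gamma'_{t+}-\gamma'))$. Since all the variables range independently, the whole family of strict inequalities is equivalent to the condition that, for every $\sj'<\sj$ and every $t$, the maximum of the left side is strictly smaller than the minimum of the right. Corollary~\ref{max-scalar-prod-cor} gives $\max_{w_\BLo}(w_\BLo\lambda,h'_*(h')^*\gamma'_{t+})=(\lambda,\hgamma'_t)$ by the definition of $\hgamma'_t$; the two $\nu,\nu'$-maxima reproduce $\bd_{\sj',\sj}^{t,+}$ and $\bd_{\sj'}^{t,-}$ from \eqref{daam} and \eqref{daap}, using $\ji^*\gamma'_{t-}=w_0^\BLi\ji^*\gamma'_{t+}$, which follows from $\gamma'_{t-}=w_0^{\BM'}\gamma'_{t+}$ together with $\BW_{\BM'_\out}$-triviality on the image of $\ji^*$ (as $\Do$ is not adjacent to $\Di$); and the minimum on the right is attained at $v_{\BM'}\gamma'_{t+}=\gamma'_{t-}$ by $\BM'$-dominance of $\rho_{\BH'}$. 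Putting these together yields the inequality of Proposition~\ref{sbsoe}. The only obstacle is the careful bookkeeping of Weyl-group actions, adjunctions, and the change of variables absorbing $v_{\BM'}$; no genuinely new idea beyond Proposition~\ref{wtgt} is needed, since Lemma~\ref{xi-inv} directly accounts for the extra $\xi$-term and the strict inequality merely reflects our use of the open core $\BR^*_{\sj'}$.
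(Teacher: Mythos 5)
Your argument is correct and is precisely the computation the paper has in mind when it says the proof of Proposition~\ref{sbsoe} is "analogous to that of Propositions~\ref{bia-exp} and~\ref{wtgt}": you unfold membership in the open core, decompose $\BW_{\BH'}\gamma'$ into $\BW_{\BM'}$-orbits, absorb $v_{\BM'}$ by the same change of variables, and separate maxima and minima, with the only new ingredients (the $\BW_\BL$-invariance of $\xi$ handling the extra $(\sj-\sj')\xi$ term, and the strict inequality coming from $\BR^*_{\sj'}$) treated exactly as the paper intends. No gaps.
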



\section{Explicit collections for classical groups}\label{ss-ex}

Now we will show that the construction of the previous section leads to
(conjecturally full) exceptional collections for isotropic Grassmannians of types $B$, $C$ and $D$, 
and potentially to many interesting collections in type $A$.


So, assume that $\BG$ is of type $B$, $C$ or $D$ and consider the standard numbering of the vertices of its Dynkin diagram.
$$
\begin{array}{ll}
\begin{picture}(300,10)
\multiput(0,0)(30,0){3}{\circle{4}}
\put(90,0){\hbox to 0mm{\hss\dots\hss}}
\put(120,0){\circle{4}}
\put(150,0){\circle*{4}}
\put(180,0){\circle{4}}
\put(210,0){\hbox to 0mm{\hss\dots\hss}}
\multiput(240,0)(30,0){2}{\circle{4}}
\multiput(2,0)(30,0){2}{\line(1,0){26}}
\multiput(122,0)(30,0){2}{\line(1,0){26}}
\multiput(62,0)(120,0){2}{\line(1,0){15}}
\multiput(118,0)(120,0){2}{\line(-1,0){15}}
\multiput(242,-1)(0,2){2}{\line(1,0){26}}
\put(250,-2.8){$>$}
\put(-2,5){$\scriptstyle{}1$}
\put(28,5){$\scriptstyle{}2$}
\put(58,5){$\scriptstyle{}3$}
\put(114,5){$\scriptstyle{}k-1$}
\put(148,5){$\scriptstyle{}k$}
\put(174,5){$\scriptstyle{}k+1$}
\put(234,5){$\scriptstyle{}n-1$}
\put(268,5){$\scriptstyle{}n$}
\end{picture}
&
\text{Diagram $B_n$}\\[10pt]
\begin{picture}(300,10)
\multiput(0,0)(30,0){3}{\circle{4}}
\put(90,0){\hbox to 0mm{\hss\dots\hss}}
\put(120,0){\circle{4}}
\put(150,0){\circle*{4}}
\put(180,0){\circle{4}}
\put(210,0){\hbox to 0mm{\hss\dots\hss}}
\multiput(240,0)(30,0){2}{\circle{4}}
\multiput(2,0)(30,0){2}{\line(1,0){26}}
\multiput(122,0)(30,0){2}{\line(1,0){26}}
\multiput(62,0)(120,0){2}{\line(1,0){15}}
\multiput(118,0)(120,0){2}{\line(-1,0){15}}
\multiput(242,-1)(0,2){2}{\line(1,0){26}}
\put(250,-2.8){$<$}
\put(-2,5){$\scriptstyle{}1$}
\put(28,5){$\scriptstyle{}2$}
\put(58,5){$\scriptstyle{}3$}
\put(114,5){$\scriptstyle{}k-1$}
\put(148,5){$\scriptstyle{}k$}
\put(174,5){$\scriptstyle{}k+1$}
\put(234,5){$\scriptstyle{}n-1$}
\put(268,5){$\scriptstyle{}n$}
\end{picture}
&
\text{Diagram $C_n$}\\
\begin{picture}(300,20)
\multiput(0,0)(30,0){3}{\circle{4}}
\put(90,0){\hbox to 0mm{\hss\dots\hss}}
\put(120,0){\circle{4}}
\put(150,0){\circle*{4}}
\put(180,0){\circle{4}}
\put(210,0){\hbox to 0mm{\hss\dots\hss}}
\put(240,0){\circle{4}}
\put(270,10){\circle{4}}
\put(270,-10){\circle{4}}
\multiput(2,0)(30,0){2}{\line(1,0){26}}
\multiput(122,0)(30,0){2}{\line(1,0){26}}
\multiput(62,0)(120,0){2}{\line(1,0){15}}
\multiput(118,0)(120,0){2}{\line(-1,0){15}}
\put(242,1){\line(3,1){26}}
\put(242,-1){\line(3,-1){26}}
\put(-2,5){$\scriptstyle{}1$}
\put(28,5){$\scriptstyle{}2$}
\put(58,5){$\scriptstyle{}3$}
\put(114,5){$\scriptstyle{}k-1$}
\put(148,5){$\scriptstyle{}k$}
\put(174,5){$\scriptstyle{}k+1$}
\put(234,5){$\scriptstyle{}n-2$}
\put(274,10){$\scriptstyle{}n-1$}
\put(274,-10){$\scriptstyle{}n$}
\end{picture}
&
\text{Diagram $D_n$}\\[10pt]
\end{array}
$$
To treat these cases simultaneously it is convenient to denote
\begin{equation}\label{e}
e = \begin{cases}
1/2, & \text{if $\BG$ is of type $B$,}\\
1, & \text{if $\BG$ is of type $C$,}\\
0, & \text{if $\BG$ is of type $D$.}
\end{cases}
\end{equation}
Then the weight lattice $P_\BG$ can be identified with the sublattice of $\QQ^n$ spanned by $\omega_i$
($1 \le i \le n$) with
$$
\begin{array}{lll}
\omega_i^{B,C,D} &=& (\underbrace{1,1,\dots,1}_i,\underbrace{0,0,\dots,0}_{n-i}),\quad 1\le i \le n-2+2e,\\
\omega_n^{B,D} &=& (1/2,1/2,\dots,1/2,1/2),\\
\omega_{n-1}^{D} &=& (1/2,1/2,\dots,1/2,-1/2).
\end{array}
$$
Let $k$ be the number of the vertex of the Dynkin diagram of $\BG$ corresponding
to the maximal parabolic subgroup $\BP$, so that $\xi = \omega_k$.

\subsection{Isotropic Grassmannians}\label{ss-ex-1}

First, we assume that
$$
k \le n + 2e - 2.
$$
In other words, $k \le n - 1$ for type $B$,
$k \le n$ for type $C$ and $k \le n-2$ for type $D$.
Then
$$
X := \BG/\BP = \begin{cases}
\OGr(k,2n+1), & \text{$k \le n-1$ ($\BG$ is of type $B_n$)}\\
\SGr(k,2n), & \text{$k \le n$\hphantom{${}-1$} ($\BG$ is of type $C_n$)}\\
\OGr(k,2n), & \text{$k \le n-2$ ($\BG$ is of type $D_n$)}
\end{cases}
$$
where $\OGr$ (resp., $\SGr$) denotes the orthogonal (resp., symplectic) isotropic Grassmannian.

Let $\Do$ be the component of $D \setminus \beta$ containing the vertices from $1$ to $k-1$. Then $b = k-1$ and $\Di$ is the component
containing vertices from $k+1$ to $n$. Note that $\ji^*$ is the projection onto the last $n-k$ coordinates
while $\jo^*$ is the projection onto the first $k-1$ coordinates with respect to the standard basis $\eps_1,\dots,\eps_n$ 
in $P_\BG = \QQ^n$. The simple roots are
$$
\begin{array}{l}
\alpha_i^{B,C,D} = \eps_i - \eps_{i+1},\quad 1\le i \le n-1,\\
\alpha_n^B = \eps_n,\qquad
\alpha_n^C = 2\eps_n,\qquad
\alpha_n^D = \eps_{n-1} + \eps_n.
\end{array}
$$
Note also that
$$
\rho = (n+e-1,n+e-2,\dots,e),
$$
thus $(\rho,\eps_i) = n+e-i$.

Now take any $a \le k-1$. Then the projection $h_a^*$ is the projection onto the last $n-a$ coordinates (it kills all $\eps_i$ with $i \le a$).
The simple root corresponding to $\BP$ is $\beta = \eps_k - \eps_{k+1}$,
so the maximal root of $\BH_a$ with the coefficient of $\beta$ equal to $1$
is $\bar\beta_a = \eps_{a+1} + \eps_{k+1}$,
so by Lemma~\ref{index} the index of the Grassmannian $\BH_a/(\BH_a \cap \BP)$ is
$$
r_a = (\rho,\beta + \bar\beta_a)/(\xi,\beta) = 2n + 2e - a - k - 1.
$$
In particular, we see that when $a$ decreases by 1, $r_a$ increases by $1$.
Also, note that $r_{k-1} = 2n + 2e - 2k$, while
$$
r = r_0 = 2n + 2e - k - 1.
$$
Further, the weight $\theta$ defined by~\eqref{theta} in this case is
$$
\theta = (\underbrace{0,0,\dots,0}_{k-1},1,\underbrace{0,0,\dots,0}_{n-k}).
$$
It follows that $(\theta,P_\BL) = \frac12\ZZ$ if $\BG$ is of type $B$ or $D$
and $(\theta,P_\BL) = \ZZ$ if $\BG$ is of type $C$ and
$$
\SJ = \begin{cases}
\frac12\ZZ \cap [0,2n-k-1/2], & \text{if $\BG$ is of type $B$}\\
\text{\hphantom{$\frac12$}}\ZZ \cap [0,2n-k], & \text{if $\BG$ is of type $C$}\\
\frac12\ZZ \cap [0,2n-k-3/2], & \text{if $\BG$ is of type $D$}\\
\end{cases}
$$
Applying~\eqref{aj} we conclude that
$$
a(\sj) = \begin{cases}
\lfloor\, \sj\,\rfloor, & \text{if $\sj < k$}\\
k-1, & \text{if $\sj \ge k$}
\end{cases}
$$

Now we are going to apply Propositions~\ref{bia-exp}, \ref{wtgt}, \ref{sbsie} and~\ref{sbsoe}.
We take
$$
\gamma_a = \alpha_a = \eps_a - \eps_{a+1}.
$$
Note that $\BW_{\BH_a}$ acts by permutations of the last $n-a$ coordinates and changes of signs of the
coordinates
(in case of type $D$ by pairwise changes of signs), while $\BW_{\BM_a}$ acts by permuting coordinates from $a+1$ to $k$
and from $k+1$ to $n$ separately and (pairwise) changes of signs only of the last $n-k$ coordinates.
Thus, the $\BW_{\BH_a}$-orbit of $\gamma_a$ consists of all vectors $\eps_a \pm \eps_i$, $a+1 \le i \le n$,
and it splits into three $\BW_{\BM_a}$-orbits:
$$
\{ \eps_a - \eps_i \}_{a+1 \le i \le k},\qquad
\{ \eps_a \pm \eps_i \}_{k+1 \le i \le n},\qquad\text{and}\qquad
\{ \eps_a + \eps_i \}_{a+1 \le i \le k}.
$$
Thus, using
the notation of section~\ref{ss-ed} we have $m = 2$ (unless $\BG$ has type $C$ and $k = n$
in which case the second orbit is empty and so $m = 1$), and the characteristic weights and
quantities from section~\ref{ss-ed} are given by the following table:
$$
\begin{array}{|c|c|c|c|c|c|c|c|c|c|}
\hline
t & \gamma_{t-} & (\rho_\BH,\gamma_{t-} - \gamma) & \gamma_{t+} & h_a^* \gamma_{t+} & \hgamma_t & (h_a^*\xi,h_a^*\gamma_{t+}) & \ji^*\gamma_{t+} & \ji^*\gamma_{t-} \\
\hline
0 & \eps_a - \eps_{a+1} & 0 & \eps_a - \eps_k & -\eps_k & -\eps_k & -1 & 0 & 0\\
\hline
1 & \eps_a - \eps_{k+1} & k-a & \eps_a + \eps_{k+1} & \eps_{k+1} & \eps_{k+1} & 0 & \eps_{k+1} & -\eps_{k+1} \\
\hline
2 & \eps_a + \eps_k & 2n+2e-a-k-1 & \eps_a + \eps_{a+1} & \eps_{a+1} & \eps_1 & 1 & 0 & 0 \\
\hline
\end{array}
$$
(if $\BG$ has type $C$ and $k = n$ then the line $t = 1$ should be omitted).

Applying Proposition~\ref{bia-exp} we obtain the following description of $\SBi_\sj$:
$$
\SBi_\sj = \{(\nu_{k+1},\dots,\nu_n) \in P_\BLi^+\ |\ 2\nu_{k+1} \le k-a(\sj)\ \text{and}\ \nu_i \equiv \sj \pmod \ZZ \}.
$$
Further, we apply~\eqref{dap} and compute
$$
d_\sj^{1,\pm} =
\{\sj\} + \lfloor (k - a(\sj))/2 - \{\sj\} \rfloor,
$$
(where $\{-\}$ stands for the fractional part)
and for other values of $t$ we have $d_\sj^{t,\pm} = 0$.

Now we can describe $\SBo_\sj$. Note that $(\hgamma_t,\Ker h_a^*) = 0$ unless $t = 2$. So, for $t = 0$ Proposition~\ref{wtgt}
gives an empty condition and for $t = 1$ we obtain the condition that $d_\sj^{1,+} + d_\sj^{1,-} \le k-a$ which holds by the definition
of $d_\sj^{1,\pm}$. Finally, the condition for $t = 2$ gives
$$
\SBo_\sj =  \{(\lambda_1,\dots,\lambda_{a(\sj)},0,\dots,0)\ |\ 2n + 2e - a(\sj) - k - 1 \ge \lambda_1 \ge \dots \ge \lambda_{a(\sj)} \ge 0\}.
$$
Note that the set $\SBo_\sj$ is the set of Young diagrams inscribed into the rectangle $a(\sj)\times (2n+2e-a(\sj)-k-1)$,
hence it is closed under taking subdiagrams. Thus, the second condition of Theorem~\ref{th-aw} is satisfied.
Since there are no very special elements by Lemma~\ref{vse}, the first condition is satisfied as well, so
Theorem applies, and we conclude that the block
$$
\SB_\sj =
\left\{(\lambda_1,\dots,\lambda_n) \in P_\BL^+\ \left|
\begin{array}{l}
2n + 2e + \sj - a(\sj) - k - 1 \ge \lambda_1 \ge \dots \ge \lambda_{a(\sj)} \ge \sj = \lambda_{a(\sj) + 1} = \dots = \lambda_k,\\
(k-a(\sj))/2 \ge \lambda_{k+1} \ge \dots \ge \lambda_n, \\
\lambda_1,\dots,\lambda_n \equiv \sj \pmod \ZZ
\end{array}
\right\}\right.
$$
is exceptional.

Now we are going to apply Proposition~\ref{sbsie}.
First, let us show that
\begin{equation}\label{Binn-equality-less-k}
\SBSi_\sj = \SBi_\sj\qquad\text{for $\sj < k$}
\end{equation}
and $\bd_\sj^{1-} = d_\sj^{1-} = \{\sj\} + \lfloor (k - a(\sj))/2 - \{\sj\} \rfloor$.
For this we can use induction on $\sj$. The base of induction, $\sj = 0$ is clear.
Assume that for all $\sj' < \sj$ the statement is proved. Then by Proposition~\ref{sbsie},
the additional condition defining $\SBSi_\sj$ is
$$
\nu_{k+1} + \{\sj'\} + \lfloor (k - a(\sj'))/2 - \{\sj'\} \rfloor < k - a(\sj').
$$
We claim that this condition is always satisfied for $\nu \in \SBi_\sj$. Indeed, we have
\begin{align}\label{long-inequality}
\{\sj\} + \lfloor (k - a(\sj))/2 - \{\sj\} \rfloor + \{\sj'\} + & \lfloor (k - a(\sj'))/2 - \{\sj'\} \rfloor \le \nonumber\\
& (k - a(\sj))/2 + (k - a(\sj'))/2 =
k - (a(\sj) + a(\sj'))/2 \le k - a(\sj'),
\end{align}
and the equality is possible only for if $a(\sj) = a(\sj')$ and both $(k-a(\sj))/2 - \{\sj\}$ and $(k-a(\sj))/2 - \{\sj'\}$ are integers.
But for $\sj',\sj < k$ one has $a(\sj) = \lfloor\, \sj \,\rfloor$, so the first condition shows that the integer parts of $\sj$ and $\sj'$ are equal,
while the second shows that the difference $\sj - \sj'$ is integer. This is possible only if
$\sj = \sj'$, which is a contradiction.
Hence, one of the inequalities in \eqref{long-inequality} is strict as we claimed. This finishes the proof of
\eqref{Binn-equality-less-k}.

Now let us check that
$$
\begin{array}{ll}
\SBSi_\sj = 0, & \text{for integer $\sj \ge k$ and}\\
\SBSi_\sj = \emptyset, & \text{for half-integer $\sj \ge k$.}
\end{array}
$$
Indeed, if $\sj$ is half-integer take $\sj' = k - 1/2$. Then
$\bd_{\sj'}^{1-} = \{\sj'\} + \lfloor (k-a(\sj')/2 - \{\sj'\} \rfloor = 1/2 + \lfloor 1/2 - 1/2 \rfloor = 1/2$,
so the inequality defining $\SBSi_\sj \subset \SBi_\sj$ is
$$
\nu_{k+1} + 1/2 < 1.
$$
On the other hand, $\nu_{k+1}$ should be a nonnegative half-integer, so we conclude that $\SBSi_\sj = \emptyset$.
For an integer $\sj \ge k$ we note that already $\SBi_\sj = 0$, so we only have to check that the inequality~\eqref{sbti}
is satisfied for $\nu = 0$. Indeed, if $\sj' < k$ then $a(\sj') \le k-1$, hence
$$
0 + \bd_{\sj'}^{1-} = \{\sj'\} + \lfloor (k-a(\sj'))/2 - \{\sj'\} \rfloor \le
\{\sj'\} + (k-a(\sj'))/2 - \{\sj'\} = (k-a(\sj'))/2 < k - a(\sj').
$$
Further, if $\sj' \ge k$ is a half-integer then as we already know $\SBSi_{\sj'}$ is empty, so $\bd_{\sj'}^{1,-} = -\infty$
and so we do not have a restriction on $\nu$. Finally, if $\sj' \ge k$ is integer then by induction hypothesis we have
$\bd_{\sj'}^{1-} = 0$ while $a(\sj') = k-1$, so the inequality $\nu_{k+1} + \bd_{\sj'}^{1-} < k - a(\sj')$ holds
in this case.

Now let us describe the outer parts of the blocks, $\SBSo_\sj$.
The inequality~\eqref{sbto} gives
$$
\lambda_1 + \sj - \sj' < 2n + 2e - a(\sj') - k - 1.
$$
It can be rewritten as
$$
\lambda_1 < 2n + 2e - \sj - k - 1 + (\sj' - a(\sj')).
$$
Since this inequality should hold for all $\sj' < \sj$, we can replace the last summand by its minimum,
which is equal to $0$. So, we conclude that the defining inequality of $\SBSo_\sj$ is
$\lambda_1 < 2n + 2e - \sj - k - 1$.
Since $\lambda_1$ should be an integer, this is equivalent to $\lambda_1 \le 2n + 2e - \lfloor\sj\rfloor - k - 2$.

Now we can write down the obtained answer. We denote by $\CA_\sj$ the subcategory in $\D(X)$
corresponding to the block $\SBS_\sj = \SBSo_\sj + \sj\xi + \SBSi_\sj$.

\begin{theorem}\label{bdnk}
Let $\BG$ be of type $B$ or $D$. Assume that $k \le n - 1$ for type $B$ and $k \le n-2$ for type $D$.
For each integer $t$, $0\le t\le k-1$, consider the subcategories $\CA_t$ and $\CA_{t+1/2}$
in $\D(X)$ defined by
$$
\renewcommand{\arraystretch}{1.5}\arraycolsep=2pt%
\begin{array}{lll}
\CA_t &=& \left\langle \CE^\lambda \ \left|\
\begin{array}{l}
2n + 2e - k - 2 \ge \lambda_1 \ge \dots \ge \lambda_{t} \ge t = \lambda_{t+1} = \dots = \lambda_k,\\
(k-t)/2 \ge \lambda_{k+1} \ge \dots \ge \lambda_n \ge (2e - 1)\lambda_{n-1},
\end{array}
\quad
\lambda_i \in \ZZ \right\rangle\right.,\\
\CA_{t+1/2} &=& \left\langle \CE^\lambda \ \left|\
\begin{array}{l}
2n + 2e - k - 3/2 \ge \lambda_1 \ge \dots \ge \lambda_{t} \ge t + 1/2 = \lambda_{t+1} = \dots = \lambda_k,\\
(k-t)/2 \ge \lambda_{k+1} \ge \dots \ge \lambda_n \ge (2e - 1)\lambda_{n-1},
\end{array}
\quad
\lambda_i \in 1/2 + \ZZ \right\rangle\right.,
\end{array}
$$
where $e$ is defined by~\eqref{e}. Also, for each integer $t$, $k \le t \le 2n+2e-k-2$, consider
the subcategory
$$
\CA_{t} = \left\langle \CE^\lambda \ \left|\
\begin{array}{l}
2n + 2e - k - 2 \ge \lambda_1 \ge \dots \ge \lambda_{k-1} \ge \lambda_k = t,\\
\lambda_{k+1} = \dots = \lambda_n = 0,
\end{array}
\quad
\lambda_i \in \ZZ \right\rangle\right..
$$
Then the collection of subcategories
$$
\begin{array}{c}
\CA_0,\CA_{1/2},\CA_1,\CA_{3/2},\dots,\CA_{k-1},\CA_{k-1/2},\CA_k,\CA_{k+1},\dots,\CA_{2n+2e-k-2}
\end{array}
$$
is semiorthogonal, and each subcategory is generated by an exceptional collection.
\end{theorem}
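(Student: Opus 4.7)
The plan is to recognize that the categories $\CA_t$ and $\CA_{t+1/2}$ in the statement coincide with the subcategories $\CA_\sj$ (for $\sj = t$ and $\sj = t + 1/2$ respectively) from the semiorthogonal family of Theorem~\ref{aap-ec}, applied to the choices (C1)--(C3) fixed in this section: outer diagram $\Do = \{1,\dots,k-1\}$, chain $1 < 2 < \dots < b = k-1$, and shape weights $\delta_a = -h_a^*\alpha_a$. Once this identification is in place, semiorthogonality is immediate from Theorem~\ref{aap-ec}, and the generating exceptional collection $\{\CE^\lambda\}_{\lambda\in\SBS_\sj}$ in each $\CA_\sj$ is provided by Proposition~\ref{el-fec}, once each block $\SBS_\sj$ has been shown to be exceptional.

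To carry out the identification I would specialize Propositions~\ref{bia-exp}, \ref{wtgt}, \ref{sbsie}, \ref{sbsoe} to the orbit data for $\gamma_a = \alpha_a$ tabulated in this section: the $\BW_{\BH_a}$-orbit of $\alpha_a$ splits into three $\BW_{\BM_a}$-orbits ($t = 0, 1, 2$) with explicit dominant and antidominant representatives, and substituting them yields the inner constraint $2\nu_{k+1} \le k - a(\sj)$ together with a single nontrivial outer constraint on $\lambda_1$ (the $t=0$ condition is automatic from $\sj > \sj'$, and the $t=1$ condition bounds only the inner part). Passing from $\SB_\sj$ to $\SBS_\sj$ via Propositions~\ref{sbsie} and \ref{sbsoe} sharpens these to the strict inequalities $\nu_{k+1} + \bd^{1-}_{\sj'} < k - a(\sj')$ and $\lambda_1 + (\sj - \sj') < 2n + 2e - a(\sj') - k - 1$, and reading them in the appropriate residue class mod $\ZZ$ recovers the bounds stated in the theorem. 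The main technical point is the inductive verification that $\SBSi_\sj = \SBi_\sj$ for $\sj < k$ (so that $\bd^{1-}_{\sj'}$ is given in closed form by $\{\sj'\} + \lfloor (k - a(\sj'))/2 - \{\sj'\}\rfloor$) and that $\SBSi_\sj$ is trivial for integer $\sj \ge k$ and empty for half-integer $\sj \ge k$; both reduce to the numerical inequality
$$
\{\sj\} + \lfloor (k - a(\sj))/2 - \{\sj\}\rfloor + \{\sj'\} + \lfloor (k - a(\sj'))/2 - \{\sj'\}\rfloor < k - a(\sj')
$$
valid for all $\sj' < \sj$, where equality in the case $\sj, \sj' < k$ would force $\sj = \sj'$. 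Minimizing $\sj' - a(\sj')$ over $\sj' < \sj$ (the minimum $0$ being attained already at $\sj' = 0$) then produces the uniform outer bound on $\lambda_1$.

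Finally, exceptionality of each block $\SBS_\sj$ follows from Corollary~\ref{cor-aw}: the invariance hypothesis holds by Proposition~\ref{inv-cond-prop}, while the compatibility hypothesis reduces to closure of $\SBSo_\sj$ under passing to Young subdiagrams. The explicit description above realizes $\SBSo_\sj$ as the set of Young diagrams inscribed in a fixed rectangle of height $a(\sj)$, which is manifestly closed under passing to subdiagrams; combined with Lemma~\ref{vse} (no very special elements in types $B$, $C$, $D$), this verifies the full hypothesis. Hence each $\SBS_\sj$ is exceptional, and the theorem follows.
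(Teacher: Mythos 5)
Your proposal is correct and follows essentially the same route as the paper: specialize Propositions~\ref{bia-exp}, \ref{wtgt}, \ref{sbsie}, \ref{sbsoe} to $\gamma_a=\alpha_a$ and the tabulated orbit data, carry out the induction showing $\SBSi_\sj=\SBi_\sj$ for $\sj<k$ (with the same numerical inequality and the equality analysis forcing $\sj=\sj'$), minimize $\sj'-a(\sj')$ for the outer bound, and invoke Theorem~\ref{aap-ec} for semiorthogonality together with Corollary~\ref{cor-aw}, Lemma~\ref{vse}, and closure under Young subdiagrams for exceptionality of each block. No gaps.
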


\begin{theorem}\label{cnk}
Assume $\BG$ is of type $C$ and $k \le n$. Consider the following subcategories in $\D(X)$
indexed by integers $t=0,\ldots,2n-k$:
$$
\renewcommand{\arraystretch}{1.5}%
\begin{array}{llll}
\CA_t &=& \left\langle \CE^\lambda \ \left|\
\begin{array}{l}
2n - k \ge \lambda_1 \ge \dots \ge \lambda_t \ge t = \lambda_{t+1} = \dots = \lambda_k,\\
\lfloor (k-t)/2 \rfloor \ge \lambda_{k+1} \ge \dots \ge \lambda_n \ge 0,
\end{array}
\right\rangle\right., & \text{for $t \le k-1$}\\
\CA_t &=& \left\langle \CE^\lambda \ \left|\
\begin{array}{l}
2n - k \ge \lambda_1 \ge \dots \ge \lambda_{k-1} \ge \lambda_k = t,\\
\lambda_{k+1} = \dots = \lambda_n = 0,
\end{array}
\right\rangle\right., & \text{for $t \ge k$}.
\end{array}
$$
Then the collection of subcategories
$$
\CA_0,\CA_1,\dots,\CA_{2n-k},
$$
is semiorthogonal, and each subcategory is generated by an exceptional collection.
\end{theorem}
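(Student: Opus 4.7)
The plan is to obtain Theorem \ref{cnk} by specializing the general construction of section \ref{ss-ckb} to type $C_n$ with the isotropic Grassmannian $X = \SGr(k,2n)$, and then matching the resulting blocks $\SBS_\sj$ to the subcategories $\CA_t$ in the statement. For the choices, I take $\Do$ to consist of the vertices $1, 2, \ldots, k-1$ (so that $b = k-1$ and $\BLo = \SL_k$), with the chain (C2) being the obvious one, and for each $a \in \{0, \ldots, k-1\}$ I pick the core shape $\delta_a = -h_a^* \alpha_a$ where $\alpha_a = \eps_a - \eps_{a+1}$ is the simple root of $\BG$ adjacent to $D_{\BH_a}$. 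Since $(\theta, P_\BL) = \ZZ$ in type $C$ and the index equals $r = 2n - k + 1$, the indexing set becomes $\SJ = \{0, 1, \ldots, 2n - k\}$ with $a(\sj) = \min(\sj, k-1)$, which matches the range of $t$ in the theorem.

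With these choices fixed, the description of the big and small blocks is the content of the explicit computation carried out in section \ref{ss-ex-1}. Plugging $e = 1$ and integer $\sj$ into the table of $\BW_{\BM_a}$-orbits of $\alpha_a$ and applying Propositions \ref{bia-exp} and \ref{wtgt} gives
$$
\SBi_\sj = \{(\nu_{k+1}, \ldots, \nu_n) \in P_\BLi^+ \mid 2\nu_{k+1} \le k - a(\sj)\},
$$
while $\SBo_\sj$ is the set of Young diagrams inscribed in the rectangle $a(\sj) \times (2n + \sj - a(\sj) - k - 1)$. Passing to $\SBS_\sj$ via Propositions \ref{sbsie} and \ref{sbsoe} and inducting on $\sj$ shows that $\SBSi_\sj = \SBi_\sj$ for $\sj < k$ and $\SBSi_\sj = \{0\}$ for $\sj \ge k$; the decisive input is the strict inequality $\lfloor (k-\sj)/2 \rfloor + \lfloor (k-\sj')/2 \rfloor < k - \sj'$ for distinct integer $\sj, \sj' < k$, which is precisely the specialization of the inequality on fractional parts from section \ref{ss-ex-1}. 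Meanwhile the outer constraint, after minimizing over $\sj' < \sj$, reduces to $\lambda_1^{(0)} \le 2n - \sj - k$ on the outer component, so that the full weight $\lambda = \lambda^{(0)} + \sj\xi + \ji_*\nu$ satisfies $\lambda_1 \le 2n - k$. Translating this into coordinates yields exactly the two families of descriptions in the statement.

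For the conclusion, exceptionality of each block follows from Corollary \ref{cor-aw}: by Lemma \ref{vse} there are no very special elements in type $C$, and $\SBSo_\sj$ is manifestly a set of Young diagrams inscribed in a rectangle, hence closed under passing to Young subdiagrams, so both hypotheses of Theorem \ref{th-aw} hold. Semiorthogonality of $\{\CA_\sj\}_{\sj \in \SJ}$ is immediate from Theorem \ref{aap-ec}. The only nontrivial step is the inductive unwinding of the recursive definitions of $\SBSi_\sj$ and $\SBSo_\sj$, but this reduces to the strict integer inequality noted above and is ultimately a bookkeeping exercise.
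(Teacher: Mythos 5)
Your proposal is correct and follows exactly the paper's route: the same choices of $\Do$, $\gamma_a=\alpha_a$ and core shapes, the same specialization of Propositions \ref{bia-exp}, \ref{wtgt}, \ref{sbsie}, \ref{sbsoe} with $e=1$, the same strict integer inequality to show $\SBSi_\sj=\SBi_\sj$ for $\sj<k$, and the same appeal to Lemma \ref{vse}, Corollary \ref{cor-aw} and Theorem \ref{aap-ec} to conclude. The only blemish is an arithmetic slip in your description of the big-block rectangle (the width should be $2n+2e-a(\sj)-k-1=2n+1-a(\sj)-k$, not $2n+\sj-a(\sj)-k-1$), but this does not affect the argument since only closure under Young subdiagrams is used and your final bound $\lambda_1\le 2n-k$ for the small blocks is right.
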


\subsection{Orthogonal maximal isotropic Grassmannians}\label{ss-ex-2}

Note that if $\BG$ is of type $D$ and $k = n-1$ or $k = n$ then the Grassmannian $\BG/\BP$ is
isomorphic to the Grassmannian of type $B_{n-1}$ with $k = n-1$.
Thus, the only remaining case with $\BG$ classical
is when $\BG$ is of type $B_n$ and $k = n$, which we will now consider.
Note that in this case
$$
X = \BG/\BP = \OGr(n,2n+1).
$$
As before we take $\Do$ to be the component containing vertices from $1$ to $n-1$,
and thus $\Di = \emptyset$. Further, $\beta = \eps_n$, so $\bar\beta_a = \eps_{a+1}$
and
$$
r_a = (\rho,\beta + \bar\beta_a)/(\xi,\beta) = 2n - 2a.
$$
Hence, when $a$ increases by 1, the index decreases by 2.
In particular,
$$
r = r_0 = 2n.
$$
The weight $\theta$ defined by~\eqref{theta} is
$$
\theta = (0,0,\dots,0,2),
$$
hence $(\theta,P_\BL) = \ZZ$ and
$$
\SJ = \ZZ \cap [0,2n-1].
$$
Applying~\eqref{aj} we deduce that
$$
a(\sj) = \lfloor\, \sj/2\rfloor.
$$

As before we take $\gamma_a = \alpha_a = \eps_a - \eps_{a+1}$.
Note that $\BW_{\BH_a}$ acts by permutations of the last $n-a$ coordinates and by
changes of signs of the coordinates,
while $\BW_{\BM_a}$ acts just by permutations.
Thus, the $\BW_{\BH_a}$-orbit of $\gamma_a$ consists of all vectors $\eps_a \pm \eps_i$, $a+1 \le i \le n$,
and it splits into two $\BW_{\BM_a}$-orbits:
$$
\{ \eps_a - \eps_i \}_{a + 1 \le i \le n}
\qquad\text{and}\qquad
\{ \eps_a + \eps_i \}_{a + 1 \le i \le n}.
$$
Thus, using the notation of section~\ref{ss-ed} we have $m = 1$ and
$$
\begin{array}{|c|c|c|c|c|c|c|c|c|c|}
\hline
t & \gamma_{t-} & (\rho_\BH,\gamma_{t-} - \gamma) & \gamma_{t+} & h_a^* \gamma_{t+} & \hgamma_t & (h_a^*\xi,h_a^*\gamma_{t+}) \\
\hline
0 & \eps_a - \eps_{a+1} & 0 & \eps_a - \eps_n & -\eps_n & -\eps_n & -1/2 \\
\hline
1 & \eps_a + \eps_n & n-a & \eps_a + \eps_{a+1} & \eps_{a+1} & \eps_1 & 1/2 \\
\hline
\end{array}
$$

Since $P_\BLi = 0$ and $a(\sj) < k = n$ for all $\sj \in \SJ$, we have
$$
\SBi_\sj = 0\qquad\text{for all $\sj \in \SJ$.}
$$
In particular, $d_\sj^{t,\pm} = 0$ and thus
$$
\SBo_\sj = \{ n-a(\sj) \ge \lambda_1 \ge \dots \ge \lambda_{a(\sj)} \ge 0\}.
$$
Note that this is the set of Young diagrams inscribed into the rectangle $a(\sj) \times (n - a(\sj))$,
hence it is closed under taking subdiagrams. Thus, the second condition of Theorem~\ref{th-aw} is satisfied.
Since there are no very special elements by Lemma~\ref{vse}, the first condition is satisfied as well, so
Theorem applies, and we conclude that the block
$$
\SB_\sj = \SBo_\sj + \sj\xi = \left\{ (\lambda_1,\dots,\lambda_n)\ \left|\
\begin{array}{l}
n + \sj/2 - a(\sj) \ge \lambda_1 \ge \dots \ge \lambda_{a(\sj)} \ge \sj/2 = \lambda_{a(\sj)+1} = \dots = \lambda_n,\\
\lambda_i \equiv \sj/2 \pmod \ZZ
\end{array}
\right\}\right.
$$
is exceptional.

On the other hand, the condition~\eqref{sbto} gives
$\lambda_1 + (\sj - \sj')/2 < n - a(\sj') = n - \lfloor \sj'/2\rfloor$.
It can be rewritten as
$$
\lambda_1 < n - \sj/2 + \{\sj'/2\}.
$$
Since this should be satisfied for all $\sj' < \sj$, we conclude that
$\lambda_1 < n - \sj/2$. On the other hand, $\lambda_1$ should be an integer,
so we obtain $\lambda_1 \le n - 1 - \lfloor \sj/2 \rfloor$.

Now we can write down the obtained answer.
Recall that $\CA_\sj$ is the subcategory of $\D(X)$
corresponding to the block $\SBS_\sj = \SBSo_\sj + \sj\xi + \SBSi_\sj$.

\begin{theorem}\label{bnn}
Assume $\BG$ is of type $B_n$ and $k = n$.
Consider the following subcategories in $\D(X)$ (where $t$ is a nonnegative integer):
$$
\renewcommand{\arraystretch}{1.5}%
\begin{array}{lll}
\CA_{2t} &=& \langle \CE^\lambda \ |\
n - 1 \ge \lambda_1 \ge \dots \ge \lambda_t \ge t = \lambda_{t+1} = \dots = \lambda_n,
\quad
\lambda_i \in \ZZ \rangle,\\
\CA_{2t+1} &=& \langle \CE^\lambda \ |\
n - 1/2 \ge \lambda_1 \ge \dots \ge \lambda_t \ge t + 1/2 = \lambda_{t+1} = \dots = \lambda_n,
\quad
\lambda_i \in 1/2 + \ZZ \rangle.
\end{array}
$$
Then the collection of subcategories
$$
\CA_0,\CA_1,\dots,\CA_{2n-1},
$$
is semiorthogonal, and each subcategory is generated by an exceptional collection.
\end{theorem}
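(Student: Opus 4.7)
The plan is to deduce Theorem~\ref{bnn} directly from the general machinery of the paper, using the explicit computations already carried out in section~\ref{ss-ex-2} to identify the blocks and check the remaining hypothesis.

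First, I would check that the subcategories $\CA_{2t}$ and $\CA_{2t+1}$ displayed in the statement coincide, respectively, with $\CA_\sj$ for $\sj = 2t$ and $\sj = 2t+1$, where $\CA_\sj = \langle \CU^\lambda\rangle_{\lambda\in \SBS_\sj}$ from subsection~\ref{ss-ec}. In the case at hand ($B_n$, $k=n$), section~\ref{ss-ex-2} already established $\SBi_\sj = 0$ and $a(\sj) = \lfloor \sj/2\rfloor$, and identified $\SBo_\sj$ with the rectangle of Young diagrams $\{n-a(\sj)\ge\lambda_1\ge\dots\ge\lambda_{a(\sj)}\ge 0\}$. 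The recursive inequality~\eqref{sbto} was then reduced to $\lambda_1 \le n-1-\lfloor\sj/2\rfloor$, and writing $\SBS_\sj = \SBSo_\sj + (\sj/2)\xi$ (since the inner part is trivial) reproduces exactly the sets displayed in the theorem. So the matching is purely bookkeeping.

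Next, semiorthogonality of the ordered collection $\CA_0,\CA_1,\dots,\CA_{2n-1}$ is immediate from Theorem~\ref{aap-ec} applied to the set $\SJ = \ZZ\cap[0,2n-1]$ computed in section~\ref{ss-ex-2}. So nothing needs to be done there beyond invoking the general statement.

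The main point is to show that each block $\SBS_\sj$ is exceptional, since then Proposition~\ref{el-fec} provides the desired full exceptional collection $\{\CE^\lambda\}_{\lambda\in\SBS_\sj}$ generating $\CA_\sj$. By Corollary~\ref{cor-aw} it suffices to verify that $\SBSo_\sj$ is closed under passing to Young subdiagrams. This is the place where one uses the explicit form: $\SBSo_\sj$ is exactly the set of Young diagrams inscribed in the rectangle of size $a(\sj)\times(n-1-\lfloor\sj/2\rfloor)$, and removing boxes from such a diagram clearly produces another diagram in the same rectangle. Hence the hypothesis of Corollary~\ref{cor-aw} holds, giving exceptionality of $\SBS_\sj$.

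I expect no serious obstacle: all the representation-theoretic content (the invariance and compatibility conditions of Proposition~\ref{crit-eb}, the absence of very special elements in type $BCD$ via Lemma~\ref{vse}, and the general semiorthogonality of the $\CA_\sj$) has already been packaged into Theorem~\ref{aap-ec} and Corollary~\ref{cor-aw}. The only step specific to this case is the Young-subdiagram closure, which is transparent because $\SBSo_\sj$ is literally a rectangle. The mildly delicate check, should one wish to be thorough, is to confirm that the explicit description of $\SBSo_\sj$ obtained from~\eqref{sbto} really does cut out the rectangle bound $\lambda_1\le n-1-\lfloor\sj/2\rfloor$ uniformly for all $\sj'<\sj$; this was done in section~\ref{ss-ex-2} by observing that the worst-case value of $\{\sj'/2\}$ forces the integer ceiling, and no further constraints appear because $\SBSi_{\sj'} = 0$.
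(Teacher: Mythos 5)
Your proposal is correct and follows the paper's own route: the proof of Theorem~\ref{bnn} is exactly the computation of section~\ref{ss-ex-2} (identifying $\SJ$, $a(\sj)=\lfloor\sj/2\rfloor$, the trivial inner parts $\SBSi_\sj=0$, and the rectangular outer parts cut out by $\lambda_1\le n-1-\lfloor\sj/2\rfloor$), combined with Theorem~\ref{aap-ec} for semiorthogonality and Corollary~\ref{cor-aw} for blockwise exceptionality via closure under Young subdiagrams. One notational slip: the shift is $\sj\xi$, not $(\sj/2)\xi$; since $\xi=\omega_n=(1/2,\dots,1/2)$ this is what produces the coordinate shift by $\sj/2$ appearing in the theorem.
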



\subsection{Purity for maximal isotropic Grassmannians}\label{lag-purity-sec}

Recall that for an exceptional block $\SB$ the exceptional collection
$(\CE^{\lambda})_{\lambda\in\SB}$ is strong if and only if it
consists of vector bundles (see Proposition~\ref{E-vec-bun-prop}).
Using the explicit form of the blocks we can check
that this is true in the case of maximal isotropic Grassmannians (symplectic or orthogonal).

\begin{theorem} The exceptional collections of Theorem~\ref{cnk} for $k=n$ and of
Theorem~\ref{bnn} consist of vector bundles.
\end{theorem}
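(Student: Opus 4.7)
The plan is to apply Proposition~\ref{E-vec-bun-prop} and reduce, block by block, to verifying one of its equivalent conditions for each $\SBS_\sj$ occurring in Theorems~\ref{cnk} with $k=n$ and~\ref{bnn}. The natural route is via Proposition~\ref{path-closed-prop}: for each such block I will verify that the subquiver $\CQ_{\SBS_\sj}\subset\CQ$ contains every directed path in $\CQ$ whose endpoints lie in $\SBS_\sj$.

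For both Grassmannians we have $\Di=\emptyset$, so the Levi is $\BL=\GL_n$ and the arrows of $\CQ$ are described by the Pieri-type decomposition of $V_\BL^\lambda\otimes V_\BL^{-\beta}$. In the Lagrangian case ($\SP_{2n}$, $k=n$), $-\beta=-2\eps_n$ and by Littlewood--Richardson the arrows $\lambda\to\mu$ correspond to subtracting a two-box vertical strip from the Young diagram of $\lambda$ (at most one box per row); in the orthogonal case ($\Spin_{2n+1}$, $k=n$), $-\beta=-\eps_n$ and arrows remove a single box. After subtracting the ``floor'' $\sj\xi$ (respectively its half-integer analogue), each block $\SBS_\sj$ in Theorems~\ref{cnk} and~\ref{bnn} becomes the set of Young diagrams inscribed in an explicit rectangle.

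The central combinatorial observation is that an arrow stays inside $\SBS_\sj$ precisely when it removes boxes only from the ``top'' rows strictly above the floor, since the last $n-\lfloor\sj\rfloor$ coordinates of every weight in $\SBS_\sj$ are pinned to the floor value and any arrow that decreases them necessarily lowers $(\theta,\cdot)$ and moves into a different block. Consequently, an intermediate weight $\lambda''$ on any path from $\lambda\in\SBS_\sj$ to $\mu\in\SBS_\sj$ is obtained from $\lambda$ by removing some boxes from its top rows, hence remains inscribed in the same rectangle and keeps the floor row intact; monotonicity of coordinate-decreasing arrows rules out any ``exit and re-enter'' scenario, since once the floor row drops it cannot rise again.

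The main obstacle is to confirm that the intermediate weights really satisfy \emph{all} the (strict) inequalities in the recursive definition of $\SBSo_\sj$ and $\SBSi_\sj$; the explicit rectangular description of the blocks available in the cases of Theorems~\ref{cnk} ($k=n$) and~\ref{bnn} collapses the recursion, so this reduces to the elementary fact that Young subdiagrams of a diagram in a given rectangle still lie in that rectangle. Once path-closedness is established, Proposition~\ref{path-closed-prop} gives condition~(3) of Proposition~\ref{E-vec-bun-prop}, from which it follows that each $\CE^\lambda$ is the projective cover of $\CU^\lambda$ in $\CC_{\SBS_\sj}$ and, in particular, a vector bundle.
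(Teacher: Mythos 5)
Your proposal is correct and follows essentially the same route as the paper: reduce to path-closedness of the subquiver $\CQ_\SB$ via Proposition~\ref{path-closed-prop}, note that by the Littlewood--Richardson rule every arrow of $\CQ$ passes to a Young subdiagram, and conclude by sandwiching the intermediate vertices of a path between two elements of an explicitly rectangular block (which pins the ``floor'' coordinates and keeps the top rows in the rectangle). The only slip is cosmetic: for type $C$ the arrow removes a horizontal, not vertical, two-box strip, which does not affect the argument.
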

\begin{proof} By Proposition~\ref{path-closed-prop},
it is enough to check that for each of the blocks $\SB$ appearing in the collection the subquiver
$\CQ_\SB\subset \CQ$ contains entirely any path that starts and ends in $\CQ_\SB$.

First, let us consider the case when $\BG$ is of type $C_n$ and $k=n$ (so $\BG/\BP$ is
the Lagrangian Grassmannian $\SGr(n,2n)$). In this case $\BL=\GL_n$, so the quiver
$\CQ$ has vertices numbered by dominant weights of $\GL_n$ and there is an arrow
$\lambda\to\mu$ if and only if
$$\Hom_{\GL_n}(V^\mu,V^\lambda\otimes (V^{2\omega_1})^\vee)=
\Hom_{\GL_n}(V^\mu\otimes V^{2\omega_1}, V^\lambda)\neq 0.$$
Thus, if $\mu$ corresponds to a Young diagram then so does $\lambda$ and
$\mu$ is contained in $\lambda$ as a subdiagram. Since all the blocks
consist of Young diagrams and are closed under passing to subdiagrams, this implies
that they satisfy our condition on paths.

In the case when $\BG$ is of type $B_n$ and $k=n$ the Levi group $\BL$ is a twofold covering of $\GL_n$.
If $\sj$ is integer then all $\lambda$ and $\mu$ from this block are restricted from $\GL_n$
and the arrow $\lambda\to\mu$ in $\CQ$ exists if and only if
$$
\Hom_{\GL_n}(V^\mu\otimes V^{\omega_1}, V^\lambda)\neq 0,
$$
so the above argument shows that the block $\SB_\sj$ satisfies the condition on paths.
If $\sj$ is half-integer then $\SB_\sj = \SB_{\sj - 1/2} + \xi$, and since
the twist by $\xi$ is an autoequivalence, we conclude that the block $\SB_\sj$
satisfies the condition on paths as well.
\end{proof}


\begin{example}\label{eee}
Assume that $\BG$ is of type $C_4$ and $k = 3$, i.e. $X = \BG/\BP = \SGr(3,8)$,
and take the block
$$
\SB_1 = \{ 5 \ge \lambda_1 \ge 1 = \lambda_2 = \lambda_3,\ 1 \ge \lambda_4 \ge 0 \}.
$$
Note also that $\BL = \GL_3\times\SL_2$ and $V_\BL^{-\beta} = V_\BL^{0,0,-1;1}$.
In particular, we have a path 
$$
(3,1,1;1) \to (2,1,1;2) \to (1,1,1;1)
$$
in the quiver $\CQ$
that starts and ends in the block $\SB_1$, while its second vertex is not in the block.
So, the assumption of Proposition~\ref{path-closed-prop} does not hold. On the other hand,
the assumption of Proposition~\ref{Koszul-alg-prop}(i) is not satisfied as well. Indeed,
if $\lambda = (4,1,1;0)$ and $\mu = (1,1,1;1)$ and $v = s_3s_4 \in \SR_\BG^\BL$
then $v\rho - \rho = (0,0,-3;1)$ hence $V_\BL^\mu \subset V_\BL^\lambda \otimes V_\BL^{v\rho - \rho}$,
so by Proposition~\ref{extuu-g}(ii) we have $\Ext^2(V_\BL^\lambda,V_\BL^\mu) \ne 0$.
On the other hand, $\xi = (1,1,1,0)$, so $(\xi,\lambda) - (\xi,\mu) = 6 - 3 = 3$.
So, in the algebra $A_{\SB_1}$ its bigrading is $(2,3)$, while the first (in the cohomological grading)
component of the algebra has bigrading $(1,1)$ by Lemma~\ref{uugord}. Thus, the algebra cannot
be one-generated, and in particular, it is not Koszul.
%
%
%
%

On the other hand, one can check that the objects $\CE^\lambda$ with $\lambda \in \SB_1$ are still vector bundles. 
To illustrate what goes on let us consider the case $\lambda = (4,1,1;0)$. By definition, $\CE^{(4,1,1;0)}$ is the right
mutation of $\CU^{(4,1,1;0)}$ through the subcategory generated by $\CU^\mu$ with smaller $\mu$. This mutation
is a composition of several simple mutations. The first simple mutation is the right mutation through $\CU^{(3,1,1;1)}$.
It is easy to see that $\Ext^\bullet(\CU^{(4,1,1;0)},\CU^{(3,1,1;1)}) = \kk[-1]$, i.e.\ $\Ext^1$ is one-dimensional
and $\Ext^i = 0$ for $i \ne 1$. This means that the result of the first mutation $R_1$ fits into an exact sequence
\begin{equation*}
0 \to \CU^{(3,1,1;1)} \to R_1 \to \CU^{(4,1,1;0)} \to 0.
\end{equation*}
The second simple mutation is the right mutation of $R_1$ through $\CU^{(2,1,1;0)}$.
It is easy to see that $\Ext^\bullet(\CU^{(4,1,1;0)},\CU^{(2,1,1;0)}) = 0$ and $\Ext^\bullet(\CU^{(3,1,1;1)},\CU^{(2,1,1;0)}) = \kk[-1]$, 
hence $\Ext^\bullet(R_1,\CU^{(2,1,1;0)}) = \kk[-1]$, so the second mutation is again given by the extension
\begin{equation*}
0 \to \CU^{(2,1,1;0)} \to R_2 \to R_1 \to 0,
\end{equation*}
where $R_2$ is the result of the mutation. 
The last simple mutation is the right mutation of $R_2$ through $\CU^{(1,1,1;1)}$.
It is easy to see that $\Ext^\bullet(\CU^{(4,1,1;0)},\CU^{(1,1,1;1)}) = \kk[-2]$,
$\Ext^\bullet(\CU^{(3,1,1;1)},\CU^{(1,1,1;1)}) = 0$,  and 
$\Ext^\bullet(\CU^{(2,1,1;0)},\CU^{(1,1,1;1)}) = \kk[-1]$.
It follows that there is an exact sequence
\begin{multline}\label{4-term-ex-seq}
\qquad
0 \to \Ext^1(R_2,\CU^{(1,1,1;1)}) \to \Ext^1(\CU^{(2,1,1;1)},\CU^{(1,1,1;1)}) \\
\to \Ext^2(\CU^{(4,1,1;0)},\CU^{(1,1,1;1)}) \to \Ext^2(R_2,\CU^{(1,1,1;1)}) \to 0,
\qquad
\end{multline}
and that all other $\Ext$ spaces from $R_2$ to $\CU^{(1,1,1;1)}$ vanish.
The map in the middle is a map $\kk \to \kk$, and a direct computation shows that it is an isomorphism.
Thus, $\Ext^\bullet(R_2,\CU^{(1,1,1;1)}) = 0$, so the last mutation changes nothing and
$\CE^{(4,1,1;0)} = R_2$ has a filtration of length $3$ with factors being $\CU^{(2,1,1;0)}$, $\CU^{(3,1,1;1)}$, and $\CU^{(4,1,1;0)}$.
In particular, it is a vector bundle.
\end{example}

It is clear from the above argument that the key point is the surjectivity of the middle morphism 
in the 4-term exact sequence \eqref{4-term-ex-seq}. 
In fact, it is equivalent to the surjectivity of the Massey triple product
\begin{equation*}
\Ext^1(\CU^{(4,1,1;0)},\CU^{(3,1,1;1)}) \otimes
\Ext^1(\CU^{(3,1,1;1)},\CU^{(2,1,1;0)}) \otimes
\Ext^1(\CU^{(2,1,1;0)},\CU^{(1,1,1;1)}) 
\to \Ext^2(\CU^{(4,1,1;0)},\CU^{(1,1,1;1)}).
\end{equation*}
Since the Massey products are induced by the higher products in the natural $A_\infty$-structure of the algebra~$A_{\SB_1}$,
this surjectivity can be reinterpreted as the fact that the algebra $A_{\SB_1}$ is one-generated as an $A_\infty$-algebra.
This leads to the following Conjecture.

\begin{conjecture}\label{ainfty}
The algebra $A_\SB$ is one-generated as an $A_\infty$ algebra. Its Koszul dual is a usual algebra.
\end{conjecture}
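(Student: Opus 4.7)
The plan is to interpret $A_\SB = \bigoplus_{\lambda,\mu\in\SB} \Ext^\bullet_\BG(\CU^\lambda,\CU^\mu)$ as the minimal $A_\infty$-model of the derived $\Ext$-algebra of $\bigoplus_{\lambda\in\SB} \CU^\lambda$ in $\D^\BG(X)$, and to exploit the quiver-with-relations description of $\CC_\SB$ used in Proposition~\ref{path-closed-prop}. Recall that by Lemma~\ref{uugord} the degree-$1$ part of $A_\SB$ consists of the arrows of $\CQ_\SB$ (restricted from $\CQ$), and if an element lies in $\Ext^n_\BG(\CU^\lambda,\CU^\mu)$ then $(\xi,\lambda)-(\xi,\mu)\geq n(\xi,\beta)$. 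The conjecture asserts that, regardless of whether this bigrading condition is an equality (which would correspond to pure Koszulity, as in Proposition~\ref{Koszul-alg-prop}), the discrepancy is always measured by a well-defined higher Massey product of $\Ext^1$-elements.

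My first step would be to show that every element $\varepsilon \in \Ext^n_\BG(\CU^\lambda,\CU^\mu)$ with $n\geq 2$ arises as the $m_n$-product of a chain of arrows $\lambda = \lambda_0 \to \lambda_1 \to \cdots \to \lambda_n = \mu$ in $\CQ$, where intermediate vertices $\lambda_i$ may lie outside $\SB$. Concretely, I would construct a minimal injective resolution of $\CU^\mu$ in $\Coh^\BG(X)$ by successive extensions dictated by $\Ext^1_\BG$; since $\CQ$ itself is Koszul in the cominuscule case and behaves in a controlled way otherwise, such a resolution exists and its terms are parametrised by paths. The elements of $\Ext^\bullet_\BG(\CU^\lambda,\CU^\mu)$ that survive inside $\CC_\SB$ correspond precisely to chains of arrows whose only endpoints lie in $\SB$; pulling this back through Keller's higher multiplication recipe expresses $\varepsilon$ as a Massey product $m_n(a_1,\ldots,a_n)$ of degree-$1$ elements, as demanded in Example~\ref{eee}.

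For the second assertion, I would then compute the Koszul dual $A_\SB^!$ as the quadratic dual algebra (in the sense of \cite{PP}) built from the degree-$1$ generators $\Ext^1_\BG$ modulo the quadratic relations, i.e.\ the image of the multiplication $\Ext^1 \otimes \Ext^1 \to \Ext^2$. To show $A_\SB^!$ has no higher $A_\infty$ operations, I would use the bar-cobar duality: given the $A_\infty$-minimal model $(A_\SB, m_2, m_3,\ldots)$ constructed above, its $A_\infty$-Koszul dual is $A_\SB^!$ equipped with operations dual to $\{m_n\}_{n\geq 2}$, and one has to check that these duals are trivial. I expect this to follow from a direct dimension/grading argument: the combinatorics of the blocks $\SBS_\sj$ forces the higher products $m_n$ on $A_\SB$ to land in a "positive corner" of the bigrading whose dual component in $A_\SB^!$ is zero.

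The main obstacle is clearly step one: systematically producing Massey products realising all higher $\Ext$'s. In Example~\ref{eee} this was done by a geometric argument involving specific Yoneda $1$-extensions between equivariant bundles, but to generalise this requires (i) a uniform construction of the minimal injective resolutions of $\CU^\mu$ in $\Coh^\BG(X)$ (which is essentially a Borel--Bott--Weil style computation over the unipotent radical), and (ii) a guarantee that, when restricted to the block $\SB$, enough of these Massey products remain non-zero to account for the entire $\Ext^n_\BG$. A secondary technical difficulty is that, outside the cominuscule case, $\CQ$ itself need not be Koszul, so one cannot directly bootstrap the $A_\infty$-generation of $A_\SB$ from that of the full $\Ext$-algebra; a more careful relative Koszul duality argument, probably using the ideas of \cite{Posic}, would be required.
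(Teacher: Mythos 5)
This statement is not proved in the paper: it is stated as a conjecture, motivated only by the single computation in Example~\ref{eee}, and it remains open. So there is no proof of record to compare against, and the real question is whether your proposal closes the gap. It does not; it is a programme whose two central steps are exactly the content of the conjecture, asserted rather than established.

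The first and decisive gap is in your step one. You propose to realise every class in $\Ext^n_\BG(\CU^\lambda,\CU^\mu)$, $n\ge 2$, as a Massey product $m_n(a_1,\dots,a_n)$ of arrows of $\CQ$, via a minimal injective resolution of $\CU^\mu$ in $\Coh^\BG(X)$ and the homotopy-transfer formulas. But you never show that the resulting higher products are nonzero, let alone that they span $\Ext^{\ge 2}_\BG$. By Proposition~\ref{extuu-g}(ii) the group $\Ext^i_\BG(\CU^\lambda,\CU^\mu)$ is governed by special representatives $v\in\SR_\BG^\BL$ of length $i$, and the inequality of Lemma~\ref{uugord} can be strict (this is precisely what happens in Example~\ref{eee}); consequently the bigrading gives no a priori reason why the transferred $m_n$ should hit these classes. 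The paper's verification in Example~\ref{eee} is an explicit ad hoc computation of one Massey product on $\SGr(3,8)$; your appeal to "a guarantee that enough of these Massey products remain non-zero" is an acknowledgement of the missing argument, not a substitute for it. Your parenthetical claim that "$\CQ$ itself is Koszul in the cominuscule case" also does not help: the interesting cases (including Example~\ref{eee}) are non-cominuscule, and you concede that there a "more careful relative Koszul duality argument would be required" without supplying it.

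The second assertion is likewise not proved. To say that the Koszul dual of $A_\SB$ "is a usual algebra" one must first fix a definition of the $A_\infty$-Koszul dual (e.g.\ the cohomology of the cobar construction on the minimal model, or the $\Ext$-algebra of the simples over $A_\SB$), then show its higher operations vanish. Your "positive corner of the bigrading" argument is not formulated precisely enough to check: you would need to exhibit the bidegrees in which the duals of $m_n$, $n\ge 3$, could live and show the corresponding components of $A_\SB^!$ vanish, and this again requires control over which pairs $(\kappa,v)$ contribute to the output set of the block --- information that is only available through the explicit combinatorics of sections~\ref{ss-ed} and~\ref{ss-ex}, which your proposal does not invoke. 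As it stands, both halves of the conjecture remain open.
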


This Conjecture implies the purity and strongness of the collections $\CE^\lambda$.

\subsection{Numbers of objects}

It turns out that the collections constructed in sections~\ref{ss-ex-1} and~\ref{ss-ex-2}
contain the maximal possible number of objects.
It is well known that the rank of Grothendieck group of $\BG/\BP$ is equal to the cardinality of 
$\BW_\BG/\BW_\BL$
%
%
(this rank is equal to the rank of the homology group of $X$ due to the Bruhat cell decomposition,
and the homology of $X$ was computed in \cite[Prop.\ 5.2]{BGG}).
In the case of the series $B$, $C$ and $D$ these ranks are given by
$$
r(n,k)={n\choose k}\cdot 2^k,
$$
where in the case of type $D$ we assume that  $k \le n-2$
(as was explained before, for type $D$ we do not need to consider the case $k=n-1$ or $n$).

\nc{\la}{\lambda}

\begin{proposition}\label{nbcd}
The total number of objects in the collections of Theorems~$\ref{bdnk}$, $\ref{cnk}$
and~$\ref{bnn}$ equals the rank of the Grothendieck group of the corresponding Grassmannian.
\end{proposition}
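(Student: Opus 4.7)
The plan is to verify the equality $\sum_{\sj\in\SJ}|\SBS_\sj| = r^{B/C/D}(n,k)$ by direct computation from the explicit block descriptions of Theorems~\ref{bdnk}, \ref{cnk}, and \ref{bnn}, reducing all three types to a single generating-function identity. By the product decomposition \eqref{sbsp}, each nonempty block satisfies $|\SBS_\sj| = |\SBSo_\sj|\cdot|\SBSi_\sj|$, so the total can be assembled from outer and inner contributions read off from the inequalities defining the blocks.

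First I will dispose of the tail $\sj\ge k$. For these values only integer $\sj$ yield nonempty blocks (the half-integer ones are empty by an argument analogous to~\eqref{Binn-equality-less-k}), the inner part degenerates to $\{0\}$, and the outer part counts Young diagrams in a $(k-1)\times(N-\sj)$ rectangle, where $N := 2n+2e-k-2$ is the common upper bound on $\lambda_1$ (so $N=2n-k, 2n-k-1, 2n-k-2$ in types $C, B, D$ respectively). The hockey-stick identity then yields
\[
\sum_{\sj=k}^{N}\binom{N-\sj+k-1}{k-1}=\binom{N}{k}.
\]
For the head $\sj<k$ with $a(\sj)=\lfloor\sj\rfloor=t$, the outer part counts Young diagrams in a $t\times(N-t)$ rectangle, giving $\binom{N}{t}$, while the inner part $\SBSi_\sj$ counts monotone $(n-k)$-tuples in $\ZZ$ or $1/2+\ZZ$ bounded by $(k-t)/2$, with an additional sign ambiguity on $\lambda_n$ in type $D$ coming from the condition $\lambda_n\ge -\lambda_{n-1}$.

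Writing $m:=n-k$ and $s:=k-t$, the total rearranges as
\[
\sum_{\sj\in\SJ}|\SBS_\sj| = [x^k]\,(1+x)^N\cdot G(x),
\]
where $G(x)=\sum_{s\ge 0}c_s\,x^s$ is the generating function for the inner-block cardinalities. A case-by-case computation using the identity $\sum_{j\ge 0}\binom{m+j}{m}x^{2j}=(1-x^2)^{-(m+1)}$ yields $G(x)=(1+x)^\eps/(1-x^2)^{m+1}$ with $\eps=1,2,3$ in types $C, B, D$ respectively (the increment from $1$ to $2$ in type $B$ comes from adding the half-integer contribution; the further increment to $3$ in type $D$ from the sign ambiguity on $\lambda_n$). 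In all three cases one checks the uniform identity $N+\eps=k+2m+1$, so
\[
(1+x)^N\cdot G(x)=\frac{(1+x)^{N+\eps}}{(1-x)^{m+1}(1+x)^{m+1}}=\frac{(1+x)^{k+m}}{(1-x)^{m+1}},
\]
and this coefficient evaluates via the elementary identity $\binom{k+m}{j}\binom{k+m-j}{m}=\binom{k+m}{m}\binom{k}{j}$ (both sides count ordered partitions of $[k+m]$ into subsets of sizes $j$, $k-j$, $m$) to $\binom{k+m}{m}\cdot 2^k=\binom{n}{k}\cdot 2^k$, matching the required rank.

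The spinor case of Theorem~\ref{bnn} (type $B_n$, $k=n$) is more direct: each $\CA_{2t}$ and $\CA_{2t+1}$ contains $\binom{n-1}{t}$ objects, and summing over $t=0,\ldots,n-1$ gives $2\cdot 2^{n-1}=2^n=r^B(n,n)$. The main bookkeeping obstacle will be the type $D$ count, where one must correctly combine the two parity classes of $\sj$ with the two signs of $\lambda_n$ to obtain exactly $\eps=3$; once this is established, the uniform relation $N+\eps=k+2m+1$ makes the generating-function reduction go through identically in all three types.
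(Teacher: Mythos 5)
Your proposal is correct and follows essentially the same route as the paper: compute the block cardinalities from the explicit descriptions, package them into the generating function $(1+x)^N\cdot G(x)$, reduce to the coefficient of $x^k$ in $(1+x)^n/(1-x)^{n-k+1}$, and evaluate by the same Vandermonde-type identity, with the tail $\sj\ge k$ absorbed by the hockey-stick identity and the spinor case handled by the same direct count. Your uniform treatment of the three types via $\eps\in\{1,2,3\}$ and the relation $N+\eps=k+2m+1$ is a mild streamlining of the paper's case-by-case computation (which pins down $\eps=3$ in type $D$ via the auxiliary counts $s_k(n)$ and $t_k(n)$), not a genuinely different proof.
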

\begin{proof}
Let us denote
$$c_k(n)=|\{n\ge\la_1\ge\ldots\ge\la_k\ge 0, \ \la_i\in\ZZ\}|={n+k\choose k}.$$
We will consider the types $B$, $C$ and $D$ separately.

{\bf 1. Type $B_n$, $k\le n-1$}. In this case we have
$$
\begin{array}{llll}
|\SBS_t| &=& c_{t}(2n-k-1-t)c_{n-k}(\lfloor (k-t)/2 \rfloor),\qquad & \text{for integer $0 \le t \le k-1$}\\
|\SBS_{t+1/2}| &=& c_{t}(2n-k-1-t)c_{n-k}(\lfloor (k-t-1)/2 \rfloor),\qquad & \text{for integer $0 \le t \le k-1$}\\
|\SBS_t| &=& c_{k-1}(2n-k-1-t),\qquad & \text{for integer $k \le t \le 2n-k-1$}\\
\end{array}
$$
Hence, the total number of objects in the collection of Theorem~\ref{bdnk} in this case is
$$
N^B(n,k)=
\sum_{t=0}^{k-1}c_{t}(2n-k-1-t)\cdot\left(c_{n-k}(\lfloor (k-t)/2 \rfloor)+c_{n-k}(\lfloor (k-t-1)/2 \rfloor)\right) +
\sum_{t=k}^{2n-k-1}c_{k-1}(2n-k-1-t).
$$
But
$$
\sum_{t=k}^{2n-k-1}c_{k-1}(2n-k-1-t)=
\sum_{i=0}^{2n-2k-1}c_{k-1}(i)=
\sum_{i=0}^{2n-2k-1}{k-1+i\choose k-1}=
{2n-k-1\choose k}=c_k(2n-2k-1).
$$
Thus,
\begin{multline*}
N^B(n,k) = \\
\sum_{t=0}^{k-1}c_{t}(2n-k-1-t)\cdot\left(c_{n-k}(\lfloor (k-t)/2 \rfloor)+c_{n-k}(\lfloor (k-t-1)/2 \rfloor)
\right)+c_k(2n-2k-1)=\\
\sum_{t=0}^{k}{2n-k-1\choose t}\cdot\left(c_{n-k}(\lfloor (k-t)/2 \rfloor)+c_{n-k}(\lfloor (k-t-1)/2 \rfloor)
\right).
\end{multline*}
Hence, $N^B(n,k)$ is the coefficient of $x^k$ in $(1+x)^{2n-k-1}f^B_{n-k}(x)$, where
$$f^B_{n-k}(x)=\sum_{i\ge 0}\left(c_{n-k}(\lfloor i/2 \rfloor)+c_{n-k}(\lfloor (i-1)/2 \rfloor)\right)x^i=
(1+2x+x^2)\cdot \sum_{j\ge 0}c_{n-k}(j)x^{2j}=\frac{(1+x)^2}{(1-x^2)^{n-k+1}}.$$
Therefore, $N^B(n,k)$ is the coefficient of $x^k$ in
$$\frac{(1+x)^{2n-k+1}}{(1-x^2)^{n-k+1}}=\frac{(1+x)^n}{(1-x)^{n-k+1}}=
(1+x)^n\cdot\sum_{i\ge 0}{n-k+i\choose i}x^i.$$
Finally, this gives
$$N^B(n,k)=\sum_{i=0}^k{n\choose k-i}{n-k+i\choose i}=\sum_{i=0}^k\frac{n!}{(k-i)!i!(n-k)!}=
{n\choose k}\cdot \sum_{i=0}^k{k\choose i}={n\choose k}\cdot 2^k.$$

{\bf 1'. Type $B_n$, $k=n$}.
In this case
$$
|\SBS_{2t}|=|\SBS_{2t+1}|=c_t(n-t-1)={n-1\choose t},
$$
and the total number of objects is
$$
N^B(n,n)=2\sum_{t=0}^{n-1}{n-1\choose t}=2\cdot 2^{n-1}=2^n.
$$

{\bf 2. Type $C_n$}.
We have
$$
\begin{array}{llll}
|\SBS_t| &=& c_t(2n-k-t)\cdot c_{n-k}(\lfloor (k-t)/2 \rfloor),\qquad & \text{for integer $0 \le t \le k-1$}\\
|\SBS_t| &=& c_{k-1}(2n-k-t),\qquad & \text{for integer $k \le t \le 2n - k$}
\end{array}
$$
Thus, the total number of objects is
\begin{multline*}
N^C(n,k) =
\sum_{t=0}^{k-1}c_{t}(2n-k-t)\cdot c_{n-k}(\lfloor (k-t)/2 \rfloor)
+\sum_{t=k}^{2n-k}c_{k-1}(2n-k-t)=\\
\sum_{t=0}^{k-1}c_{t}(2n-k-t)\cdot c_{n-k}(\lfloor (k-t)/2 \rfloor)+c_k(2n-2k)=
\sum_{t=0}^{k}c_{t}(2n-k-t)\cdot c_{n-k}(\lfloor (k-t)/2 \rfloor).
\end{multline*}
In other words, $N^C(n,k)$ is the coefficient of $x^k$ in $(1+x)^{2n-k}f^C_{n-k}(x)$, where
$$f^C_{n-k}(x)=\sum_{i\ge 0}c_{n-k}(\lfloor i/2 \rfloor)x^i=
(1+x)\sum_{j\ge 0}c_{n-k}(j)x^{2j}=\frac{(1+x)}{(1-x^2)^{n-k+1}}.$$
Therefore, $N^C(n,k)$ is the coefficient of $x^k$ in
$(1+x)^{2n-k+1}\cdot(1-x^2)^{-(n-k+1)}$, so we get
$$N^C(n,k)=N^B(n,k)={n\choose k}\cdot 2^k.$$

{\bf 3. Type $D_n$, $k\le n-2$}.
First, we observe that
\begin{multline*}
s_k(n):=|\{n\ge\la_1\ge\ldots\ge \la_k\ge -\la_{k-1},\ \la_i\in\ZZ\}|= \\
\sum_{p\ge 0}
|\{n\ge\la_1\ge\ldots\ge \la_{k-1}=p, \la_i\in\ZZ\}|\cdot (2p+1)=
\sum_{p\ge 0}(2p+1)c_{k-2}(n-p),
\end{multline*}
and so
$$\sum_{n\ge 0}s_k(n)x^n=
\left(\sum_{p\ge 0}(2p+1)x^p\right)\cdot\frac{1}{(1-x)^{k-1}}=\frac{1+x}{(1-x)^{k+1}}.$$
Similarly,
\begin{multline*}
t_k(n):=|\{n+1/2\ge\la_1\ge\ldots\ge \la_k\ge -\la_{k-1},\ \la_i\in 1/2+\ZZ\}|=\\
\sum_{p\ge 0}
|\{n+1/2\ge\la_1\ge\ldots\ge \la_{k-1}=p+1/2, \la_i\in 1/2+\ZZ\}|\cdot (2p+2)=
\sum_{p\ge 0}(2p+2)c_{k-2}(n-p),
\end{multline*}
and so
$$\sum_{n\ge 0}t_k(n)x^n=
\left(\sum_{p\ge 0}(2p+2)x^p\right)\cdot\frac{1}{(1-x)^{k-1}}=\frac{2}{(1-x)^{k+1}}.$$
Now
$$
\begin{array}{llll}
|\SBS_t| &=& c_{t}(2n-k-2-t)s_{n-k}(\lfloor (k-t)/2 \rfloor),\qquad & \text{for integer $0 \le t \le k-1$}\\
|\SBS_{t+1/2}| &=& c_{t}(2n-k-2-t)t_{n-k}(\lfloor (k-t-1)/2 \rfloor),\qquad & \text{for integer $0 \le t \le k-1$}\\
|\SBS_{t}| &=& c_{k-1}(2n-k-2-t),\qquad & \text{for integer $k \le t \le 2n-k-2$}
\end{array}
$$
Hence, the total number is
\begin{multline*}
N^D(n,k)= \\
\sum_{t=0}^{k-1}c_{t}(2n-k-2-t)\cdot\left(s_{n-k}(\lfloor (k-t)/2 \rfloor)+t_{n-k}(\lfloor (k-t-1)/2 \rfloor)
\right)
+\sum_{t=k}^{2n-k-2}c_{k-1}(2n-k-2-t)= \\
\sum_{t=0}^{k}c_{t}(2n-k-2-t)\cdot\left(s_{n-k}(\lfloor (k-t)/2 \rfloor)+t_{n-k}(\lfloor (k-t-1)/2 \rfloor)
\right).
\end{multline*}
Thus, $N^D(n,k)$ is the coefficient of $x^k$ in $(1+x)^{2n-k-2}f^D_{n-k}(x)$, where
\begin{multline*}
f^D_{n-k}(x)=
\sum_{i\ge 0}\left(s_{n-k}(\lfloor i/2 \rfloor)+t_{n-k}(\lfloor (i-1)/2 \rfloor)\right)x^i= \\
(1+x)\cdot \sum_{j\ge 0}s_{n-k}(j)x^{2j} +x(1+x)\cdot \sum_{j\ge 0}t_{n-k}(j)x^{2j}= \\
\frac{(1+x)(1+x^2)}{(1-x^2)^{n-k+1}}+
\frac{2(1+x)x}{(1-x^2)^{n-k+1}}=
\frac{(1+x)^3}{(1-x^2)^{n-k+1}}.
\end{multline*}
Therefore, $N^D(n,k)$ is the coefficient of $x^k$ in
$(1+x)^{2n-k+1}(1-x^2)^{-(n-k+1)}$
which gives
$$N^D(n,k)=N^B(n,k)={n\choose k}\cdot 2^k.
$$
This completes the proof.
\end{proof}

\subsection{Proofs}\label{Proofs-sec}

Here we explain how the results of the paper imply the Theorems from the Introduction.

\begin{proof}[Proof of Theorem~\ref{main-th}]
The exceptional collections are constructed in Theorems~\ref{bdnk}, \ref{cnk}, and~\ref{bnn}.
They have equivariant structure by construction. The number of objects equals the rank
of the Grothendieck group by Proposition~\ref{nbcd}.
\end{proof}

\begin{proof}[Proof of Corollary~\ref{dbfib}]
Recall that $Y = \CG\times_\BG(\BG/\BP) = (\CG \times (\BG/\BP)) /\BG$,
with respect to the natural right action of $\BG$ on $\CG$ and the left action on $\BG/\BP$.
By~\cite{El}, Theorem 9.6, the derived category $\D(Y)$ is equivalent to $\D(\CG\times(\BG/\BP))^\BG$,
the category of $\BG$-equivariant objects in $\D(\CG\times(\BG/\BP))$.
Consider the object $\CO_\CG \boxtimes \CE^\lambda \in \D(\CG\times(\BG/\BP))$ with its natural
$\BG$-equivariant structure. By the above observation it gives an object $\CE_Y^\lambda \in \D(Y)$
such that for any point $x \in X$ we have
$$
(\CE_Y^\lambda)_{|p^{-1}(x)} \cong \CE^\lambda.
$$
Thus we can apply Theorem 3.1 from~\cite{Sam2} and conclude that the functors
$$
\Phi^\lambda : \D(X) \to \D(Y),
\qquad
F \mapsto p^*F \otimes \CE_Y^\lambda
$$
are fully faithful and subcategories $\Phi_\lambda(\D(X)) \subset \D(Y)$ are semiorthogonal.
This means that we have a semiorthogonal decomposition
$$
\D^b(Y) = \langle \{ \Phi^\lambda(\D(X)) \}_{\lambda \in \SB}, \CA \rangle,
$$
where $\CA = \cap_{\lambda \in \SB} {}^\perp \Phi^\lambda(\D(X))$. Now if $X$ has an exceptional collection $F_i$
of length $N = \rk K_0(X)$ then the objects $p^*F_i \otimes \CE_Y^\lambda$ form an exceptional collection of length
$N\cdot \#\SB$ in $\D(Y)$, so if $\#\SB = \rk K_0(\BG/\BP)$ then this number equals $\rk K_0(X)\cdot \rk K_0(\BG/\BP) = \rk K_0(Y)$,
so we have an exceptional collection of expected length on $Y$.
\end{proof}

\begin{proof}[Proof of Theorem~\ref{mainthm}]
Part $(i)$ is given by Theorem~\ref{aap-ec}.
Part $(ii)$ follows from Proposition~\ref{crit-eb} combined with Proposition~\ref{inv-cond-prop} and Theorem~\ref{th-aw}.
Part $(iii)$ is a combination of Theorems~\ref{bdnk}, \ref{cnk}, and~\ref{bnn} with Proposition~\ref{nbcd}.
\end{proof}

\begin{proof}[Proof of Theorem~\ref{sp-equiv}]
This is just Proposition~\ref{E-vec-bun-prop}.
\end{proof}

\subsection{Usual Grassmannians}\label{s-typea}

In this section we speculate that our construction might still work
with a certain weakening of the assumption~\eqref{a-ass} (so that $\Do$ is not necessarily
connected).
Namely, we consider the case $X = \Gr(k,n)$, the usual Grassmannian,
and apply formally the procedure of section~\ref{ss-ckb} to the data for which
\eqref{a-ass} does not hold
to construct collections of expected length in $\D^b(X)$.
Of course, our proof of part~(b)
of the criterion of exceptionality (see Proposition \ref{crit-eb})
does not work in this situation, so we do not
have a proof of the exceptionality of this collection. However, we believe
that all these collections are exceptional and full.

Since the result of this section is only conjectural, we skip the intermediate calculations
(which are analogous to those for isotropic Grassmannians) and only state the final answer.


Let $\BG = \SL_n$ and $\BL = (\GL_k\times\GL_l) \cap \SL_n$ ($n = k + l$).
In the framework of the paper we could take $\Do$ to be either of the two connected components of $D_\BG \setminus \beta$.
Let us take instead $\Do$ to be {\em the union} of both, that is $\Do = D_\BG \setminus \beta$.
Of course we violate here the assumption~\eqref{a-ass}.

%

Moreover, we arbitrarily renumber the vertices of $D_\BG$ in such a way that
$D_a=D_\BG \setminus \{1,\dots,a\}$
is always connected and contains $\beta = \alpha_{n-1}$. In other words, to obtain
from $D_\BG$ the chain of Dynkin diagrams $D_a$
we keep chopping off one of the end-points of
the diagram until only $\beta$ is left.

It is clear that such renumberings are in a bijection
with isotopy classes of monotone curves $C$ in a $k \times l$ rectangle on an integer grid
going from the point $(k,l)$ to the point $(0,0)$ and not passing through integer points.
We will describe a conjectural exceptional collection corresponding to an isotopy class of such a curve.

Moreover, in fact we will allow the curve to pass through integer points (this corresponds to
allowing to chop off both end-points simultaneously).

So, assume we are given such a curve $C$. Consider the sequence of points $Q_0,Q_1,\dots,Q_m$
of intersection of $C$ with the edges of the grid squares (some of the points $Q_i$ can lie
at the vertices of the squares) and let $(x_i,y_i)$ be the coordinates of $Q_i$.
Set
$$
a_i = \lfloor x_i \rfloor,\qquad
b_i = \lfloor y_i \rfloor,\qquad
c_i = k - \lceil x_i \rceil,\qquad
d_i = l - \lceil y_i \rceil.
$$
Then consider the blocks
\begin{equation}\label{ba}
\SB_i = \left\{ \begin{array}{l} d_i + i \ge \lambda_1 \ge \dots \ge \lambda_{a_i} \ge i = \lambda_{a_i+1} = \dots = \lambda_k,\\
\lambda_{k+1} = \dots = \lambda_{n-b_i} = 0 \ge \lambda_{n-b_i+1} \ge \dots \ge \lambda_n \ge -c_i \end{array} \right\}
\end{equation}
(in particular, $\SB_0 = \{0\}$).
Note that the total number of weights in those blocks is
$$
\# \left( \SB_0 \sqcup \SB_1 \sqcup \dots \sqcup \SB_m \right) =
\sum_{i=0}^m{a_i + d_i \choose a_i}{b_i + c_i \choose b_i} = {k + l \choose k},
$$
which is the rank of the Grothendieck group of $X = \Gr(k,n)$.
The equality above has a simple combinatorial proof --- the RHS is the number of Young diagrams inscribed in the rectangle,
we divide the set of all such diagrams into subsets numbered by the point of intersection of the border of the diagram
with the curve $C$, the summands in the LHS correspond to the parts of this decomposition.

We have the following

\begin{conjecture}
The blocks $\SB_i$ given by~\eqref{ba} are exceptional and the collection
$\langle \CA_0,\CA_1,\dots,\CA_m \rangle$ with subcategories $\CA_i = \langle \CU^\lambda \rangle_{\lambda \in \SB_i}$
is a semiorthogonal decomposition of $\D^b(\Gr(k,n))$,
each component of which is generated by an exceptional collection.
\end{conjecture}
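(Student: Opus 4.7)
The plan is to proceed in four stages, following the strategy for classical groups but adapting to the disconnected $\Do$.

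First, I would unwrap the block structure. For each $i$, a weight $\lambda \in \SB_i$ splits under $\BL = (\GL_k\times\GL_l)\cap\SL_n$ into a $\GL_k$-part $(\lambda_1,\ldots,\lambda_k)$ with $\lambda_1 \le d_i+i$ and $\lambda_{a_i+1}=\cdots=\lambda_k=i$, and a $\GL_l$-part $(\lambda_{k+1},\ldots,\lambda_n)$ with $\lambda_{k+1}=\cdots=\lambda_{n-b_i}=0$ and $\lambda_n \ge -c_i$. After subtracting the baseline $i$ from the first part and negating the second, each block becomes a product of two sets of Young diagrams: one inscribed in an $a_i\times d_i$ rectangle, one in a $b_i\times c_i$ rectangle. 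Correspondingly, $\CU^\lambda \cong \Sigma^{\alpha}\CU \otimes \Sigma^{\beta}\CQ^\vee \otimes \CO(i-?)$ for appropriate $\alpha,\beta$, where $\CU$ and $\CQ$ are the tautological sub- and quotient bundles. This factorization will be the structural backbone for everything that follows.

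Second, to establish that each $\CA_i$ is generated by an exceptional collection, I would verify the analog of the criterion of Proposition~\ref{crit-eb} for this block. The invariance condition can be checked by the method of Proposition~\ref{inv-cond-prop}: the relevant subgroup $\BH$ plays the role of a two-step intermediate between $\BLi$ and $\BG$, so the key is to show that all output pairs $(\kappa,v)$ have $\kappa \in \Ker h^*$ and $v\in\BW_\BH$, using a core $\BR_{\delta}$ shaped by an appropriate strictly dominant weight. For the compatibility condition, the fact that both rectangles $a_i\times d_i$ and $b_i\times c_i$ are closed under passing to Young subdiagrams should let one run an argument analogous to Theorem~\ref{th-aw}, now applied to the product $\GL_k\times\GL_l$ instead of just $\GL_k$, via the Kostant--Parthasarathy-type lemma from the Appendix (section~\ref{ss-kp}) applied on each factor separately. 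Alternatively, one can hope to reduce to the Kapranov collection on the product $\Gr(a_i, a_i+d_i)\times\Gr(b_i,b_i+c_i)$ after twisting.

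Third, for the semiorthogonality of $(\CA_0,\dots,\CA_m)$, I would adapt the proof of Theorem~\ref{aap-ec}. Given $\lambda \in \SB_j$ and $\lambda' \in \SB_i$ with $i<j$, by Corollary~\ref{extuu} one has to show that for every $\mu$ in the convex hull $\Conv(\lambda'-w\lambda)_{w\in\BW_\BL}$ that is $\BL$-dominant, the weight $\mu+\rho$ is $\BG$-singular. This should follow from the fact that the transition from $\SB_i$ to $\SB_j$ along the curve $C$ is controlled: the difference of the ``outer'' entries $\lambda_1-\lambda'_1$ is bounded above by $d_i+i-(d_j+j)$ plus $j-i$ and similar for the ``inner'' entries, so that $h_i^*(\mu+\rho)$ lies in an appropriate core analogous to $\BR_i^*$. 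The geometry of the curve $C$ — precisely the fact that $(a_i,d_i)$ and $(c_i,b_i)$ are nondecreasing/nonincreasing as $i$ grows — is exactly what should force the relevant singularity.

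Finally, fullness would follow by combining the object count $\sum_i |\SB_i| = \binom{n}{k} = \rk K_0(\Gr(k,n))$ with Remark~\ref{nv-full}, which attributes fullness to the Nonvanishing Conjecture; unconditionally, one would show that the $\CU^\lambda$ for $\lambda \in \bigsqcup \SB_i$ mutate into the Kapranov collection, e.g.\ by running a staircase curve argument and then mutating across adjacent blocks. The hardest part will be step three: the semiorthogonality depends delicately on the combinatorial geometry of the curve $C$, and the jumps between $\SB_i$ and $\SB_{i+1}$ across an integer grid point (where two entries change simultaneously) are the places where the core-based argument of Theorem~\ref{aap-ec} is most fragile, since in this situation there is no single $\BH_i$ that absorbs the change — this is precisely where the restriction (R) was used in the main body of the paper, and working around it is the crux of the conjecture.
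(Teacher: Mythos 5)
The statement you are trying to prove is not proved in the paper: it is stated as a conjecture in section~\ref{s-typea}, and the authors say explicitly that their proof of the compatibility condition (part (b) of Proposition~\ref{crit-eb}) does not work once the assumption~\eqref{a-ass} (restriction (R), connectedness of $\Do$ of type $A$) is dropped. So there is no proof in the paper to compare against, and what you have written is a strategy outline rather than a proof. Your outline correctly locates the difficulty, but it does not resolve it; it defers to "run an argument analogous to Theorem~\ref{th-aw} on each factor separately" at exactly the step the authors identify as broken.

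Concretely, the gap is in your second stage. The proof of Theorem~\ref{th-aw} pulls everything back to $\BLo\times\Gm\times\BLi$ and reduces the compatibility condition to the Key-Proposition for a \emph{single} $\GL_k$. The reduction works because the one central character condition~\eqref{lk} pins down the common value $\lambda_{a+1}=\dots=\lambda_k=\phi(v)$ of the trailing coordinates of the $\GL_k$-weight, so that the block projector factors as $\Pi_{S^\out}\otimes\Pi_{S^\inn}$ with $\Pi_{S^\out}$ of the exact shape treated by Proposition~\ref{GL-lem-gen}. For $\Gr(k,n)$ with $\Do=D_\BG\setminus\beta$ one has $\BLi$ trivial and the outer part is $\GL_k\times\GL_l$ modulo center: there is still only one $\Gm$, but now two independent strings of trailing coordinates (positions $a_i+1,\dots,k$ and $k+1,\dots,n-b_i$) that the block definition~\eqref{ba} forces to take prescribed constant values. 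A single central-character equation cannot determine both, so the block projector does not split as a product of projectors of the type the Key-Proposition handles; the two $\GL$ factors are coupled through the weight $v\rho-\rho$ (the element $v$ mixes the two sides through $\beta$). Applying the Appendix lemma "on each factor separately" is therefore not available, and this coupling is precisely the open content of the conjecture. Your fullness step has the same status: deducing it from the object count via Remark~\ref{nv-full} invokes the Nonvanishing Conjecture, which is itself open, and the paper does not claim fullness even in the BCD cases (Conjecture~\ref{full}).
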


\begin{remark}
One special case is interesting. Assume $l = k$, and take for $C$ the segment of the straight line
from $(k,k)$ to $(0,0)$. Then $m = k$ and $Q_i = (i,i)$ so that $a_i = b_i = i$, $c_i = d_i = k - i$.
The corresponding exceptional collection is invariant with respect to the outer automorphism
of $\Gr(k,2k)$ (passing to orthogonal complement with respect to a nondegenerate bilinear form).
\end{remark}



\section{Appendix. Key technical Proposition}\label{ss-kp}

In this Appendix we prove a certain auxiliary result on $\GL_n$-representations.

For a dominant weight $\lambda=(\lambda_1\ge \ldots\ge \lambda_n)$ of $\GL_n$ we 
denote by $V^\lambda$ the corresponding irreducible $\GL_n$-representation.
We write $\lambda\ge 0$ (and say that $\lambda$ is {\sf nonnegative}) if $\lambda_n\ge 0$. 
Such weights correspond to partitions with at most $n$ parts.
Let ${w}_0$ denote the longest element of the symmetric group $\SSS_n$, i.e.\ the permutation which takes $i$ to $n+1-i$ for all $i$.
 
For an integer $a$, $0\le a\le n$, and an integer $l\ge 0$, let $\Pi^{a}_{-l}$ be the projector on the category
of $\GL_n$-representations which
acts identically on $V^{\la}$, where $\la_{a+1}=\ldots=\la_n=-l$,
and sends all other irreducible representations to zero.
We say that a map of $\GL_n$-representations is a $\Pi^a_{-l}$-isomorphism (resp.\ $\Pi^a_{-l}$-injection)
if applying $\Pi^a_{-l}$ to this map we get an isomorphism (resp.\ injection).

The main result of this Appendix is the following

\begin{proposition}\label{GL-lem-gen}
Fix an integer $a$, $0\le a\le n$.
Let $\kappa$ be a partition with at most $a$ parts, and
let $\tau$ be a partition with at most $n-a$ parts (both viewed as weights of $\GL_n$).
Finally, let $W$ be a representation which is a direct summand of $V^{\otimes N}$,
where $V$ is the standard $n$-dimensional representation of $\GL_n$.
Then the natural map
\begin{equation}\label{st0}
V^{\kappa-{w}_0\tau}\otimes W \to V^{\kappa}\otimes V^{-{w}_0\tau}\otimes W \to
V^{\kappa}\otimes \Pi^a_0(V^{-{w}_0\tau}\otimes W)  
\end{equation}
is a $\Pi^a_0$-isomorphism.
\end{proposition}

The following Corollary of this Proposition is used in section~\ref{ss-proof-cc}.

\begin{corollary}\label{GL-lem-cor} Fix $a$, $0\le a\le n-1$.
Let $\kappa$ be a partition with at most $a$ parts, $\tau$ a partition with at most $n-a$ parts,
and $\mu$ a partition with at most $n$ parts.
Then the natural map
$$
V^{\kappa-{w}_0\tau}\otimes V^\mu \to
V^{\kappa}\otimes V^{-{w}_0\tau}\otimes V^\mu \to
V^{\kappa}\otimes \Pi^{a}_{-l}(V^{-{w}_0\tau}\otimes V^\mu)$$
induces an isomorphism after applying $\Pi^a_{-l}$.
\end{corollary}
\begin{proof}
Denote by $(l)$ the autoequivalence of the category of representations of $\GL_n$ that takes 
a representation with a highest weight $\lambda = (\lambda_1,\dots,\lambda_n)$ 
to the representation with the highest weight $(\lambda_1+l,\dots,\lambda_n+l)$.
In other words, it is the twist by $(\det V)^{\otimes l}$.
Then for $W = V^\mu(l)$ we have
\begin{align*}
& \Pi^a_{-l}(V^{\kappa-w_0\tau}\otimes V^\mu)(l) =
\Pi^a_0(V^{\kappa-w_0\tau}\otimes W),\\
& \Pi^a_{-l}(V^{\kappa} \otimes V^{-w_0\tau}\otimes V^\mu)(l) =
\Pi^a_0(V^{\kappa} \otimes V^{-w_0\tau}\otimes W),\\
& \Pi^a_{-l}(V^{\kappa} \otimes \Pi^a_{-l}(V^{-w_0\tau}\otimes V^\mu))(l) =
\Pi^a_0(V^{\kappa} \otimes \Pi^a_{-l}(V^{-w_0\tau}\otimes V^\mu)(l)) =
\Pi^a_0(V^{\kappa} \otimes \Pi^a_0(V^{-w_0\tau}\otimes W)),
\end{align*}
so applying $\Pi^a_{-l}$ to the map in the Corollary and twisting by $(l)$ we obtain the map~\eqref{st0}
acted upon by~$\Pi^a_0$. The latter is an isomorphism by Proposition~\ref{GL-lem-gen},
hence the former is an isomorphism as well.
%
\end{proof}

We start the proof of Proposition~\ref{GL-lem-gen} with the following numerical observation.

\begin{lemma}\label{tableau-lem}
Under the assumptions of Proposition \ref{GL-lem-gen} one has
\begin{equation*}
\dim \Pi^a_0(V^{\kappa-{w}_0\tau}\otimes W)=\dim 
\Pi^a_0\left(V^{\kappa}\otimes \Pi^a_0(V^{-{w}_0\tau}\otimes W)\right).
\end{equation*}
\end{lemma}
\begin{proof}
It is enough to check that the multiplicities of $V^\mu$,
where $\mu$ is a partition with at most $a$ parts, in $V^{\kappa-{w}_0\tau}\otimes W$
and in $V^{\kappa}\otimes \Pi^a_0(V^{-{w}_0\tau}\otimes W)$
are equal.
To this end we replace $W$ with any of its irreducible summand of the form $V^{\lambda}$,
where $\lambda\ge 0$, and apply the Littlewood--Richardson rule.
The dimension of the space $\Hom(V^{\mu},V^{\kappa-w_0\tau}\ot V^{\lambda})$ is given by the number
of semistandard skew tableaux $S$ of shape $(\mu)\setminus(\kappa-w_0\tau)$
with the content of weight $\lambda$, satisfying the lattice permutation condition.
Every such skew tableau contains a skew subtableau $S'$ of shape $\mu\setminus\kappa$
that still satisfies the lattice permutation condition. Let $\nu$ be the weight of the content of $S'$.
Then to give $S$ is the same as to give $\nu\subset\lambda$ together with a pair:

\noindent
(i) a semistandard skew tableau of shape $\mu\setminus\kappa$ with content of weight $\nu$,

\noindent
(ii) a semistandard skew tableau of shape $\nu\setminus(-w_0\tau)$ with content $\lambda$.

\noindent
Let $N_1$ (resp., $N_2$) be the number of choices in (i) (resp., in (ii)).
We have
$$N_1=\dim\Hom(V^{\mu},V^{\kappa}\ot V^{\nu}).$$
On the other hand,
$$N_2=\dim\Hom(V^{\nu},V^{-w_0\tau}\ot V^{\lambda}).$$
Thus, the above argument gives the equality
\begin{equation}\label{dimension-decomposition}
\dim \Hom(V^{\mu},V^{\kappa-w_0\tau}\ot V^{\lambda})=
\sum_{\nu\ge 0, \nu\subset\mu,\nu\subset\lambda}
\dim(\Hom(V^{\mu},V^{\kappa}\ot V^{\nu}))\cdot \dim(\Hom(V^{\nu},V^{-w_0\tau}\ot V^{\lambda})).
\end{equation}
Note that the condition $\nu\subset\mu$ here is automatic since otherwise
$\Hom(V^{\mu},V^{\kappa}\ot V^{\nu})$ is zero.
On the other hand, we have a decomposition
\begin{equation}\label{kappa-tau-lambda-tensor-decomposition}
\Hom(V^{\mu},V^{\kappa}\ot\Pi^a_0(V^{-w_0\tau}\ot V^{\lambda}))=
\bigoplus_{\nu\ge 0, \nu\subset\mu, \nu\subset\lambda}
\Hom(V^{\mu},V^{\kappa}\ot V^{\nu})\ot \Hom(V^{\nu},V^{-w_0\tau}\ot V^{\lambda}).
\end{equation}
Indeed, the summation over $\nu\ge 0$ in the right-hand side appears from decomposing
$\Pi^a_0(V^{-w_0\tau}\ot V^{\lambda})$ into irreducibles. The condition $\nu\subset\mu$
can be added for the same reason as before, and the condition $\nu\subset\lambda$ is added
because otherwise $\Hom(V^{\nu}, V^{-w_0\tau}\ot V^{\lambda})\neq 0$ vanishes.
According to the definition of $\Pi^a_0$ we also have to requre $\nu$ to have at most $a$ parts,
but this follows from the inclusion $\nu\subset\mu$. Comparing the dimensions in
\eqref{kappa-tau-lambda-tensor-decomposition} with \eqref{dimension-decomposition}, we get
the required equality.
\end{proof}


The above Lemma reduces the proof of Proposition \ref{GL-lem-gen}
to showing that the map \eqref{st0} is $\Pi^a_0$-injective.
We will deduce this injectivity from a more general Proposition~\ref{glprop} below.
To state it we need more notation.

Let us define the {\sf depth} of a dominant weight $\lambda=(\lambda_1\ge \ldots\ge \lambda_n)$ of $\GL_n$ 
as the sum of absolute values of all its negative entires. In other words, we take $1 \le i \le n$ such that
$\lambda_i \ge 0 \ge \lambda_{i+1}$, and set
\begin{equation*}
\depth(\lambda) = - \lambda_{i+1} - \dots - \lambda_n.
\end{equation*}
Note that the depth is always nonnegative, and it is zero if and only if $\lambda\ge 0$.

Let $\Pi_d$ be the the projector on the category of representations of $\GL_n$ which acts identically
on all $V^\lambda$ with $\depth(\lambda) = d$, and sends all other irreducible representations to zero. 
Also, set $\Pi_{\ge d_0} := \sum_{d \ge d_0} \Pi_d$.

Consider the $\GL_n$-representations
\begin{equation*}
V_p := V^{\otimes p}
\qquad\text{and}\qquad
V_{p,q} := V^{\otimes p} \otimes (V^*)^{\otimes q}.
\end{equation*}
We will derive the $\Pi^a_0$-injectivity of \eqref{st0} from the following result.

\begin{proposition}\label{glprop}
Fix integers $k,t,N \ge 0$. The natural map
\begin{equation}\label{mainmap}
\Pi_t(V_{k,t}) \otimes V_{N} \to V_{k+N,t} \to V_{k} \otimes \Pi_0(V_{N,t})
\end{equation}
is $\Pi_0$-injective, i.e.\ it becomes injective after applying $\Pi_0$.
\end{proposition}

To prove Proposition~\ref{glprop} we will use some simple facts about the partial contraction maps between
the $\GL_n$-representations $V_{p,q}$.
First, let us consider the partial trace map $\Tr_{i,j}:V_{p,q} \to V_{p-1,q-1}$ given by
\begin{equation}\label{trij}
\Tr_{i,j}((v_1\otimes \dots \otimes v_p) \otimes (f_1\otimes \dots \otimes f_q)) = 
f_j(v_i) v_1 \otimes \dots \otimes \widehat{v_i} \otimes \dots \otimes v_p \otimes f_1 \otimes \dots \otimes \widehat{f_j} \otimes \dots \otimes f_q.
\end{equation}
Clearly it is $\GL_n$-equivariant. 

\begin{lemma}\label{kertrij}
The maximal depth of an irreducible representation occuring in $V_{p,q}$ is equal to $q$.
The intersection of the kernels of all maps $\Tr_{i,j}$ for $1 \le i \le p$, $1 \le j \le q$ contains
the direct sum of all irreducibles of depth $q$ in $V_{p,q}$:
\begin{equation*}
\Pi_q(V_{p,q}) \subset \bigcap_{1 \le i \le p,\ 1\le j \le q} \Ker\Tr_{i,j}.
\end{equation*}
\end{lemma}
\begin{proof}
The first assertion follows easily from the Littlewood-Richardson rule.
The second follows immediately from the first, as $V_{p-1,q-1}$ does not contain irreducible 
representations of depth $q$.
\end{proof}

Next, for $p \ge q$ and a permutation $\sigma \in \SSS_p$
let us define the corresponding contraction map
\begin{equation}\label{trsi}
\Tr_\sigma:V_{p,q}  \to V_{p-q},\qquad
(v_1\otimes \dots \otimes v_p) \otimes (f_1\otimes \dots \otimes f_q)  \mapsto f_1(v_{\sigma_p})\cdots f_q(v_{\sigma_{p-q+1}}) v_{\sigma_{1}} \otimes \dots \otimes v_{\sigma_{p-q}}.
\end{equation} 
In other words, $\Tr_\sigma$ 
is the composition of the action of $\sigma\otimes\id_{V_{0,q}}$ followed by $q$
consecutive contractions of the factors $V\otimes V^*$.

\begin{lemma}
The intersection of the kernels of all maps $\Tr_\sigma$ for $\sigma \in \SSS_p$ contains
the direct sum of all irreducibles of positive depth in $V_{p,q}$:
\begin{equation*}
\Pi_{\ge 1}(V_{p,q}) \subset \bigcap_{\sigma \in \SSS_p} \Ker\Tr_\sigma = \Ker\left(\sum_{\sigma\in\SSS_p} \Tr_\sigma \right).
\end{equation*}
\end{lemma}
\begin{proof}
This follows from the fact that irreducible representations of positive depth do not occur
in $V_{p-q}$.
\end{proof}

\begin{lemma}\label{traces-lem} 
Any $\GL_n$-map $V_{p,q} \to V_{p-q}$, 
where $p\ge q$, is a linear combination $\sum_{\sigma \in \SSS_p} a_\sigma\Tr_\sigma$ of the 
contraction maps \eqref{trsi}.
Moreover, the kernel of the map
\begin{equation*}
\sum\Tr_\sigma : V_{p,q} \xrightarrow{\ \ }
\bigoplus_{\sigma \in \SSS_p} V_{p-q} 
\end{equation*}
is $\Pi_{\ge 1}(V_{p,q})$. In particular, the restriction of this map 
to $\Pi_0(V_{p,q})$ is injective.
%
\end{lemma}
\begin{proof} 
The first part follows immediately from the first fundamental theorem on invariants of $\GL_n$
(see e.g. \cite[Sec.\ 12]{CB}).
For the second part, since we already know that $\Pi_{\ge1}(V_{p,q})$ is in the kernel, we have to check that 
for each irreducible summand $V^\mu \subset V_{p,q}$
with $\mu\ge 0$ the map $\sum\Tr_\sigma$ is injective on $V^\mu$.

So, let $V^\mu \subset V_{p,q}$ be an irreducible summand with $\mu \ge 0$.
Note that $\mu$ is a partition of $p-q$, in particular,
$V^\mu$ is a direct summand of $V_{p-q}$. Choose a splitting $V_{p,q} \to V^\mu$
of the given embedding and an embedding $V^\mu \to V_{p-q}$. Then the composition
\begin{equation*}
V^\mu \to V_{p,q} \to V^\mu \to V_{p-q}
\end{equation*}
is an embedding. On the other hand, the composition of the second and the third arrows is a linear combination
of the maps $\Tr_\sigma$. It follows that for some $\sigma$ the map $\Tr_\sigma$ restricted to $V^\mu$ is nonzero,
hence injective. Therefore the map $\sum\Tr_\sigma$ is also injective on $V^\mu$.
%
\end{proof}

\begin{proof}[Proof of Proposition~\ref{glprop}]
If $N < t$ then by the Littlewood--Richardson rule,
$\Pi_0(V_{N,t}) = 0$, hence the third term in~\eqref{mainmap} is zero.
Similarly, in this case $\Pi_0(V^\lambda \otimes V_N) = 0$ for any $\lambda$ of depth $t$, 
hence the first term in~\eqref{mainmap}
becomes zero after applying $\Pi_0$. Thus, the composition~\eqref{mainmap} is $\Pi_0$-injective.

From now on assume that $N \ge t$. By Lemma~\ref{traces-lem}, we have a left exact sequence
\begin{equation*}
0 \to \Pi_{\ge 1}(V_{N,t}) \to V_{N,t} \xrightarrow{\ \sum\Tr_\sigma\ } \bigoplus_{\sigma \in \SSS_N} V_{N-t}.
\end{equation*}
Since the complement of $\Pi_{\ge 1}(V_{N,t})$ in $V_{N,t}$ is $\Pi_0(V_{N,t})$, this means
that the projection $V_{N,t} \to \Pi_0(V_{N,t})$
fits into a commutative diagram
\begin{equation*}
\xymatrix{
V_{N,t} \ar[rr] \ar[dr]_{\sum_{\sigma\in\SSS_N}\Tr_\sigma} && 
\Pi_0(V_{N,t}) \ar@{^{(}->}[dl] \\
&
\displaystyle\bigoplus_{\sigma\in\SSS_N} V_{N-t}
}
\end{equation*}
with an injective right bottom arrow.
Tensoring it with $V_{k}$ we obtain the following commutative diagram
\begin{equation*}
\xymatrix{
V_{k+N,t} = V_k \otimes V_{N,t} \ar[rr] \ar[dr]_{\sum_{\sigma\in\SSS_N} \id_{V_{k}}\otimes\Tr_\sigma\quad} && 
V_{k} \otimes \Pi_0(V_{N,t}) \ar@{^{(}->}[dl] \\
&
\displaystyle\bigoplus_{\sigma\in\SSS_N} V_{k+N-t}
}
\end{equation*}
Now consider the composition~\eqref{mainmap} and assume that $V^\mu \subset \Pi_t(V_{k,t})\otimes V_{N}$ is an irreducible summand
with $\mu \ge 0$ such that its image in $V_{k+N,t}$ is mapped to zero by the projection
$V_{k+N,t} \to V_{k}\otimes \Pi_0(V_{N,t})$. By the above commutative diagram it means that
\begin{equation}\label{trs1}
(\id_{V_{k}}\otimes\Tr_\sigma)(V_\mu) = 0
\end{equation} 
for all $\sigma \in \SSS_N$.
%
On the other hand, since $V_\mu \subset \Pi_t(V_{k,t})\otimes V_{N} \subset V_{k+N,t}$,
by Lemma~\ref{kertrij} we have
\begin{equation}\label{trs2}
\Tr_{i,j}(V_\mu) = 0
\end{equation} 
for all $1\le i \le k$ and $1 \le j \le t$.
Let us show that~\eqref{trs1} and~\eqref{trs2} lead to a contradiction. Indeed, since $\mu \ge 0$,
we know by Lemma~\ref{traces-lem} that for some $\sigma \in \SSS_{k+N}$
the trace map $\Tr_\sigma:V_{k+N,t} \to V_{k+N-t}$ is injective on $V^\mu$. 
Fix such $\sigma$. There are two possibilities:
\begin{enumerate}
\item for each $1\le i \le k$ we have $\sigma_i \le k+N-t$, or
\item for some $1 \le i \le k$ we have $\sigma_i > k+N-t$. 
\end{enumerate}
%
In the first case the map $\Tr_\sigma$ can be rewritten as the composition of $\id_{V_{k}}\otimes \Tr_{\sigma'}$ with some $\sigma' \in \SSS_N$,
followed by an appropriate permutation acting on $V_{k+N-t}$. In particular, by~\eqref{trs1} it vanishes on~$V^\mu$.
In the second case the map $\Tr_\sigma$ factors through the map $\Tr_{i,j}:V_{k+N,t} \to V_{k+N-1,t-1}$ for $j = N+k+1-\sigma_i$,
and so it vanishes on~$V^\mu$ by~\eqref{trs2}.
This contradiction finishes the proof.
\end{proof}

Now we can finish the proof of our key technical Proposition.

\begin{proof}[Proof of Proposition \ref{GL-lem-gen}]
By Lemma \ref{tableau-lem}, it is enough to prove that the map \eqref{st0} is $\Pi^a_0$-injective.
Let $k$ be the sum of parts of $\kappa$, and let $t$ be the sum of parts of $\tau$. Note that
the representation $V^{\kappa-w_0\tau}$ has depth $t$.
Let us choose some embeddings
$V^\kappa \subset V_{k}$, $V^{-w_0\tau} \subset V_{0,t}$ and $W \subset V_{N}$. Their tensor
product gives an embedding $V^\kappa \otimes V^{-w_0\tau} \otimes W \subset V_{k+N,t}$ that fits into a commutative diagram
\begin{equation*}
\xymatrix{
V^{\kappa-{w}_0\tau}\otimes W \ar[r] \ar@{..>}[d] &
V^{\kappa}\otimes V^{-{w}_0\tau}\otimes W \ar[r] \ar[d] &
V^{\kappa}\otimes \Pi^a_0(V^{-{w}_0\tau}\otimes W) \ar@{..>}[d] \\
\Pi_t(V_{k,t}) \otimes V_{N} \ar[r] &
V_{k+N,t} \ar[r] &
V_{k} \otimes \Pi_0(V_{N,t}) 
}
\end{equation*}
(the left dotted arrow comes from the embedding $V^{\kappa-{w}_0\tau}\subset \Pi_t(V_{k,t})$
 and the right dotted arrow 
is obtained by the functoriality of the projector $\Pi^a_0$). Note that all vertical arrows 
are injective. Applying the projector $\Pi_0$ (and dropping the middle terms) 
we obtain a commutative square
\begin{equation*}
\xymatrix{
\Pi_0(V^{\kappa-{w}_0\tau}\otimes W) \ar[r] \ar[d] &
\Pi_0(V^{\kappa}\otimes \Pi_0(V^{-{w}_0\tau}\otimes W)) \ar[d] \\
\Pi_0(\Pi_t(V_{k,t}) \otimes V_{N}) \ar[r] &
\Pi_0(V_{k} \otimes \Pi_0(V_{N,t}))
}
\end{equation*}
with injective vertical arrows. The bottom line is injective by Proposition~\ref{glprop}, hence so is the top line.
Applying additionally the projector $\Pi^a_0$ we conclude that the map
\begin{equation*}
\Pi^a_0(V^{\kappa-{w}_0\tau}\otimes W) \to
\Pi^a_0(V^{\kappa}\otimes \Pi_0(V^{-{w}_0\tau}\otimes W)) 
\end{equation*}
is also injective. But 
$$\Pi^a_0(V^{\kappa}\otimes \Pi_0(V^{-{w}_0\tau}\otimes W)) = \Pi^a_0(V^{\kappa}\otimes \Pi^a_0(V^{-{w}_0\tau}\otimes W)).$$
Indeed, by the Littlewood--Richardson rule, the tensor product of $V^\kappa$ with $V^\mu$ for nonnegative $\mu$ has a summand $V^\lambda$
with $\lambda_{a+1} = \dots = \lambda_n = 0$ only if $\mu_{a+1} = \dots = \mu_n = 0$. We conclude that the map
\begin{equation*}
\Pi^a_0(V^{\kappa-{w}_0\tau}\otimes W) \to
\Pi^a_0(V^{\kappa}\otimes \Pi^a_0(V^{-{w}_0\tau}\otimes W)) 
\end{equation*}
is injective.
\end{proof}



\end{document}